\theoremstyle{plain}                    
\newtheorem{theorem}{Theorem}[section]
\newtheorem{lemma}[theorem]{Lemma}
\newtheorem{conj}[theorem]{Conjecture}
\newtheorem{corollary}[theorem]{Corollary}
\theoremstyle{definition}
\theoremstyle{plain}
\newtheorem{remark}[theorem]{Remark}
\numberwithin{equation}{section}
\DeclareMathOperator{\Flip}{Flip}
\DeclareMathOperator{\Rot}{Rot}
\DeclareMathOperator{\Glue}{Glue}
\DeclareMathOperator{\Alt}{Alt}
\DeclareMathOperator{\Sym}{Sym}
\DeclareMathOperator{\Cay}{Cay}
\DeclareMathOperator{\Hom}{Hom}
\DeclareMathOperator{\im}{im}
\DeclareMathOperator{\dev}{dev}
\DeclareMathOperator{\hol}{hol}
\DeclareMathOperator{\Hol}{Hol}
\DeclareMathOperator{\GL}{GL}
\DeclareMathOperator{\PGL}{PGL}
\DeclareMathOperator{\PSO}{PSO}
\DeclareMathOperator{\PO}{PO}
\DeclareMathOperator{\CW}{CW}
\newcommand{\tet}{\mathrm{T}}
\newcommand{\statet}{\tet_\circ}
\newcommand{\rp}{\mathbb{RP}}
\newcommand{\cp}{\mathbb{CP}}
\newcommand{\A}{\mathcal{A}}
\newcommand{\E}{\mathcal{E}}
\newcommand{\F}{\mathcal{F}}
\newcommand{\G}{\mathcal{G}}
\renewcommand{\H}{\mathcal{H}}
\newcommand{\J}{\mathcal{J}}
\newcommand{\T}{\mathcal{T}}
\newcommand{\X}{\mathcal{X}}
\newcommand{\Y}{\mathcal{Y}}
\newcommand{\CC}{\mathbb{C}}
\newcommand{\HH}{\mathbb{H}}
\newcommand{\NN}{\mathbb{N}}
\newcommand{\PP}{\mathbb{P}}
\newcommand{\RR}{\mathbb{R}}
\newcommand{\XX}{\mathbb{X}}
\newcommand{\ZZ}{\mathbb{Z}}
\newcommand{\edgeface}{(\mathrm{E})\mathrm{F}}
\newcommand{\FL}{\mathrm{FL}}
\newcommand{\cycFL}{\FL_*}
\newcommand{\ccycFL}{\mathcal{FL}_*}
\newcommand{\ordFL}{\FL}
\newcommand{\cordFL}{\mathcal{FL}}
\newcommand{\triFL}{\FL_{\triangle}}
\newcommand{\ctriFL}{\cordFL_{\triangle}}
\newcommand{\tetFL}{\FL_{\tet}}
\newcommand{\ctetFL}{\cordFL_{\tet}}
\newcommand{\TriFL}{\FL_{\Delta}}
\newcommand{\cTriFL}{\cordFL_{\Delta}}
\newcommand{\cross}[1]{\left[#1\right]}
\newcommand{\3}{\cancel{3}}
\newcommand{\abs}[1]{\left|#1\right|}
\newcommand{\op}[1]{#1_{op}}
\newcommand{\Tet}{\mathsf{Tet}}
\newcommand{\Edge}{\mathsf{Ed}}
\newcommand{\Face}{\mathsf{Fa}}
\newcommand{\Vertex}{\mathsf{Ver}}
\newcommand{\Prism}{\mathsf{P}}
\newcommand{\tetsubvariety}{\mathcal{D}_\tet^+}
\newcommand{\Ext}{\mathrm{Ext}_{\Delta}}
\newcommand{\defspace}{\mathcal{D}}
\newcommand{\complex}{\mathfrak{C}}
\newcommand{\graph}{\mathfrak{G}}
\newcommand{\wt}{\widetilde}
\newcommand{\basis}[1]{\mathbf{e_{#1}}}
\newcommand{\dbasis}[1]{\mathbf{e_{#1}^*}}
\newcommand{\Co}{\mathsf{Co}}
\newcommand{\Psist}{\Psi_{\sigma,\tau}}
\newcommand{\wDelta}{\widetilde{\Delta}}
\newcommand{\coch}{\mathrm{C}^1}
\newcommand{\cocy}{\mathrm{Z}^1}
\newcommand{\ADS}{\mathbb{A}\mathrm{d}\mathbb{S}^3}
\newcommand\restr[2]{{% we make the whole thing an ordinary symbol
  \left.\kern-\nulldelimiterspace % automatically resize the bar with \right
  #1 % the function
  \vphantom{\big|} % pretend it's a little taller at normal size
  \right|_{#2} % this is the delimiter
  }}
\renewcommand{\Re}{\mathsf{Re}}
\renewcommand{\Im}{\mathsf{Im}}
\def\Tt{\mathcal{CP}}
\def\Char{\mathfrak{X}}
\def\End{\E}
\def\n@te#1{\textsf{\boldmath \textbf{$\langle\!\langle$#1$\rangle\!\rangle$}}\leavevmode}
\def\Note#1{\textcolor{red}{\n@te {#1}}}
\newcommand{\cyclic}[1]{\mathrel{ ( \mkern-3.85mu ( \mkern-6mu} {#1} \mathrel{\mkern-6mu ) \mkern-3.85mu)}}
\newcounter{notes}%
\begin{document}
\title[Gluing equations ]{Gluing equations for real projective structures on $3$--manifolds}
\author{Samuel A. Ballas}
\address{Department of Mathematics,
 Florida State University,
  1017 Academic Way, Tallahassee, FL 32306, USA}
\email{ballas@math.fsu.edu}

\author{Alex Casella}
\address{Department of Mathematics, Florida State University, 1017 Academic Way, Tallahassee, FL 32306, USA}
\email{casella@math.fsu.edu}
% \thanks{}
\subjclass[2010]{57M50}
\date{\today}
% \dedicatory{}

\begin{abstract}
Given an orientable ideally triangulated $3$--manifold $M$, we define a system of real valued equations and inequalities whose solutions can be used to construct projective structures on $M$. These equations represent a unifying framework for the classical Thurston gluing equations in hyperbolic geometry and their more recent counterparts in Anti-de Sitter and half-pipe geometry. Moreover, these equations can be used to detect properly convex structures on $M$. The paper also includes a few explicit examples where the equations are used to construct properly convex structures.
\end{abstract}

\maketitle
\tableofcontents

%%%%%%%%%%%%%%%%%%%%%%%%%%%%%%%%%%%%%%%%%%
%%%%%%%%%%%%%%%%%%%%%%%%%%%%%%%%%%%%%%%%%%
%%%%%%%%%%%%%%%%%%%%%%%%%%%%%%%%%%%%%%%%%%

\section*{Preface}

\emph{Real projective geometry} is a special instance of a $(G,X)$--geometry, in the sense of Klein~\cite{KL}, whose model space is the real projective space $\rp^n$, acted upon by the group of projective transformations $\PGL({n+1},\RR)$. This geometry represents a unifying framework in which many interesting geometries (Riemmannian,  Lorentzian, etc.) can be viewed simultaneously. This perspective often allows one to see seemingly unrelated connections between disparate types of geometric objects.

The space of equivalence classes of real projective structures on a manifold $M$ is denoted by $\rp(M)$. If $M$ has an ideal triangulation $\Delta$, one can use this decomposition to concretely construct elements in $\rp(M)$. This idea goes back to Thurston~\cite{THUNOTES}, in the context of $3$--dimensional hyperbolic structures, and has been developed and generalized by many others since then (for instance \cite{BFGFlags,DAN,FockGonch,ST11} and others. In his notes, Thurston constructs hyperbolic structures by realizing each tetrahedron of $\Delta$ as a hyperbolic ideal tetrahedron, and then gluing them together with hyperbolic isometries. This construction gives rise to a hyperbolic structure on the complement of the $1$--skeleton in $\Delta$. When the structure extends to the $1$--skeleton so that the angle is an integer multiple of $2\pi$, we say that the structure is \emph{branched} with respect to $\Delta$. The space of equivalence classes of hyperbolic structures on $M$, that are branched with respect to $\Delta$, is denoted by $\HH(M;\Delta)$. The brilliance in Thurston's idea relies on few facts. First, the isometry class of an oriented hyperbolic tetrahedron is uniquely determined by a single complex number, with positive imaginary part, called the \emph{Thurston's parameter}. Second, two hyperbolic tetrahedra can always be glued together along a face with a unique orientation reversing hyperbolic isometry. Finally, the branching condition can be encoded in one complex valued equation per edge, called the \emph{Thurston's gluing equation}. The set of Thurston's parameters satisfying Thurston's gluing equations is a semi-algebraic affine subset of $\CC^k$, where $k$ is the number of tetrahedra in $\Delta$. We call it \emph{Thurston's deformation space} of $M$ with respect to $\Delta$ and denote it by $\mathcal{D}_{\HH}(M;\Delta)$. Then Thurston's construction translates into a map $\mathrm{Ext}_{\HH}:\mathcal{D}_{\HH}(M;\Delta)\to \HH(M;\Delta)$,  that assigns to a solution the corresponding class of hyperbolic structures.

In this paper, we generalize Thurston's technique to the space of equivalence classes of real projective structures on a non-compact orientable $3$--manifold $M$, that are branched with respect to an ideal triangulation $\Delta$. Such space is denoted by $\rp(M;\Delta)$. Although angles are not well defined in $\rp^3$, the notion of branching can be defined by insisting that the holonomy of a structure around edges of $\Delta$ is trivial (cf.~\S\ref{subsec:proj_structures}). The main idea is to introduce the space of equivalence classes of \emph{triangulation of flags} $\ctriFL$ (cf. \S\ref{subsec:tri_of_flags}). Roughly speaking, a triangulation of flags is a decoration of each ideal vertex in $\Delta$ with an \emph{(incomplete) flag} of $\rp^3$, namely a point and a plane containing such point, which is equivariant with respect to some representation $\pi_1(M) \rightarrow \PGL(4)$. We show that every triangulation of flags can be extended to a branched real projective structure, thus giving rise to an analogous map $\mathrm{Ext}_\Delta: \ctriFL \to \rp(M;\Delta)$ (cf. \S\ref{subsec:developing_tri_of_flags}). One of the main results of this paper is that $\ctriFL$ admits an explicit parametrization by a semi-algebraic affine set $\mathcal{D}_\Delta = \mathcal{D}_\rp(M;\Delta)$, called the \emph{deformation space of a triangulation of flags} (cf. Theorem~\ref{thm:parametrization_def_space}). The definition of $\mathcal{D}_\rp(M;\Delta)$ and its identification with $\ctriFL$ involve innovative ideas developed by combining the work of Thurston~\cite{THUNOTES}, Fock and Goncharov~\cite{FockGonch}, Bergeron--Falbel--Guilloux \cite{BFGFlags}, and Garoufalidis--Goerner--Zickert \cite{GGZGluing}. Since they are of a technical nature, we postpone describing them until the next section.

The identification $\mathcal{D}_\rp(M;\Delta) \cong \ctriFL$ leads to the following result.

\begin{theorem}\label{thm:main_thm}
Let $M$ be a non-compact orientable $3$--manifold equipped with an ideal triangulation $\Delta$. There are embeddings $\varphi : \mathcal{D}_{\HH}(M;\Delta)\hookrightarrow \mathcal{D}_{\rp}(M;\Delta)$ and $\mathrm{f}: \HH(M;\Delta)\hookrightarrow \rp(M;\Delta)$ that make the following diagram commute:
$$
\begin{tikzcd}
\mathcal{D}_{\HH}(M;\Delta)\ar[d,hookrightarrow,"\varphi"] \ar[r,"\mathrm{Ext}_{\HH}"] & \HH(M;\Delta)\ar[d,hookrightarrow, "\mathrm{f}"]\\
\mathcal{D}_\rp(M;\Delta)\ar[r, "\Ext"] & \rp(M;\Delta)
\end{tikzcd}
$$
Furthermore, the image of $\varphi$ can be explicitly described.
\end{theorem}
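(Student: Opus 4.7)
The plan is to leverage the projective (Klein) model of hyperbolic 3-space. Recall that $\HH^3$ sits in $\rp^3$ as the interior of a smooth quadric $Q$ of signature $(3,1)$, and $\mathrm{Isom}^+(\HH^3) = \mathrm{PO}(3,1)^+$ embeds naturally into $\PGL(4,\RR)$. The map $\mathrm{f}$ is induced by this inclusion: any $(\mathrm{Isom}^+(\HH^3),\HH^3)$--structure is automatically a $(\PGL(4,\RR),\rp^3)$--structure, and since the holonomy around each edge is the same group element viewed in the larger group, the branching condition is preserved. Injectivity of $\mathrm{f}$ follows because the invariant quadric $Q$ is recoverable from the projective holonomy (as soon as the hyperbolic representation is not reducible to a subgroup stabilizing a lower-dimensional projective subspace).

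To construct $\varphi$, I would decorate each ideal vertex $v$ of every hyperbolic tetrahedron of $\Delta$ with its canonical flag $(v, T_vQ)$, where $T_vQ\subset\rp^3$ is the projective tangent plane to $Q$ at the ideal point $v\in Q$. This produces a collection of flags that is automatically equivariant under the composed representation $\pi_1(M)\to\mathrm{PO}(3,1)^+\hookrightarrow\PGL(4,\RR)$, because $\mathrm{PO}(3,1)^+$ preserves $Q$ and hence the tangent-plane assignment. One then checks that equivalent hyperbolic structures yield equivalent triangulations of flags, producing a well-defined map $\varphi: \mathcal{D}_{\HH}(M;\Delta)\to\mathcal{D}_{\rp}(M;\Delta)\cong\ctriFL$, and that this map is injective because $Q$ is reconstructible from any four flags lying on it.

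Commutativity of the diagram then reduces to showing that extending a flag triangulation of this form via $\Ext$ reproduces, on each tetrahedron, the hyperbolic ideal tetrahedron one started with, and glues consecutive tetrahedra by the hyperbolic face--pairing. This should be immediate: the four flags of a tetrahedron already determine a hyperbolic ideal tetrahedron (its vertices lie on $Q$ by construction), while the unique projective map gluing two face--triangles of flags is precisely the projective transformation preserving $Q$ and matching the three flags of the shared face --- that is, the hyperbolic face--pairing.

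The main obstacle, and the substantive content of the ``furthermore'' clause, is the explicit description of $\varphi(\mathcal{D}_{\HH}(M;\Delta))$ inside $\mathcal{D}_{\rp}(M;\Delta)$. Locally, four flags $(p_i,\pi_i)$ in $\rp^3$ arise from a signature--$(3,1)$ quadric in the required way precisely when there exists a symmetric $4\times 4$ matrix polarizing each $p_i$ to $\pi_i$; a dimension count shows this is a codimension--three real--algebraic condition on a generic flag quadruple, together with open sign conditions ensuring the correct signature and that the tetrahedron lies inside $Q$. Translating this polar--compatibility into the coordinates on $\mathcal{D}_{\rp}(M;\Delta)$ furnished by Theorem~\ref{thm:parametrization_def_space} --- presumably by forcing the triple ratios at each vertex to take a specific real value and constraining the edge parameters to match the shape prescribed by Thurston's complex parameter $z$ --- then cuts out $\varphi(\mathcal{D}_{\HH}(M;\Delta))$ as an explicit semi--algebraic slice. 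The hard part will be checking that these pointwise polar conditions on each tetrahedron assemble globally into a consistent description that is compatible with the face, edge and cycle relations defining $\mathcal{D}_{\rp}(M;\Delta)$.
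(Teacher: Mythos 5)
Your overall strategy matches the paper's: $\mathrm{f}$ comes from the Klein-model embedding $\HH^3\subset\rp^3$, $\varphi$ comes from decorating each ideal vertex with the flag whose plane is the projective tangent plane to the invariant quadric, and the image of $\varphi$ is cut out by forcing each tetrahedron of flags to have an inscribed quadric (the paper calls this being an $\XX_\star$--tetrahedron of flags, and formulates it uniformly for hyperbolic, Anti-de Sitter and half-pipe geometry at once in \S\ref{sec:thu_equations}). So this is the same route, not a different one.

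However, you have flagged but not closed the gap that carries the real content of the ``furthermore'' clause, and one piece of the gap is larger than you suggest. First, the pointwise condition on a single tetrahedron is worked out in the paper as Theorem~\ref{thm:X_tets_of_flags}: a tetrahedron of flags is osculated by a quadric if and only if all its triple ratios equal $1$, equivalently $e_\sigma = e_{\op\sigma}$ for every edge-face, and the type of quadric (hyperbolic / AdS / half-pipe) is decided by the sign of $j_\sigma = e_\sigma - X_\sigma^2$. Your guess that ``forcing the triple ratios to take a specific real value'' is the right condition is correct, and the codimension count you give per tetrahedron ($5\to 2$) is right. But the polar-compatibility condition you describe is intrinsically a condition on a single tetrahedron; it does not by itself constrain the gluing parameters, and this is where the global consistency you worry about actually lives. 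The paper's description of the image (the set $\mathcal{S}_\star(M;\Delta)$ in Theorem~\ref{thm:phi_image}) has an additional equation $j_\sigma^\tet = (g_\sigma^\tet)^2 j_\tau^{\tet'}$ coupling the gluing parameter to the inscribed quadrics of the two adjacent tetrahedra. This equation (proved as Theorem~\ref{thm:X_gluing_param}) is precisely the statement that adjacent tetrahedra share a \emph{common} osculating quadric, and without it the tetrahedron-by-tetrahedron conditions do not assemble into a hyperbolic structure. Second, you do not address why the edge gluing equations of $\mathcal{D}_\rp(M;\Delta)$ reduce, on the image of $\varphi$, to the product $\prod z_\sigma = 1$ of Thurston's equations; this requires the block-diagonal reduction of the edge-cycle matrix $G^s$ to a $2\times 2$ matrix representing multiplication by $\prod z_\sigma$ in $\mathcal{B}_\star$ (the computation in the proof of Theorem~\ref{thm:phi_image} culminating in \eqref{eq:thurston_gluing_matrix}), which is the step actually tying $\mathcal{D}_\HH(M;\Delta)$ to $\mathcal{D}_\rp(M;\Delta)$ and is not a formal consequence of the per-tetrahedron polar conditions.
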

Roughly speaking, Theorem~\ref{thm:main_thm} says that $\mathcal{D}_\rp(M;\Delta)$ gives rise to (branched) projective structures on $M$ in a manner that is compatible with the way Thurston's deformation variety determines (branched) hyperbolic structures on $M$. Theorem~\ref{thm:main_thm} turns out to be a corollary of two more general results, Theorem~\ref{thm:phi_image} and Theorem~\ref{thm:X_comm_diag} (cf. \S\ref{subsec:proof_main_thm}). By work of Danciger~\cite{DAN2}, there are generalizations of Thurston's deformation space to both \emph{Anti-de Sitter} and \emph{half-pipe} geometries, and they can each be embedded in $\mathcal{D}_\rp(M;\Delta)$. In \S\ref{sec:thu_equations} we show that $\mathcal{D}_\rp(M;\Delta)$ represents a unifying framework for these three geometries and show that a version of Theorem~\ref{thm:main_thm} is true for each of these geometries.

If a projective (resp. hyperbolic) structure is contained in the image of $\Ext$ (resp.\ $\mathrm{Ext}_{\HH})$ then we say that this structure is \emph{realized} by $\mathcal{D}_\rp(M;\Delta)$ (resp.\ $\mathcal{D}_\HH(M;\Delta)$).

Aside from hyperbolic structures, there are other interesting projective structures that the space $\mathcal{D}_\rp(M;\Delta)$ realizes. A \emph{properly convex projective structure} on $M$ is a projective structure for which the developing map is a diffeomorphism onto a properly convex set (cf. \S\ref{sec:prop_convex_projective_st}). Let $\Tt(M)\subset \rp(M)$ be the set of equivalence classes of properly convex projective structures on $M$. A \emph{generalized cusp} is a properly convex manifold that generalizes the notion of a cusp in a finite volume hyperbolic manifold (cf. \S\ref{subsec:gen_cusp}). These types of cusps were originally defined by Cooper-Long-Tillmann in~\cite{CLTII}, and have recently been classified in~\cite{BCL}. Let $\Tt^c(M)\subset \Tt(M)$ be the set of equivalence classes of properly convex structures on $M$ where each topological end of $M$ has the structure of a generalized cusp. For instance, every complete finite volume hyperbolic structure is a properly convex projective structure with generalized cusp ends. It is conjectured that $\Tt^c(M) = \Tt(M)$ for a large class of manifolds (cf. Conjecture~\ref{conj:gen_cusps}). We show that the deformation space $\mathcal{D}_\rp(M;\Delta)$ also realizes many projective structures in $\Tt^c(M)$.

\begin{theorem}\label{thm:main_thm2}
Let $M$ be a non-compact orientable $3$--manifold that is the interior of a compact manifold whose boundary is a finite union of disjoint tori and suppose that $M$ is equipped with an ideal triangulation $\Delta$. If $U:=\Ext^{-1}(\Tt^c(M))$ then the restriction
$$
\restr{\Ext}{U}:U\to \Tt^c(M)
$$
is open and finite-to-one.
\end{theorem}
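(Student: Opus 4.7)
The plan is to analyze $\Ext$ through the decomposition of a triangulation of flags into its holonomy representation $\rho$ together with a choice of $\rho$-equivariant flag at each $\pi_1(M)$-orbit of ideal vertices. Fixing a target structure $[\sigma] \in \Tt^c(M)$ with holonomy $\rho$, a preimage in $U$ is specified by a choice of flag at each cusp that is fixed by the corresponding peripheral subgroup acting on $\rp^3$. Both conclusions of the theorem then reduce to questions about these peripheral invariant flags, together with continuity of $\Ext$ (which one should extract from the construction of $\Ext$ given earlier in the paper).

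For the finite-to-one statement, I would invoke the classification of generalized cusps from~\cite{BCL}. For each type on the list, the peripheral holonomy is a virtually abelian subgroup of $\PGL(4,\RR)$ with tightly constrained dynamics on $\rp^3$, and its common fixed points and invariant hyperplanes assemble into only finitely many invariant flags. Since $M$ has finitely many ends by hypothesis (all torus ends), the total set of equivariant flag choices decorating $[\sigma]$ is finite, so every fiber of $\restr{\Ext}{U}$ is finite.

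For openness, the first step is to verify that $\Tt^c(M)$ is open in $\rp(M;\Delta)$: proper convexity of the developing image is an open condition (Koszul-type stability), and the generalized cusp property is preserved under small perturbations of the peripheral holonomy by~\cite{CLTII,BCL}. Continuity of $\Ext$ then gives that $U = \Ext^{-1}(\Tt^c(M))$ is open in $\mathcal{D}_\rp(M;\Delta)$. To upgrade this to openness of the restricted map, I would produce a continuous local inverse near each $x \in U$. The invariant flags of a generalized cusp depend algebraically on the peripheral holonomy (they are roots of explicit polynomial equations extracted from the eigenstructure of a commuting pair of generators), so as $\sigma$ deforms slightly within $\Tt^c(M)$ the finite fiber deforms continuously, and one can canonically pick out the branch through $x$. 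Patching these local sections across the finitely many cusps produces a continuous local right inverse to $\Ext$ through $x$, showing $\restr{\Ext}{U}$ is a local homeomorphism and hence open.

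The step I expect to be most delicate is the last one: justifying that the invariant flags move algebraically in families across the boundary between different generalized cusp types, and ruling out collisions between two previously distinct flags in the fiber under perturbation. This requires a case-by-case argument using the explicit normal forms of~\cite{BCL}, together with checking that the perturbed flags remain inside the open, generic locus of admissible triangulations of flags cut out by the transversality conditions defining $\mathcal{D}_\rp(M;\Delta)$ — so that the local lift actually lands in $U$ rather than escaping through the boundary of the semi-algebraic set.
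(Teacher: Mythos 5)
Your decomposition of the problem into (holonomy) $\times$ (equivariant flag choice per cusp) and your appeal to the finiteness of peripheral invariant flags (what the paper records as Lemma~\ref{lem:framing_end}) is indeed the route the paper takes for the finite-to-one part. For openness, however, your plan both overshoots and leaves a genuine hole.

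You propose to build a \emph{continuous} local section $s$ of $\Ext$ and conclude that $\Ext|_U$ is a \emph{local homeomorphism}. Both halves of this are problematic. First, you only need openness, which is the weaker statement that for every $p_0 \in U$ and every $\sigma_1 \in \Tt^c(M)$ sufficiently close to $\Ext(p_0)$, \emph{some} preimage $p_1$ of $\sigma_1$ lies near $p_0$. This requires only the \emph{existence} of a $\rho_1(\Gamma_v)$--invariant flag near $\Phi_0(\wt v)$, not a continuous choice of one. Insisting on a continuous branch through $p_0$ forces you to confront exactly the ``delicate step'' you flagged: the invariant flags vary continuously only as a multiset, and individual branches genuinely collide as the generalized cusp type degenerates (e.g.\ a type~$1$ cusp limiting to a type~$0$ cusp, whose peripheral group has a \emph{unique} invariant flag). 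This is not a technical wrinkle to be smoothed over; it is an honest obstruction to $\Ext|_U$ being a local homeomorphism, which is why the theorem claims only openness. The paper sidesteps all of this by making a merely pointwise (non-canonical, non-continuous) choice of nearby invariant flag for each nearby $\sigma_1$.

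Second, even granting a local section, you never verify that $\Ext(s(\sigma)) = \sigma$. The map $\Ext$ rebuilds the developing map from scratch as a piecewise-linear simplicial extension of the flag decoration, so $\Ext(s(\sigma))$ is \emph{a priori} only a branched projective structure with the same holonomy as $\sigma$, not $\sigma$ itself. The paper closes this gap with the Ehresmann--Thurston principle (Theorem~\ref{thm:ersh_thu_principle}): two projective structures close to $\Ext(p_0)$ with identical holonomy must coincide, because $\Hol$ is a local homeomorphism. Without this step the construction produces some lift, but not one you know maps to the prescribed target. Finally, the preliminary claim that $\Tt^c(M)$ is open in $\rp(M;\Delta)$, and hence $U$ open in $\mathcal{D}_\rp(M;\Delta)$, is not needed: the statement concerns openness of the \emph{map} $\Ext|_U$ into $\Tt^c(M)$ with the subspace topology, not openness of its domain.
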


Theorem \ref{thm:main_thm2} should be viewed in the context of results of Fock-Goncharov \cite{FG2} concerning character varieties of non-compact surfaces. Specifically,  they show that if $\Sigma$ is a non-compact surface of finite type then the space of equivalence classes of ``framed'' representations is a branched cover of the standard character variety.

The proof of Theorem~\ref{thm:main_thm2} is the main result of \S\ref{sec:prop_convex_projective_st}. It has the following immediate corollary.

\begin{corollary}\label{cor:detects_nearby_hyperbolic}
Let $M$ be a finite volume hyperbolic $3$--manifold, and suppose the complete hyperbolic structure is realized by $\mathcal{D}_\HH(M;\Delta)$. Then every properly convex projective structure with generalized cusp ends that is sufficiently close to the complete hyperbolic structure is realized by $\mathcal{D}_\rp(M;\Delta)$.
\end{corollary}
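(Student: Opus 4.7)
The plan is to chain Theorem~\ref{thm:main_thm} with Theorem~\ref{thm:main_thm2}: the first moves the complete hyperbolic structure into $\mathcal{D}_\rp(M;\Delta)$, and the second then opens out a neighborhood inside $\Tt^c(M)$.

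First, I would use the hypothesis to pick a point $z_0 \in \mathcal{D}_\HH(M;\Delta)$ with $\mathrm{Ext}_\HH(z_0)$ equal to the complete hyperbolic structure $h_0 \in \HH(M;\Delta)$, and set $\tilde z_0 := \varphi(z_0) \in \mathcal{D}_\rp(M;\Delta)$. Commutativity of the diagram in Theorem~\ref{thm:main_thm} gives $\Ext(\tilde z_0) = \mathrm{f}(h_0)$. The next small step is to observe that $\mathrm{f}(h_0) \in \Tt^c(M)$: in the Klein model $\HH^3$ is a properly convex domain in $\rp^3$, and the standard cusps of a complete finite volume hyperbolic $3$-manifold are the motivating examples of generalized cusps (as recorded in \S\ref{subsec:gen_cusp}). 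In particular $\tilde z_0 \in U := \Ext^{-1}(\Tt^c(M))$.

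Second, I would invoke the openness part of Theorem~\ref{thm:main_thm2}: the restriction $\restr{\Ext}{U}: U \to \Tt^c(M)$ is an open map, so $\Ext(U)$ is an open subset of $\Tt^c(M)$ that contains $\mathrm{f}(h_0)$. Consequently every element of $\Tt^c(M)$ lying in a sufficiently small $\rp(M)$-neighborhood of $\mathrm{f}(h_0)$ is contained in $\Ext(U)$, i.e.\ is realized by $\mathcal{D}_\rp(M;\Delta)$, which is exactly the statement of the corollary.

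There is no real obstacle here beyond bookkeeping: the only content is the identification $\mathrm{f}(h_0) \in \Tt^c(M)$, which reduces to the compatibility between the Klein model embedding used to define $\mathrm{f}$ and the definition of generalized cusps, and this is precisely the setting for which the latter notion was designed. Everything else is a formal consequence of Theorems~\ref{thm:main_thm} and~\ref{thm:main_thm2}.
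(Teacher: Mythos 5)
Your argument is correct and is essentially the derivation the paper has in mind when it calls this statement an ``immediate corollary'' of Theorem~\ref{thm:main_thm2}: you pull the complete hyperbolic structure into $\mathcal{D}_\rp(M;\Delta)$ via the commuting square of Theorem~\ref{thm:main_thm}, note that its image $\mathrm{f}(h_0)$ lies in $\Tt^c(M)$ (recorded in \S\ref{subsec:gen_cusp} and in the preface), and then use openness of $\restr{\Ext}{U}$ to cover a neighborhood of $\mathrm{f}(h_0)$ in $\Tt^c(M)$. Nothing is missing and the bookkeeping is sound.
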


We remark that it is still unknown whether there always exists a triangulation $\Delta$ such that the complete hyperbolic structure is realized by $\mathcal{D}_\HH(M;\Delta)$, but there is strong evidence that this is typically true.

In a complementary direction to Theorem~\ref{thm:main_thm2}, we are also able to show that for a few small examples it is possible to find explicit families in $\mathcal{D}_\HH(M;\Delta)$ that correspond to properly convex projective structures on $M$ (cf. \S\ref{sec:examples}). In particular, we construct previously unknown properly convex deformations of the complete hyperbolic structure for the figure-eight sister manifold.

\begin{theorem}\label{thm:sis_thm}
Let $M$ be either the figure-eight knot complement or the figure-eight sister manifold, then there is a $1$--parameter family of finite volume (non-hyperbolic) properly convex projective structures on $M$ corresponding to a curve in $\mathcal{D}_{\rp}(M;\Delta)$. 
\end{theorem}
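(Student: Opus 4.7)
The plan is to establish the result by explicit computation for each manifold, leveraging the algebraic description of $\mathcal{D}_\rp(M;\Delta)$ developed in the body of the paper. Both the figure-eight knot complement and the figure-eight sister manifold admit well-known ideal triangulations $\Delta$ consisting of exactly two ideal tetrahedra $\tet_1,\tet_2$, with a single edge class (figure-eight) or two edge classes (sister), and each exhibits a $\ZZ/2\ZZ$ combinatorial symmetry that exchanges the two tetrahedra. I would exploit this symmetry throughout to reduce the dimension of the computation.

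First, I would write down the defining system for $\mathcal{D}_\rp(M;\Delta)$ in the parametrization of Theorem~\ref{thm:parametrization_def_space}: a concrete set of polynomial face and edge equations on the flag coordinates, together with the open inequalities ensuring non-degeneracy. The complete hyperbolic structure supplies a distinguished point $p_0 \in \mathcal{D}_\rp(M;\Delta)$ as the image under $\varphi$ from Theorem~\ref{thm:main_thm} of the unique complete solution of the Thurston gluing equations for $\Delta$ (both examples being known to satisfy the hypothesis that the complete hyperbolic structure is realized by $\mathcal{D}_{\HH}(M;\Delta)$).

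Next, restricting to the subvariety cut out by the $\ZZ/2\ZZ$ symmetry and the cusp conditions forcing finite volume (i.e.\ forcing generalized-cusp ends in the sense of Ballas--Cooper--Leitner), I would eliminate variables directly and show that the resulting algebraic set contains a smooth one-parameter curve $\gamma\colon(-\epsilon,\epsilon)\to \mathcal{D}_\rp(M;\Delta)$ with $\gamma(0)=p_0$, transverse to the hyperbolic locus. In the figure-eight case such deformations are well-documented and serve as a sanity check; for the sister manifold the existence of the curve is the main computational content, and can be certified by writing explicit rational or low-degree algebraic parametrizations of the symmetric slice.

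Finally, I would argue that $\Ext(\gamma(t))$ gives a properly convex finite-volume structure for every $t$. Near $t=0$ this is immediate from Corollary~\ref{cor:detects_nearby_hyperbolic}. To extend to the whole curve, the plan is to exhibit, for each $t$, a properly convex domain preserved by the holonomy: concretely, produce an explicit invariant bilinear form or cone (depending continuously on $t$) whose positivity is witnessed by a single computable invariant along $\gamma$, and combine this with the generalized-cusp classification to rule out degenerations at the ends. The finite-volume claim then follows from the control of cusp cross-sections built into the deformation space.

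The main obstacle I expect is precisely this last step: proving global proper convexity along the entire family, as opposed to local proper convexity near the hyperbolic point. Guaranteeing that no accident (loss of strict convexity, collapse of the developing image, or transition of cusp type) occurs along $\gamma$ requires a genuine geometric certificate, and the computation of such a certificate for the sister manifold — which is not previously in the literature — is where the real work lies.
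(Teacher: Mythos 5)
Your outline runs broadly parallel to the paper's argument --- both write down an explicit one-parameter curve of solutions passing through the complete hyperbolic point and then argue proper convexity and finite volume along it --- but the tool you propose for the proper-convexity step points in the wrong direction, and this is a genuine gap. Corollary~\ref{cor:detects_nearby_hyperbolic} is a \emph{realization} statement: it says that properly convex structures with generalized cusp ends sufficiently close to the complete hyperbolic structure lie in the image of $\Ext$. It does not say that points of $\mathcal{D}_{\rp}(M;\Delta)$ near the hyperbolic solution map under $\Ext$ to properly convex structures, which is what you need. What the paper actually does is verify that along the explicit curve the peripheral holonomy preserves a complete real flag for every value of the parameter, and then invoke the Cooper--Long--Tillmann openness theorem (Thm.~0.2 of \cite{CLTII}), an openness-of-proper-convexity result governed by peripheral behavior, to conclude that for parameters near the hyperbolic value the deformed structures remain properly convex; finite volume is then obtained from the classification of the resulting type-$1$ generalized cusps together with Thm.~0.6 of \cite{BCL}, not from a soft appeal to ``control of cusp cross-sections.''

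You have also made the task harder than the statement demands. It suffices to produce a one-parameter family on an arbitrarily short interval around the hyperbolic point, so there is no need for a \emph{global} convexity certificate along the full curve (an invariant bilinear form or cone that works for all $t$, or a ruling out of degenerations at the ends). The paper stops at the local result, and correctly so. If you replace the appeal to Corollary~\ref{cor:detects_nearby_hyperbolic} with a genuine openness-of-convexity theorem of CLT type, note the complete-real-flag invariance of the peripheral holonomy, and cite \cite{BCL} for finite volume, then --- modulo writing down the explicit parameter tables for the sister triangulation, which is the unavoidable computational content --- your plan becomes essentially the paper's proof restricted to a neighborhood of the hyperbolic point.
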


In the above theorem, the volume is the Hausdorff measure coming from the Hilbert metric (see \cite{BAL15} for a description of this measure in the context of properly convex geometry).

We conclude the paper by computing a modified version of our gluing equations of a certain hyperbolic orbifold $O$ obtained from the Hopf link that has an ideal triangulation with a single tetrahedron. The case for orbifolds turns out to be more subtle than the case of manifolds. Specifically, in this context, we are able to show that Theorem \ref{thm:main_thm2} does not generalize in a straightforward way.

%%%%%%%%%%%%%%%%%%%%%%%%%%%%%%%%%%%%%%%%%%
%%%%%%%%%%%%%%%%%%%%%%%%%%%%%%%%%%%%%%%%%%
%%%%%%%%%%%%%%%%%%%%%%%%%%%%%%%%%%%%%%%%%%

\subsection*{Summary of the main technique}\label{subsec:main_technique}

The first half of the paper (\S\ref{sec:coords_teton}--\S\ref{sec:coords_triangulation}) is devoted to defining the deformation space $\mathcal{D}_\Delta = \mathcal{D}_\rp(M;\Delta)$, and proving that it parametrizes the space of projective classes of triangulations of flags $\ctriFL$. The construction is technical, but insightful, and the ideas developed along the way are of interest on their own. Here we provide the reader with a broad overview in the hope that it will make the more technical heart of the paper easier to follow.

Our main object of study is a \emph{tetrahedron of flags}, namely four (incomplete) flags in $\rp^3$ that satisfy certain genericity conditions (cf. \S\ref{subsec:tris_and_tets_flags}). One should think of a tetrahedron of flags as encoding the vertices of a \emph{projective tetrahedron} in $\rp^3$, together with a plane through each vertex. Here a (projective) tetrahedron is a region of $\rp^3$ that is projectively equivalent to the projectivization of the positive orthant in $\RR^4$. Although in general there are several projective tetrahedra that have the same four points as their vertices, a tetrahedron of flags always singles out a \emph{unique} projective tetrahedron, namely the one whose interior is disjoint from the union of the planes coming from the flags.

Unlike projective tetrahedra, not all tetrahedra of flags are projectively equivalent, thus giving rise to a non-trivial moduli space. The space of projective classes of tetrahedra of flags $\ctetFL$ is determined by twelve \emph{edge ratios} and twelve \emph{triple ratios} (cf. \S\ref{subsec:triple_edge_ratios}), satisfying some \emph{internal consistency equations} (cf. Lemma~\ref{lem:internal_eqs}). We devote \S\ref{sec:coords_teton} to show that these ratios cut out a semi-algebraic affine set $\mathcal{D}_\tet^+ \subset \RR^{24}$ which is homeomorphic to both $\RR^5_{>0}$ and $\ctetFL$ (cf. Theorem~\ref{thm:parametrization_flag_space}). Conceptually, this is a generalization of the fact that the space of orientation preserving isometry classes of hyperbolic ideal tetrahedra is homeomorphic to the subset of complex numbers with positive imaginary part. Edge ratios and triple ratios are inspired by the work of Fock-Goncharov~\cite{FockGonch} and Bergeron-Falbel-Guilloux~\cite{BFGFlags}, although their setting is different as it involves \emph{complete flags} instead.

Next in \S\ref{sec:coords_pair_teta}, we turn our attention to \emph{gluing} two tetrahedra of flags along a face. Roughly speaking, this involves applying a projective transformation that maps three flags of one tetrahedron of flags to three flags of the other one. In general, ordered triples of incomplete flags are not projectively equivalent. This phenomenon has no hyperbolic analog, as any simplicial map between two hyperbolic ideal triangles is always realizable by a unique hyperbolic isometry. On the other hand, two tetrahedra of flags are \emph{glueable} if and only if their parameters satisfy some \emph{face pairing equations} (cf. Lemma~\ref{lem:face_pair_eq}). Additionally, when two tetrahedra of flags can be glued along a face, there is a $1$--parameter family of inequivalent ways to glue them. This is analogous to the fact there is a $1$--parameter family of ways to glue two hyperbolic ideal triangles along an ideal edge. For symmetry reasons, it is more convenient to think about this $1$--parameter family as parametrized by six \emph{gluing parameters} (cf.~\S\ref{subsec:gluing_param}) satisfying some \emph{gluing consistency equations} (cf. Lemma~\ref{lem:gluing_eqs}). In hyperbolic geometry, the requirement that the face pairings are orientation reversing ensures that the tetrahedra are glued \emph{geometrically}, namely not ``inside out''. The same effect is achieved with tetrahedra of flags by imposing that all gluing parameters are positive (cf. \S\ref{subsec:geometric_gluing}). The rest of \S\ref{sec:coords_pair_teta} is then devoted to show that the \emph{deformation space of pairs of glued tetrahedra of flags} $\mathcal{D}_{\sigma,\tau}$ is homeomorphic to $\RR_{>0}^{10}$ (cf. Theorem~\ref{thm:parametrization_gluing_space}).

Next, we iterate the above gluing construction and attempt to glue together the cycle of tetrahedra of flags that about a common edge, to obtain a \emph{triangulation of flags} (cf.~\ref{subsec:tri_of_flags}). In hyperbolic geometry, a cycle of hyperbolic ideal tetrahedra closes up around a common edge to form an angle which is an integer multiple of $2\pi$ if and only if the Thurston's parameters satisfy a single complex valued equation. Similarly, a cycle of tetrahedra of flags glues around a common edge so that the underlying projective tetrahedra form a branched structure if and only if their parameters satisfy certain additional \emph{edge gluing equations} (cf.~\S\ref{subsec:gluing_eq}). These equations are not as simple to define as Thurston's gluing equations, and they require the introduction of a tool we call the \emph{monodromy complex} (cf.~\S\ref{subsec:monodromy_complex}).

The monodromy complex $\complex_\Delta$ is a $2$--dimensional CW--complex embedded in $M$ \emph{dual to} $\Delta$, with the same fundamental group as $M$ (cf. Lemma~\ref{lem:complex_fund_group}). Given a choice of edge ratios, triple ratios and gluing parameters for each tetrahedron in $\Delta$ we can label the edges of $\complex_\Delta$ with elements of $\PGL(4)$ to define a $\PGL(4)$--cochain (see Eqs \eqref{eq:rot_mat}-\eqref{eq:glue_mat}). Each of these labels has geometric meaning. Roughly speaking, the edge and triple ratios determine the projective classes of each tetrahedron of flags and the labels on the edges encode how these tetrahedron of flags are glued together. In general, this construction does not give rise to a triangulation of flags since as one uses the monodromy complex to assemble the cycle of tetrahedra of flags around an edge of $\Delta$ they do ``close up.'' In Theorem~\ref{thm:deformation_space_to_cocycles}, we show that these tetrahedra of flags glue to form a triangulation of flags if and only if the corresponding cochain is a $\PGL(4)$--\emph{cocycle}, namely the product of all matrices along the boundary of $2$--cells are trivial (cf.~\S\ref{subsec:cocycles}). The equations the ensure the triviality of this product are the previously mentioned edge gluing equations.

Our use of cochains and cocycles was inspired by the work of Garoufalidis--Goerner--Zickert \cite{GGZGluing}, where they use these concepts to study representations of $3$--manifold groups into $\PGL(n,\CC)$. The set of parameters satisfying the edge gluing equations is called the \emph{deformation space of a triangulation of flags}, $\mathcal{D}_\Delta = \mathcal{D}_\rp(\Delta;M)$, and it is homeomorphic to the space of equivalence classes of triangulations of flags $\ctriFL$ (cf. Theorem~\ref{thm:parametrization_def_space}).

%%%%%%%%%%%%%%%%%%%%%%%%%%%%%%%%%%%%%%%%%%
%%%%%%%%%%%%%%%%%%%%%%%%%%%%%%%%%%%%%%%%%%
%%%%%%%%%%%%%%%%%%%%%%%%%%%%%%%%%%%%%%%%%%

\subsection*{Motivation}

Our original motivation for constructing $\mathcal{D}_\rp(\Delta;M)$ was to study properly convex structures on $M$, particularly in the case where $M$ is a finite volume hyperbolic $3$--manifold. From this perspective, the use of incomplete flags is quite natural. Assuming all ends of $M$ are \emph{generalized cusps} (cf.~\S\ref{subsec:gen_cusp} and Conjecture~\ref{conj:gen_cusps}), the holonomy of each peripheral subgroup of $\pi_1(M)$ preserves at least one and at most finitely
many incomplete flags in $\rp^3$ (cf. Lemma~\ref{lem:framing_end}). At the expense of modifying the developing map in a way that does not change the underlying projective structure, one of these flags can always be chosen to be a \emph{supporting flag}. Assigning this flag to the vertices of the tetrahedra in $\wDelta$ that correspond to that peripheral subgroup is how we determine a point in $\mathcal{D}_\rp(\Delta;M)$ in the proof Theorem~\ref{thm:main_thm2}.

With this goal in mind, we mainly drew inspiration from Thurston's work~\cite{THUNOTES}, but also from several other generalizations such as \cite{BFGFlags},\cite{FockGonch},\cite{GGZGluing},\cite{THUNOTES} and others. While many of the ideas and techniques in these works are similar to ours, our work differs in several important ways. One of the main differences is the strong geometric flavour of our construction. For instance, the generalized gluing equations defined in~\cite{GGZGluing} are naturally complex valued, and their set of solutions ends up parametrizing representations from $\pi_1(M)$ into $\PGL(n,\CC)$. In general, these representations are not the holonomy representations of a geometric structure on $M$, at least not in an obvious way. Furthermore, the main tools in these other constructions are \emph{complete} flags. The use of complete flags has the computational advantage of simpler looking equations, usually at the expenses of larger systems of equations. However, from the perspective of properly convex structures, this approach is less natural. As described above there are natural geometric choices for incomplete flags, there is generally no geometrically meaningful way to complete these flags with projective lines. Moreover, for certain projective structures (as complete hyperbolic structures), there are \emph{infinitely many} ways to decorate the vertices of an ideal triangulation with complete flags (see Remark~\ref{rem:cplt_flags_bad}). Such behavior is undesirable if one's goal is to parametrize projective structures.

%%%%%%%%%%%%%%%%%%%%%%%%%%%%%%%%%%%%%%%%%%
%%%%%%%%%%%%%%%%%%%%%%%%%%%%%%%%%%%%%%%%%%
%%%%%%%%%%%%%%%%%%%%%%%%%%%%%%%%%%%%%%%%%%

\subsection*{Organization of paper} Section~\ref{sec:background} describes necessary background for subsequent results. Sections~\ref{sec:coords_teton}-\ref{sec:coords_triangulation} form the technical heart of the paper. In particular \S\ref{sec:coords_teton} describes the moduli space of a single tetrahedron of flags, \S\ref{sec:coords_pair_teta} describes the moduli space for gluing two tetrahedra of flags, and  \S\ref{sec:coords_triangulation} describes the gluing equations for an entire triangulation of flags of a $3$--manifolds. The remaining sections are applications of the aforementioned parametrizations to prove the main theorems and construct examples. Section~\ref{sec:thu_equations} analyzes the relationships between our machinery and the more classical settings of hyperbolic, Anti-de Sitter, and half-pipe structures. It also contains the proof of Theorem~\ref{thm:main_thm}. Section~\ref{sec:prop_convex_projective_st} describes how convex projective structures can be framed and contains the proof of Theorem~\ref{thm:main_thm2}. Section~\ref{sec:examples} contains explicit examples where the gluing equations are solved and used to produce interesting projective structures. In particular, Section~\ref{sec:sister} produces a previously unknown family of convex projective structures on the figure-eight sister manifold (cf. Theorem~\ref{thm:sis_thm}). 

%%%%%%%%%%%%%%%%%%%%%%%%%%%%%%%%%%%%%%%%%%
%%%%%%%%%%%%%%%%%%%%%%%%%%%%%%%%%%%%%%%%%%
%%%%%%%%%%%%%%%%%%%%%%%%%%%%%%%%%%%%%%%%%%

\subsection*{Acknowledgements} The authors would like to thank J.\ Porti and S.
 Tillmann for several useful conversations and for making us aware of the new convex projective structures they found for the Hopf link orbifold. The authors would also like to thank J.\ Danciger for several useful discussions regarding Anti-de Sitter and half-pipe geometry. The first author was partially supported by NSF grant DMS 1709097.
 
%%%%%%%%%%%%%%%%%%%%%%%%%%%%%%%%%%%%%%%%%%
%%%%%%%%%%%%%%%%%%%%%%%%%%%%%%%%%%%%%%%%%%
%%%%%%%%%%%%%%%%%%%%%%%%%%%%%%%%%%%%%%%%%%

\section{Background}\label{sec:background}

	\subsection{Projective space}\label{subsec:projective_space}
	
	Let $W$ be a finite dimensional real vector space. There is an equivalence relation on $W\setminus\{0\}$ given by $v\sim w$ if and only if $v = \lambda w$, for some $\lambda \in \RR^\times := \RR \setminus \{ 0\}$. The quotient of $W\setminus\{0\}$ by this equivalence relation is the \emph{projective space of $W$}, which we denote $\PP(W)$. More conceptually, $\PP(W)$ is the space of $1$--dimensional subspaces of $W$. A \emph{$k$--dimensional plane} of $\PP(W)$ is the projectivization of a $(k+1)$--dimensional subspace of $W$. In particular, if $n = \dim(W)$, a \emph{hyperplane} (resp. \emph{line}) of $\PP(W)$ is an
	$(n-1)$--dimensional (resp. $1$--dimensional) plane of $\PP(W)$.
	
	There is a natural quotient map $W\setminus\{0\} \rightarrow \PP(W)$ that maps a vector $v \in W\setminus\{0\}$ to its equivalence class $[v] \in \PP(W)$. We will usually drop the brackets for elements of $\PP(W)$, unless we need to distinguish them from their representatives in $W\setminus\{0\}$.
	
	If $W^*$ is the \emph{dual vector space} to $W$, then $\PP(W^*)$ is the \emph{dual projective space}. We can regard $\PP(W^*)$ as the set of hyperplanes of $\PP(W)$ by identifying $[f] \in W^*$ with the hyperplane $[\ker(f)] \subset \PP(W)$. Henceforth we will make use of this identification implicitly.
	
	Let $\{V_i\}_{i=1}^k$ be a set of points of $\PP(W)$, then we denote by $V_1V_2\ldots V_{k-1}V_k$ the plane of $\PP(W)$ spanned by this set. Similarly, if $\{\eta_i\}_{i=1}^k$ is a set of planes of $\PP(W)$, then $\eta_1\eta_2 \ldots \eta_{k-1}\eta_k$ is the plane of $\PP(W)$ obtained by intersecting them.
	
	Elements of the general linear group $\GL(W)$ take $1$--dimensional subspaces to $1$--dimensional subspaces, so the natural (left) action of $\GL(W)$ on $W$ descends to an action on $\PP(W)$. This action is not faithful: the kernel consists of non-zero scalar multiples of the identity $I$, and so the \emph{projective general linear group} $\PGL(W):=\GL(W)/\RR^\times I$ acts faithfully on $\PP(W)$. The group $\PGL(W)$ also admits a (left) action on $\PP(W^*)$ given by $[A]\cdot [f]=[f \circ A^{-1}]$, for all $[A]\in \PGL(W)$ and $[f] \in \PP(W^*)$.
	
	 Let $n := \dim (W)$. Then a collection of distinct points (resp. planes) $\mathcal{C}$ in $\PP(W)$ is in \emph{general position} if no $k$ of them lie in a $(k-2)$--dimensional plane (resp. intersects in an $(n-k+1)$--dimensional plane) of $\PP(W)$, for all $1\leq k\leq n$. Notice that if the collection $\mathcal{C}$ contains at least $n+1$ points then they are in general position if and only if no $n$ of them lie in a hyperplane. A collection of exactly $n+1$ points in general position is called a \emph{projective basis} for $\PP(W)$. It is an elementary fact about projective geometry that the group $\PGL(W)$ acts simply transitively on the set of ordered projective bases of $\PP(W)$. Consequently, the action of an element of $\PGL(W)$ on $\PP(W)$ is completely determined by its action on a projective basis.
	 
	 In this paper we are mainly interested in the case where $W=\RR^{n+1}$, for which we adopt the notation $\rp^n : = \PP(\RR^{n+1})$ and $\PGL(n) := \PGL(\RR^{n+1})$.

%%%%%%%%%%%%%%%%%%%%%%%%%%%%%%%%%%%%%%%%%%
%%%%%%%%%%%%%%%%%%%%%%%%%%%%%%%%%%%%%%%%%%
%%%%%%%%%%%%%%%%%%%%%%%%%%%%%%%%%%%%%%%%%%
	 
\subsection{Cross ratios}\label{subsec:cross_ratio}

We fix an identification $\rp^1 = \RR\cup\{\infty\}$. Three distinct points of $\rp^1$ form a projective basis, hence given an ordered quadruple of four distinct points $(x_1, x_2, x_3, x_4)$ of $\rp^1$ there is a unique projective transformation $G\in \PGL(2)$ that takes $(x_1, x_2, x_3)$ to $(\infty,0,1)$. The \emph{cross ratio of $(x_1, x_2, x_3, x_4)$} is the quantity $\cross{x_1,x_2,x_3,x_4}:=G(x_4) \in \rp^1$. It is easy to check that in coordinates:
$$
\cross{x_1,x_2,x_3,x_4} = \frac{(x_1 - x_3)(x_2 - x_4)}{(x_1 - x_4)(x_2 - x_3)}.
$$
By definition, the cross ratio is projectively invariant. Furthermore, it is also invariant under certain symmetries. Specifically, let $\Sym(n)$ be the \emph{group of permutations on $n$ symbols}, and consider the subgroup $H \leqslant \Sym(4)$ of $2$--$2$ cycles. The group $\Sym(4)$ acts on ordered quadruples of points in $\rp^1$ by permuting them and a simple computation shows that $H$ is the kernel of this action. 

This definition extends to ordered quadruples of collinear points $(x_1, x_2, x_3, x_4)$ in $\rp^n$. If $\ell$ is the line spanned by $(x_1, x_2, x_3, x_4)$, then $\ell$ is projectively equivalent to $\rp^1$, and one defines the \emph{cross ratio $\cross{x_1,x_2,x_3,x_4}$} through this equivalence. This definition does not depend on the chosen identification $\ell = \rp^1$ as the cross ratio is projectively invariant.

Similarly, one defines the cross ratio of an ordered quadruple of distinct hyperplanes $(\eta_1,\eta_2,\eta_3,\eta_4)$ of $\rp^n$, intersecting at a common $(n-2)$--dimensional plane $\ell$. The \emph{pencil of hyperplanes through $\ell$} is the set $\ell^*$ of hyperplanes of $\rp^n$ containing $\ell$. Then $\ell^*$ is projectively equivalent to $\rp^1$, and one defines the \emph{cross ratio $\cross{\eta_1,\eta_2,\eta_3,\eta_4}$} through this equivalence. Once again, this definition does not depend on the chosen identification $\ell^* = \rp^1$.

The cross ratio has a straightforward positivity property that we record in the following result.

\begin{lemma}\label{lem:cross_ratio_positivity}
    Let $(x_1, x_2, x_3, x_4)$ (resp. $(\eta_1,\eta_2,\eta_3,\eta_4)$) be an ordered quadruple of distinct points on a line $\ell$ (resp. hyperplanes in a pencil $\ell^*$) of $\rp^n$. Then $\cross{x_1,x_2,x_3,x_4} > 0$ (resp. $\cross{\eta_1,\eta_2,\eta_3,\eta_4} > 0$) if and only if $x_3$ and $x_4$ (resp. $\eta_3$ and $\eta_4$) belong to the same connected component of $\ell \setminus \{x_1,x_2\}$ (resp. $\ell^* \setminus \{\eta_1,\eta_2\}$).
\end{lemma}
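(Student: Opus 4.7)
The plan is to reduce the statement to a direct computation on $\rp^1$, exploiting the projective invariance of both the cross ratio and the notion of ``same connected component of the complement of two points.''

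For the statement about collinear points, I would first use the projective equivalence $\ell \cong \rp^1 = \RR \cup \{\infty\}$ to transfer everything to $\rp^1$. Because cross ratios and the topological property of lying in the same component of $\ell \setminus \{x_1, x_2\}$ are both preserved by projective isomorphisms of lines, it suffices to prove the statement on $\rp^1$. On $\rp^1$, I would apply the unique projective transformation $G \in \PGL(2)$ sending $(x_1, x_2, x_3)$ to $(\infty, 0, 1)$. By definition $\cross{x_1, x_2, x_3, x_4} = G(x_4)$, and since $G$ is a homeomorphism of $\rp^1$ carrying $\{x_1, x_2\}$ to $\{\infty, 0\}$, it induces a bijection between the connected components of $\ell \setminus \{x_1, x_2\}$ and those of $\rp^1 \setminus \{\infty, 0\}$.

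The second step is to identify the two connected components of $\rp^1 \setminus \{\infty, 0\}$ with the open intervals $(-\infty, 0)$ and $(0, +\infty)$ of $\RR = \rp^1 \setminus \{\infty\}$. Since $G(x_3) = 1$ lies in $(0, +\infty)$, the points $x_3$ and $x_4$ lie in the same component of $\ell \setminus \{x_1, x_2\}$ if and only if $G(x_4) > 0$, if and only if $\cross{x_1, x_2, x_3, x_4} > 0$. This completes the argument for points.

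For the dual statement about a pencil of hyperplanes $(\eta_1, \eta_2, \eta_3, \eta_4)$ through a common $(n-2)$--plane, I would invoke the projective equivalence $\ell^* \cong \rp^1$ already mentioned in the preceding paragraph of the paper: the cross ratio of the quadruple of hyperplanes is by definition the cross ratio of the corresponding quadruple of points in $\rp^1$ under this equivalence, and likewise this equivalence is a homeomorphism identifying the components of $\ell^* \setminus \{\eta_1, \eta_2\}$ with those of $\rp^1 \setminus \{x_1, x_2\}$. The statement thus follows from the point case, with no additional work.

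There is no serious obstacle; the only care needed is to verify that the topology transfers correctly under the projective equivalence $\ell \cong \rp^1$ (and $\ell^* \cong \rp^1$), so that ``same connected component'' is a well-defined, coordinate-free notion. This is immediate because any two identifications differ by an element of $\PGL(2)$, which is a homeomorphism of $\rp^1$.
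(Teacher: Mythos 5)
Your argument is correct. The paper does not actually give a proof of this lemma; it states it as a ``straightforward positivity property'' and records it without argument, so there is no approach to compare against. Your reduction to $\rp^1$ via the defining transformation $G$ taking $(x_1,x_2,x_3)\mapsto(\infty,0,1)$, followed by the observation that the two components of $\rp^1\setminus\{\infty,0\}$ are exactly $(-\infty,0)$ and $(0,\infty)$ with $G(x_3)=1$ landing in the positive one, is precisely the standard computation one would expect the authors had in mind, and the dual case follows by the same identification $\ell^*\cong\rp^1$ already set up in \S\ref{subsec:cross_ratio}. Nothing is missing; the only slightly implicit point worth making explicit is that because the four points are distinct, $G(x_4)\in\RR\setminus\{0\}$, so the cross ratio is a genuine nonzero real number and the sign dichotomy is exhaustive.
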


%%%%%%%%%%%%%%%%%%%%%%%%%%%%%%%%%%%%%%%%%%
%%%%%%%%%%%%%%%%%%%%%%%%%%%%%%%%%%%%%%%%%%
%%%%%%%%%%%%%%%%%%%%%%%%%%%%%%%%%%%%%%%%%%

\subsection{Incomplete flags}\label{subsec:flags}
    
    Let $W$ be a finite dimensional real vector space. An \emph{incomplete (projective) flag} is a pair $(V,\eta) \in \PP(W)\times \PP(W^*)$, such that $\overline{\eta}(\overline{V})=0$ for some (and hence for all) representatives $\overline{V} \in W$ and $\overline{\eta} \in W^*$ of $V$ and $\eta$ respectively. Geometrically, an incomplete flag is a point in $\PP(W)$ and a hyperplane in $\PP(W)$ containing that point. The \emph{space of incomplete flags} is denoted by $\FL(W)$.
    
    We remark that $\eta(V)=\eta(G^{-1}GV)=(G\cdot \eta)(G\cdot V)$,  for all $G\in \PGL(W)$ and $(V,\eta) \in \FL(W)$. Thus the natural diagonal action of $\PGL(W)$ on $\PP(W)\times \PP(W^*)$ gives an action of $\PGL(W)$ on $\FL(W)$.
    
    A collection of incomplete flags $\{(V_i,\eta_i)\}_{i=1}^k$ is \emph{non-degenerate} when
	$$
	\eta_i(V_j) = 0 \iff i = j.
	$$
	We denote by $\FL^k(W)$ the collection of \emph{non-degenerate ordered} $k$--tuples of flags, for which we adopt the notation
	$$
	(V_i,\eta_i)_{i=1}^{k} := \left((V_1,\eta_1),(V_2,\eta_2),\dots,(V_k,\eta_k)\right).
	$$
	The set $\FL^k(W)$ is $\PGL(W)$ invariant, therefore there is a well defined quotient $\cordFL^k(W) := \FL^k(W) / \PGL(W)$.
	
	Furthermore, we denote by $\cycFL^k(W)$ be the collection of \emph{non-degenerate cyclically ordered} $k$--tuples of flags, for which we adopt the notation
	$$
	\cyclic{V_i,\eta_i}_{i=1}^{k} := \cyclic{(V_1,\eta_1),(V_2,\eta_2),\dots,(V_k,\eta_k)}.
	$$
	Formally, this is the quotient of $\FL^k(W)$ by the subgroup of $\Sym(k)$ generated by the $k$--cycle $(12\ldots k)$. The set $\cycFL^k(W)$ is also $\PGL(W)$ invariant, thus we let $\ccycFL^k(W)$ be the corresponding quotient.

    Since we will only be working with incomplete flags in this paper, we will henceforth refer to them simply as \emph{flags}. Moreover, from now on we will focus on the case $W=\RR^{4}$, for which we adopt the shorter notations:
    $$
    \FL^k := \FL^k(\RR^{4}), \quad \cordFL^k := \cordFL^k(\RR^{4}), \quad  \cycFL^k := \cycFL^k(\RR^{4}), \quad \ccycFL^k := \ccycFL^k(\RR^{4}).
    $$
    
%%%%%%%%%%%%%%%%%%%%%%%%%%%%%%%%%%%%%%%%%%
%%%%%%%%%%%%%%%%%%%%%%%%%%%%%%%%%%%%%%%%%%
%%%%%%%%%%%%%%%%%%%%%%%%%%%%%%%%%%%%%%%%%%    

\subsection{Triangles of flags and tetrahedra of flags}\label{subsec:tris_and_tets_flags}

The \emph{standard $n$--dimensional simplex} is the set
$$
\left\{(x_1,\dots,x_{n+1}) \in \mathbb{R}^{n+1}_{\geq 0} \ | \ x_1 + \dots + x_{n+1} = 1  \right\}.
$$
Its set of \emph{vertices} $\{\basis{1},\dots,\basis{n+1}\}$ is the standard basis of $\mathbb{R}^{n+1}$. Their natural order $(\basis{1},\dots,\basis{n+1})$ induces an orientation on the standard $n$--dimensional simplex. We will be mostly interested in the standard $3$--dimensional simplex, that we call the \emph{standard tetrahedron} and denote it by $\statet$. Furthermore, we define a \emph{(projective) triangle} (resp. a \emph{(projective) tetrahedron}) of $\rp^3$ to be a region in $\rp^3$ projectively equivalent to the closure of the projectivization of the positive orthant in $\RR^3$ (resp.\ $\RR^4$).

Let $\T = \cyclic{V_i,\eta_i}_{i=1}^3 \in \cycFL^3$ be a non-degenerate \emph{cyclically ordered} triple of flags. We say that $\T$ is a \emph{triangle of flags} if the following conditions are satisfied:
\begin{enumerate}
    \item the points $V_i$ are in general position;
	\item there is a triangle $\triangle_{\T} \subset V_1V_2V_3$ with vertices the three points $V_i$ whose interior is disjoint from all planes $\eta_i$.
\end{enumerate}

Because the points $V_i$ are in general position, they belong to a unique plane $V_1V_2V_3$ of $\rp^3$. The lines $V_iV_j$ through pairs of points $\{V_i,V_j\}$ divide $V_1V_2V_3$ in four projectively equivalent projective triangles. By non-degeneracy, each plane $\eta_i$ intersect $V_1V_2V_3$ in a line through $V_i$, but distinct from $V_iV_j$ and $V_iV_k$. Thus every plane $\eta_i$ intersects the interior of exactly two of the four projective triangles. It is easy to see that if they all miss one of them, then this triangle is unique.

We denote by $\triFL \subset \cycFL^3$ the subset of triangles of flags. As noted above, every triangle of flags $\T \in \triFL$ corresponds to a unique triangle $\triangle_{\T}$ in $\rp^3$, with a canonical cyclical ordering of the vertices, but not an ordering. The map $\T \mapsto \triangle_{\T}$ is surjective, but not finite-to-one, as we can always decorate the vertices of a triangle with infinitely many appropriate planes.

Let $\ctriFL \subset \ccycFL^3$ be the image of $\triFL$ under the quotient map $\pi_3 : \cycFL^3 \rightarrow \ccycFL^3$. The definition of a triangle of flags is invariant under projective transformation, therefore $\pi_3^{-1}(\ctriFL) = \triFL$. Given a triangle of flags $\T$, there is a corresponding triangle of flags $\overline{\T}$ obtained from $\T$ by applying an odd permutation to the flags in $\T$. They share the same underlying triangle but they have opposite cyclical ordering of the vertices.

Let $\F = (V_i,\eta_i)_{i=1}^4 \in \FL^4$ be a non-degenerate \emph{ordered} quadruple of flags. We say that $\F$ is a \emph{tetrahedron of flags} if the following conditions are satisfied:
\begin{enumerate}
    \item the points $V_i$ are in general position;
	\item there is a tetrahedron $\tet_{\F} \subset \rp^3$ with vertices the four points $V_i$ whose interior is disjoint from all planes $\eta_i$.
\end{enumerate}

\begin{figure}
  \centering
  \def\svgscale{.4}
  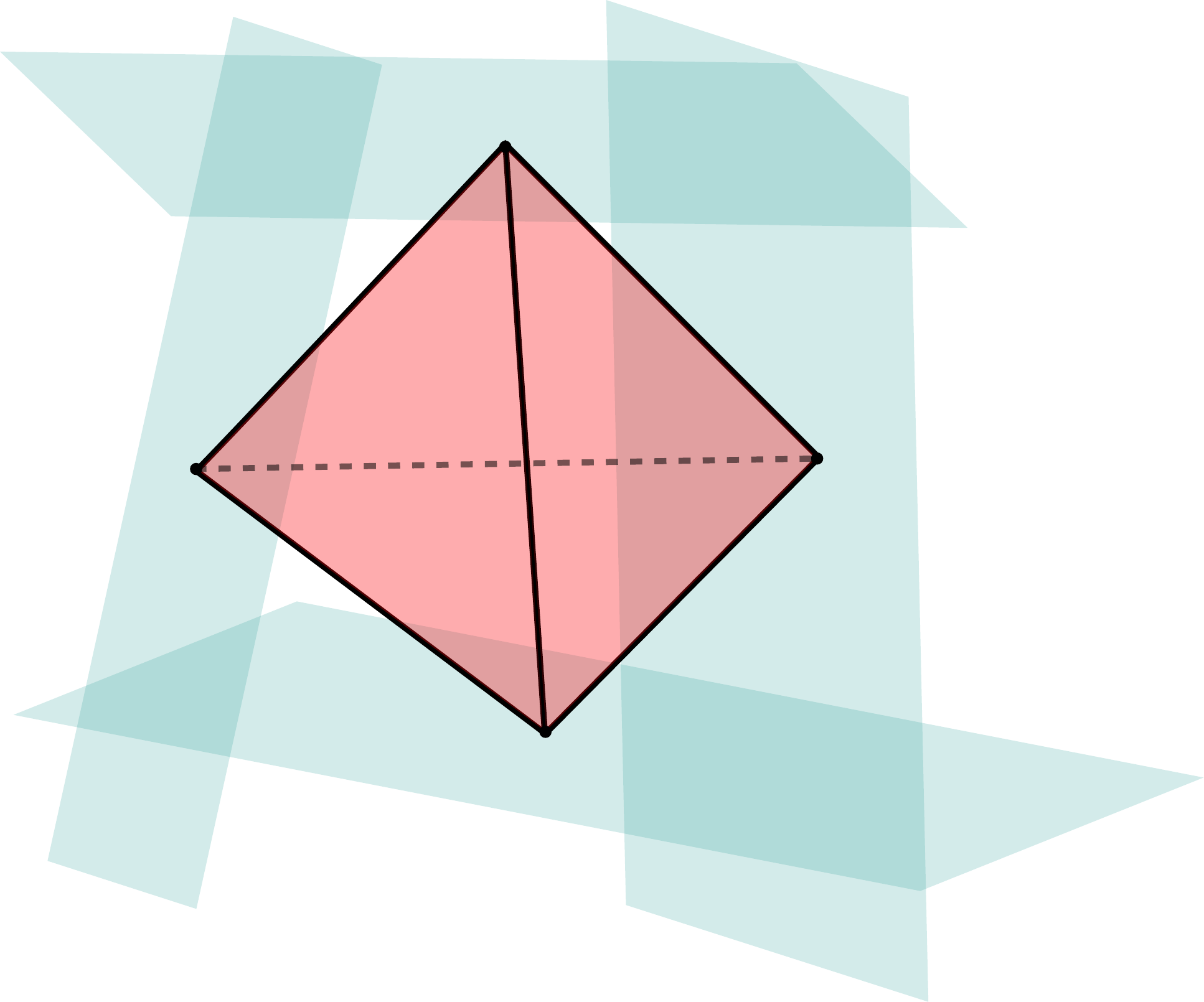
  \caption{A tetrahedron of flags in $\rp^3$}\label{tetflags}
\end{figure}

Because the points $V_i$ are in general position, the four planes $V_iV_jV_k$ through the triples of points $\{V_i,V_j,V_k\}$ divide $\rp^3$ in eight projectively equivalent projective tetrahedra. Once again, it is a simple exercise to show that if all of the planes $\eta_i$ miss a tetrahedron, then it is the only one.

We denote by $\tetFL \subset \FL^4$ the subset of tetrahedra of flags. Then by definition, every tetrahedron of flags $\F \in \tetFL$ corresponds to a unique tetrahedron $\tet_{\F}$ in $\rp^3$, with a canonical ordering of the vertices induced by the ordering of the flags. In particular it comes with a simplicial identification to the standard tetrahedron $\statet$. We say that $\tet_{\F}$ is \emph{positively oriented} if this identification to $\statet$ is orientation preserving. Otherwise it is \emph{negatively oriented}. We remark that $\PGL(4)$ contains orientation reversing projective transformations, therefore a tetrahedron may change orientation under projective transformation. Once again, the map $\F \mapsto \tet_{\F}$ is surjective but not finite-to-one.

Let $\ctetFL \subset \cordFL^4$ be the image of $\tetFL$ under the quotient map $\pi_4 : \FL^4 \rightarrow \cordFL^4$. The definition of a tetrahedron of flags is invariant under projective transformation, therefore $\pi^{-1}_4(\ctetFL) = \tetFL$.
	
\begin{remark}\label{rem:tet_are_not_dual}
The definition of a tetrahedron of flags is not invariant under projective duality, in the sense that the planes $\eta_i$ don't have to be in general position. However, when the planes are in general position, then they form a unique tetrahedron $\tet_{\F}^\ast \subset \rp^3$, whose faces are contained in the four planes $\eta_i$, that contains the tetrahedron $\tet_{\F}$.
\end{remark}

The following result shows that the action of $\PGL(4)$ on $\ctetFL$ is free.

\begin{lemma}\label{lem:tet_stab_is_trivial}
    The stabilizer of a tetrahedron of flags in $\PGL(4)$ is trivial.
\end{lemma}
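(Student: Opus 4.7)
The plan is to reduce the statement to a linear algebra computation by simultaneously diagonalizing any stabilizing element and then using the non-degeneracy of the flags to force all eigenvalues to coincide.

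First I would lift $g \in \PGL(4)$ stabilizing $\F = (V_i,\eta_i)_{i=1}^4$ to an element $\tilde g \in \GL(4)$ and choose representatives $\bar V_i \in \RR^4$, $\bar\eta_i \in (\RR^4)^*$. Since $g$ fixes each point $V_i$, each $\bar V_i$ is an eigenvector of $\tilde g$, say $\tilde g \bar V_i = \lambda_i \bar V_i$. The points $V_1,\dots,V_4$ are in general position by hypothesis, hence $\{\bar V_1,\dots,\bar V_4\}$ is a basis of $\RR^4$; in this basis $\tilde g = \operatorname{diag}(\lambda_1,\lambda_2,\lambda_3,\lambda_4)$.

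Next I would exploit the dual picture. Recall that $\PGL(4)$ acts on $\rp^3{}^*$ by $g\cdot \eta = \eta \circ g^{-1}$, so the assumption that $g$ fixes $\eta_i$ means $\bar\eta_i\circ\tilde g^{-1}$ is a scalar multiple of $\bar\eta_i$; equivalently $\bar\eta_i$ is an eigenvector of the transpose action of $\tilde g$ on $(\RR^4)^*$. In the dual basis $\{\bar V_1^*,\dots,\bar V_4^*\}$ the transpose action is $\operatorname{diag}(\lambda_1^{-1},\dots,\lambda_4^{-1})$. Writing $\bar\eta_i = \sum_j \bar\eta_i(\bar V_j)\,\bar V_j^*$, the non-degeneracy condition $\eta_i(V_j)=0 \iff i=j$ tells me that the $j$-th coordinate $\bar\eta_i(\bar V_j)$ vanishes precisely when $j=i$, and is nonzero for the three values $j\neq i$.

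Now I would conclude with an eigenvalue comparison. For $\bar\eta_i$ to be an eigenvector of a diagonal operator while having three nonzero coordinates (at positions $j\neq i$), the three eigenvalues $\lambda_j^{-1}$ with $j\neq i$ must coincide. Applying this to $i=1$ gives $\lambda_2=\lambda_3=\lambda_4$, and applying it to $i=2$ gives $\lambda_1=\lambda_3=\lambda_4$, so $\lambda_1=\lambda_2=\lambda_3=\lambda_4$. Hence $\tilde g$ is a scalar multiple of the identity and $g$ is trivial in $\PGL(4)$.

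There is no real obstacle here: the only thing one must be slightly careful about is the interpretation of the $\PGL(4)$-action on $\PP(W^*)$ (transpose inverse, as recalled in \S\ref{subsec:projective_space}), which is exactly what couples the eigenvalue data on points and on planes. Everything else is a one-line diagonalization argument that is forced by the fact that, although four points in general position do not constitute a projective basis of $\rp^3$ (which would require five points), the additional datum of the four planes $\eta_i$ supplies precisely the extra rigidity needed to pin down the identity.
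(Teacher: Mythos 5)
Your argument is correct, and it takes a genuinely different route from the paper. The paper observes that a stabilizing element must fix not only the $V_i$ and $\eta_i$ but also the intersection points $\eta_i\eta_j\eta_k$, and then asserts that at least one of three explicit $5$--tuples built from these is a projective basis, invoking the freeness of the $\PGL(4)$--action on projective bases; verifying this assertion requires a small case analysis (and, in particular, uses the tetrahedron-of-flags condition to rule out degenerate incidences such as $\eta_1\eta_2\eta_3$ lying on $V_1V_2V_3$). Your proof instead lifts the stabilizing element, diagonalizes it in the eigenbasis $\bar V_1,\dots,\bar V_4$, and then uses non-degeneracy of the flags to pin down the eigenvalues: since $\bar\eta_i$ must be an eigenvector of $\operatorname{diag}(\lambda_1^{-1},\dots,\lambda_4^{-1})$ while having nonzero coordinates in every slot $j\neq i$, all $\lambda_j$ with $j\neq i$ must agree, and running this over two values of $i$ forces $\lambda_1=\dots=\lambda_4$. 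This is more self-contained (no unproved claim about one of several tuples being a basis), avoids any appeal to the convexity condition defining a tetrahedron of flags, and in fact proves the slightly stronger statement that the stabilizer of \emph{any} non-degenerate ordered quadruple of flags with the four points in general position is trivial. The paper's proof is shorter on the page but pushes the work into the unstated verification; yours front-loads a little linear algebra and gets a cleaner, more general argument in return.
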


\begin{proof}
    We recall that the stabilizer of a projective basis of $\rp^3$ in $\PGL(4)$ is trivial. Let $\F = (V_i,\eta_i)_{i=1}^4 \in \tetFL$ be a tetrahedron of flags.
    The statement follows from the fact that at least one of the following $5$--tuples of points is a projective basis: %\textcolor{red}
    {
    $$
    \{V_1,V_2,V_3,V_4,\eta_1\eta_2\eta_3\}, \quad \text{or} \quad \{V_1,V_2,V_3,V_4,\eta_1\eta_2\eta_4\}, \quad \text{or} \quad \{V_1,V_2,V_3,\eta_1\eta_2\eta_3,\eta_2\eta_3\eta_4\}.
    $$}
\end{proof}

%\textcolor{red}
{
Let $\F = (V_m,\eta_m)_{m=1}^4 \in \tetFL$ be an ordered quadruple of flags. For every even permutation $(ijkl)$ of $(1234)$, there is an associated ordered triple of flags
$$
(\F)_{ijk} := \left((V_i,\eta_i),(V_j,\eta_j),(V_k,\eta_k)\right).
$$
We call $(\F)_{ijk}$ a \emph{marked face} of $\F$. The corresponding cyclically ordered triple of flags
$$
\cyclic{\F}_{ijk}:= \cyclic{(V_i,\eta_i),(V_j,\eta_j),(V_k,\eta_k)}
$$
is simply a \emph{face} of $\F$. The terminology is only meaningful when $\F$ is a tetrahedron of flags. Indeed it follows directly from the definitions that faces of tetrahedra of flags are triangles of flags.} We record this fact in the following result for future reference.

\begin{lemma}\label{lem:tet_to_triangles}
    Every face of a tetrahedron of flags is a triangle of flags.
\end{lemma}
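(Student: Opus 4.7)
The plan is to fix a tetrahedron of flags $\F = (V_i, \eta_i)_{i=1}^4$, choose any face, and verify each of the conditions that define a triangle of flags. Non-degeneracy of the resulting triple of flags follows immediately from the non-degeneracy of $\F$, since the biconditional $\eta_a(V_b) = 0 \iff a = b$ is preserved when passing to any sub-triple. General position of the three chosen vertices is equally immediate, since the four points $V_i$ are in general position in $\rp^3$, so in particular no three of them are collinear, which is the only content of general position for three points in the plane they span.

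The substantive content is to exhibit a triangle in the plane $V_iV_jV_k$ whose interior is disjoint from each of $\eta_i$, $\eta_j$, $\eta_k$. The natural candidate is the face $\triangle_{ijk}$ of the underlying projective tetrahedron $\tet_\F$, which already has the required vertices. So the task reduces to showing that $\eta_m \cap \Int(\triangle_{ijk}) = \emptyset$ for each $m \in \{i,j,k\}$. I would argue by contradiction: assume $\eta_m$ meets $\Int(\triangle_{ijk})$ at some point $p$. Because $\eta_m$ avoids the fourth vertex $V_l$ by non-degeneracy, $\eta_m$ is distinct from the plane $V_iV_jV_k$, so the two planes intersect transversely along a line through $V_m$. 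Passing to an affine chart in which $\tet_\F$ is a bounded convex tetrahedron, a small neighborhood of $p$ in $\eta_m$ contains points lying on the same side of $V_iV_jV_k$ as $V_l$, and since $p$ lies in the relative interior of the face $\triangle_{ijk}$, those nearby points lie in $\Int(\tet_\F)$. This contradicts the defining property that $\eta_m$ is disjoint from $\Int(\tet_\F)$.

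I do not anticipate a serious obstacle. The only mild subtlety is that ``sides of a hyperplane'' is not globally defined in $\rp^3$, which is why the transversality argument must be performed in a chosen affine chart containing $\tet_\F$; once that chart is fixed, everything else is a routine unpacking of the definitions of tetrahedra and triangles of flags.
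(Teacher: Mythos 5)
Your proof is correct and supplies the details that the paper leaves implicit: the paper merely remarks, immediately before the lemma, that ``it follows directly from the definitions,'' and your argument is exactly the natural unpacking of that claim. Non-degeneracy and general position are indeed inherited trivially, and using the face of the projective tetrahedron $\tet_\F$ as the witness triangle, combined with the local ``two-sidedness of $\eta_m$ near an interior point of the face'' argument in an affine chart, is the right way to establish the disjointness condition.

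One small logical slip worth fixing: the inference ``$\eta_m$ avoids $V_l$, hence $\eta_m \neq V_iV_jV_k$'' is a non-sequitur, since the plane $V_iV_jV_k$ also avoids $V_l$ (by general position). The intended and easy justification is different: by non-degeneracy $\eta_m$ does not contain $V_j$ or $V_k$ for $j,k\neq m$, whereas $V_iV_jV_k$ contains all three points, so the two planes are distinct. With this substitution your proof is complete.
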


Let $\F=(V_m,\eta_m)_{m=1}^4$ and $\E=(W_m,\zeta_m)_{m=1}^4$ be two tetrahedra of flags and let $(ijkl)$ and $(i'j'k'l')$ be two even permutations of $(1234)$. If
$$
(V_i,\eta_i) = (W_{j'},\zeta_{j'}), \qquad (V_j,\eta_j) = (W_{i'},\zeta_{i'}), \qquad \text{ and } \qquad
(V_k,\eta_k) = (W_{k'},\zeta_{k'}),
$$
then we say that $\F$ and $\E$ are \emph{glued (along the marked faces $(\F)_{ijk}$ and $(\E)_{i'j'k'}$)}. We remark that, in this case, $\cyclic{\F}_{ijk} = \overline{\cyclic{\E}}_{i'j'k'}$. Let $\tet_\F,\tet_\E$ be the tetrahedra of $\rp^3$ associated to $\F$ and $\E$. If $\F$ and $\E$ are glued together, then $\tet_\F$ and $\tet_\E$ share a face $f$. In general $f \subset \tet_{\E} \cap \tet_{\F}$, and we say that the pair $(\F,\E)$ is \emph{geometric} if $f = \tet_{\E} \cap \tet_{\F}$. 

We remark that a pair $(\F,\E)$ is not always geometric, as both tetrahedra $\tet_{\F},\tet_{\E}$ might be ``on the same side'' of $f$.

%%%%%%%%%%%%%%%%%%%%%%%%%%%%%%%%%%%%%%%%%%
%%%%%%%%%%%%%%%%%%%%%%%%%%%%%%%%%%%%%%%%%%
%%%%%%%%%%%%%%%%%%%%%%%%%%%%%%%%%%%%%%%%%%	

\subsection{Triangulations of flags}\label{subsec:tri_of_flags}

We begin by defining ideal triangulations, inspired from~\cite{CLT17}. Let $\mathcal{U} = \sqcup_{i=1}^k\statet^i$ be the disjoint union of $k$ copies of the standard tetrahedron, and $\Psi$ be a family of orientation-reversing simplicial isomorphisms pairing the faces of $\mathcal{U}$, such that:
\begin{itemize}
    \item $\varphi \in \Psi$ if and only if $\varphi^{-1}\in \Psi$;
    \item every face is the domain of a unique element of $\Psi$.
\end{itemize}
The elements of $\Psi$ are called \emph{face pairings}. The quotient space
$$
\dot{M} = \mathcal{U}_{/\Psi}
$$
is a closed, orientable $3$--dimensional $\CW$--complex, and the quotient map is denoted $p : \mathcal{U} \to \dot{M}$. The triple $\Delta = ( \mathcal{U}, \Psi, p)$ is a \emph{(singular) triangulation} of $\dot{M}$. The adjective \emph{singular} is usually omitted, and we will not need to distinguish between the cases of a simplicial or a singular triangulation.
We will always assume that $\dot{M}$ is connected. In the case where $\dot{M}$ is not connected, the results of this paper apply to its connected components.

The set of non-manifold points of $\dot{M}$ is contained in the $0$--skeleton $\dot{M}^{(0)}$, thus $M := \dot{M} \setminus \dot{M}^{(0)}$ is a non-compact orientable $3$--manifold. We say that $\Delta$ is an \emph{ideal triangulation} of $M$ and $\dot{M}$ is its \emph{end-compactification}.

We adopt the following notation:
$\Vertex(\Delta),\Edge(\Delta),\Face(\Delta),\Tet(\Delta)$ will denote the sets of $0$--cells, $1$--cells, $2$--cells and $3$--cells of $\dot{M}$, respectively. These sets are called the sets of \emph{(ideal) vertices, edges, faces and tetrahedra of $\Delta$}, respectively.

Let $\wDelta$ be the ideal triangulation of the universal cover $\widetilde{M}$ obtained by lifting $\Delta$. The \emph{space of (ideal) triangulations of flags $\TriFL$} (of $M$ with respect to $\Delta$) is the set of pairs $(\Phi,\rho)$ where
$$
\Phi : \Vertex(\wDelta) \rightarrow \FL, \qquad \text{and} \qquad \rho : \pi_1(M) \rightarrow \PGL(4),
$$
satisfy the following conditions.
\begin{enumerate}
    \item \label{item:def_tri_of_flags_1} For all $\tet \in \Tet(\wDelta)$, the image of the vertices of $\tet$ forms a tetrahedron of flags. Namely
    $$
    \Phi(\tet) := \left(\Phi(v_1),\Phi(v_2),\Phi(v_3),\Phi(v_4) \right) \in \tetFL,
    $$
    for $\{v_1,v_2,v_3,v_4\} = \Vertex(\tet)$.
    \item \label{item:def_tri_of_flags_2} If $\tet,\tet' \in \Tet(\wDelta)$ are glued along a face $f$, then the corresponding tetrahedra of flags $\Phi(\tet),\Phi(\tet')$ are glued geometrically along the their faces that correspond to $f$.
    \item \label{item:def_tri_of_flags_3} The map $\Phi$ is $\rho$--equivariant. Namely if $v \in \Vertex(\wDelta)$ and $\gamma\in \pi_1(M)$ then 
    $$
    \Phi(\gamma\cdot v)=\rho(\gamma)\cdot\Phi(v).
    $$
\end{enumerate}
The map $\Phi$ is called a \emph{flag decoration} of $\rho$. There is an action of $\PGL(4)$ on $\TriFL$ given by postcomposition in the first factor and conjugation in the second factor, and we let $\cTriFL$ be the corresponding quotient space. When its clear from the context, we will sometimes refer to a class of triangulation of flags simply as a triangulation of flags. We will see in \S\ref{subsec:developing_tri_of_flags} that classes of triangulations of flags are closely related to branched projective structures on $M$. One of the main goals of the first half of this paper is to parametrize the space $\cTriFL$ (cf. Theorem~\ref{thm:parametrization_def_space}).

\begin{remark}\label{rem:dev_determines_rho}
Let $\gamma\in \pi_1(M)$ and let $\tet\in \Tet(\wDelta)$. If $\Phi(\tet)$ and $\Phi(\gamma\cdot \tet)$ are projectively equivalent then there is a unique projective transformation mapping one to the other (cf. Lemma~\ref{lem:tet_stab_is_trivial}). It follows that the holonomy of a triangulation of flags can be recovered from the flag decoration.
\end{remark}

%%%%%%%%%%%%%%%%%%%%%%%%%%%%%%%%%%%%%%%%%%
%%%%%%%%%%%%%%%%%%%%%%%%%%%%%%%%%%%%%%%%%%
%%%%%%%%%%%%%%%%%%%%%%%%%%%%%%%%%%%%%%%%%%	

\subsection{Projective structures}\label{subsec:proj_structures}

Let $M$ be a manifold of dimension $n$. In this section we describe projective structures on $M$. A projective structure is a special instance of a $(G,X)$--structure; a detailed account of such structures can be found in \cite{RAT} or \cite{THUNOTES}. A \emph{projective structure} on $M$ consists of a (maximal) atlas of charts $(U_\alpha,\phi_\alpha)_{\alpha\in \mathcal{A}}$, where the $U_\alpha$ cover $M$, $\phi_\alpha:U_\alpha\to \rp^n$ is a diffeomorphism onto its image, and if $U_\alpha\cap U_\beta\neq \emptyset$ then $\phi_\alpha\circ \phi_\beta^{-1}$ restricts to an element of $\PGL(n+1)$ on each connected component of $U_\alpha\cap U_\beta$. There is a more global description of such a structure given by a pair $(\dev,\hol)$, where $\dev:\wt M\to \rp^n$ is a local diffeomorphism called a \emph{developing map}, $\hol:\pi_1(M)\to \PGL(n+1)$ is a representation called a \emph{holonomy representation}, and $\dev$ is $\hol$--equivariant in the sense that
$$
\dev(\gamma\cdot x)=\hol(\gamma)\dev(x),\qquad \text{for every} \quad x\in \wt M, \quad \gamma\in \pi_1(M).
$$
Given a structure, a developing map can be constructed via analytic continuation. 

There is a natural equivalence relation that can be put on the set of projective structures on $M$. We begin by describing the simpler case where $M$ is compact. In this context, we say that two structures $(\dev,\hol)$ and $(\dev',\hol')$ are \emph{equivalent} if there is an element $G\in \PGL(n+1)$ so that, up to precomposing $\dev$ and $\dev'$ with equivariant isotopies, 
$$
\dev'=G\circ\dev, \qquad \text{and} \qquad \hol'=G \cdot \hol,
$$
where the action of $G$ on $\hol$ is by conjugation. Our primary case of interest is when $M$ is non-compact, but is the interior of a compact manifold $\hat M$. In this setting we call a compact submanifold $N\subset M$ a \emph{compact core} if $N$ is the complement in $\hat M$ of an open collar neighborhood of the boundary of $\hat M$. Two projective structures $(\dev,\hol)$ and $(\dev',\hol')$ on $M$ are \emph{equivalent} if for some choice of compact core, $N$, they are equivalent (using the previous definition) when restricted to $N$. Let $\rp(M)$ be the \emph{space of equivalence classes of projective structures on $M$} and let $\Char(M):=\Hom(\pi_1(M),\PGL(n+1))/\PGL(n+1)$ be the $\PGL(n+1)$--\emph{character variety} of $\pi_1(M)$, then there is a map 
$$
\Hol:\rp(M)\to \Char(M), 
$$
that takes a class of structures to the conjugacy class of its holonomy representations. 

Both $\rp(M)$ and $\Char(M)$ are natural topological spaces. The space of developing maps of projective structures on $M$ is a subspace of the set $C^\infty(\wt M,\rp^n)$ of smooth maps from $\wt M$ to $\rp^n$. The space $C^\infty(\wt M,\rp^n)$ can be equipped with the \emph{weak topology} (see \cite[pp.\ 35]{HIR} for definition), the space of developing maps can then be equipped with the corresponding subspace topology, and $\rp(M)$ can be equipped with the corresponding quotient topology. The space $\Hom(\pi_1(M),\PGL(n+1))$ can be equipped with the compact-open topology and $\Char(M)$ can be equipped with the corresponding quotient topology. With respect to these topologies, we have the following well known result (see \cite{CEG} for a nice exposition).

\begin{theorem}[The Ehresmann-Thurston principle, \cite{THUNOTES}]\label{thm:ersh_thu_principle}
    The map $\Hol:\rp(M)\to \Char(M)$ is a local homeomorphism.
\end{theorem}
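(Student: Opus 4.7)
The plan is to prove continuity of $\Hol$, local injectivity, and local surjectivity (the latter being the substantive content). Since equivalence on $\rp(M)$ is defined via restriction to a compact core, I would first reduce to the case of a compact manifold with boundary: fix a compact core $N \subset M$ and work throughout with the restriction of $(\dev,\hol)$ to $\wt N \subset \wt M$. Continuity of $\Hol$ is essentially formal: if $\dev_k \to \dev_0$ in the weak topology, then for each $\gamma \in \pi_1(M)$ the projective transformation $\hol_k(\gamma)$ is uniquely determined by the equivariance condition applied to a fixed projective frame contained in the image of a chart on some lift of $N$, and this dependence on $\dev_k$ is continuous.

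For local injectivity, suppose two structures $(\dev,\hol)$ and $(\dev',\hol')$ near a given $(\dev_0,\hol_0)$ have conjugate holonomies. After applying a projective transformation, I may assume $\hol = \hol'$. Then $\dev$ and $\dev'$ are both $\hol$-equivariant, and when they are sufficiently close the composition $\dev' \circ \dev^{-1}$ extends (by equivariance) to an $\hol$-equivariant diffeomorphism of a neighborhood in $\rp^n$ close to the identity; a standard Moser-type argument then produces the required equivariant isotopy realizing the equivalence of the two structures on $N$.

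The technical heart is local surjectivity: given a representation $\hol$ close to $\hol_0$ in the compact-open topology, produce an equivariant developing map $\dev$ close to $\dev_0$. My approach is to choose a finite open cover $\{U_i\}_{i=1}^m$ of $N$ by open sets each of whose components in $\wt N$ is mapped diffeomorphically by $\dev_0$ onto an open ball in $\rp^n$. Fix a lift $\wt U_i$ for each $i$ and set $\dev := \dev_0$ on $\wt U_i$, then extend $\hol$-equivariantly to the orbit by $\dev(\gamma \cdot x) := \hol(\gamma)\dev_0(x)$ for $\gamma \in \pi_1(M)$. On overlaps of translates of the cover, the two candidate definitions differ by a projective transformation close to the identity (because $\hol$ is close to $\hol_0$), and I interpolate between them using a smooth partition of unity subordinate to the cover, exploiting the local convexity of $\PGL(n+1)$ near the identity via the exponential map of its Lie algebra.

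The main obstacle will be to verify that the glued map $\dev$ is still a local diffeomorphism. This requires a quantitative estimate showing that the Jacobian of the interpolated map remains non-degenerate in terms of the $C^1$-norm of the partition of unity and the smallness of the perturbation in $\PGL(n+1)$. Compactness of $N$ and finiteness of the cover make these bounds uniform, so the construction succeeds provided $\hol$ is taken sufficiently close to $\hol_0$. The resulting $\dev$ is smooth, $\hol$-equivariant by construction, and a local diffeomorphism, hence defines a projective structure on $M$ whose class maps to $[\hol]$ under $\Hol$. Combined with the local injectivity above, this exhibits $\Hol$ as a local homeomorphism, as claimed by Ehresmann and Thurston.
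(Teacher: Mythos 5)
The paper does not actually prove this theorem: it states it with a citation to \cite{THUNOTES} and refers the reader to \cite{CEG} for a detailed exposition, so there is no in-paper argument to compare against. Your proposal is the standard hands-on (non-bundle-theoretic) strategy one finds in the literature: reduce to a compact core, observe continuity is formal, reduce injectivity to producing an equivariant isotopy once the holonomies have been made equal, and obtain surjectivity by gluing a new developing map out of local copies of $\dev_0$ via a partition of unity in the exponential chart of $\PGL(n+1)$. This is essentially the argument behind the cited result.

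There is, however, a genuine flaw in the formulation of the injectivity step. You write that when $\hol=\hol'$ the composition $\dev'\circ\dev^{-1}$ ``extends (by equivariance) to an $\hol$-equivariant diffeomorphism of a neighborhood in $\rp^n$.'' But $\dev$ is only a local diffeomorphism, generally far from injective, so $\dev^{-1}$ is not a well-defined map on any subset of $\rp^n$, and $\hol(\pi_1(M))$ need not act nicely on $\dev(\wt N)$, so an ``equivariant diffeomorphism of a neighborhood in $\rp^n$'' is not the right object. The correct construction takes place on $\wt N$, not on $\rp^n$: define $\psi:\wt N\to\wt N$ locally by $\psi(x):=(\dev|_V)^{-1}\bigl(\dev'(x)\bigr)$, where $V$ is a small ball around $x$ on which $\dev$ embeds. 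Uniform $C^1$-closeness of $\dev'$ to $\dev$ over a compact core makes $\psi$ globally well-defined, smooth, close to the identity (hence a diffeomorphism), and $\pi_1(M)$-equivariant because both developing maps share the holonomy $\hol$. A straight-line interpolation in local charts then provides the required isotopy from $\mathrm{id}_{\wt N}$ to $\psi$; no Moser-type normalization is needed or applicable here.

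The surjectivity sketch is sound and is where the real technical work lies, as you acknowledge: compactness of $N$, finiteness of the cover, and smallness of the elements $\hol(\gamma)\hol_0(\gamma)^{-1}$ in the exponential chart for the finitely many $\gamma$ producing overlaps give a uniform lower bound on the Jacobian of the interpolated map. One detail worth making explicit is that the partition of unity should be chosen on $N$ (equivalently, taken $\pi_1(M)$-invariant on $\wt N$); otherwise the glued map will fail to be $\hol$-equivariant.
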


A $(G,X)$--geometry such that $X\subset \rp^n$ and $G\subset \PGL(n+1)$ is called a \emph{subgeometry} of projective geometry. Using the previous construction, we can build the space $X(M)$ of equivalence classes of $(G,X)$--structures on $M$. If $(G,X)$ is a subgeometry of projective geometry, there is a map $X(M)\to \rp(M)$ that associates an equivalence class of $(G,X)$--structures to the underlying class of projective structures. For certain subgeometries this map can fail to be injective. Two examples that will be relevant for our purposes are \emph{hyperbolic geometry}, modeled on $(\PSO(3,1),\HH^3)$, and \emph{Anti-de Sitter geometry}, modeled on $(\PSO(2,2),\ADS)$ (cf. \S\ref{sec:thu_equations}). 

There is another type of structures that will come out throughout this work, called \emph{branched structures}. Roughly speaking, they are generalizations of geometric structures where instead of insisting that the charts are local diffeomorphisms, we only require that they are branched covering maps. This construction is quite general, but we will only have occasion to use a very specific instance, which we now describe. Let $\Delta$ be an ideal triangulation of a $3$--manifold $M$. A \emph{branched projective structure on $M$ with respect to $\Delta$} is a (maximal) collection of charts $(U_\alpha,\phi_\alpha)_{\alpha\in \mathcal{A}}$ so that
\begin{itemize}
    \item if $U_\alpha$ is disjoint from $\Edge(\Delta)$, then $\phi_\alpha:U_\alpha\to \rp^3$ is a local diffeomorphism;
    \item if $U_\alpha\cap \Edge(\Delta)\neq \emptyset$ then $\phi_\alpha$ maps the components of $U_\alpha\cap \Edge(\Delta)$ diffeomorphically to disjoint projective line segment, and each $p\in U_\alpha\cap \Edge(\Delta)$ has a neighborhood $\mathcal{N}_p$ on which the restriction of $\phi_\alpha$ is a branched cyclic covering of finite order with branch locus $\mathcal{N}_p\cap \Edge(\Delta)$.
\end{itemize} 
As before, we also insist that if the domains of two charts intersect then the transition map restricts to an element of $\PGL(4)$, in each component of intersection. The \emph{branch locus} $\Sigma$ of a branched structure is the subset of $\Edge(\Delta)$ where the charts are non-trivially branching. If $\Sigma = \emptyset$, then the structure is a genuine projective structure.\\

As with projective structures, branched projective structures can be described globally as a pair $(\dev,\hol)$, where $\hol:\pi_1(M)\rightarrow \PGL(4)$ is a  representation, and $\dev:\wt M\rightarrow \rp^3$ is a $\hol$--equivariant local diffeomorphism away from $\Edge(\wDelta)$, and locally a cyclic branched cover of finite order at $\Edge(\wDelta)$. 

Finally, the same equivalence relation we adopted for projective structures can be applied verbatim to branched projective structures. We denote by $\rp(M,\Delta)$ the \emph{space of equivalence classes of branched projective structures} on $M$ with respect to $\Delta$. If $(G,X)$ is a subgeometry of projective geometry, we denote by $X(M;\Delta) \subset \rp(M;\Delta)$ the space of equivalence classes of \emph{branched} $(G,X)$--structures.

%%%%%%%%%%%%%%%%%%%%%%%%%%%%%%%%%%%%%%%%%%
%%%%%%%%%%%%%%%%%%%%%%%%%%%%%%%%%%%%%%%%%%
%%%%%%%%%%%%%%%%%%%%%%%%%%%%%%%%%%%%%%%%%%	

\subsection{Developing triangulations of flags}\label{subsec:developing_tri_of_flags}

Let $\Delta$ be an ideal triangulation of a $3$--manifold $M$. We conclude this section by describing the relationship between triangulation of flags $\cTriFL$ and branched projective structures $\rp(M,\Delta)$.
Let $[\Phi,\rho]\in \cTriFL$ be a triangulation of flags and let $(\Phi,\rho)$ be a representative pair. For each tetrahedron $\wt \tet\in \Tet(\wDelta)$, the image of the vertices of $\wt \tet$ under $\Phi$ is a tetrahedron of flags $\F_{\wt \tet}$. Then by definition, there is a unique tetrahedron ${\wt \tet}_\F$ in $\rp^3$ associated to $\F_{\wt \tet}$, and we can simplicially extend $\Phi$ to an isomorphism $\wt \tet \rightarrow {\wt \tet}_\F$. Repeating this construction for each tetrahedron in $\Tet(\wDelta)$ gives a map $\wDelta \to \rp^3$, hence in particular a map $\hat \Phi:\wt M\to \rp^3$. By construction, $\hat \Phi$ is $\rho$--equivariant. We say that $\hat \Phi$ is the \emph{simplicial developing extension} of $(\Phi,\rho)$.

\begin{theorem}\label{thm:geom_flag_struc_are_developable}
    Let $[\Phi,\rho]\in \cTriFL$, and let $\hat\Phi$ be the simplicial developing extension of a representative pair $(\Phi,\rho)$. Then $[\hat \Phi,\rho]\in \rp(M,\Delta)$. In particular, there is a well defined continuous map
    $$
    \Ext : \cTriFL \rightarrow \rp(M,\Delta), \qquad \text{where} \qquad [\Phi,\rho] \mapsto [\hat \Phi,\rho].
    $$
\end{theorem}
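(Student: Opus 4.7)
The plan is to verify the three local conditions that define a branched projective structure on $M$ with respect to $\Delta$ for the pair $(\hat\Phi,\rho)$, then to deduce well-definedness of $\Ext$ on equivalence classes and its continuity at the end. First I would check that $\hat\Phi$ is well-defined and $\rho$-equivariant. On each tetrahedron $\wt\tet\in\Tet(\wDelta)$, the flag decoration $\Phi$ restricted to $\Vertex(\wt\tet)$ determines a tetrahedron of flags $\F_{\wt\tet}$, which selects a unique projective tetrahedron $\wt\tet_\F\subset\rp^3$ together with a unique projective simplicial isomorphism $\wt\tet\to\wt\tet_\F$. On a face shared by two tetrahedra the two resulting extensions agree because a projective simplicial isomorphism of a $2$-simplex onto a projective triangle is determined by its action on the three vertices. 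Equivariance of $\hat\Phi$ is inherited from that of $\Phi$, and the transition maps between the charts associated to adjacent tetrahedra are elements of $\PGL(4)$ by construction.

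Next I would verify that $\hat\Phi$ is a local diffeomorphism away from $\Edge(\wDelta)$. On the interior of each tetrahedron $\hat\Phi$ restricts to a projective isomorphism, and at an interior point of a shared face the geometric gluing condition (item \eqref{item:def_tri_of_flags_2} in the definition of triangulations of flags) places the two developed tetrahedra on opposite sides of their common face image, so $\hat\Phi$ extends to a local diffeomorphism across the face.

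The main obstacle is the branched cyclic covering condition at edges. Fix $e\in\Edge(\wDelta)$ with endpoints $v_1,v_2$ and let $\wt\tet_1,\dots,\wt\tet_n$ be the cyclic sequence of tetrahedra in the star of $e$, with $\wt\tet_i$ and $\wt\tet_{i+1}$ glued along a face $f_i\ni e$ (indices modulo $n$). Each $\hat\Phi(\wt\tet_i)$ is a projective tetrahedron having the line $L:=\Phi(v_1)\Phi(v_2)$ as one of its edges, and consecutive developed tetrahedra lie on opposite sides of their common face by geometric gluing. Identifying the link of $L$ in $\rp^3$ with the pencil of planes through $L$ (a copy of $\rp^1$), each $\hat\Phi(\wt\tet_i)$ occupies an arc of this pencil, and adjacent arcs share only an endpoint. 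Using the previous step, the restriction of $\hat\Phi$ to the combinatorial link of $e$ in $\wDelta$ (a circle with $n$ edges) is a local diffeomorphism $S^1\to\rp^1\cong S^1$, hence a covering map of some nonzero integer degree $k$. Since the cyclic sequence in $\wDelta$ closes up with geometric gluings throughout, the arcs proceed monotonically around the pencil, forcing $|k|\geq 1$ to be a positive integer and the local model of $\hat\Phi$ near $e$ to be the standard branched cyclic covering $z\mapsto z^{|k|}$ with branch locus $e$. The subtle point here is verifying monotonicity, i.e.\ that adjacent arcs always lie on the rotationally ``next'' side of their shared endpoint rather than doubling back; this uses the precise notion of geometric gluing together with the orientation-reversing nature of the combinatorial face pairings.

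Finally, $\Ext$ is well-defined on equivalence classes: if $(\Phi',\rho')=(G\circ\Phi,\,G\rho G^{-1})$ for some $G\in\PGL(4)$, then $\hat\Phi'=G\circ\hat\Phi$, and this is exactly the equivalence relation on branched projective structures (after restricting to a compact core in the non-compact case). Continuity follows from the fact that the simplicial projective extension on each tetrahedron depends continuously (indeed algebraically, via Cramer's rule) on its four vertex flags, together with the fact that any compact subset of $\wt M$ meets only finitely many tetrahedra, so small perturbations of $\Phi$ produce small perturbations of $\hat\Phi$ in the weak topology on developing maps.
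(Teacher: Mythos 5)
Your proof is correct and takes essentially the same route as the paper's: establish that $\hat\Phi$ is well defined and $\rho$-equivariant, verify local diffeomorphism on tetrahedron interiors and then across face interiors using the geometric-gluing hypothesis (item~\eqref{item:def_tri_of_flags_2}), reduce the behavior at an edge to a degree-$k$ self-map of the link circle, and close with the same well-definedness and continuity arguments. One small remark: the ``monotonicity'' you flag as a subtle point is actually automatic once you know $\hat\Phi$ is a local diffeomorphism on the punctured neighborhood of the edge, since a local homeomorphism from the compact link $S^1$ to $\rp^1 \cong S^1$ is already a covering map and cannot double back; the paper's terser argument (and your own covering-degree step) already contains this, so the extra worry about arcs turning around at shared endpoints is redundant rather than a gap.
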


The map $\Ext$ is called the \emph{extension map} of $\cTriFL$.

\begin{proof}
    First we are going to show that $[\hat \Phi,\rho]\in \rp(M,\Delta)$. Since $\hat \Phi:\wt M\to \rp^3$ and $\hat \Phi$ is $\rho$--equivariant, we only need to show that $\hat \Phi$ is a local homeomorphism away from the edges of $\wDelta$, and locally a cyclic branched cover of finite order otherwise.
    
    We recall that $\hat \Phi$ is a homeomorphism when restricted to each individual tetrahedron $\wt \tet$ of $\wDelta$, therefore a local homeomorphism in the interior of $\wt \tet$. Item~\eqref{item:def_tri_of_flags_2} in the definition of an ideal triangulation of flags implies that $\hat \Phi$ is a local homeomorphism in the interior of each face too. Furthermore, each edge in $\Edge(\wDelta)$ is mapped to a segment of a projective line. To conclude that $\hat \Phi$ is a cyclic branched cover of finite order in a neighbourhood of each edge of $\wDelta$, it is enough to notice that $\hat \Phi$ is $\rho$--equivariant and $\rho$ is a representation of $\pi_1(M)$. Therefore any closed simple loop around an edge $s \in \Edge(\wDelta)$, and contained in a small enough neighbourhood of $s$, is mapped to a closed loop in $\rp^3$ going around $\hat \Phi(s)$ finitely many times. It follows that $[\hat \Phi,\rho]\in \rp(M,\Delta)$.
    
    Finally, suppose $(\Phi_1,\rho_1)$ is another representative pair for $[\Phi,\rho]$. Then $(\Phi_1,\rho_1)$ is equivalent to $(\Phi,\rho)$, and it is easy to see that so are $(\hat \Phi_1,\rho_1)$ and $(\hat \Phi,\rho)$. Hence $[\hat \Phi_1,\rho_1] = [\hat \Phi,\rho]$ and $\Ext$ is well defined. The restriction of $\hat\Phi$ to each tetrahedron $\wt\tet\in \Tet(\wt \Delta)$ depends continuously on the vertices of the tetrahedron of flags $\Phi(\wt \tet)$. Since there are only finitely many tetrahedra in $\Delta$ and $\hat \Phi$ is equivariant, continuity of $\Ext$ follows. 
\end{proof}

%%%%%%%%%%%%%%%%%%%%%%%%%%%%%%%%%%%%%%%%%%
%%%%%%%%%%%%%%%%%%%%%%%%%%%%%%%%%%%%%%%%%%
%%%%%%%%%%%%%%%%%%%%%%%%%%%%%%%%%%%%%%%%%%	
	
\section{Coordinates on a tetrahedron of flags}\label{sec:coords_teton}

In this section we describe a convenient set of coordinates on the space $\ctetFL$ of $\PGL(4)$--classes of tetrahedra of flags. This is the first step towards a parametrization of $\ctriFL$. The neatest way to do so is to introduce the concept of an \emph{edge-face} of a tetrahedron (cf.\ \S\ref{subsec:edge_faces}), and to associate to each of them a meaningful positive real number. The coordinates we use are \emph{triple ratios} and \emph{edge ratios} (cf.\ \S\ref{subsec:triple_edge_ratios}), partially inspired by \cite{BFGFlags,FockGonch}. These projective coordinates are defined for quadruple of flags, but are positive if and only if the flags form a tetrahedron of flags (cf. Lemma~\ref{lem:positive_coords}). We show that they satisfy some \emph{internal consistency equations} (cf. Lemma~\ref{lem:internal_eqs}), defining a \emph{deformation space} $\tetsubvariety$, homeomorphic to $\RR^5_{>0}$. Using \emph{edge-face standard position} (cf.\ \S\ref{subsec:standard_position}), we will show that $\ctetFL$ is homeomorphic to $\tetsubvariety$ in Theorem~\ref{thm:parametrization_flag_space}. We conclude with a remark that projective tetrahedra coming from tetrahedra of flags in edge-face standard position are always \emph{positively oriented} (cf.\ \S\ref{subsec:orientation}).

%%%%%%%%%%%%%%%%%%%%%%%%%%%%%%%%%%%%%%%%%%
%%%%%%%%%%%%%%%%%%%%%%%%%%%%%%%%%%%%%%%%%%
%%%%%%%%%%%%%%%%%%%%%%%%%%%%%%%%%%%%%%%%%%

\subsection{Edge-faces}\label{subsec:edge_faces}

Recall that $\statet$ is the standard $3$--dimensional simplex (cf. \S\ref{subsec:tris_and_tets_flags}). An \emph{edge-face} of $\statet$ is an ordered pair $\sigma = (e,f)$ consisting of an (oriented) face $f$ of $\statet$ and an (oriented) edge $e$ of $f$, so that the orientation of $f$ is the one induced from $\statet$, and the orientation of $e$ is the one induced from $f$. The components of an edge-face $\sigma$ are called the \emph{edge} and \emph{face of $\sigma$}, respectively. We denote by $\edgeface$ the set of all edge-faces of $\statet$. 

There is a natural identification between $\edgeface$ and the \emph{alternating group on four symbols} $\Alt(4) \leqslant \Sym(4)$, namely the subgroup of even permutations. This can be best described using the non-standard notation where $[ijkl] \in \Alt(4)$ is the permutation mapping $(1,2,3,4)$ to $(i,j,k,l)$. Since every vertex of $\statet$ is numbered, every edge-face $\sigma = (e,f)$ can be encoded as $(ij)k$, where $(ij)$ is the oriented edge $e$ and $\cyclic{ijk}$ is the oriented face $f$. Then we have a bijection
$$
\Alt(4) \rightarrow \edgeface, \qquad \text{where} \qquad [ijkl] \mapsto  (ij)k.
$$
Henceforth we will implicitly make use of this identification, and abuse the notation by indicating with $\sigma$ both the edge-face and the corresponding even permutation.

The group $\Alt(4)$ acts simply transitively on itself \emph{on the right} by multiplication, thus giving a simply transitive right action of $\Alt(4)$ on $\edgeface$. As a consequence, one can visualize the set $\edgeface$ as the set of vertices of the Cayley graph $\Cay(4)$ of $\Alt(4)$.

We introduce the following notation (cf. Figure~\ref{fig:EdgeFace_labels}). Let $\alpha=[3124]$ and $\beta=[2143]$ be a generating set of $\Alt(4)$ (these are $(132)$ and $(12)(34)$ in the standard notation). If $\sigma=(ij)k$, then we define
$$
\sigma_+:=\sigma\cdot \alpha=(ki)j, \qquad \text{and} \qquad \sigma_-:=\sigma\cdot \alpha^{-1}=(jk)i.
$$
These are the three edge-faces that share the face $\cyclic{ijk}$. Furthermore, let
$$
\overline{\sigma}:=\sigma\cdot \beta=(ji)l, \qquad \op{\sigma}:=\sigma\cdot \alpha^{-1}\beta\alpha=(lk)j, \qquad \text{and} \qquad \overline{\op{\sigma}} := \sigma\cdot \alpha\beta\alpha^{-1}=(kl)i,
$$
be the \emph{conjugate}, \emph{positive opposite} and \emph{negative opposite} edge-faces of $\sigma$, respectively. We remark that conjugate edge-faces share the same unoriented edge, but with opposite orientations. On the other hand, the edges of opposite edge-faces are opposite in $\statet$, namely they do not share a vertex.

\begin{remark}
It should be noted that some symbols commute while some do not. For example $\overline{\op{\sigma}} = \op{\overline{\sigma}}$, but $ \overline{\sigma_-}\neq  \overline{\sigma}_-=(\op{\overline{\sigma_-})}$.
\end{remark}

In light of the above notation, one can meaningfully embed $\Cay(4)$ inside $\statet$ so that each edge-face $(e,f)$ lies inside $f$ and next to $e$ (cf. Figure~\ref{fig:EdgeFace_labels}). We remark that the set of faces of $\statet$ has a natural identification with (left) cosets of $\langle \alpha\rangle$ in $\Alt(4)$, while the set of edges can be identified with (left) cosets of $\langle \beta\rangle$ in $\Alt(4)$.  

\begin{figure}[t]
    \centering
    \includegraphics[width=\textwidth]{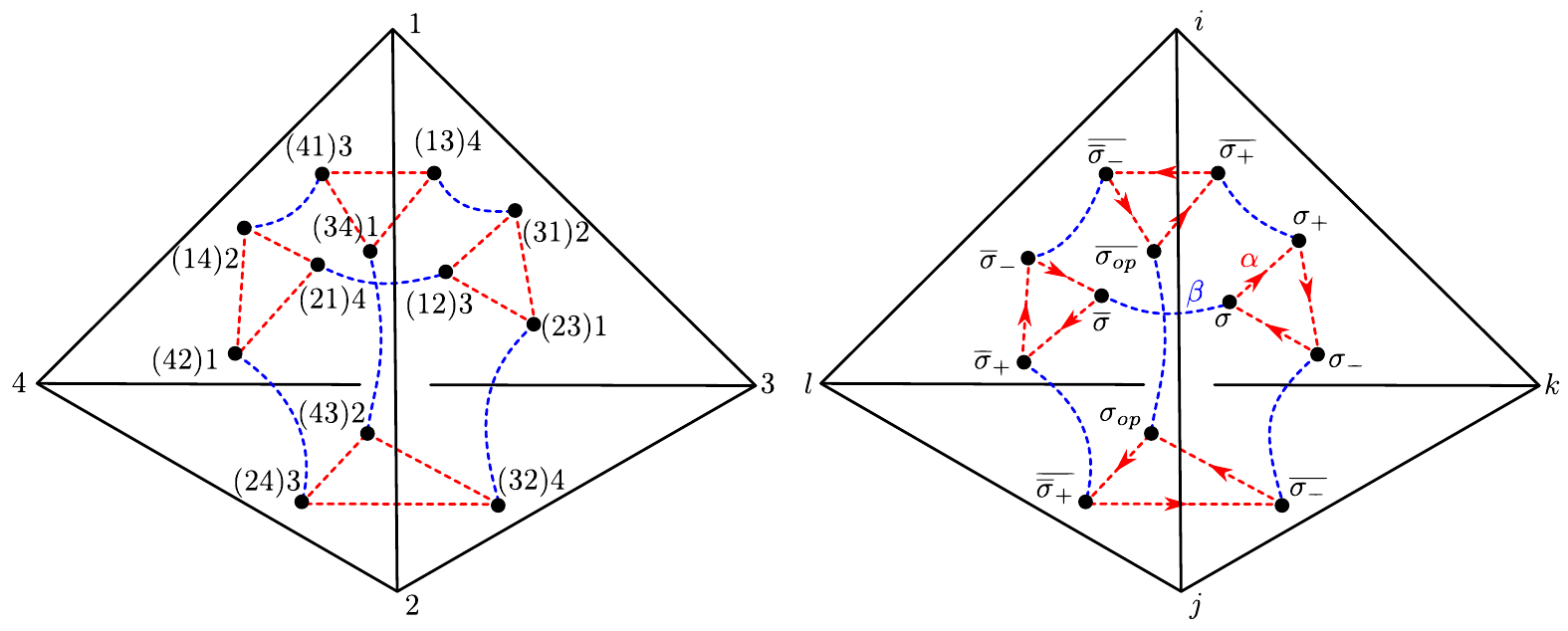}
    \caption{An embedding of the Cayley graph $\Cay(4)$ of $\Alt(4)$ into $\statet$ where each edge-face $\sigma = (ij)k$ lies inside the face $\cyclic{ijk}$ and next to the edge $(ij)$.}    \label{fig:EdgeFace_labels}
\end{figure}

%%%%%%%%%%%%%%%%%%%%%%%%%%%%%%%%%%%%%%%%%%
%%%%%%%%%%%%%%%%%%%%%%%%%%%%%%%%%%%%%%%%%%
%%%%%%%%%%%%%%%%%%%%%%%%%%%%%%%%%%%%%%%%%%	

\subsection{Triple ratios and edge ratios}\label{subsec:triple_edge_ratios}

Each flag in $\rp^3$ has $5$ degrees of freedom, and hence $\cycFL^3$ is a $15$--dimensional space. Since $\PGL(4)$ is also a $15$--dimensional space, one might naively expect that all cyclically ordered triples of flags are in the same $\PGL(4)$--orbit, however, this turns out not to be the case. 

Recall that $\RR^\times := \RR \setminus\{0\}$. We define the continuous map
$$
\overline{\3}_\circ : \cycFL^3 \rightarrow \RR^\times, \qquad \text{where} \qquad \T = \cyclic{V_m,\eta_m}_{m=1}^3 \mapsto \frac{\overline{\eta_1}(\overline{V_2})\overline{\eta_2}(\overline{V_3})\overline{\eta_3}(\overline{V_1})}{\overline{\eta_1}(\overline{V_3})\overline{\eta_2}(\overline{V_1})\overline{\eta_3}(\overline{V_2})} \in \RR^\times,
$$
where $\overline{V_m} \in \RR^4$ and $\overline{\eta_m} \in (\RR^4)^*$ are representatives of $V_m$ and $\eta_m$, respectively. It is easy to check that this quantity is well defined: it is independent of the choice of representatives $\overline{V_m}$ and $\overline{\eta_m}$,
numerator and denominator are non-zero by non-degeneracy of the flags,  and it is invariant under cyclic permutations of the three flags. The \emph{triple ratio of $\T$} is the number
$$
t^\T := \overline{\3}_\circ(\T).
$$
We remark that the triple ratio is invariant under projective transformations, namely
$$
\overline{\3}_\circ(G \cdot \T) = \overline{\3}_\circ(\T), \qquad \forall G \in \PGL(4).
$$
Therefore $\overline{\3}_\circ$ descends to a function
$$
\3_\circ : \ccycFL^3 \rightarrow \RR^\times.
$$
It turns out that one can characterize the subspace of triangle of flags $\ctriFL \subset \ccycFL^3$ via $\3_\circ$. The following result is a straightforward consequence of Theorem~\cite[Theorem $2.2$]{FockGonch}. See also \cite{CTT20} for a proof of continuity and more details.

\begin{lemma}\label{lem:triple_ratio}
    Both the map $\3_\circ : \ccycFL^3 \rightarrow \RR^\times$ and the restriction map $\restr{\3_\circ}{\ctriFL} : \ctriFL \rightarrow \RR_{>0}$ are homeomorphisms. 
\end{lemma}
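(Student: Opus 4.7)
The plan is to prove both statements simultaneously by exhibiting an explicit normal form for a cyclically ordered triple of flags parametrized by its triple ratio, and then analyzing positivity directly on this normal form.

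First, for surjectivity and the candidate inverse, I would define for each $t \in \RR^\times$ the model triple
$$
\T(t) := \cyclic{(e_1, \eta_1(t)), (e_2, \eta_2(t)), (e_3, \eta_3(t))},
$$
with $\eta_1(t) = (0,1,1,0)$, $\eta_2(t) = (1,0,1,0)$, and $\eta_3(t) = (t,1,0,0)$ viewed as covectors in $(\RR^4)^*$. A direct substitution into the formula defining $\overline{\3}_\circ$ gives $\overline{\3}_\circ(\T(t)) = t$, so $\3_\circ$ is surjective, and $t \mapsto [\T(t)]$ is a continuous section. Continuity of $\3_\circ$ itself is automatic from the rational expression in flag coordinates.

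The technical heart is injectivity. Given an arbitrary representative $\T = \cyclic{(V_i, \eta_i)}_{i=1}^3$ with triple ratio $t$, I would first apply a projective transformation to send the triple $V_1, V_2, V_3$ to $e_1, e_2, e_3$. The remaining freedom is the 6-dimensional stabilizer of this triple in $\PGL(4)$, which consists of classes of matrices with diagonal upper-left $3\times 3$ block and zero subdiagonal in the last row. Writing $\eta_i = (\text{dots}, w_i)$ with $w_i$ the fourth coordinate, one uses the three ``translation'' parameters in the fourth column to kill the $w_i$'s. This amounts to solving a $3\times 3$ linear system whose determinant equals $y_1 z_2 x_3 + z_1 x_2 y_3$, nonzero precisely when $t \neq -1$; once the $w_i$'s are killed, two diagonal ratios normalize $\eta_1, \eta_2$ to their model values, and a direct computation forces $\eta_3$ into the form $(1, 1/t, 0, 0)$, which is projectively equivalent to $(t, 1, 0, 0)$. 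Hence $\T \sim \T(t)$.

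Finally, for the restriction to $\ctriFL$, I would verify directly on $\T(t)$ that the triangle of flags condition is equivalent to $t > 0$. In the plane $V_1V_2V_3 = \{x_4 = 0\} \cong \rp^2$ with coordinates $[x_1:x_2:x_3]$, the four candidate triangles are the regions defined by the possible sign patterns modulo overall negation, and the trace lines $\ell_i = \eta_i \cap V_1V_2V_3$ have equations $x_2+x_3=0$, $x_1+x_3=0$, $tx_1+x_2=0$. The positive region $\{x_1,x_2,x_3>0\}$ misses all three lines exactly when $t > 0$, while the other three sign regions necessarily meet either $\ell_1$ or $\ell_2$ (as $x_2+x_3$ or $x_1+x_3$ vanishes on them) regardless of $t$. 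Since every class has a standard representative, this characterizes the image of $\ctriFL$ under $\3_\circ$ as $\RR_{>0}$.

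The main obstacle is the degenerate case $t = -1$ in the injectivity argument, where the linear system for the $w_i$'s becomes singular; I would handle it by using a slightly different intermediate normal form near $t = -1$, or by invoking continuity of $\3_\circ$ together with an abstract dimension count (the stabilizer of the model $\T(t)$ is 1-dimensional, making the orbit 14-dimensional, which together with the rank-one character of $\3_\circ$ on the 15-dimensional space $\cycFL^3$ forces each fiber to coincide with a single orbit).
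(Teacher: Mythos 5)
Your route is genuinely different from the paper's: the paper deduces the lemma from Fock--Goncharov's classification of triples of \emph{complete} flags in $\rp^2$, obtained by intersecting the planes $\eta_i$ with the plane $V_1V_2V_3$, whereas you normalize directly in $\PGL(4)$ against an explicit model $\T(t)$. For $t\neq -1$ your computation is correct and complete: the model realizes every $t$, the $3\times 3$ system killing the fourth coordinates has determinant $z_1x_2y_3(t+1)$, the residual diagonal freedom forces $\eta_3$ into the model form, and the sign analysis of the model triple is right. This part is in fact more self-contained than the paper's citation-based argument.

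The gap is $t=-1$, and neither of your proposed patches can close it, because the fiber of $\3_\circ$ over $-1$ is genuinely not a single $\PGL(4)$--orbit. Your dimension count fails exactly there: the stabilizer of $\T(-1)$ inside the point-stabilizer is two-dimensional, not one-dimensional (the covectors $(0,1,1,0)$, $(1,0,1,0)$, $(-1,1,0,0)$ are linearly dependent, and a one-parameter family of ``fourth-column'' elements preserves all three planes), so the orbit has dimension $13$ and cannot exhaust a $14$--dimensional fiber. More decisively, there are inequivalent classes with triple ratio $-1$: keep $V_i=[\basis{i}]$, $\eta_1=[(0,1,1,0)]$, $\eta_2=[(1,0,1,0)]$, and compare $\eta_3=[(-1,1,0,0)]$ with $\eta_3'=[(-1,1,0,1)]$. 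Both triples are non-degenerate with triple ratio $-1$, but in the first the three covectors span a $2$--plane (the three planes share a line), while in the second they are independent (the planes meet in a point); this rank is a projective and cyclic invariant, so the classes are distinct. (Collinear point-triples, which the non-degeneracy condition does not exclude, also all have triple ratio $-1$ and give further classes; this is also why your normalization of $V_1,V_2,V_3$ to $\basis{1},\basis{2},\basis{3}$ needs the remark that collinearity forces $t=-1$, so it is harmless only away from $-1$.) So no alternative normal form at $t=-1$ can restore injectivity on all of $\ccycFL^3$; note the paper's own reduction to the plane $V_1V_2V_3$ has the same blind spot, since the trace lines do not remember how each $\eta_i$ tilts out of that plane, and transitivity on the tilts is exactly what fails at $t=-1$.

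This leaves a concrete hole even in your treatment of the second statement: you identify the image of $\ctriFL$ by arguing on the models $\T(t)$, but classes with triple ratio $-1$ have no model in your family, so you have not excluded a triangle of flags with $t=-1$. This is fixable by a short separate argument. For a triangle of flags the points span a plane, and (with lines written as covectors adapted to the vertices) $t=-1$ is equivalent to the three trace lines $\ell_i=\eta_i\cap V_1V_2V_3$ being concurrent at some $Q=[q_1:q_2:q_3]$; non-degeneracy forces all $q_i\neq 0$, and the requirement that each $\ell_i$ miss the open triangle gives $q_2q_3<0$, $q_1q_3<0$, $q_1q_2<0$, whose product is negative yet equals $(q_1q_2q_3)^2\geq 0$, a contradiction. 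With that addition, your argument yields a complete proof that $\restr{\3_\circ}{\ctriFL}:\ctriFL\to\RR_{>0}$ is a homeomorphism, which is the part of the lemma the paper actually uses; the unrestricted claim over $\RR^\times$ should be regarded as valid only away from $-1$.
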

\begin{proof}
If $\T = \cyclic{V_m,\eta_m}_{m=1}^3 \in \cycFL^3$, then the intersections of the plane $V_1V_2V_3$ with $\eta_1$, $\eta_2$, and $\eta_3$ give three lines $\ell_1$, $\ell_2$, and $\ell_3$ in $V_1V_2V_3$. By identifying $V_1V_2V_3$ with $\rp^2$, we see that $\T' := \cyclic{V_m,[\ell_m]}_{m=1}^3$ is a cyclically ordered triple of complete flags in $\rp^2$. Then the triple ratio $\3_\circ(\T)$ is equal to the Fock-Goncharov triple ratio of $\T'$ and the result follows from \cite[Theorem $2.2$]{FockGonch}.
\end{proof}

For each edge-face $\sigma \in \edgeface$ and each ordered quadruple of flags $\F \in \FL^4$, we recall that $\cyclic{\F}_\sigma$ is a cyclically ordered triple of flags. Thus we have a well defined continuous map
$$
\overline{\3} : \ordFL^4 \times \edgeface \rightarrow \RR^\times, \qquad \text{where} \qquad (\F,\sigma) \mapsto \3_\circ\left( \cyclic{\F}_\sigma \right).
$$
The \emph{triple ratio of $\F$ with respect to $\sigma$} is
\begin{equation}\label{eq:triple_ratio_def}
    t_{\sigma}^\F := \3_\circ\left(\cyclic{\F}_\sigma\right) = \overline{\3}(\F,\sigma).
\end{equation}
The fact that the triple ratio is invariant under projective transformations implies that the map $\overline{\3}$ descends to a continuous map
$$
\3 : \cordFL^4 \times \edgeface \rightarrow \RR^\times,
$$
which by abuse of notation we have also denote by $\3$. This is our first type of coordinate on $\cordFL^4 \times \edgeface$.

Now we are going to define a second function on the space $\cordFL^4 \times \edgeface$. Recall that the edge-face $\sigma=(ij)k$ corresponds to the permutation $[ijkl] \in \Alt(4)$. We define the continuous map
$$
\overline{\xi} : \ordFL^4 \times \edgeface \rightarrow \RR^\times, \qquad \text{where} \qquad (\F,\sigma) = \left((V_m,\eta_m)_{m=1}^4,(ij)k\right) \mapsto \frac{\overline{\eta_i}(\overline{V_k})\overline{\eta_j}(\overline{V_l})}{\overline{\eta_i}(\overline{V_l})\overline{\eta_j}(\overline{V_k})} \in \RR^\times.
$$
for representatives $\overline{V}_m \in \RR^4$ and $\overline{\eta}_m \in (\RR^4)^*$ of $V_m$ and $\eta_m$, respectively. Once again, the quantity $\overline{\xi}(\F,\sigma)$ is well defined as the numerator and denominator are non-zero by non-degeneracy of the flags, and it is independent of the choice of representative of $V_i$ and $\eta_i$. The \emph{edge ratio of $\F$ with respect to $\sigma$} is then
\begin{equation}\label{eq:edge_ratio_def}
    e_\sigma^\F := \overline{\xi}(\F,\sigma).
\end{equation}
As with the triple ratio, the edge ratio is invariant under projective transformations. Therefore $\overline{\xi}$ descends to a continuous map
$$
\xi : \cordFL^4 \times \edgeface \rightarrow \RR^\times.
$$
The edge ratio is inspired from a coordinate defined in \cite{BFGFlags}, and we will see shortly that it admits a geometric description coming from its interpretation as a cross ratio (cf. Lemma~\ref{lem:edge_to_cross}) .

For both triple ratios and edge ratios, it is often convenient to have a notation that makes the edge-face $\sigma$ more explicit. If $\sigma=(ij)k$ then we sometimes use the notations
$$
t^\F_{ijk} = t^\F_\sigma = t^{[\F]}_\sigma, \qquad \text{and} \qquad e_{ij}^\F=e_\sigma^\F = e_\sigma^{[\F]}.
$$
This convention for the edge ratio should not create confusion: given distinct $i,j \in \{1,2,3,4\}$ there is a unique choice of $k\in \{1,2,3,4\}\setminus\{i,j\}$ so that $(ij)k\in \edgeface$. Furthermore, we will occasionally omit superscripts when $\F$ is clear from context (see Lemma~\ref{lem:internal_eqs} for example). 

It follows directly from the definitions of the triple ratio \eqref{eq:triple_ratio_def} and the edge ratio \eqref{eq:edge_ratio_def} that these two ratios satisfy several relations. They are called the \emph{internal consistency equations}, and are summarized in the following result.

\begin{lemma}\label{lem:internal_eqs}
   Let $\F \in \ordFL^4$ and let $\sigma = (ij)k \in \edgeface$, then
    \begin{alignat}{2}
    &t_\sigma^\F=t_{\sigma_+}^{\F}=t_{\sigma_-}^\F, \qquad &&\text{or} \qquad t_{ijk}=t_{kij}=t_{jki}, \label{eq:triple_rat_eq}\\
    &e^\F_\sigma=e^\F_{\overline{\sigma}}, \qquad &&\text{or} \qquad e_{ij}=e_{ji}, \label{eq:edge_rat_eq}\\
    &t^\F_{\sigma} (e^\F_{\sigma}e^\F_{\sigma_+}e^\F_{\sigma_-})=1, \qquad &&\text{or} \qquad t_{ijk} (e_{ij}e_{ki}e_{jk})=1, \label{eq:fe1}\\
    &t_\sigma^\F= e_{\op{\sigma}}^\F e_{\op{(\sigma_+)}}^\F e_{\op{(\sigma_-)}}^\F, \qquad &&\text{or} \qquad t_{ijk}= e_{kl} e_{jl} e_{il}. \label{eq:fe2}
\end{alignat}
In particular
\begin{equation}
    e^\F_\sigma e^\F_{\sigma_+}e^\F_{\sigma_-}e_{\op{\sigma}}^\F e_{\op{(\sigma_+)}}^\F e_{\op{(\sigma_-)}}^\F=1, \qquad \text{or} \qquad e_{ij} e_{ki}e_{jk}e_{kl} e_{jl} e_{il}=1. \label{eq:fe3}
\end{equation}
\end{lemma}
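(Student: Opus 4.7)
The plan is to reduce each identity to a routine monomial cancellation in the scalars $c_{mn} := \overline{\eta}_m(\overline{V}_n)$ for $m \neq n$. These scalars are all nonzero by the non-degeneracy of $\F$, and both $t^\F_\sigma$ and $e^\F_\sigma$ are manifestly Laurent monomials in the $c_{mn}$'s, independent of the choices of representatives $\overline{V}_n, \overline{\eta}_m$. Hence each assertion becomes a closed identity in the twelve scalars $\{c_{mn}\}_{m\neq n}$, and it suffices to verify the monomial identities in the six (or twelve) $c_{mn}$'s actually appearing in each equation.

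Relation \eqref{eq:triple_rat_eq} requires no computation: the three edge-faces $\sigma, \sigma_+, \sigma_-$ share the same underlying oriented face $\cyclic{ijk}$ and differ only by the cyclic $3$-action on $(i,j,k)$, while $\overline{\3}_\circ$ is invariant under cyclic permutations of its three flags (as noted immediately after its definition). Relation \eqref{eq:edge_rat_eq} is a direct symmetry of the defining formula: for $\sigma = (ij)k$ corresponding to $[ijkl]$ one has $\overline{\sigma} = (ji)l$ corresponding to $[jilk]$, so applying the formula for $e^\F$ under the substitution $(i,j,k,l)\mapsto(j,i,l,k)$ yields $e^\F_{\overline{\sigma}} = c_{jl}c_{ik}/(c_{jk}c_{il}) = e^\F_\sigma$.

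For \eqref{eq:fe1} and \eqref{eq:fe2} I would first identify the relevant edge-faces. For \eqref{eq:fe1}, the three edge-faces $\sigma, \sigma_+, \sigma_-$ are exactly those lying inside the common face $\cyclic{ijk}$, so the opposite vertex is $l$ for all three. Writing the product
$$
e^\F_\sigma\, e^\F_{\sigma_+}\, e^\F_{\sigma_-} = \frac{c_{ik}c_{jl}}{c_{il}c_{jk}} \cdot \frac{c_{kj}c_{il}}{c_{kl}c_{ij}} \cdot \frac{c_{ji}c_{kl}}{c_{jl}c_{ki}}
$$
and cancelling the three pairs $c_{il}, c_{jl}, c_{kl}$ leaves $c_{ik}c_{kj}c_{ji}/(c_{ij}c_{jk}c_{ki}) = 1/t^\F_{ijk}$. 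For \eqref{eq:fe2}, a short calculation using $\op{\sigma} = \sigma\cdot \alpha^{-1}\beta\alpha$, $\op{(\sigma_\pm)} = \sigma_\pm \cdot \alpha^{-1}\beta\alpha$, and $\alpha^3 = 1$ shows that $\op{\sigma}, \op{(\sigma_+)}, \op{(\sigma_-)}$ are the three edge-faces incident to the opposite vertex $l$; the resulting product
$$
e^\F_{\op\sigma}\, e^\F_{\op{(\sigma_+)}}\, e^\F_{\op{(\sigma_-)}} = \frac{c_{lj}c_{ki}}{c_{li}c_{kj}} \cdot \frac{c_{li}c_{jk}}{c_{lk}c_{ji}} \cdot \frac{c_{lk}c_{ij}}{c_{lj}c_{ik}}
$$
has all six factors of the form $c_{l\ast}$ cancel pairwise, leaving exactly $t^\F_{ijk}$. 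Finally, \eqref{eq:fe3} follows by multiplying \eqref{eq:fe1} and \eqref{eq:fe2}.

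The only real work is bookkeeping: translating each edge-face $\sigma \in \edgeface$ into its associated permutation $[ijkl]$ so that the right six or twelve scalars $c_{mn}$ appear in the product. Once a uniform convention is fixed, all four identities reduce to one-line monomial cancellations, so there is no genuine obstacle.
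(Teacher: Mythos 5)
Your proof is correct and follows exactly the route the paper intends: the paper gives no explicit proof, simply asserting that the relations "follow directly from the definitions," and your argument supplies precisely the missing bookkeeping, writing both $t^\F_\sigma$ and $e^\F_\sigma$ as Laurent monomials in $c_{mn}=\overline{\eta}_m(\overline{V}_n)$ and verifying each identity by cancellation. The substitutions ($\sigma_+=(ki)j$, $\sigma_-=(jk)i$, $\overline{\sigma}=(ji)l$, $\op{\sigma}=(lk)j$, etc.) and all six quotients are computed correctly, and the reduction of \eqref{eq:triple_rat_eq} to cyclic invariance of $\overline{\3}_\circ$ matches the remark made in the paper immediately after the definition.
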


There is a simple way to associate triple ratios and edge ratios to the edges and faces of the standard simplex $\statet$ that makes the internal consistency equations easier to remember. For every edge-face $\sigma = (ij)k$, we label the (unoriented) edge of $\statet$ with vertices $\{i,j\}$ with the edge ratio $e_\sigma$, and we label the (unoriented) face of $\statet$ with vertices $\{i,j,k\}$ with the triple ratio $t_\sigma$. The relations \eqref{eq:triple_rat_eq} and \eqref{eq:edge_rat_eq} show that this labeling is well defined. Equation \eqref{eq:fe1} says that the product of the three edge ratios on the edges of a face in $\statet$ is the inverse of the triple ratio of that face. Similarly, equation \eqref{eq:fe2} says that the product of the edge ratios on the three edges emanating from a vertex is equal to the triple ratio of the face that is opposite to that vertex.

%%%%%%%%%%%%%%%%%%%%%%%%%%%%%%%%%%%%%%%%%%
%%%%%%%%%%%%%%%%%%%%%%%%%%%%%%%%%%%%%%%%%%
%%%%%%%%%%%%%%%%%%%%%%%%%%%%%%%%%%%%%%%%%%

\subsection{Edge-face standard position}\label{subsec:standard_position}

Here we develop one of the main tools for constructing the parametrization of $\ctetFL$ in \S\ref{subsec:parametrization_tet_flags}. In few words, for every edge-face $\sigma \in \edgeface$, we are going to construct a continuous section $\ctetFL \rightarrow \tetFL$ that allows us to single out a preferred tetrahedron of flags in its $\PGL(4)$--class.

To state the next result, we use the following notation. For $1\leq i\leq 4$, we denote by $\basis{i}$ (resp. $\dbasis{i}$) the $i$--th standard basis vector in $\RR^4$ (resp. $(\RR^4)^*$), and by $[\basis{i}]$ (resp. $[\dbasis{i}]$) the corresponding point in $\rp^3$ (resp. $(\rp^3)^*$). Furthermore, for all $[\F]\in \ctetFL$ and all $\sigma = (ij)k \in \edgeface$, we define
$$
\mu_\sigma^\F := e^\F_{\sigma_-}e^\F_{\sigma_+} -e_{\sigma_-}^\F + 1,\qquad \text{or} \qquad \mu_{ij}^\F := e^\F_{jk}e^\F_{ki} -e_{jk}^\F + 1.
$$

\begin{lemma}\label{lem:std_pos_tetrahedron}
    For each edge-face $\sigma=(ij)k$ and for each class $[\F] \in \ctetFL$ of tetrahedra of flags there is a unique representative $\F = (V_m,\eta_m)_{m=1}^4 \in \tetFL$ such that
    \begin{align}\label{eq:sp1}
    (V_i,\eta_i) = ([\basis{1}],[\dbasis{2}]), \qquad (V_j,\eta_j) = ([\basis{2}],[\dbasis{1}]),
    \\\label{eq:sp2}
    (V_k,\eta_k) = ([\basis{1}+\basis{2}+\basis{3}],[t_{\sigma} \cdot \dbasis{1} +\dbasis{2} - (t_{\sigma}+1) \cdot \dbasis{3}],
    \\\label{eq:sp3}
    V_l = [e_{\sigma} \cdot \basis{1} + \basis{2} + X_\sigma \cdot \basis{3} - \basis{4}],
    \\\label{eq:sp4}
    \eta_l = [e_{\overline{\sigma}_-}e_{\overline{\sigma}_+} \cdot \dbasis{1} + \dbasis{2} - \mu_{\overline{\sigma}}\cdot \dbasis{3} +  \mu_{\overline{\sigma}}\left( Y_{\overline{\sigma}} - X_\sigma \right) \cdot \basis{4}^*],
    \end{align}
    where
    $$
    X_\sigma := \frac{\mu_{\sigma}t_\sigma e_\sigma}{t_{\sigma}+1}, \qquad \text{ and } \qquad Y_{\overline{\sigma}} := \frac{t_{\overline{\sigma}}+1}{t_{\overline{\sigma}}\mu_{\overline{\sigma}}}.
    $$
\end{lemma}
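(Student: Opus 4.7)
The claim has two parts, uniqueness and existence of the normalized representative, and the real content is existence. Uniqueness is immediate: if two representatives $\F,\F'$ in the same $\PGL(4)$--orbit both satisfy \eqref{eq:sp1}--\eqref{eq:sp4}, the projective map sending $\F$ to $\F'$ stabilizes every flag of $\F$, hence is trivial by Lemma~\ref{lem:tet_stab_is_trivial}. For existence I will step through the four flags, at each stage identifying the residual freedom and checking that the normalization succeeds.

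\textbf{First two flags.} Since $\PGL(4)$ acts transitively on non-degenerate pairs of flags, one can find $G_1\in\PGL(4)$ with $G_1\cdot(V_i,\eta_i)=([\basis{1}],[\dbasis{2}])$ and $G_1\cdot(V_j,\eta_j)=([\basis{2}],[\dbasis{1}])$. The residual stabilizer consists of block matrices $\mathrm{diag}(a,b)\oplus\begin{pmatrix}c&d\\ e&f\end{pmatrix}$ modulo scalars, a five-dimensional group.

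\textbf{Third flag.} Write $V_k$ in coordinates and observe that general position of the $V_m$ and non-degeneracy of the flags force the first two coordinates of $V_k$ and its projection to $\mathrm{span}(\basis{3},\basis{4})$ to be nonzero. Using $a,b$ and the lower block, normalize $V_k=[\basis{1}+\basis{2}+\basis{3}]$, which forces the residual to satisfy $a=b=c$ and $e=0$, cutting us down to two dimensions parameterized by $(d,f)$. For $\eta_k$, the condition $\eta_k(V_k)=0$ together with the scaling freedom shows that the first three coordinates are proportional to $(\alpha_1,\alpha_2,-\alpha_1-\alpha_2)$, and a direct computation of $t_{ijk}=\eta_k(V_i)/\eta_k(V_j)\cdot(\text{unit factors})$ yields $\alpha_1/\alpha_2=t_\sigma$; after rescaling $\eta_k$ this gives the first three coefficients in \eqref{eq:sp2}. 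The remaining parameter $d$ can then be chosen to kill the $\dbasis{4}$--component (here one uses $t_\sigma+1\neq 0$, which holds since $t_\sigma>0$ by Lemma~\ref{lem:triple_ratio} and Lemma~\ref{lem:tet_to_triangles}). One parameter $f/a$ of residual freedom remains.

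\textbf{Fourth flag.} Write $V_l=[y_1:y_2:y_3:-1]$; the $y_4\neq 0$ condition is general position and the remaining scaling freedom is used to set $y_4=-1$. A direct computation of $e_{ij}$ using \eqref{eq:edge_ratio_def} gives $e_\sigma=y_1/y_2$, which is the content of the first coordinate in \eqref{eq:sp3}. The remaining $f/a$ normalizes $y_2$ to $1$, consuming the last freedom; then $y_1=e_\sigma$ automatically. The quantity $X_\sigma=y_3$ is determined, and to show it equals $\mu_\sigma t_\sigma e_\sigma/(t_\sigma+1)$ one computes an appropriate ratio—for instance evaluating $e_{\sigma_-}$ or $e_{\sigma_+}$ at the partially normalized flags and solving for $y_3$ in terms of the edge ratios, then simplifying with the internal consistency relations \eqref{eq:fe1}--\eqref{eq:fe3} of Lemma~\ref{lem:internal_eqs}. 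Finally $\eta_l$ is constrained by $\eta_l(V_l)=0$, so it has three free coordinates; these are pinned down by computing the three remaining edge ratios $e_{\bar\sigma}, e_{\bar\sigma_-}, e_{\bar\sigma_+}$ (equivalently $e_{il},e_{jl},e_{kl}$) from \eqref{eq:edge_ratio_def}, solving the resulting linear system, and rewriting the coefficients via the internal consistency equations to put $\eta_l$ in the form \eqref{eq:sp4}.

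\textbf{Main obstacle.} The conceptual skeleton—successive reduction of the $\PGL(4)$--freedom to the trivial group—is routine, so the real hurdle is the bookkeeping in the last step: showing that the specific closed-form expressions for $X_\sigma$, $Y_{\bar\sigma}$, and the four coefficients of $\eta_l$ (involving $\mu_{\bar\sigma}$ and products like $e_{\bar\sigma_-}e_{\bar\sigma_+}$) actually emerge from the normalization. This is a finite computation which I would organize by first solving for the generic free parameters in $V_l,\eta_l$ in terms of raw ratios and then systematically applying \eqref{eq:triple_rat_eq}--\eqref{eq:fe3} to recognize the claimed expressions; the non-vanishing conditions needed at each inversion (e.g.\ $t_\sigma+1\neq 0$, $\mu_{\bar\sigma}\neq 0$) follow either from positivity of triple ratios on triangles of flags or from non-degeneracy of the tetrahedron of flags.
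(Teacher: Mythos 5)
Your proposal is correct and takes essentially the same approach as the paper: exhaust the $\PGL(4)$ freedom by successive normalization and then read off the claimed coordinates via the edge and triple ratios, invoking Lemma~\ref{lem:tet_stab_is_trivial} for uniqueness. The only organizational difference is that the paper begins by observing that the quadruple $\{V_i,V_j,V_k,\eta_i\eta_j\eta_k\}$ is in general position (using that the face $\cyclic{\F}_\sigma$ is a triangle of flags), normalizes those four points at once, then pins down $P=\eta_i\eta_j\cap V_iV_jV_k$ so that $\eta_i,\eta_j$ are forced automatically, whereas you track the residual stabilizer flag by flag; both routes arrive at the same $1$--parameter ambiguity to be spent on $V_l$, and the final bookkeeping with the internal consistency equations is identical in spirit.
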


The representative $\F$ from Lemma~\ref{lem:std_pos_tetrahedron} is \emph{the $\sigma$--standard representative of $[\F]$}. We will also say that a tetrahedron of flags $\F$ is \emph{in $\sigma$--standard position} if it is equal to the $\sigma$--standard representative of its $\PGL(4)$--class.

\begin{proof}
    Let $\F = (V_m,\eta_m)_{m=1}^4$ be a class representative for $[\F] \in \ctetFL$. We recall that the face $\cyclic{\F}_\sigma$ is a triangle of flags (cf. Lemma~\ref{lem:tet_to_triangles}). We claim that the quadruple of points $\{V_i,V_j,V_k,
    \eta_i\eta_j\eta_k\}$ is in general position. Otherwise $\eta_i\eta_j\eta_k$ would belong to the plane $V_iV_jV_k$ and there would not be a triangular region in $V_iV_jV_k$ disjoint from all planes $\eta_i$, contradicting the fact that $\cyclic{\F}_\sigma$ is a triangle of flags.
    
    Since $\{[\basis{1}],[\basis{2}],[\basis{1}+\basis{2}+\basis{3}],[\basis{4}]\}$ is also in general position, and $\PGL(4)$ acts transitively on quadruples of points in general position, we can change $\F$ in its class so that:
    $$
    V_i = [\basis{1}], \quad V_j = [\basis{2}], \quad V_k = [\basis{1}+\basis{2}+\basis{3}], \quad \eta_i\eta_j\eta_k = [\basis{4}].
    $$
    By non-degeneracy, the line $\eta_i
    \eta_j$ intersects the plane $V_iV_jV_k$ in a single point $P$ that is disjoint from the line $V_iV_j$. It follows that $\{V_i,V_j,V_k,P\}$ is a projective basis for the projective plane $V_iV_jV_k$, and hence there is an element of $\PGL(4)$ that fixes pointwise $\{V_i, V_j, V_k,\eta_i\eta_j\eta_k\}$ and maps $P$ to $[\basis{3}]$. Any such element maps $\eta_i\eta_j$ to the line $[\basis{3}][\basis{4}]$.  In this setting, $\eta_i = [\dbasis{2}]$ and $\eta_j = [\dbasis{1}]$ are forced, which proves \eqref{eq:sp1}.
    
    Since $\eta_k$ is a plane through $V_k = [\basis{1}+\basis{2}+\basis{3}]$ and $\eta_i\eta_j\eta_k = [\basis{4}]$, it is of the form $
    \eta_k = [t \cdot \dbasis{1} + \dbasis{2} - (t+1)\cdot \dbasis{3}]$. A simple computation shows that
    $$
    t_\sigma = \frac{1\cdot 1 \cdot t }{1 \cdot 1 \cdot 1} = t,
    $$
    \noindent which proves \eqref{eq:sp2}. Next we notice that, by non-degeneracy, $\eta_i(V_l) \not= 0$ and $\eta_l(V_j) \not= 0$, thus we may normalize so that
    $$
    V_l = [A \cdot \basis{1} + \basis{2} + C \cdot \basis{3} + D \cdot \basis{4}], \qquad \text{ and } \qquad \eta_l = [a \cdot \dbasis{1} + \dbasis{2} + c \cdot \dbasis{3} + d \cdot \basis{4}^*].
    $$
    It follows from the definition of these edge ratios and the internal consistency equations~\eqref{eq:fe1} and~\eqref{eq:fe2} that
    $$
    e_{\sigma} = A,\qquad e_{\sigma_+} = \frac{1}{t_{\sigma}A + 1 - (t_{\sigma}+1)C}, \quad  \Longrightarrow \quad C = \frac{\mu_\sigma}{e_{\sigma_-}e_{\sigma_+}(t_\sigma + 1)} = X_\sigma,
    $$
    and
    $$
    e_{\overline{\sigma}_-} = a+1+c,\qquad e_{\op{\sigma}} = \frac{t_{\sigma}}{a},  \quad  \Longrightarrow \quad a = e_{\overline{\sigma}_-}e_{\overline{\sigma}_+}, \quad c = -\mu_{\overline{\sigma}}.
    $$
    Imposing that $\eta_l(V_l) = 0$, gives the additional equation
    $$
    d \cdot D = -a \cdot A -1 - c \cdot C = -\frac{1}{t_{\overline{\sigma}}} - 1 - \mu_{\overline{\sigma}} X_\sigma = -\mu_{\overline{\sigma}}(Y_{\overline{\sigma}} - X_\sigma).
    $$
    But $D \not=0$ because the points $\{V_m\}_{m=1}^4$ are in general position, thus we can rewrite
    $$
    d = \frac{-\mu_{\overline{\sigma}}}{D}(Y_{\overline{\sigma}} - X_\sigma).
    $$
    To conclude, we claim that we can change $\F$ in its $\PGL(4)$--class so that $D = -1$, while everything else stays fixed. This can be done by applying the  projective transformation
    $$
    G = 
    \begin{bmatrix}
        1 & 0 & 0 & 0 \\
        0 & 1 & 0 & 0 \\
        0 & 0 & 1 & 0 \\
        0 & 0 & 0 & -\frac{1}{D} \\
    \end{bmatrix}.
    $$
    We remark that uniqueness follows from the fact that the stabilizer of a tetrahedron of flags is trivial (cf. Lemma~\ref{lem:tet_stab_is_trivial}).
\end{proof}

%%%%%%%%%%%%%%%%%%%%%%%%%%%%%%%%%%%%%%%%%%
%%%%%%%%%%%%%%%%%%%%%%%%%%%%%%%%%%%%%%%%%%
%%%%%%%%%%%%%%%%%%%%%%%%%%%%%%%%%%%%%%%%%%

\subsection{The parametrization of \texorpdfstring{$\ctetFL$}{TEXT}}\label{subsec:parametrization_tet_flags}

Let $\RR^{\edgeface}_\times$ be the set of functions from $\edgeface$ to $\RR^\times$. By combining the maps $\3$ and $\xi$ from \S\ref{subsec:triple_edge_ratios}, we have a well defined continuous map
$$
\Psi :  \ctetFL \rightarrow \RR^{\edgeface}_\times \times \RR^{\edgeface}_\times, \qquad \text{where} \qquad [\F] \mapsto \left( \3([\F],\ast), \xi([\F],\ast)\right) = \left( t^\F_\ast, e^\F_\ast \right).
$$
It follows from Lemma~\ref{lem:internal_eqs} that the image of $\Psi$ is contained in the algebraic variety defined by the internal consistency equations. The goal of this section is to determine the image of $\Psi$, and show that it is a homeomorphism onto its image. The first step is to understand the edge ratios in geometric terms, through cross ratios.

\begin{lemma}\label{lem:edge_to_cross}
	Let $\F = (V_m,\eta_m)_{m=1}^4\in \ordFL^4$. For every $\sigma=(ij)k \in \edgeface$, let $\ell_{ij}$ be the line $V_iV_j$, and let $\ell^*_{ij}$ be the pencil of planes containing $\eta_i$ and $\eta_j$. Let $P_{k}=\ell_{ij}\cap \eta_k$ and $P_l=\ell_{ij}\cap \eta_l$. Let $p^*_{k}$ be the plane in $\ell_{ij}^*$ that contains $V_k$ and let $p_l^*$ the plane in $\ell_{ij}^*$ that contains $V_l$. Then
	$$
	e_{ij}^\F=e_{\sigma}^\F=\cross{\eta_i,
	\eta_j,p_k^*,p_l^*}, \qquad \text{and} \qquad e_{kl}^\F=e_{\op{\sigma}}^\F=\cross{V_i,V_j,P_k,P_l}.
	$$
\end{lemma}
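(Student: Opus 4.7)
The plan is to verify both identities by first putting $\F$ in $\sigma$-standard position using Lemma~\ref{lem:std_pos_tetrahedron}. Both edge ratios and cross ratios are projectively invariant, so this reduces each identity to a short computation in coordinates, after which everything becomes explicit via \eqref{eq:sp1}--\eqref{eq:sp4}.

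For the first identity, in $\sigma$-standard position we have $\eta_i = [\dbasis{2}]$ and $\eta_j = [\dbasis{1}]$, so $\eta_i \cap \eta_j = \{x_1 = x_2 = 0\}$ and the pencil $\ell_{ij}^*$ is naturally identified with $\rp^1$ via $[a:b] \mapsto [a\dbasis{1} + b\dbasis{2}]$. Under this identification $\eta_i \mapsto [0:1]$ and $\eta_j \mapsto [1:0]$, while the condition that such a plane contains $V_k = [\basis{1}+\basis{2}+\basis{3}]$ (respectively $V_l$ from \eqref{eq:sp3}) pins down $p_k^*$ and $p_l^*$ as concrete points of $\rp^1$. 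Substituting into the cross-ratio formula from \S\ref{subsec:cross_ratio} then yields exactly $e_\sigma$.

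For the second identity, I would proceed symmetrically: parametrize $\ell_{ij} = V_iV_j$ by $[s:t:0:0]$, so that $V_i, V_j$ correspond to $[1:0]$ and $[0:1]$. The two points $P_k, P_l$ are obtained by solving the single linear equations $\eta_k(\cdot) = 0$ and $\eta_l(\cdot) = 0$ on this line, using \eqref{eq:sp2} and \eqref{eq:sp4}; the resulting cross ratio simplifies to $t_\sigma / (e_{\overline{\sigma}_-} e_{\overline{\sigma}_+})$, which matches $e_{\op{\sigma}}^\F$ by a direct evaluation of \eqref{eq:edge_ratio_def} in standard position (equivalently, by an application of the internal consistency equations of Lemma~\ref{lem:internal_eqs}).

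There is no substantive obstacle in this argument. The main points requiring care are the choice of parametrization of the pencil and of the line (each is an identification with $\rp^1$, and the final answer must come out independent of this choice), and the correct identification of the even permutation associated to $\op{\sigma}$ so that $e_{\op{\sigma}}^\F$ can be read off directly from \eqref{eq:edge_ratio_def}. Once the standard representative of $[\F]$ has been written down via Lemma~\ref{lem:std_pos_tetrahedron}, both identities collapse to essentially one-line linear-algebra computations.
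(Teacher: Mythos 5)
Your proposal is correct, but it takes a genuinely different route than the paper. You reduce to $\sigma$--standard position via Lemma~\ref{lem:std_pos_tetrahedron} and then compute both cross ratios in explicit coordinates; the resulting expression $t_\sigma/(e_{\overline{\sigma}_-}e_{\overline{\sigma}_+})$ for the second cross ratio is indeed $e_{\op{\sigma}}^\F$ by \eqref{eq:fe2}, so the argument closes. The paper instead works coordinate-free and never invokes standard position: it writes down the explicit fractional--linear map
$$
\ell^*_{ij} \to \rp^1, \qquad \eta \mapsto \frac{\overline{\eta}(\overline{V_k})\,\overline{\eta_j}(\overline{V_l})}{\overline{\eta}(\overline{V_l})\,\overline{\eta_j}(\overline{V_k})},
$$
observes that it sends $(p_l^*,p_k^*,\eta_j,\eta_i)$ to $(\infty,0,1,e_{ij})$, and then reads off the cross ratio straight from its definition in \S\ref{subsec:cross_ratio}, together with the $2$--$2$--cycle invariance. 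The dual map on $\ell_{ij}$ handles the second identity analogously. The paper's argument is shorter and more conceptual: the numerator and denominator are manifestly linear in $\eta$ so projectivity is immediate, and the four special values fall out because of the non-degeneracy conditions $\overline{p_l^*}(\overline{V_l})=0$, $\overline{p_k^*}(\overline{V_k})=0$, together with the very definition of $e_\sigma^\F$. Your approach buys concreteness (everything is a literal matrix/vector computation after Lemma~\ref{lem:std_pos_tetrahedron}), at the cost of carrying along the somewhat heavy normal-form expressions \eqref{eq:sp1}--\eqref{eq:sp4} and tracking the $\rp^1$-identification conventions.
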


\begin{proof}
	To simplify the notation we are going to drop the superscripts.	We consider the function 
	$$
	\ell^*_{ij} \rightarrow \rp^1, \qquad \text{that maps} \qquad \eta \mapsto \frac{\overline{\eta}(\overline{V_k})\overline{\eta_j}(\overline{V_l})}{\overline{\eta}(\overline{V_l})\overline{\eta_j}(\overline{V_k})},
	$$
	for representatives $\overline{\eta},\overline{\eta_j},\overline{V_l},\overline{V_k}$ of $\eta,\eta_j,V_l,V_k$, respectively. As in the definition of the edge ratios, this function is well defined by non-degeneracy of the flags and it does not depend on the choice of representatives.
	
	It is easy to check that this is the unique projective map that takes $(p_l^*,p_k^*,\eta_j,\eta_i)$ to $(\infty,0,1,e_{ij})$. But since $\ell^*_{ij} \cong \rp^1$, by definition of cross ratio,
	$$
	e_{ij} = [p_l^*,p_k^*,\eta_j,\eta_i] = [\eta_i,\eta_j,p_k^*,p_l^*].
	$$
	A dual (but analogous) argument applies to the function
	$$
	\ell_{ij} \rightarrow \rp^1, \qquad \text{that maps} \qquad V \mapsto \frac{\overline{\eta_k}(\overline{V_i})\overline{\eta_l}(\overline{V})}{\overline{\eta_k}(\overline{V})\overline{\eta_l}(\overline{V_i})},
	$$
	to show that $e_{kl}=\cross{P_k,P_l,V_i,V_j}=\cross{V_i,V_j,P_k,P_l}$.
\end{proof}

Combining Lemma~\ref{lem:edge_to_cross} with Lemma~\ref{lem:cross_ratio_positivity}, we can show that the edge ratios of a tetrahedron of flags are always positive. Recall that this was already proven for the triple ratios in Lemma~\ref{lem:triple_ratio}. For convenience, we combine them both in the following result.

\begin{lemma}\label{lem:positive_coords}
	For every $\F \in \tetFL$ and every $\sigma \in \edgeface$,
	$$
	t_{\sigma}^\F >0,\qquad \text{and} \qquad
	e_{\sigma}^\F  > 0.
	$$
\end{lemma}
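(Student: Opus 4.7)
The plan is to handle the triple ratio and the edge ratio separately: the former is an immediate consequence of earlier results, while the latter needs a short geometric argument via the cross ratio sign criterion of Lemma~\ref{lem:cross_ratio_positivity}. For the triple ratio, observe that every face $\cyclic{\F}_\sigma$ of $\F$ is a triangle of flags by Lemma~\ref{lem:tet_to_triangles}; then Lemma~\ref{lem:triple_ratio} asserts that the restriction of $\3_\circ$ to $\ctriFL$ takes values in $\RR_{>0}$, so $t_\sigma^\F = \3_\circ(\cyclic{\F}_\sigma) > 0$.

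For the edge ratio, fix $\sigma = (ij)k \in \edgeface$ and let $l$ be the fourth index. Since the assignment $\sigma \mapsto \op{\sigma}$ is an involution on $\edgeface$, we have $\op{\op{\sigma}} = \sigma$. Applying Lemma~\ref{lem:edge_to_cross} with $\op{\sigma}$ substituted for $\sigma$ then yields the geometric expression
\[ e_\sigma^\F = e_{\op{\op{\sigma}}}^\F = \cross{V_l, V_k, P_j, P_i}, \]
where $P_m := V_kV_l \cap \eta_m$ for $m \in \{i,j\}$. By Lemma~\ref{lem:cross_ratio_positivity}, it suffices to show that $P_i$ and $P_j$ lie in the same connected component of $V_kV_l \setminus \{V_k, V_l\}$.

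To see this, consider the two faces of the projective tetrahedron $\tet_\F$ containing its edge between $V_k$ and $V_l$: namely the triangles $\triangle_{ikl} \subset V_iV_kV_l$ with vertices $\{V_i, V_k, V_l\}$ and $\triangle_{jkl} \subset V_jV_kV_l$ with vertices $\{V_j, V_k, V_l\}$. By Lemma~\ref{lem:tet_to_triangles} these are triangles of flags, so the interior of $\triangle_{ikl}$ is disjoint from $\eta_i$ and the interior of $\triangle_{jkl}$ is disjoint from $\eta_j$. Because both triangles are faces of the same projective tetrahedron, their common edge on $V_kV_l$ is a single open arc $\gamma \subset V_kV_l \setminus \{V_k, V_l\}$. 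The plane $\eta_i$ meets $V_iV_kV_l$ in a line through $V_i$; for this line to avoid the interior of $\triangle_{ikl}$, its intersection $P_i$ with $V_kV_l$ must lie in the complementary open arc $V_kV_l \setminus (\{V_k, V_l\} \cup \gamma)$. The symmetric argument places $P_j$ in the same complementary arc, completing the proof.

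The main obstacle is the geometric claim that the two relevant triangular faces share the same open arc of $V_kV_l \setminus \{V_k, V_l\}$; this is precisely where the preferred projective tetrahedron $\tet_\F$ singled out by the tetrahedron-of-flags condition plays its role, since four points in general position in $\rp^3$ bound eight projectively equivalent tetrahedra and only one of them satisfies the defining disjointness condition that distinguishes $\tet_\F$.
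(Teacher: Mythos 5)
Your proof of $t_\sigma^\F>0$ is identical to the paper's (Lemma~\ref{lem:tet_to_triangles} plus Lemma~\ref{lem:triple_ratio}), but your argument for $e_\sigma^\F>0$ takes a genuinely different route. The paper first normalizes to $\sigma$--standard position (Lemma~\ref{lem:std_pos_tetrahedron}), works in the affine patch $\A = \rp^3 \setminus [\dbasis{1}+\dbasis{2}-\dbasis{3}]$, and uses the first cross-ratio formula of Lemma~\ref{lem:edge_to_cross}, namely $e_\sigma = \cross{\eta_i,\eta_j,p_k^*,p_l^*}$ in the \emph{pencil} $\ell^*_{ij}$: it shows $V_k,V_l$ both lie in the triangular prism $\Prism$ cut out by $\eta_i,\eta_j,\eta_k$, which forces $p_k^*,p_l^*$ into the same component of $\ell^*_{ij}\setminus\{\eta_i,\eta_j\}$. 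You instead apply the second cross-ratio formula of Lemma~\ref{lem:edge_to_cross} to $\op{\sigma}$ (using $\op{\op{\sigma}}=\sigma$, which is correct since $\beta^2 = e$) to get $e_\sigma = \cross{V_l,V_k,P_j,P_i}$ as a cross ratio of \emph{points} on the line $V_kV_l$, and then argue directly on the two faces of $\tet_\F$ adjacent to the edge between $V_k$ and $V_l$: neither $\eta_i$ nor $\eta_j$ can cross the interior of its corresponding face, so both $P_i$ and $P_j$ land in the arc of $V_kV_l\setminus\{V_k,V_l\}$ complementary to that shared edge, and positivity follows from Lemma~\ref{lem:cross_ratio_positivity}. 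Your route avoids the standard-position normalization and the affine-patch construction entirely, which makes it shorter and more intrinsic; the paper's route has the advantage of setting up the prism machinery and coordinate formulas that are reused immediately afterward in the proof of Theorem~\ref{thm:parametrization_flag_space}. One small point worth flagging: you invoke Lemma~\ref{lem:tet_to_triangles} to conclude the interior of the geometric face $\triangle_{ikl}$ of $\tet_\F$ avoids $\eta_i$, but that lemma is a statement about the triple of flags $\cyclic{\F}_\sigma$, whose associated triangle is defined by a uniqueness property; to conclude that this distinguished triangle literally equals the geometric face of $\tet_\F$ requires a short transversality observation (if $\eta_i$ crossed the interior of a face of $\tet_\F$, it would cross the interior of $\tet_\F$ itself). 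The paper's proof relies on the same implicit identification, so this is not a defect specific to your argument, but it would be worth a sentence.
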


\begin{proof}
	Let $\sigma \in \edgeface$ and $\F \in \tetFL$. Since edge ratios and triple ratios are invariant under projective transformations, we can change $\F$ to be in $\sigma$--standard position (cf. Lemma~\ref{lem:std_pos_tetrahedron}).
	
	By Lemma~\ref{lem:tet_to_triangles}, the face $\cyclic{\F}_\sigma$ is a triangle of flags hence $t_{\sigma}^\F > 0$ (cf. Lemma~\ref{lem:triple_ratio}).
	
	Now we consider the affine patch $\A := \rp^3 \setminus [\dbasis{1} + \dbasis{2} - \dbasis{3}]$. The three planes $\eta_i,\eta_j,\eta_k$ have a unique common intersection $\eta_i\eta_j\eta_k = [\basis{4}] \in [\dbasis{1} + \dbasis{2} - \dbasis{3}]$, therefore they form an infinite triangular prism $\Prism$ in $\A$. The prism $\Prism$ contains the triangle associated to the face $\cyclic{\F}_\sigma$, thus it must contain the entire tetrahedron associated to the tetrahedron of flags $\F$.
	
	Let $p_k^*$ and $p_l^*$ be the planes in the pencil $\ell^*_{ij}$ through the line $\eta_i\eta_j$, containing the points $V_k$ and $V_l$, respectively. 
	The prism $\Prism$ is contained in one of the four connected components of $\A\setminus \{\eta_i,\eta_j\}$. The points $V_k$ and $V_l$ are contained in $\Prism$ and so the planes $p_k^*,p_l^*$ intersect $\Prism$. It follows that $p_k^*$ and $p_l^*$ are contained in the same region, namely they are in the same connected component of $\ell^*_{ij} \setminus \{\eta_i,\eta_j\}$. Combining Lemma~\ref{lem:edge_to_cross} and Lemma~\ref{lem:cross_ratio_positivity}, we conclude that
	%\textcolor{red}
	{
		$$
	e_{\sigma}^\F = \cross{\eta_i,\eta_j,p_k^*,p_l^*} > 0.
	$$
}
\end{proof}

Let $\defspace_\tet$ be the algebraic variety of $\RR^{\edgeface}_\times \times \RR^{\edgeface}_\times$ defined by the internal consistency equations \eqref{eq:triple_rat_eq}--\eqref{eq:fe2}. The \emph{deformation space of a tetrahedron of flags} is the semi-algebraic set
$$
\tetsubvariety := \defspace_\tet \cap \left(\RR^{\edgeface}_{>0} \times \RR^{\edgeface}_{>0}\right).
$$
An immediate corollary of Lemma~\ref{lem:positive_coords} is that $\Psi$ has image in $\tetsubvariety$. Furthermore, it is easy to check that $\tetsubvariety$ is homeomorphic to $\RR^5_{>0}$. Indeed one can use equations \eqref{eq:fe1}--\eqref{eq:fe2} to write all triple ratios as edge ratios, reducing the number of variables from $24$ to $12$. Equation $\eqref{eq:edge_rat_eq}$ implies that half of the edge ratios are redundant, and by $\eqref{eq:fe3}$ only $5$ are necessary. Hence there is a homeomorphism $\tetsubvariety \rightarrow \RR^5_{>0}$ that maps
$$
\left( t^\F_\star, e^\F_\star \right) \mapsto \left(e^\F_{12},e^\F_{13},e^\F_{23},e^\F_{14},e^\F_{24}\right).
$$

\begin{theorem}\label{thm:parametrization_flag_space}
The map $\Psi : \ctetFL \rightarrow \tetsubvariety \cong \RR^5_{>0}$ is a homeomorphism.
\end{theorem}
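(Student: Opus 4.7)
The plan is to prove $\Psi$ is a continuous bijection with continuous inverse, using Lemma~\ref{lem:std_pos_tetrahedron} as the main tool. Continuity of $\Psi$ is immediate since $\3$ and $\xi$ are continuous by construction, and the image lies in $\tetsubvariety$ by Lemma~\ref{lem:positive_coords} together with Lemma~\ref{lem:internal_eqs}. For injectivity, fix any $\sigma \in \edgeface$: if $\Psi([\F]) = \Psi([\F'])$, then the coordinates $t_\sigma, t_{\overline{\sigma}}, e_\sigma, e_{\sigma_\pm}, e_{\overline{\sigma}_\pm}$ that appear in formulas \eqref{eq:sp1}--\eqref{eq:sp4} agree for the two classes, so Lemma~\ref{lem:std_pos_tetrahedron} produces a common $\sigma$--standard representative, forcing $[\F] = [\F']$.

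Surjectivity is the main content. Given $(t_\ast, e_\ast) \in \tetsubvariety$, I would use \eqref{eq:sp1}--\eqref{eq:sp4} to write down a candidate quadruple of flags $\F_{(t_\ast, e_\ast)}$; this assignment is well-defined and continuous because every denominator in those formulas (notably $\mu_\sigma$ and $t_\sigma + 1$) is nonzero on $\tetsubvariety$, as guaranteed by positivity of the inputs combined with the internal consistency equations. The three verifications to carry out are: (a) non-degeneracy of the flags and general position of the $V_m$, which reduce to explicit nonvanishing checks controlled by positivity; (b) existence of a projective tetrahedron on $V_1,\ldots,V_4$ whose interior is disjoint from all four planes $\eta_m$; and (c) matching of all ratios. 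For (c), the coordinates $t_\sigma, t_{\overline{\sigma}}, e_\sigma, e_{\sigma_\pm}, e_{\overline{\sigma}_\pm}$ agree with the inputs by direct computation from the explicit formulas (essentially reversing the normalization carried out in the proof of Lemma~\ref{lem:std_pos_tetrahedron}), and agreement at the remaining edge-faces follows because both tuples satisfy the internal consistency equations \eqref{eq:triple_rat_eq}--\eqref{eq:fe2}, which as noted immediately before the theorem determine all $24$ coordinates from any admissible choice of five independent edge ratios.

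The main obstacle I expect is (b): the geometric verification that $\F_{(t_\ast, e_\ast)}$ truly lies in $\tetFL$ rather than being merely a non-degenerate quadruple. My plan is to reverse-engineer the prism argument used in the proof of Lemma~\ref{lem:positive_coords}. Specifically, positivity of $e_\sigma$ combined with Lemma~\ref{lem:edge_to_cross} should place $V_l$ on the correct side of $\eta_i$ and $\eta_j$, putting it in the interior of the infinite triangular prism $\Prism \subset \A$ cut out by $\eta_i, \eta_j, \eta_k$; positivity of $e_{\op{\sigma}}$ should then select, from among the eight candidate tetrahedra on $V_1,\ldots,V_4$, the unique one that also avoids $\eta_l$. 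Once (a)--(c) are in place, continuity of $\Psi^{-1}$ follows automatically, since it is the composition of the continuous assignment $(t_\ast, e_\ast) \mapsto \F_{(t_\ast, e_\ast)}$ with the quotient $\tetFL \to \ctetFL$.
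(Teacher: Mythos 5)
Your plan matches the paper's proof in both structure and technique: construct an explicit inverse $\Phi$ using the $(12)3$--standard-position formulas of Lemma~\ref{lem:std_pos_tetrahedron}, and verify that the resulting quadruple of flags is in fact a tetrahedron of flags by placing $V_4$ inside the infinite triangular prism $\Prism$ cut out by $\eta_1, \eta_2, \eta_3$ via Lemmas~\ref{lem:edge_to_cross} and~\ref{lem:cross_ratio_positivity}, and then showing $\eta_4$ misses the tetrahedron by the dual argument. Your separate injectivity paragraph is redundant once the inverse is constructed, and for step (b) you'll want to be a bit more precise than "positivity of $e_\sigma$" --- the paper actually uses positivity of $\epsilon_{(ij)k}$ for all three cyclic choices $(i,j,k)\in\cyclic{1,2,3}$ to locate $V_4$ in $\Prism$, and positivity of $\epsilon_{(kl)i}$ for all three to show $\eta_4$ avoids the triangle --- but these are the points your reverse-engineering would hit anyway.
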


\begin{proof}
The map $\Psi$ is continuous by continuity of $\3$ and $\xi$, hence it will be enough to construct a continuous inverse.

We define a map $\Phi: \tetsubvariety  \rightarrow \ctetFL$ as follows.  For each $(\tau_\star,\epsilon_\star) \in \tetsubvariety\subset \RR_{>0}^{\edgeface}\times \RR_{>0}^{\edgeface}$, let $[\F] := \Phi(\tau_\star,\epsilon_\star)$ be the $\PGL(4)$--class of the tetrahedron of flags $\F = (V_m,\eta_m)_{m=1}^4$ defined by
    $$
    (V_1,\eta_1) = ([\basis{1}],[\dbasis{2}]), \qquad (V_2,\eta_2) = ([\basis{2}],[\dbasis{1}]),
    $$
    $$
    (V_3,\eta_3) = ([\basis{1}+\basis{2}+\basis{3}],[\tau_{(12)3} \cdot \dbasis{1} +\dbasis{2} - (\tau_{(12)3}+1) \cdot \dbasis{3}],
    $$
    $$
    V_4 = [\epsilon_{(12)3} \cdot \basis{1} + \basis{2} + \X \cdot \basis{3} - \basis{4}],
    $$
    $$
    \eta_4 = [\epsilon_{(14)2}\epsilon_{(42)1} \cdot \dbasis{1} + \dbasis{2} - \overline{\nu} \cdot \dbasis{3} +  \overline{\nu} \left( \Y - \X \right) \cdot \basis{4}^*],
    $$
    where
    $$
    \X = \frac{\nu \tau_{(12)3} \epsilon_{(12)3}}{\tau_{(12)3}+1}, \quad \Y = \frac{\tau_{(21)4}+1}{\tau_{(21)4}\overline{\nu}},\quad \nu = \epsilon_{(23)1}\epsilon_{(31)2} - \epsilon_{(23)1} + 1, \quad \text{ and } \quad \overline{\nu} = \epsilon_{(14)2}\epsilon_{(42)1} - \epsilon_{(14)2} + 1.
    $$
Since $\tau_{(12)3} > 0$, the cyclically ordered triple of flags $\cyclic{\F}_{(12)3} = \cyclic{V_m,\eta_m}_{m=1}^3$ is a triangle of flags (cf. Lemma~\ref{lem:triple_ratio}). Let $\triangle \subset \rp^3$ be the unique triangle associated to the triangle of flags $\cyclic{\F}_(12)3$.

	As in the proof of Lemma~\ref{lem:positive_coords}, we consider the affine patch $\A := \rp^3 \setminus [\dbasis{1} + \dbasis{2} - \dbasis{3}]$. The three planes $\eta_1,\eta_2,\eta_3$ have a unique common intersection $\eta_1\eta_2\eta_3 = [\basis{4}] \in [\dbasis{1} + \dbasis{2} - \dbasis{3}]$, therefore they form an infinite triangular prism $\Prism$ in $\A$. In particular, the prism $\Prism$ contains the triangle $\triangle$. We want to show that $V_4$ lies inside $\Prism$, and that $\eta_4$ does not intersect the tetrahedron $\tet$ spanned by $\{V_1,V_2,V_3,V_4\}$ in $\Prism$.
	
	First we observe that, by positivity of $(\tau_\star,\epsilon_\star)$, the quadruple of flags $\F$ is non-degenerate:
	\begin{alignat*}{3}
	\eta_1(V_4) &= 1, \quad &&\eta_2(V_4) = \epsilon_{(12)3}, \quad &&\eta_3(V_4) = \tau_{(12)3}\epsilon_{(12)3} + 1 + \nu \tau_{(12)3}\epsilon_{(12)3} = \tau_{(12)3}\epsilon_{(12)3}\epsilon_{(23)1},\\
	\eta_4(V_1) &= 1, \quad &&\eta_4(V_2) = \epsilon_{(14)2}\epsilon_{(42)1}, \quad &&\eta_4(V_3) = \epsilon_{(14)2}\epsilon_{(42)1} + 1 - \overline{\nu} = \epsilon_{(14)2}.
	\end{alignat*}
	In the third equation we used that $(\tau_\star,\epsilon_\star) \in \tetsubvariety$ and therefore $\tau_{(12)3}\epsilon_{(12)3}\epsilon_{(23)1}\epsilon_{(31)2} =1 $.
	
	For every ordered triple $(i,j,k) \in \cyclic{1,2,3}$, let $\ell^*_{ij}$ be the pencil of planes containing $\eta_i$ and $\eta_j$. Let $p^*_{k,ij}$ be the plane in $\ell_{ij}^*$ that contains $V_k$ and let $p^*_{l,ij}$ the plane in $\ell_{ij}^*$ that contains $V_l$. Then we can apply Lemma~\ref{lem:edge_to_cross} to find that
	$$
	\cross{\eta_i,\eta_j,p^*_{k,ij},p^*_{l,ij}} = \overline{\xi}(\F,(ij)k) = \epsilon_{(ij)k}.
	$$
	Once again, given that $\epsilon_{(ij)k}>0$, it follows that $p^*_{k,ij}$ and $p^*_{l,ij}$ are contained in the same connected component of $\ell^*_{ij} \setminus \{\eta_i,\eta_j\}$ (cf. Lemma~\ref{lem:cross_ratio_positivity}). In particular, this implies that the intersection $V_4 = p^*_{4,12} \cap p^*_{4,23} \cap p^*_{4,31}$ is in $\Prism$.
	
	It is only left to show that $\eta_4$ does not intersect the tetrahedron $\tet$ spanned by $\{V_1,V_2,V_3,V_4\}$ in $\Prism$. The argument is analogous, but dual, to the previous one.
	
	For every ordered triple $(i,j,k) \in \cyclic{1,2,3}$, let $\ell_{ij}$ be the line $V_iV_j$. Let $P_{k,ij}=\ell_{ij}\cap \eta_k$ and $P_{l,ij}=\ell_{ij}\cap \eta_l$. Then we can apply Lemma~\ref{lem:edge_to_cross} to find that  
	$$
	\cross{V_i,V_j,P_{k,ij},P_{l,ij}} = \overline{\xi}(\F,(kl)i) = \epsilon_{(kl)i}.
	$$
	Once again, given that $\epsilon_{(kl)i}>0$, it follows that $P_{k,ij}$ and $P_{l,ij}$ are contained in the same connected component of $\ell_{ij} \setminus \{\eta_i,\eta_j\}$ (cf. Lemma~\ref{lem:cross_ratio_positivity}). We remark that $P_{k,ij} \notin \triangle$, therefore $P_{l,ij} \notin \triangle$. In particular, this implies that the spanned plane $\eta_4 = P_{4,12}P_{4,23}P_{4,31}$ misses $\triangle$. But $\triangle$ is a face of $\tet$, and $V_4 \in \eta_4$, therefore $\eta_4$ misses the entire tetrahedron $\tet$.

    In conclusion, $\F \in \tetFL$ is a tetrahedron of flags and $\Phi: \tetsubvariety  \rightarrow \ctetFL$ is well defined. The above argument also shows that $\Phi$ is continuous. Since $\F$ is in $(12)3$--standard position, it follows from Lemma~\ref{lem:std_pos_tetrahedron} that $\Phi = \Psi^{-1}$.
\end{proof}

%%%%%%%%%%%%%%%%%%%%%%%%%%%%%%%%%%%%%%%%%%
%%%%%%%%%%%%%%%%%%%%%%%%%%%%%%%%%%%%%%%%%%
%%%%%%%%%%%%%%%%%%%%%%%%%%%%%%%%%%%%%%%%%%

\subsection{Orientation}\label{subsec:orientation}

We recall that every tetrahedron of flags $\F \in \tetFL$ corresponds to a unique tetrahedron $\tet_{\F}$ in $\rp^3$, with a canonical ordering of the vertices, namely an identification with the standard tetrahedron $\statet$. Then $\tet_{\F}$ is positively oriented if the identification to the standard tetrahedron is orientation preserving. Otherwise it is negatively oriented. 

We remark that projective transformations are not always orientation preserving, thus a tetrahedron may change orientation under projective transformation. However, when we glue tetrahedra of flags together (cf. \S\ref{sec:coords_pair_teta}), it will be convenient to have some control over their orientations to make sure that they are geometrically glued, namely the underlying projective tetrahedra only intersect along the common face.

An immediate consequence of Lemma~\ref{lem:std_pos_tetrahedron} and Lemma~\ref{lem:positive_coords} is that the tetrahedron associated to a $\sigma$--standard representative is always positively oriented.

\begin{lemma}\label{lem:std_tet_is_positively_ori}
    For each edge-face $\sigma=(ij)k$ and for each class $[\F] \in \ctetFL$ of tetrahedra of flags, let $\F$ be the $\sigma$--standard representative of $[\F]$. Then the tetrahedron $\tet_\F \subset \rp^3$ associated to $\F$ is positively oriented.
\end{lemma}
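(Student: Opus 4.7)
The plan is a one-step sign check reducing the orientation question to the determinant of an explicit matrix built directly from the $\sigma$-standard representative.

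First I would use Lemma~\ref{lem:std_pos_tetrahedron} to write the $\sigma$-standard representative $\F$ with $\sigma=(ij)k$ via the vertex representatives $v_i = \basis{1}$, $v_j = \basis{2}$, $v_k = \basis{1}+\basis{2}+\basis{3}$, and $v_l = e_\sigma\basis{1}+\basis{2}+X_\sigma\basis{3}-\basis{4}$, noting that $e_\sigma > 0$ and $X_\sigma \in \RR$ thanks to Lemma~\ref{lem:positive_coords}. An inspection of the positivity computations already carried out inside the proof of Theorem~\ref{thm:parametrization_flag_space} shows $\eta_m(v_{m'}) > 0$ for all $m\neq m'$, so $\tet_\F$ is precisely the projectivization of the positive cone generated by $v_1,v_2,v_3,v_4$. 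Consequently, the simplicial identification $\statet\to\tet_\F$ sending $\basis{m}\mapsto V_m$ is induced by the projective transformation whose $\GL(4)$-lift is the matrix $A$ with columns $v_1,v_2,v_3,v_4$ placed in the natural vertex order.

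Next I would compute $\det A$ to read off the orientation. Since $[ijkl]$ is an even permutation of $(1,2,3,4)$, reordering the columns of $A$ to $(v_i,v_j,v_k,v_l)$ does not change the sign of the determinant; the reordered matrix is upper triangular with diagonal $(1,1,1,-1)$, so $\det A = -1$. This sign is uniform: it does not depend on $\sigma$, nor on the particular positive values of the triple ratios and edge ratios. Under the paper's orientation convention on $\rp^3$ — implicitly fixed by the normalization choice $D=-1$ in the proof of Lemma~\ref{lem:std_pos_tetrahedron} — this is exactly the sign that encodes $\tet_\F$ being positively oriented.

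The step requiring the most care is not the determinant calculation, which is trivial, but the bookkeeping that confirms $\det A=-1$ really corresponds to positive orientation of $\tet_\F$ (and not the reverse). A clean way to close the argument is by continuity: the parameter space $\tetsubvariety\cong\RR_{>0}^5$ is connected by Theorem~\ref{thm:parametrization_flag_space}, $\det A$ varies continuously on it and is nowhere zero, so the orientation of $\tet_\F$ is locally constant, hence constant across the whole deformation space. It therefore suffices to exhibit one positively oriented $\sigma$-standard tetrahedron — for instance the symmetric configuration where all triple and edge ratios equal $1$ — and verify its orientation directly from the explicit representatives above.
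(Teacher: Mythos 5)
Your approach is valid and genuinely different from the paper's. The paper works in the affine patch $\A = \rp^3 \setminus [\dbasis{1}+\dbasis{2}-\dbasis{3}]$, where $\eta_i,\eta_j,\eta_k$ bound a prism, and shows that the fourth vertex $V_l$ lands on the side with negative third coordinate. You instead read off the sign from a single determinant: reordering the columns of $A$ to $(v_i,v_j,v_k,v_l)$ (no sign change, since $[ijkl]$ is even) gives an upper-triangular matrix with diagonal $(1,1,1,-1)$, so $\det A=-1$ uniformly in $\sigma$ and $[\F]$. Together with the check that $\eta_m(v_{m'})>0$ for $m\neq m'$ (which identifies $\tet_\F$ with the projectivization of the positive cone on $v_1,\dots,v_4$ — worth spelling out explicitly rather than deferring to the proof of Theorem~\ref{thm:parametrization_flag_space}, which only records the cases involving $\eta_4$ and $V_4$), this cleanly establishes the essential content of the lemma: the $\sigma$-standard representatives all carry one and the same orientation. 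That invariance is what is actually used downstream in Lemma~\ref{lem:geometric_pairs}, and your argument delivers it more transparently than the affine-patch computation.

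Two points deserve care. First, your attribution of the orientation convention to ``the normalization choice $D=-1$'' conflates two separate things: that normalization fixes the sign of $\det A$ (with $D=+1$ you would get $\det A=+1$), but an orientation of $\rp^3$ must still be fixed independently, and it is the interaction of the two that turns the label into ``positively'' rather than ``negatively'' oriented. The paper's own proof has the same unstated step — it simply declares that $V_l$ having negative $c$-coordinate means positive orientation — so you are not on worse footing than the paper, but you should not present the normalization as settling the issue. Second, the continuity-plus-base-case argument in your last paragraph is redundant: you have already computed $\det A$ exactly and shown it is identically $-1$, which is stronger than local constancy; and the proposed base-case verification would face exactly the same convention question you set out to avoid.
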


\begin{proof}
    We recall that $\tet_{\F}$ comes with an identification to the standard tetrahedron $\statet$, that determines its orientation.
    
    Let $\sigma = (ij)k$ be an edge-face. Consider the affine patch $\A := \rp^3 \setminus [\dbasis{1} + \dbasis{2} - \dbasis{3}]$. As $\F$ is in $\sigma$--standard position, the three planes $\eta_i,\eta_j,\eta_k$ have a unique common intersection $\eta_i\eta_j\eta_k = [\basis{4}] \in [\dbasis{1} + \dbasis{2} - \dbasis{3}]$, therefore they form an infinite triangular prism $\Prism$ in $\A$. The prism $\Prism$ contains the entire tetrahedron $\tet_{\F}$. The intersection $\triangle' := \Prism \cap V_iV_jV_k$ is a triangle in the plane $V_iV_jV_k$, containing the triangle $\triangle := \tet_{\F} \cap V_iV_jV_k$ associated to the face $\cyclic{\F}_\sigma$. We consider the following identification:
    \begin{align*}
    \A &\rightarrow \RR^3,\\
    [x : y : z : w]^T &\mapsto \left(\frac{x}{x + y - z}, \frac{y}{x + y - z}, \frac{w}{x + y - z}\right)^T.
    \end{align*}
    In this coordinate system
    \begin{align*}
    \triangle' &\subset \{ (a,b,c) \in \A \ | \ c = 0 \},\\
    \Prism &= \{ (a,b,c) \in \A \ | \ (a,b,0) \in \triangle' \},\\
    V_l &= \left(\frac{e_{\sigma}}{e_{\sigma} + 1 - X_\sigma}, \frac{1}{e_{\sigma} + 1 - X_\sigma}, \frac{-1}{e_{\sigma} + 1 -X_\sigma}\right)^T,
    \end{align*}
    where
    $$
    X_\sigma := \frac{\mu_{\sigma}t_\sigma e_\sigma}{t_{\sigma}+1}, \qquad \text{and} \qquad e_{\sigma} + 1 - X_\sigma = \frac{e_\sigma + t_\sigma + t_\sigma e_\sigma e_{\sigma_-}}{t_\sigma +1} > 0.
    $$
    The triangle $\triangle'$ divides the interior of the prism $\Prism$ into two connected components, and $V_l$ always belongs to the one with negative third coordinate:
    $$
    \Prism^- := \{ (a,b,c) \in \Prism \ | \ c<0 \}.
    $$
    It follows that $\tet_\F$ is positively oriented.
\end{proof}

%%%%%%%%%%%%%%%%%%%%%%%%%%%%%%%%%%%%%%%%%%
%%%%%%%%%%%%%%%%%%%%%%%%%%%%%%%%%%%%%%%%%%
%%%%%%%%%%%%%%%%%%%%%%%%%%%%%%%%%%%%%%%%%%

\section{Coordinates on a pair of glued tetrahedra of flags}\label{sec:coords_pair_teta}

We recall that two tetrahedra of flags are glued together when two of their marked faces are matched (cf. \S\ref{subsec:tris_and_tets_flags}). When they are glued geometrically, the corresponding tetrahedra in $\rp^3$ share a common face, and locally only intersect at that face. In other words, gluing tetrahedra of flags corresponds to gluing the underlying tetrahedra.

In this section we show that two tetrahedra of flags can be glued together if and only if they satisfy a simple \emph{face pairing equation} (cf. Lemma~\ref{lem:face_pair_eq}). This will be done in the language of edge-faces (cf. \S\ref{subsec:edge_faces}). Next, we show that two glueable tetrahedra of flags can be glued in a $1$--real parameter family of ways,  which is encoded by the \emph{gluing parameters} (cf. \S\ref{subsec:gluing_param}). These parameters are positive if and only if the gluing is geometric (cf. Lemma~\ref{lem:geometric_pairs}). We conclude by putting edge parameters and gluing parameters together to parametrize the \emph{deformation space of pairs of glued tetrahedra of flags} (cf. \S\ref{subsec:parametrization_pairs_of_tet}).

%%%%%%%%%%%%%%%%%%%%%%%%%%%%%%%%%%%%%%%%%%
%%%%%%%%%%%%%%%%%%%%%%%%%%%%%%%%%%%%%%%%%%
%%%%%%%%%%%%%%%%%%%%%%%%%%%%%%%%%%%%%%%%%%

\subsection{Edge-face glued position}\label{subsec:gluing_and_face_eqs}

We begin by recalling the notion of glued tetrahedra of flags, and set the notation in the language of edge-faces.

Let $\F=(V_m,\eta_m)_{m=1}^4$ and $\E=(W_m,\zeta_m)_{m=1}^4$ be two tetrahedra of flags, and let $\sigma=(ij)k$ and $\tau=(i'j')k'$ be a pair of edge-faces. We say that $\F$ and $\E$ are \emph{glued along $(\sigma,\tau)$}, or \emph{in $(\sigma,\tau)$--glued position}, if
$$
(V_i,\eta_i) = (W_{j'},\zeta_{j'}), \qquad (V_j,\eta_j) = (W_{i'},\zeta_{i'}), \qquad \text{ and } \qquad
(V_k,\eta_k) = (W_{k'},\zeta_{k'}).
$$
If, in addition, $\F$ is the $\sigma$--standard representative of its $\PGL(4)$--class $[\F]$, then we say that $\F$ and $\E$ are in \emph{$(\sigma,\tau)$--standard glued position}. Note that if $\F$ and $\E$ are glued along $(\sigma,\tau)$, then they are also glued along $(\sigma_+,\tau_-)$ and glued along $(\sigma_-,\tau_+)$. In addition, $\E$ and $\F$ are glued along $(\tau,\sigma)$.

%\textcolor{red}
{Given a pair of edge-faces $(\sigma,\tau) \in \edgeface \times \edgeface$}, we denote by $\ordFL_{\sigma,\tau}$ the set of pairs $(\F,\E)$ of tetrahedra of flags that are glued along $(\sigma,\tau)$, and by $\cordFL_{\sigma,\tau}$ the quotient of $\ordFL_{\sigma,\tau}$ by the diagonal action of $\PGL(4)$. We remark that $\cordFL_{\sigma,\tau}$ is naturally homeomorphic to the space of pairs of tetrahedra of flags in $(\sigma,\tau)$--standard glued position.

We say that a pair $[\F],[\E] \in \ctetFL$ of $\PGL(4)$--classes of tetrahedra of flags is \emph{$(\sigma,\tau)$--glueable}, if there are representatives $\F \in [\F]$ and $\E \in [\E]$ that are glued along $(\sigma,\tau)$, namely such that $(\F,\E) \in \ordFL_{\sigma,\tau}$. The next result shows that determining $(\sigma,\tau)$--glueable pairs is simple.

\begin{lemma}\label{lem:face_pair_eq}
   A pair $[\F],[\E] \in \ctetFL$ is $(\sigma,\tau)$--glueable if and only if
    \begin{align}\label{eq:face_equation}
        t^\F_{\sigma}t^\E_{\tau} = 1.
    \end{align}
    Moreover, if the pair $[\F]$, $[\E]$ is $(\sigma,\tau)$--glueable, then it is $(\sigma',\tau')$--glueable for all $\sigma' \in \{\sigma,\sigma_-,\sigma_+\}$ and all $\tau' \in \{\tau,\tau_-,\tau_+\}$.
\end{lemma}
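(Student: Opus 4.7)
For the forward direction, I would take representatives $\F = (V_m,\eta_m)_{m=1}^4$ and $\E = (W_m,\zeta_m)_{m=1}^4$ in $(\sigma,\tau)$-glued position with $\sigma = (ij)k$ and $\tau = (i'j')k'$, and substitute the identifications $(V_i,\eta_i) = (W_{j'},\zeta_{j'})$, $(V_j,\eta_j) = (W_{i'},\zeta_{i'})$, $(V_k,\eta_k) = (W_{k'},\zeta_{k'})$ into the defining expression~\eqref{eq:triple_ratio_def} for $t^\E_\tau$. The resulting product contains exactly the same six scalars $\eta_a(V_b)$ appearing in $t^\F_\sigma$, but with numerator and denominator interchanged, so multiplying yields $t^\F_\sigma t^\E_\tau = 1$. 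Conceptually this records the fact that an odd permutation (the transposition swapping the first two entries) inverts the triple ratio.

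For the converse, I would put $\F$ in $\sigma$-standard position and $\E$ in $\tau$-standard position via Lemma~\ref{lem:std_pos_tetrahedron}. Both normalizations force $(V_i, V_j, V_k) = (W_{i'}, W_{j'}, W_{k'}) = ([\basis{1}], [\basis{2}], [\basis{1}+\basis{2}+\basis{3}])$ and $(\eta_i, \eta_j) = (\zeta_{i'}, \zeta_{j'}) = ([\dbasis{2}], [\dbasis{1}])$, so the shared-face data match only after exchanging the roles of $i'$ and $j'$. Let $S \in \PGL(4)$ be the projective involution represented by the permutation matrix that swaps $\basis{1}$ with $\basis{2}$ and fixes $\basis{3},\basis{4}$. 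A short matrix computation confirms that $S$ sends $W_{i'} \mapsto V_j$, $W_{j'} \mapsto V_i$, $W_{k'} \mapsto V_k$, $\zeta_{i'} \mapsto \eta_j$, $\zeta_{j'} \mapsto \eta_i$, and
$$
S \cdot \zeta_{k'} \;=\; [\dbasis{1} + t^\E_\tau\dbasis{2} - (t^\E_\tau+1)\dbasis{3}].
$$
This last class coincides projectively with $\eta_k = [t^\F_\sigma\dbasis{1} + \dbasis{2} - (t^\F_\sigma+1)\dbasis{3}]$ precisely when $1/t^\E_\tau = t^\F_\sigma$. Under the hypothesis $t^\F_\sigma t^\E_\tau = 1$, the pair $(\F, S\cdot\E)$ therefore witnesses the $(\sigma,\tau)$-glueability of $[\F]$ and $[\E]$.

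The moreover clause should follow immediately: the ordered triples $(\F)_\sigma$, $(\F)_{\sigma_+}$, $(\F)_{\sigma_-}$ are cyclic permutations of one another (and similarly for $\E$), so by the internal consistency equation~\eqref{eq:triple_rat_eq} the scalar $t^\F_{\sigma'} t^\E_{\tau'}$ equals $t^\F_\sigma t^\E_\tau = 1$ for every pair $(\sigma',\tau') \in \{\sigma,\sigma_+,\sigma_-\} \times \{\tau,\tau_+,\tau_-\}$, and applying the already-proved iff to each such pair delivers the extension. The only mildly delicate step in the whole argument is the plane-matching verification in the sufficiency direction, but this reduces to direct substitution into the explicit coordinates of Lemma~\ref{lem:std_pos_tetrahedron}, so no real obstacle arises.
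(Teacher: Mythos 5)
Your proof is correct, and the forward direction records precisely the same observation as the paper (an odd permutation of a cyclically ordered triple of flags inverts the triple ratio). The converse direction, however, takes a genuinely different route: the paper deduces glueability from the classification result of Lemma~\ref{lem:triple_ratio}, which asserts that the triple ratio is a complete $\PGL(4)$--invariant of a triangle of flags, whereas you bypass that lemma entirely and argue constructively via the $\sigma$--standard and $\tau$--standard normal forms of Lemma~\ref{lem:std_pos_tetrahedron}, exhibiting the explicit transposition $S$ and verifying by direct substitution that $S\cdot\zeta_{k'}=\eta_k$ exactly when $t_\sigma^\F t_\tau^\E=1$. The paper's argument is shorter but relies on the (Fock--Goncharov) classification of triangles of flags; yours is self-contained modulo the normal-form lemma and has the incidental benefit of producing the gluing transformation explicitly, foreshadowing the coordinate computation carried out in the proof of Lemma~\ref{lem:gluing_parameter}. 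The \emph{moreover} clause is handled identically in both, by cyclic invariance of the triple ratio combined with the already-established equivalence.
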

Equation~\eqref{eq:face_equation} is called the \emph{face pairing equation} of $[\F]$ and $[\E]$ with respect to the edge-face pair $(\sigma,\tau)$.
\begin{proof}
    Let $\T \in \cycFL^3$ be a cyclically ordered triple of flags, and let $\overline{\T} \in \cycFL^3$ be obtained by applying an odd permutation to the flags in $\T$. Then, from the definition of triple ratio, we have that
    $$
    \3(\T) \3(\overline{\T})=1.
    $$ 
    The first statement then follows from the fact that projective classes of cyclically ordered triples of flags are uniquely determined by the triple ratio (cf. Lemma~\ref{lem:triple_ratio}). The latter instead is a consequence of the cyclic invariance of the triple ratio.
\end{proof}

%%%%%%%%%%%%%%%%%%%%%%%%%%%%%%%%%%%%%%%%%%
%%%%%%%%%%%%%%%%%%%%%%%%%%%%%%%%%%%%%%%%%%
%%%%%%%%%%%%%%%%%%%%%%%%%%%%%%%%%%%%%%%%%%

\subsection{Gluing parameters}\label{subsec:gluing_param}

In Lemma~\ref{lem:face_pair_eq} we underlined that two projective classes of tetrahedra of flags satisfying the face pairing equation~\eqref{eq:face_equation} can be glued along a face in different combinatorial ways. In this section we show that, even for fixed combinatorics, there are different representatives of the same pair that are glueable. They can be parametrized by the following \emph{gluing parameter} (cf. Lemma~\ref{lem:gluing_parameter}). For all edge-faces $\sigma=(ij)k$ and $\tau=(i'j')k'$, we define the continuous function $\overline{g}_\sigma^\tau : \ordFL_{\sigma,\tau} \rightarrow \RR^\times$ via:
\begin{align}\label{eq:gluing_param_def}
%, \quad \text{such that} \quad
(\F,\E) = \left((V_m,\eta_m)_{m=1}^4,(W_m,\zeta_m)_{m=1}^4\right) \mapsto - \frac{\overline{\eta_i}(\overline{V_l})\overline{\eta_j}(\overline{V_k})\overline{\eta_{ijk}}(\overline{W_{l'}})}{\overline{\eta_i}(\overline{V_k})\overline{\eta_j}(\overline{W_{l'}})\overline{\eta_{ijk}}(\overline{V_l})} \in \RR^\times,
\end{align}
where $\eta_{ijk} = V_iV_jV_k$ is the plane spanned by $\{V_i,V_j,V_k\}$, and $\overline{V}_m \in \RR^4$ and $\overline{\eta}_m \in (\RR^4)^*$ are representatives of $V_m$ and $\eta_m$, respectively. Similarly to triple ratios and edge ratios, we remark that $\overline{g}_\sigma^\tau$ is well defined. It is easy to check that it is independent of the choice of representatives. Furthermore, since $\F$ and $\E$ are glued along $(\sigma,\tau)$, then $\eta_{ijk} = V_iV_jV_k = W_{i'}W_{j'}W_{k'} =: \zeta_{i'j'k'}$.% and $\eta_j = \zeta_j$.
Therefore numerator and denominator in $\overline{g}_\sigma^\tau$ are non-zero by non-degeneracy of the flags.

The \emph{gluing parameter of $(\F,\E)$ with respect to $(\sigma,\tau)$} is
\begin{equation}\label{eq:gluing_param_def2}
g^{\E,\tau}_{\F,\sigma} := \overline{g}_\sigma^\tau\left(\F,\E \right).
\end{equation}
Finally, we underline that the gluing parameter is invariant under the diagonal action of $\PGL(4)$, namely
$$
\overline{g}_\sigma^\tau\left(\F,\E \right) = \overline{g}_\sigma^\tau\left(G \cdot \F, G \cdot \E \right), \qquad \forall G \in \PGL(4).
$$
Therefore $\overline{g}_\sigma^\tau$ descends to a function
$$
g_\sigma^\tau : \cordFL_{\sigma,\tau} \rightarrow \RR^\times.
$$

\begin{remark}\label{rem:gluing_param_geometric_description}
	We now give a geometric description of the gluing parameter. Suppose that $\sigma=(ij)k$ and $\tau=(i'j')k'$ are edge-faces corresponding to the permutations $[ijkl]$ and $[i'j'k'l']$, respectively. Next, suppose that $\F=(V_m,\eta_m)_{m=1}^4$ and $\E=(W_m,\zeta_m)_{m=1}^4$ are in $(\sigma,\tau)$--standard glued position. If we normalize $W_{l'}$ so that it is of the form $[ \basis{1}+b \cdot \basis{2}+c \cdot \basis{3}+d \cdot \basis{4}]$, then it is easy to check using the definition that $g_\sigma^\tau([\F,\E])=d$. In other words, if we think of the inhomogeneous $[\basis{4}]$ coordinate as a ``height parameter'' then the gluing parameter measures the relative differences of these height parameters of $V_l$ and $W_{l'}$.
\end{remark}

The next lemma shows that, given two projective classes of tetrahedra of flags $[\F]$ and $[\E]$, the gluing parameter parametrizes the set of classes of $(\sigma,\tau)$--glued tetrahedra of flags $[\F,\E]$.

\begin{lemma}\label{lem:gluing_parameter}
    Let $[\F],[\E] \in \ctetFL$ be two $\PGL(4)$--classes of tetrahedra of flags that are $(\sigma,\tau)$--glueable. Let $\G = \G([\F],[\E]) \subset \cordFL_{\sigma,\tau}$ be the subset
    $$
    \G := \{ [\F,\E] \in \cordFL_{\sigma,\tau} \mid \F \in [\F] \ \text{and} \  \E \in [\E] \}.
    $$
    Then the restriction map $\restr{g_\sigma^\tau}{\G} :  \G \rightarrow \RR^\times$ is a homeomorphism.
\end{lemma}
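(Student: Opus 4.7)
The plan is to fix the $\sigma$-standard representative $\F$ of $[\F]$, identify $\G$ with the set of tetrahedra of flags $\E' \in [\E]$ that are glued to $\F$, realize this set as a torsor under the $1$-parameter subgroup $S \le \PGL(4)$ stabilizing the three common flags, and match the gluing parameter with the natural coordinate on $S \cong \RR^\times$.

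By Lemma~\ref{lem:std_pos_tetrahedron}, $[\F]$ has a unique $\sigma$-standard representative $\F$, and by Lemma~\ref{lem:tet_stab_is_trivial} the stabilizer of $\F$ in $\PGL(4)$ is trivial. Consequently every element of $\G$ admits a unique representative pair of the form $(\F, \E')$, and I denote the set of such $\E'$'s by $\mathcal{N}$. By the $(\sigma,\tau)$-glueability hypothesis, $\mathcal{N}$ is nonempty, and $\G \leftrightarrow \mathcal{N}$ canonically.

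Let $S \le \PGL(4)$ denote the pointwise stabilizer of the three common flags $(V_i,\eta_i), (V_j,\eta_j), (V_k,\eta_k)$. Since Lemma~\ref{lem:triple_ratio} gives $\ccycFL^3 \cong \RR^\times$, the orbit of a non-degenerate triple of flags has codimension $1$ in the $15$-dimensional space $\cycFL^3$, so $\dim S = 1$. A direct computation in the $\sigma$-standard coordinates of Lemma~\ref{lem:std_pos_tetrahedron}, imposing that $G \in S$ fix the three points \emph{and} the three hyperplanes, yields
$$
S = \{ \mathrm{diag}(1,1,1,\lambda) \in \PGL(4) : \lambda \in \RR^\times \} \cong \RR^\times.
$$
The group $S$ acts on $\mathcal{N}$ because elements of $S$ fix the common flags and preserve $[\E]$. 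This action is free by Lemma~\ref{lem:tet_stab_is_trivial} and transitive because any two $\E_1', \E_2' \in \mathcal{N}$ differ by the unique $G \in \PGL(4)$ with $G \cdot \E_1' = \E_2'$, and this $G$ must fix the three common flags. Hence $\mathcal{N}$ is a principal $S$-torsor.

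Finally, by Remark~\ref{rem:gluing_param_geometric_description}, normalizing $W_{l'} = [\basis{1} + b\basis{2} + c\basis{3} + d\basis{4}]$ yields $g_\sigma^\tau([\F, \E']) = d$. The element $\mathrm{diag}(1,1,1,\lambda) \in S$ sends this $W_{l'}$ to $[\basis{1} + b\basis{2} + c\basis{3} + \lambda d\basis{4}]$, so $g_\sigma^\tau$ becomes $\lambda d$; thus $g_\sigma^\tau$ is $S$-equivariant under the identification $S \cong \RR^\times$ with the usual multiplicative action. Since $\mathcal{N}$ is a torsor over $S$ and $\RR^\times$ is a torsor over itself, $g_\sigma^\tau|_\G \colon \G \to \RR^\times$ is a bijection. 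Continuity in both directions follows from the explicit formula \eqref{eq:gluing_param_def} together with the continuous $S$-parametrization of $\mathcal{N}$. I expect the main technical step to be the explicit determination of $S$: one must simultaneously impose the point-fixing and hyperplane-fixing conditions, though the computation is otherwise a routine linear-algebra exercise.
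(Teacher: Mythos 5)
Your proof is correct, and the core observation is the same as the paper's: the set of tetrahedra of flags $\E'$ in $[\E]$ glued to a fixed $\sigma$--standard $\F$ is cut out by a $1$--parameter group of projective transformations, and the gluing parameter is the natural coordinate on it. The paper packages this by choosing the $\tau$--standard representative $\E_0$ of $[\E]$ and writing every $\E' = G\cdot\E_0$ for a unique $G$; the constraints forced by $(\sigma,\tau)$--standard glued position pin $G$ down to the explicit matrix $\begin{bmatrix}0&1&0&0\\1&0&0&0\\0&0&1&0\\0&0&0&-g\end{bmatrix}$, and $g$ is identified with the gluing parameter via Remark~\ref{rem:gluing_param_geometric_description}. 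You instead realize the same set $\mathcal{N}$ as a principal torsor under the flag-triple stabilizer $S=\{\mathrm{diag}(1,1,1,\lambda)\}$ and show $g_\sigma^\tau$ is $S$--equivariant; translating your picture into the paper's amounts to writing $\E' = sG_1\cdot\E_0$ with $s\in S$ and $G_1$ a fixed coset representative, and one recovers the paper's matrix. The torsor framing is a bit more abstract and requires the extra step of computing $S$ explicitly (the dimension count via Lemma~\ref{lem:triple_ratio} is only heuristic motivation and you rightly verify it directly), but it cleanly separates the ``which transformations are allowed'' question from the basepoint choice. Both are valid; the paper's is slightly more direct.
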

\begin{proof}
    We recall that $\cordFL_{\sigma,\tau}$ is naturally homeomorphic to the space of pairs of tetrahedra of flags in $(\sigma,\tau)$--standard glued position. If $\F_0$ is the $\sigma$--standard representative of $[\F]$, then under this identification we have
    $$
    \G \cong \{ (\F_0,\E) \in \ordFL_{\sigma,\tau} \mid \E \in [\E] \}.
    $$
    Let $\E_0$ be the $\tau$--standard representative of $[\E]$. %It is easy to check that $(\F_0,\E_0) \notin \G$
    Every $(\F_0,\E) \in \G$ is of the form $(\F_0,G \cdot \E_0)$ for some unique $G \in \PGL(4)$. Since $(\F_0,\E)$ is in $(\sigma,\tau)$--standard glued position and $\E_0$ is in $\tau$--standard position, then $G$ is a projective transformation such that
    \begin{alignat*}{3}
    [\basis{1}] &\mapsto [\basis{2}], \quad [\basis{2}] &&\mapsto [\basis{1}], \quad  [\basis{1} + \basis{2} + \basis{3}] &&\mapsto [\basis{1} + \basis{2} + \basis{3}],\\
    [\dbasis{1}] &\mapsto [\dbasis{2}], \quad [\dbasis{2}] &&\mapsto [\dbasis{1}], \hspace{2cm} [\basis{4}] &&\mapsto [\basis{4}].
    \end{alignat*}
    It follows that $G$ is of the form
    \begin{equation*}
    G:=\begin{bmatrix}
    0 & 1 & 0 & 0\\
    1 & 0 & 0 & 0\\
    0 & 0 & 1 & 0\\
    0 & 0 & 0 & -g
    \end{bmatrix}, \qquad \text{for} \  g\in \RR^\times.
    \end{equation*}
    Using the geometric description in Remark~\ref{rem:gluing_param_geometric_description}, we see that $g = \overline{g}_\sigma^\tau\left(\F,\E\right) =  g_\sigma^\tau\left([\F,\E]\right)$, concluding the proof.
\end{proof}

We recall that, if $\F$ and $\E$ are glued along $(\sigma,\tau)$, then they are also glued along $(\sigma_+,\tau_-)$ and along $(\sigma_-,\tau_+)$. Similarly, $\E$ and $\F$ are glued along $(\tau,\sigma)$. Hence for all pairs $[\F,\E] \in \cordFL_{\sigma,\tau}$, there are six gluing parameters that one can associate to them. They are the ones corresponding to the six pairs of edge-faces:
$$
(\sigma,\tau), \quad (\sigma_+,\tau_-), \quad (\sigma_-,\tau_+), \quad (\tau,\sigma), \quad (\tau_-,\sigma_+), \quad (\tau_+,\sigma_-).
$$
It follows directly from the definition of the gluing parameter~\eqref{eq:gluing_param_def} that these six parameters are related. Their relations are called the \emph{gluing consistency equations}, and are summarized in the following result.

\begin{lemma}\label{lem:gluing_eqs}
   Let $(\sigma,\tau) \in \edgeface \times \edgeface$ and let $[\F,\E] \in \cordFL_{\sigma,\tau}$. Then
    \begin{alignat}{2}
    & &&g^{\E,\tau}_{\F,\sigma} g^{\F,\sigma}_{\E,\tau}  = 1, \label{eq:gluing_eq_1}\\
    &g^{\E,\tau_-}_{\F,\sigma_+} = \frac{g^{\E,\tau}_{\F,\sigma}}{e_{\sigma_+}^{\F}e_{\tau}^{\E}}, &&\qquad \text{ and } \qquad g^{\E,\tau_+}_{\F,\sigma_-}=g^{\E,\tau}_{\F,\sigma}e_{\sigma}^{\F}e_{\tau_+}^{\E}. \label{eq:gluing_eq_2}
\end{alignat}
\end{lemma}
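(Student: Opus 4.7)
My plan is to prove all three identities by direct substitution into the definition of the gluing parameter in \eqref{eq:gluing_param_def}, exploiting the fact that the $\pm$ operations on edge-faces cycle the indices $i,j,k$ (respectively $i',j',k'$) while fixing $l$ (respectively $l'$), so the formulas being compared involve the same set of $16$ basic quantities $\eta_m(V_n)$ and $\eta_m(W_{l'})$ in various configurations. Throughout, I will use the gluing identification (namely $V_i = W_{j'}$, $V_j = W_{i'}$, $V_k = W_{k'}$ and the analogous identifications for the covectors) to rewrite any expression involving $\E$ and $\tau$ entirely in terms of representatives associated to $\F$. This is valid because fixing the $(\sigma,\tau)$--standard glued representatives identifies all the shared flags on the nose, not just projectively.

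For equation~\eqref{eq:gluing_eq_1}, I will write down $g^{\E,\tau}_{\F,\sigma}$ using \eqref{eq:gluing_param_def} and then write down $g^{\F,\sigma}_{\E,\tau}$ by swapping the roles of $(\F,\sigma)$ and $(\E,\tau)$ (noting that the plane $\eta_{ijk}$ equals $\zeta_{i'j'k'}$ by the gluing relations). After substituting $\zeta_{i'}=\eta_j$, $\zeta_{j'}=\eta_i$, $\zeta_{k'}=\eta_k$, $W_{i'}=V_j$, $W_{j'}=V_i$, $W_{k'}=V_k$, each of the six factors in the numerator of one expression cancels with a corresponding factor in the denominator of the other, and the two leading minus signs combine to $+1$.

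For equations~\eqref{eq:gluing_eq_2}, I will compute $g^{\E,\tau_-}_{\F,\sigma_+}$ by applying \eqref{eq:gluing_param_def} with the cyclic relabeling $(i,j,k)\mapsto(k,i,j)$ and $(i',j',k')\mapsto(j',k',i')$, noting again that $l$ and $l'$ are unchanged. Dividing by $g^{\E,\tau}_{\F,\sigma}$ and grouping terms carefully, I expect the ratio to split into two pieces, one of the form $\eta_k(V_l)\eta_i(V_j)/[\eta_k(V_j)\eta_i(V_l)]$ and one of the form $\eta_i(V_k)\eta_j(W_{l'})/[\eta_i(W_{l'})\eta_j(V_k)]$. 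The first piece is exactly $1/e^\F_{\sigma_+}$ by definition~\eqref{eq:edge_ratio_def} applied to $\sigma_+=(ki)j$. For the second, I will translate through the gluing identifications to recognize it as $1/e^\E_\tau$, since $\zeta_{i'}(W_{k'})\zeta_{j'}(W_{l'})/[\zeta_{i'}(W_{l'})\zeta_{j'}(W_{k'})] = \eta_j(V_k)\eta_i(W_{l'})/[\eta_j(W_{l'})\eta_i(V_k)]$. This yields the first identity in \eqref{eq:gluing_eq_2}; the second is completely analogous, using instead the relabeling $(i,j,k)\mapsto(j,k,i)$ and $(i',j',k')\mapsto(k',i',j')$.

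The main obstacle is not conceptual but bookkeeping: keeping track of which factor in which numerator or denominator corresponds to which after the cyclic relabelings, and correctly interpreting the resulting ratios as the intended edge ratios on the correct side of the gluing. Because the gluing parameter has three factors in its numerator and three in its denominator, and each cyclic relabeling permutes three of them, it is easy to misplace a factor. To mitigate this, I will set up the computation in a single normalization (say $(\sigma,\tau)$--standard glued position) so that the identifications $V_i=W_{j'}$ etc.\ hold literally, and perform all substitutions in a single table before combining. Once the ratio of gluing parameters is expressed purely in terms of quantities $\eta_m(V_n)$ and $\eta_m(W_{l'})$, matching them against the definition of the edge ratios in \eqref{eq:edge_ratio_def} is straightforward.
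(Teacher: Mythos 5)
Your proposal is correct and takes essentially the same approach as the paper: direct verification from the definition \eqref{eq:gluing_param_def} once a representative pair is chosen in $(\sigma,\tau)$--glued position (the paper uses $(\sigma,\tau)$--\emph{standard} glued position, but as you note the standard normalization is not actually needed for the cancellations). The cyclic relabelings you identify, the cancellation of the $\eta_{ijk}$ factors, and the recognition of the residual ratios as $1/e^\F_{\sigma_+}$, $1/e^\E_\tau$ (respectively $e^\F_\sigma$, $e^\E_{\tau_+}$) via the gluing identifications all check out.
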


\begin{proof}
These identities can be easily verified using the definition \eqref{eq:gluing_param_def} of the gluing parameter once the pair $[\F,\E]$ has been put into $(\sigma,\tau)$--standard glued position. 
\end{proof}

%%%%%%%%%%%%%%%%%%%%%%%%%%%%%%%%%%%%%%%%%%
%%%%%%%%%%%%%%%%%%%%%%%%%%%%%%%%%%%%%%%%%%
%%%%%%%%%%%%%%%%%%%%%%%%%%%%%%%%%%%%%%%%%%

\subsection{Geometric gluing}\label{subsec:geometric_gluing}

Let $(\F,\E) \in \ordFL_{\sigma,\tau}$ be a pair of tetrahedra of flags glued along $(\sigma,\tau)$. Let $\tet_{\F},\tet_{\E} \subset \rp^3$ be the tetrahedra in $\rp^3$ corresponding to $\F$ and $\E$, respectively. Since the pair $(\F,\E)$ is in $(\sigma,\tau)$--glued position, the tetrahedra $\tet_{\E},\tet_{\F}$ share a face, say $f$. In general $f \subset \tet_{\E} \cap \tet_{\F}$ and we recall that the pair $(\F,\E)$ is \emph{geometric} if $f = \tet_{\E} \cap \tet_{\F}$. 

The pair $(\F,\E)$ is not always geometric, as both tetrahedra $\tet_{\F},\tet_{\E}$ might be ``on the same side'' of $f$. On the other hand, the geometricity condition is invariant under projective transformations, thus a class of pairs $[\F,\E] \in \cordFL_{\sigma,\tau}$ is \emph{geometric} if one representative pair $(\F,\E)$ (and hence all) is geometric. The next result shows that the sign of the gluing parameter determines if a glued pair is geometric (cf. \S\ref{subsec:gluing_param}).

\begin{lemma}\label{lem:geometric_pairs}
    A class of glued pairs $[\F,\E] \in \cordFL_{\sigma,\tau}$ is geometric if and only if
    \begin{equation}
    g^{\E,\tau}_{\F,\sigma} > 0.
    \end{equation}
\end{lemma}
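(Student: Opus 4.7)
The plan is to reduce the statement to a concrete calculation after putting $(\F,\E)$ in $(\sigma,\tau)$--standard glued position. Since both the sign of $g^{\E,\tau}_{\F,\sigma}$ and the geometricity condition are invariant under the diagonal action of $\PGL(4)$, it is enough to prove the result when $\F$ is the $\sigma$--standard representative of $[\F]$ and $\E = G \cdot \E_0$, where $\E_0$ is the $\tau$--standard representative of $[\E]$ and $G$ is the explicit matrix of Lemma~\ref{lem:gluing_parameter} involving $g := g^{\E,\tau}_{\F,\sigma}$ in its bottom-right entry. Use the affine patch $\A := \rp^3 \setminus [\dbasis{1}+\dbasis{2}-\dbasis{3}]$ with coordinates $(a,b,c) = \bigl(\tfrac{x}{x+y-z},\tfrac{y}{x+y-z},\tfrac{w}{x+y-z}\bigr)$ introduced in the proof of Lemma~\ref{lem:std_tet_is_positively_ori}. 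The shared face $V_iV_jV_k$ lies entirely in the plane $\{c = 0\}$, so the strategy is to show that $\tet_\F$ and $\tet_\E$ sit on opposite closed sides of this plane exactly when $g>0$.

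For $\tet_\F$, the proof of Lemma~\ref{lem:std_tet_is_positively_ori} gives $V_l = \bigl(\tfrac{e^\F_\sigma}{e^\F_\sigma+1-X_\sigma},\tfrac{1}{e^\F_\sigma+1-X_\sigma},\tfrac{-1}{e^\F_\sigma+1-X_\sigma}\bigr)$ with positive denominator, so $V_l$ has strictly negative $c$--coordinate and hence $\tet_\F \subset \{c \leq 0\}$. For $\tet_\E$, observe first that the gluing identifications together with the face pairing equation of Lemma~\ref{lem:face_pair_eq} force the three planes of $\E$ at the shared face to coincide with $\eta_j,\eta_i,\eta_k$; thus the triangular prism $\Prism$ formed by $\eta_i,\eta_j,\eta_k$ in $\A$ contains both $\tet_\F$ and $\tet_\E$. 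A direct computation gives $G \cdot W^{\E_0}_{l'} = [\basis{1} + e^\E_\tau \basis{2} + X_\tau \basis{3} + g \basis{4}]$, so the $c$--coordinate of the fourth vertex $W_{l'}$ of $\tet_\E$ equals $\frac{g}{1 + e^\E_\tau - X_\tau}$, and the denominator is positive by the same identity $1+e^\E_\tau-X_\tau = \tfrac{e^\E_\tau + t^\E_\tau + t^\E_\tau e^\E_\tau e^\E_{\tau_-}}{t^\E_\tau+1}$ used in Lemma~\ref{lem:std_tet_is_positively_ori}.

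Since $\tet_\E$ is the convex hull in $\A$ of three vertices on $\{c=0\}$ together with $W_{l'}$, this gives $\tet_\E \subset \{c \geq 0\}$ when $g > 0$ and $\tet_\E \subset \{c \leq 0\}$ when $g < 0$. In the first case, $\tet_\F \cap \tet_\E \subset \{c = 0\}$, and intersecting with each tetrahedron recovers exactly the shared face, proving geometricity. In the second case, both tetrahedra contain a small half-cone into $\{c < 0\}$ at every interior point of the shared face, so their interiors intersect on an open set and the pair is not geometric. Combining these two directions yields the equivalence.

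I expect the main technical point to be verifying that the prism $\Prism$ of $\F$ really does contain all of $\tet_\E$, which requires carefully tracking how $G$ acts on the planes $\zeta_{i'},\zeta_{j'},\zeta_{k'}$ and using the face pairing equation; everything else is a direct half-space argument in the explicit affine chart.
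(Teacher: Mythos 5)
Your proof is correct and takes essentially the same route as the paper: both reduce to $(\sigma,\tau)$--standard glued position, express $\E = G\cdot\E_0$ via Lemma~\ref{lem:gluing_parameter}, and appeal to Lemma~\ref{lem:std_tet_is_positively_ori}. The paper simply concludes that $(\F_0,\E)$ is geometric iff $G$ is orientation preserving iff $g>0$, leaning on the orientations computed in Lemma~\ref{lem:std_tet_is_positively_ori}; your argument unpacks that one-line orientation claim into the explicit half-space computation in the affine chart $\A$ (signs of the $c$--coordinates of $V_l$ and $W_{l'}$), making concrete what the paper leaves implicit. One small inaccuracy: the identification of $\zeta_{i'},\zeta_{j'},\zeta_{k'}$ with $\eta_j,\eta_i,\eta_k$ is just the definition of being glued along $(\sigma,\tau)$; the face pairing equation $t_\sigma^\F t_\tau^\E=1$ is a consequence, not a needed hypothesis there.
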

\begin{proof}
    Let $(\F_0,\E) \in [\F,\E]$ be the unique representative pair in $(\sigma,\tau)$--standard glued position, and let $\E_0$ be the $\tau$--standard representative of $[\E]$. From the proof of Lemma \ref{lem:gluing_parameter}, $(\F_0,\E) = (\F_0,G\cdot \E_0)$ for
    \begin{equation*}
    G:=\begin{bmatrix}
    0 & 1 & 0 & 0\\
    1 & 0 & 0 & 0\\
    0 & 0 & 1 & 0\\
    0 & 0 & 0 & -g^{\E,\tau}_{\F,\sigma}
    \end{bmatrix}.
    \end{equation*}
    Let $\tet_{\F_0},\tet_{\E_0} \subset \rp^3$ be the projective tetrahedra associated to $\F_0$ and $\E_0$, respectively. Then both $\tet_{\F_0}$ and $\tet_{\E_0}$ are positively oriented by Lemma~\ref{lem:std_tet_is_positively_ori}. Hence $(\F_0,\E)$ is geometric if and only if $G$ is orientation preserving, that is if and only if $g^{\E,\tau}_{\F,\sigma} > 0$.
\end{proof}

We denote by $\ordFL^+_{\sigma,\tau} \subset \ordFL_{\sigma,\tau}$ the subset of geometric pairs glued along $(\sigma,\tau)$, and by $\cordFL^+_{\sigma,\tau} \subset \cordFL_{\sigma,\tau}$ the corresponding quotient by the diagonal action of $\PGL(4)$. The following result is an immediate consequence of Lemma~\ref{lem:gluing_parameter} and Lemma~\ref{lem:geometric_pairs}.

\begin{corollary}\label{cor:positive_gluing_parameter}
    Let $[\F],[\E] \in \ctetFL$ be two $\PGL(4)$--classes of tetrahedra of flags that are $(\sigma,\tau)$--glueable. Let $\G^+ = \G^+([\F],[\E]) \subset \cordFL^+_{\sigma,\tau}$ be the subset
    $$
    \G^+ := \{ [\F,\E] \in \cordFL^+_{\sigma,\tau} \mid \F \in [\F] \ \text{and} \  \E \in [\E] \}.
    $$
    Then the restriction map $\restr{g_\sigma^\tau}{\G^+} :  \G^+ \rightarrow \RR_{>0}$ is a homeomorphism.
\end{corollary}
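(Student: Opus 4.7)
The plan is to deduce this as a direct corollary of the two preceding lemmas by restricting the homeomorphism $g_\sigma^\tau : \G \to \RR^\times$ of Lemma~\ref{lem:gluing_parameter} to an appropriate subspace. First I would observe that, by definition, $\G^+$ is nothing but $\G \cap \cordFL^+_{\sigma,\tau}$, so it suffices to identify the image of this intersection under $g_\sigma^\tau$ and check that the restriction remains a homeomorphism.

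The key step is the identification of $\G^+$ as the preimage of $\RR_{>0}$ under $g_\sigma^\tau$. By Lemma~\ref{lem:geometric_pairs}, a class $[\F,\E] \in \G \subset \cordFL_{\sigma,\tau}$ is geometric if and only if $g_\sigma^\tau([\F,\E]) > 0$. Consequently
\[
\G^+ = \G \cap \cordFL^+_{\sigma,\tau} = (g_\sigma^\tau)^{-1}(\RR_{>0}) \cap \G = \left(\restr{g_\sigma^\tau}{\G}\right)^{-1}(\RR_{>0}).
\]
Since $\restr{g_\sigma^\tau}{\G} : \G \to \RR^\times$ is a homeomorphism by Lemma~\ref{lem:gluing_parameter}, its restriction to the preimage of the open subset $\RR_{>0} \subset \RR^\times$ is a homeomorphism onto $\RR_{>0}$. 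This restricted map is exactly $\restr{g_\sigma^\tau}{\G^+}$, completing the argument.

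There is no real obstacle here; the statement is a packaging of information already established. The only minor point to verify is that $\G^+$ is indeed non-empty (so the image is all of $\RR_{>0}$ rather than a proper open subset), but this is immediate: by hypothesis $[\F]$ and $[\E]$ are $(\sigma,\tau)$-glueable, so Lemma~\ref{lem:gluing_parameter} provides representatives realizing every value in $\RR^\times$, in particular every positive value, and by Lemma~\ref{lem:geometric_pairs} those with positive value lie in $\G^+$.
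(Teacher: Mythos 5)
Your proof is correct and is precisely the argument the paper has in mind: the paper states the corollary as an immediate consequence of Lemma~\ref{lem:gluing_parameter} and Lemma~\ref{lem:geometric_pairs} without spelling it out, and your write-up supplies exactly that deduction, namely restricting the homeomorphism $\restr{g_\sigma^\tau}{\G}$ to the preimage of $\RR_{>0}$, which Lemma~\ref{lem:geometric_pairs} identifies with $\G^+$.
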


%%%%%%%%%%%%%%%%%%%%%%%%%%%%%%%%%%%%%%%%%%
%%%%%%%%%%%%%%%%%%%%%%%%%%%%%%%%%%%%%%%%%%
%%%%%%%%%%%%%%%%%%%%%%%%%%%%%%%%%%%%%%%%%%

\subsection{The parametrizations of \texorpdfstring{$\cordFL_{\sigma,\tau}$}{TEXT} and \texorpdfstring{$\cordFL^+_{\sigma,\tau}$}{TEXT}
} \label{subsec:parametrization_pairs_of_tet}

In this section we combine the homeomorphism $\Psi : \ctetFL \rightarrow \tetsubvariety$ (cf. \S\ref{subsec:parametrization_tet_flags}) and the map $g_\sigma^\tau : \cordFL_{\sigma,\tau} \rightarrow \RR^\times$ (cf. \S\ref{subsec:gluing_param}) to parametrize $\cordFL_{\sigma,\tau}$ and $\cordFL^+_{\sigma,\tau}$.

Let $(\sigma,\tau) \in \edgeface \times \edgeface$ be a fixed pair of edge-faces, and let
$$
    \mathcal{S} := \{ (\sigma,\tau), (\sigma_+,\tau_-), (\sigma_-,\tau_+), (\tau,\sigma), (\tau_-,\sigma_+), (\tau_+,\sigma_-) \}.
$$
Recall that $\RR^{\mathcal{S}}_\times$ is the set of functions from $\mathcal{S}$ to $\RR^\times$. Then we define the map
$$
   \Psist : \cordFL_{\sigma,\tau}\rightarrow \tetsubvariety\times \tetsubvariety\times \RR^{\mathcal{S}}_\times, \qquad \text{such that} \qquad [(\F,\E)] \mapsto \left(\Psi(\F),\Psi(\E),g^{\E,\ast}_{\F,\ast} \right),
$$
where $g^{\E,\ast}_{\F,\ast} \in \RR^{\mathcal{S}}_\times$ is the function
$$
    g^{\E,\ast}_{\F,\ast}(x,y) = g^{\E,y}_{\F,x}, \qquad \text{for all} \quad  (x,y) \in \mathcal{S}.
$$
%\textcolor{red}
{The \emph{deformation space of pairs of $(\sigma,\tau)$--glued tetrahedra of flags}} $\mathcal{D}_{\sigma,\tau}$ is the semi-algebraic subset of $\tetsubvariety\times \tetsubvariety\times \RR^{\mathcal{S}}_\times$
such that:
\begin{enumerate}
    \item the first two coordinates satisfy the face pairing equation \eqref{eq:face_equation} from Lemma~\ref{lem:face_pair_eq};
    \item all coordinates satisfy the gluing consistency equations \eqref{eq:gluing_eq_1} and \eqref{eq:gluing_eq_2} from Lemma~\ref{lem:gluing_eqs}.
\end{enumerate}
%\textcolor{red}
{The \emph{deformation space of geometric pairs of $(\sigma,\tau)$--glued tetrahedra of flags}} $\mathcal{D}^+_{\sigma,\tau}$ is the semi-algebraic set
$$
\mathcal{D}^+_{\sigma,\tau} := \{ (p_1,p_2,p_3) \in \mathcal{D}_{\sigma,\tau} \mid p_3(s)>0, \ \forall s\in \mathcal{S}\}.
$$
It is a consequence of Lemma~\ref{lem:face_pair_eq} and Lemma~\ref{lem:gluing_eqs} that $\Psist$ has image in $\mathcal{D}_{\sigma,\tau}$. Similarly, it follows from Corollary~\ref{cor:positive_gluing_parameter} that the restriction map
$\Psist^+ := \restr{\Psist}{\cordFL^+_{\sigma,\tau}}$ has image in $\mathcal{D}^+_{\sigma,\tau}$, namely
$$
\Psist^+ := \restr{\Psist}{\cordFL^+_{\sigma,\tau}} : \cordFL^+_{\sigma,\tau} \rightarrow \mathcal{D}^+_{\sigma,\tau}.
$$

Furthermore, it is easy to check that $\mathcal{D}_{\sigma,\tau}$ (resp. $\mathcal{D}^+_{\sigma,\tau}$) is homeomorphic to $\RR^{9}_{>0} \times \RR^\times$ (resp. $\RR^{10}_{>0}$). Indeed $\tetsubvariety\times \tetsubvariety \cong \RR^{10}_{>0}$, and the face pairing equation \eqref{eq:face_equation} eliminates 1 degree of freedom. On the other hand, one can use equations \eqref{eq:gluing_eq_1}--\eqref{eq:gluing_eq_2} to write all gluing parameters in terms of a single one. Hence there is a homeomorphism
$$
\RR^{\mathcal{S}}_\times \rightarrow \RR^\times, \qquad \text{given by} \qquad	g^{\E,\star}_{\F,\star} \mapsto g^{\E,\tau}_{\F,\sigma},
$$
which induces identifications $\mathcal{D}_{\sigma,\tau} \cong \RR^{9}_{>0} \times \RR^\times$ and $\mathcal{D}^+_{\sigma,\tau} \cong \RR^{10}_{>0}$.

\begin{theorem}\label{thm:parametrization_gluing_space}
The maps
$$
\Psist : \cordFL_{\sigma,\tau}\rightarrow \mathcal{D}_{\sigma,\tau}
%\cong \RR^{9}_{>0} \times \RR^\times
\qquad \text{and} \qquad \Psist^+ : \cordFL^+_{\sigma,\tau}\rightarrow \mathcal{D}^+_{\sigma,\tau}% \cong \RR^{10}_{>0}
$$
are homeomorphisms.
\end{theorem}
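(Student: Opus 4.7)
The plan is to construct an explicit continuous inverse $\Phi_{\sigma,\tau} : \mathcal{D}_{\sigma,\tau} \to \cordFL_{\sigma,\tau}$ and combine the already-established homeomorphisms from Theorem~\ref{thm:parametrization_flag_space} and Lemma~\ref{lem:gluing_parameter}. Continuity of $\Psist$ itself is immediate, since the triple ratios, edge ratios, and gluing parameter are all polynomial expressions in representatives of flags and therefore continuous. The fact that $\Psist$ lands in $\mathcal{D}_{\sigma,\tau}$ has already been noted as a consequence of Lemma~\ref{lem:face_pair_eq} and Lemma~\ref{lem:gluing_eqs}.

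For the inverse, given $(q_1,q_2,q_3) \in \mathcal{D}_{\sigma,\tau}$, I would first apply $\Psi^{-1}$ to recover classes $[\F] := \Psi^{-1}(q_1)$ and $[\E] := \Psi^{-1}(q_2)$ in $\ctetFL$. Because $(q_1,q_2)$ satisfies the face pairing equation built into the definition of $\mathcal{D}_{\sigma,\tau}$, Lemma~\ref{lem:face_pair_eq} guarantees that $[\F]$ and $[\E]$ are $(\sigma,\tau)$--glueable. Then Lemma~\ref{lem:gluing_parameter} produces a unique class $[(\F,\E)] \in \cordFL_{\sigma,\tau}$ with representatives in $[\F]$ and $[\E]$ whose gluing parameter equals the distinguished value $q_3(\sigma,\tau) \in \RR^\times$, and I define $\Phi_{\sigma,\tau}(q_1,q_2,q_3) := [(\F,\E)]$.

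That $\Phi_{\sigma,\tau} \circ \Psist = \mathrm{id}$ is immediate from the construction. For the other direction $\Psist \circ \Phi_{\sigma,\tau} = \mathrm{id}$, the first two components match by Theorem~\ref{thm:parametrization_flag_space}, and the $(\sigma,\tau)$--component of the third factor matches by Lemma~\ref{lem:gluing_parameter}. The main content is then to verify that the remaining five components of $q_3$ also coincide with the actual gluing parameters of the constructed pair. This is precisely the role of the gluing consistency equations \eqref{eq:gluing_eq_1} and \eqref{eq:gluing_eq_2}: they force each remaining component of $q_3$ to be a specific rational function of $q_1$, $q_2$, and $q_3(\sigma,\tau)$, and by Lemma~\ref{lem:gluing_eqs} the actual gluing parameters of $[(\F,\E)]$ are governed by the same identities. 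Continuity of $\Phi_{\sigma,\tau}$ follows from continuity of $\Psi^{-1}$ together with the observation (cf. the proof of Lemma~\ref{lem:gluing_parameter}) that once the pair is placed in $(\sigma,\tau)$--standard glued position the section to $\cordFL_{\sigma,\tau}$ is realized by applying an explicit matrix $G$ depending continuously on $q_3(\sigma,\tau)$ to the $\tau$--standard representative of $[\E]$, which in turn depends continuously on $q_2$ through Lemma~\ref{lem:std_pos_tetrahedron}.

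The positive case $\Psist^+ : \cordFL^+_{\sigma,\tau} \to \mathcal{D}^+_{\sigma,\tau}$ follows by the same argument with Lemma~\ref{lem:gluing_parameter} replaced by Corollary~\ref{cor:positive_gluing_parameter}, noting that the condition $p_3(s) > 0$ for all $s \in \mathcal{S}$ in the definition of $\mathcal{D}^+_{\sigma,\tau}$ is the geometricity condition from Lemma~\ref{lem:geometric_pairs}. The main obstacle I anticipate is not conceptual but bookkeeping: carefully checking that the redundancy among the six components of $q_3$ encoded in \eqref{eq:gluing_eq_1}--\eqref{eq:gluing_eq_2} exactly matches the relations satisfied by the six gluing parameters of a genuine glued pair, so that no additional constraint is hidden in $\mathcal{D}_{\sigma,\tau}$ beyond the ones already listed.
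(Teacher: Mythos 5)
Your proof takes essentially the same route as the paper: establish continuity of $\Psist$ from continuity of the constituent ratio maps, and build a continuous inverse by applying $\Psi^{-1}$ to the first two factors and Lemma~\ref{lem:gluing_parameter} (resp.\ Corollary~\ref{cor:positive_gluing_parameter}) to the distinguished gluing parameter $q_3(\sigma,\tau)$. Your final paragraph — explicitly checking via the gluing consistency equations \eqref{eq:gluing_eq_1}--\eqref{eq:gluing_eq_2} that the remaining five components of $q_3$ are forced to agree with the actual gluing parameters of the constructed pair — is a useful elaboration of a step the paper's proof passes over with ``clearly,'' and correctly identifies that the definition of $\mathcal{D}_{\sigma,\tau}$ imposes no hidden constraints beyond those satisfied by genuine glued pairs.
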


\begin{proof}
    The proof is a straightforward application of Theorem~\ref{thm:parametrization_flag_space}, Lemma~\ref{lem:gluing_parameter} and Corollary~\ref{cor:positive_gluing_parameter}. Here is a sketch. The maps $\Psist$ and $\Psist^+$ are continuous by the continuity of $\Psi$ and $g_\sigma^\tau$. Hence it will be enough to find continuous inverses.  For every point $p = (p_1,p_2,p_3) \in \mathcal{D}_{\sigma,\tau}$, let $[\F] = \Psi^{-1}(p_1)$ and $[\E] = \Psi^{-1}(p_2)$. The $\sigma$--standard representative $\F_0 \in [\F]$ and the $\tau$--standard representative $\E_0 \in [\E]$ are $(\sigma,\tau)$--glueable because $p_1$ and $p_2$ satisfy the face pairing equation \eqref{eq:face_equation} (cf. Lemma~\ref{lem:face_pair_eq}). Hence by Lemma~\ref{lem:gluing_parameter} there is a unique representative $\E' \in [\E]$ such that
    $$
    g_{\sigma}^{\tau}\left( [\F_0, \E' ] \right) = p_3(\sigma,\tau).
    $$
    Thus we have defined a map $\mathcal{D}_{\sigma,\tau} \rightarrow \cordFL_{\sigma,\tau}$ via $p \mapsto [\F_0, \E' ]$. This map is clearly continuous and inverse of $\Psist$. The same map can be modified to be the inverse of $\Psist^+$ via Corollary~\ref{cor:positive_gluing_parameter}.
\end{proof}

%%%%%%%%%%%%%%%%%%%%%%%%%%%%%%%%%%%%%%%%%%
%%%%%%%%%%%%%%%%%%%%%%%%%%%%%%%%%%%%%%%%%%
%%%%%%%%%%%%%%%%%%%%%%%%%%%%%%%%%%%%%%%%%%

\section{Coordinates on a triangulation of flags}\label{sec:coords_triangulation}

Let $\Delta$ be a triangulation of a $3$--manifold $M$, and let $\wDelta$ be a lift of $\Delta$ to the universal cover $\widetilde{M}$. We recall that a triangulations of flags of $M$ (with respect to $\Delta$) is a pair $(\Phi,\rho)$ where 
$$
\Phi : \Vertex(\wDelta) \rightarrow \FL, \qquad \text{and} \qquad \rho : \pi_1(M) \rightarrow \PGL(4),
$$
satisfying some compatibility conditions (cf.\ \eqref{item:def_tri_of_flags_1},\eqref{item:def_tri_of_flags_2} and \eqref{item:def_tri_of_flags_3} in \S\ref{subsec:tri_of_flags}). In what follows, we will abuse notation by letting $\Phi(\tilde \tet)$ denote the tetrahedron of flags determined by the $\Phi$--images of the vertices of $\tilde \tet$. We showed in Theorem~\ref{thm:geom_flag_struc_are_developable} that every triangulation of flags can be extended to a (possibly branched) real projective structure on $M$, thus we are interested in parametrizing the space of $\PGL(4)$--classes of triangulation of flags $\ctriFL$. We begin by defining the parametrization.

Let $(\tetsubvariety \times \RR_{>0}^{\edgeface})^{\Tet(\wDelta)}$ be the set of functions from $\Tet(\wDelta)$ to $\tetsubvariety \times \RR_{>0}^{\edgeface}$, and consider the map $\Psi_\Delta : \cTriFL \rightarrow (\tetsubvariety \times \RR_{>0}^{\edgeface})^{\Tet(\wDelta)}$ defined as follows. For all $p=[\Phi,\rho]\in \cTriFL$ and $\wt \tet\in \Tet(\wDelta)$, we let
\begin{align}\label{eq:PsiDelta_def}
\Psi_\Delta(p)(\wt \tet):=\left(\Psi([\Phi(\wt\tet)]),v_{\wt\tet}^p \right),
\end{align}
where $\Psi$ is the parametrization from Theorem \ref{thm:parametrization_flag_space} and $v_{\wt\tet}^p:\edgeface\to \RR_{>0}$ is  defined as follows. For each $\sigma\in \edgeface$, let $\wt \tet'$ and $\tau \in \edgeface$ be the unique tetrahedron and the unique edge-face such that $\Phi(\wt\tet)$ is glued to $\Phi(\wt\tet')$ along the edge-face pair $(\sigma,\tau)$. Then we define
\begin{align}\label{eq:v_def}
v_{\wt \tet}^p(\sigma):=g^{\Phi(\wt\tet),\sigma}_{\Phi(\wt\tet'),\tau}.
\end{align}
It is easy to check that $\Psi([\Phi(\wt\tet)])$ does not depend on the choice of representative $\Phi$, thus it is well defined by property~\eqref{item:def_tri_of_flags_1} of a triangulation of flags (cf. \S\ref{subsec:tri_of_flags}), and belongs to $\tetsubvariety$ by Theorem~\ref{thm:parametrization_flag_space}. Similarly, the function $v_{\wt\tet}^p$ does not depend on a representative pair of $p$, hence it is well defined by property~\eqref{item:def_tri_of_flags_2} of a triangulation of flags. It follows that $\Psi_\Delta$ is well defined. Roughly speaking, the two factors of $\Psi_\Delta(p)(\wt \tet)$ encode the edge parameters and the gluing parameters of the tetrahedron of flags $\Phi(\wt\tet)$ corresponding to $\wt \tet$. 

Finally, we recall that $\Phi$ is $\rho$--equivariant (property~\eqref{item:def_tri_of_flags_3} of a triangulation of flags) and all of the parameters are invariant under projective transformations. Then $\Psi_\Delta(p)(\tet) = \Psi_\Delta(p)(\gamma \cdot \tet)$ for all $\gamma \in \pi_1(M)$, and $\Psi_\Delta$ descends to a well defined function
$$
\Psi_\Delta : \cTriFL \rightarrow (\tetsubvariety \times \RR_{>0}^{\edgeface})^{\Tet(\Delta)}.
$$
To avoid introducing new notation, we make the abuse of using the symbol $\Psi_\Delta$ for both maps.

The goal of this section is to determine the image of $\Psi_\Delta$, and to show that $\Psi_\Delta$ is a homeomorphism onto its image. It is a consequence of Theorem~\ref{thm:parametrization_gluing_space} that we can make the following immediate restriction on the image of $\Psi_\Delta$. Suppose $\tet$ and $\tet'$ are two tetrahedra of $\Delta$ glued along some face $f$, and let $\wt \tet$ and $\wt \tet'$ be two lifts to $\wDelta$ glued along the corresponding lift of $f$. Then there are edge-faces $\sigma$ and $\tau$ such that $\Phi(\wt\tet)$ is geometrically glued to $\Phi(\wt\tet')$ along the edge-face pair $(\sigma,\tau)$. In other words, $[\Phi(\wt\tet),\Phi(\wt\tet')] \in \cordFL^+_{\sigma,\tau}$. Then by Theorem~\ref{thm:parametrization_gluing_space}, the point
$$
\left(\Psi([\Phi(\tet)]),\Psi([\Phi(\tet')]), \left( v_{\tet}^p(\sigma), v_{\tet}^p(\sigma_+),v_{\tet}^p(\sigma_-),v_{\tet'}^p(\tau),v_{\tet'}^p(\tau_+),v_{\tet'}^p(\tau_-)\right) \right) \in \mathcal{D}^+_{\sigma,\tau}.
$$
More precisely:
\begin{enumerate}
    \item \label{eq:pair_cond_1} the pair $\Psi([\Phi(\tet)]),\Psi([\Phi(\tet')])$ satisfies the face pairing equation \eqref{eq:face_equation} from Lemma~\ref{lem:face_pair_eq};
    \item \label{eq:pair_cond_2} each pair $\left(v_{\wt\tet}^p(\sigma),v_{\wt\tet'}^p(\tau) \right),\left(v_{\wt\tet}^p(\sigma_-),v_{\wt\tet'}^p(\tau_+) \right),\left(v_{\wt\tet}^p(\sigma_+),v_{\wt\tet'}^p(\tau_-) \right)$ satisfies the gluing consistency equations \eqref{eq:gluing_eq_1} from Lemma~\ref{lem:gluing_eqs};
    \item \label{eq:pair_cond_3} all coordinates together satisfy the gluing consistency equations \eqref{eq:gluing_eq_2} from Lemma~\ref{lem:gluing_eqs}.
\end{enumerate}
We denote by $\mathcal{D}'_\Delta$ be the subset of $(\tetsubvariety \times \RR_{>0}^{\edgeface})^{\Tet(\Delta)}$ satisfying the above conditions \eqref{eq:pair_cond_1},\eqref{eq:pair_cond_2} and \eqref{eq:pair_cond_3}, for every pair of glued tetrahedra. The above discussion shows that we have a well defined restriction
$$
\Psi_\Delta : \cTriFL \rightarrow \mathcal{D}'_\Delta.
$$
We recall that $\mathcal{D}^+_{\sigma,\tau} \cong \RR^{10}_{>0}$ (cf. \S\ref{subsec:parametrization_pairs_of_tet}), hence $\mathcal{D}'_\Delta \cong \RR^{5|\Tet(\Delta)|}_{>0}$. Roughly speaking, every time we glue two tetrahedra of flags we gain one gluing parameter, but also lose one edge parameter due to the face pairing equation.\\

Next, we are going to define the \emph{deformation space of a triangulation of flags} $\mathcal{D}_\Delta = \mathcal{D}_\rp(M;\Delta)$. We will soon see that $\mathcal{D}_\Delta$ is a semi-algebraic subset of $\mathcal{D}'_\Delta$. To determine the additional equations that cut out $\mathcal{D}_\Delta$, we will introduce the \emph{monodromy complex} $\complex_\Delta$ associated to $\Delta$ (cf. \S\ref{subsec:monodromy_complex}) and define \emph{cochains and cocycles} on $\complex_\Delta$ (cf. \S\ref{subsec:cocycles}). We then show that a point $x \in \mathcal{D}'_\Delta$ determines a cocycle if and only if $x$ satisfies the \emph{edge gluing equations} (cf. Theorem~\ref{thm:deformation_space_to_cocycles}), and define $\mathcal{D}_\Delta$ as the subset of $\mathcal{D}'_\Delta$ where those equations are satisfied. Finally, in \S\ref{subsec:parametrization_tri_of_flags}, we will prove that $\Psi_\Delta$ is a homeomorphism between $\cTriFL$ and $\mathcal{D}_\Delta$ (cf. Theorem~\ref{thm:parametrization_def_space}).

%%%%%%%%%%%%%%%%%%%%%%%%%%%%%%%%%%%%%%%%%%
%%%%%%%%%%%%%%%%%%%%%%%%%%%%%%%%%%%%%%%%%%
%%%%%%%%%%%%%%%%%%%%%%%%%%%%%%%%%%%%%%%%%%

\subsection{The monodromy complex}\label{subsec:monodromy_complex}

Let $\Delta$ be an ideal triangulation of a $3$--manifold $M$. The \emph{monodromy graph} associated to $\Delta$ is the graph $\graph_\Delta$ defined as follows. The vertices of $\graph_\Delta$ are all pairs $(\tet,\sigma)$ where $\tet \in \Tet(\Delta)$ is a tetrahedron and $\sigma \in \edgeface$ is an edge-face, for a total of $12 \cdot \abs{\Tet(\Delta)}$ vertices. Two vertices $(\tet,\sigma)$ and $(\tet',\sigma')$ of $\graph_\Delta$ are connected by an edge if and only if one of the following (mutually exclusive) conditions is satisfied.
\begin{alignat*}{3}
    (1) & (\emph{Red edge}) && \quad \text{ if } \tet = \tet' \text{ and } \sigma' = \sigma_+ \text{ or } \sigma' = \sigma_-.\\
    (2) & (\emph{Blue edge}) && \quad \text{ if } \tet = \tet' \text{ and } \sigma' = \overline{\sigma}.\\
    (3) & (\emph{Green edge}) && \quad \text{ if } \tet \text{ and } \tet' \text{ are glued along } (\sigma,\sigma').
\end{alignat*}
It is easy to check that every pair of vertices is connected by at most one edge, and there are no loops. Moreover, for every tetrahedron $\tet \in \Tet(\Delta)$, the subgraph $\graph_\Delta^\tet$ of $\graph_\Delta$ with vertices labeled with $\tet$ is combinatorially isomorphic to $\Cay(4)$, the Cayley graph of the alternating group $\Alt(4)$ with the standard $3$--cycle and $2$-$2$--cycle generating set. In \S\ref{subsec:edge_faces}, we described a meaningful way to embed $\Cay(4)$ inside a standard tetrahedron. This subgraph is the $1$--skeleton of a \emph{truncated tetrahedron} dual to $\tet$. More precisely, if one takes the tetrahedron dual to $\tet$ and truncates the vertices by planes parallel to the faces of $\tet$ then $\graph_\Delta^\tet$ is the $1$--skeleton of the resulting truncated tetrahedron.  This construction can be extended to embed $\graph_\Delta$ inside $\Delta$.

The \emph{monodromy complex} is the $\CW$--complex $\complex_\Delta$ obtained from attaching four different types of $2$--cells to loops in the monodromy graph $\graph_\Delta$. The loops on the boundary of the first three types of $2$--cells are easy to describe. We denote a loop in $\graph_\Delta$ by a cyclically ordered sequence of its vertices.
\begin{alignat*}{3}
    (1) & (\emph{Triangle}) && \quad \cyclic{(\tet,\sigma),(\tet,\sigma_+),(\tet,\sigma_-)}, \ \forall\ (\tet,\sigma) \in \Tet(\Delta)\times \edgeface
    \\
    (2) & (\emph{Quadrilateral}) && \quad \cyclic{(\tet,\sigma),(\tet',\tau),(\tet',\tau_-),(\tet',\sigma_+)}, \ \forall \tet,\tet' \in \Tet(\Delta) \text{ glued along } (\sigma,\tau).\\
    (3) & (\emph{Hexagon}) && \quad \cyclic{(\tet,\sigma),(\tet,\sigma_-),(\tet,\overline{\sigma_-}),(\tet,(\overline{\sigma_-})_-),(\tet,\overline{\sigma}_+),(\tet,\overline{\sigma})}, \
    \forall\ (\tet,\sigma) \in \Tet(\Delta)\times \edgeface.
\end{alignat*}
Triangular and hexagonal faces can be seen in Figure \ref{fig:EdgeFace_labels} and quadrilateral faces can been seen in Figure \ref{fig:Monodromy_graph}.  To described the fourth type of cell we need the following notation. For every edge $s\in \Edge(\Delta)$, fix once and for all an orientation on $s$, and let $\cyclic{\tet_1^s,\ldots, \tet_{k_s}^s}$ be the cyclically ordered ${k_s}$--tuple of tetrahedra that abut $s$, cyclically ordered to follow the ``right hand rule'' by placing the thumb in the direction of $s$. The number $k_s$ is called the \emph{valence} of $s$. For each $1\leq i\leq {k_s}$, the tetrahedra $\tet_i^s$ and $\tet_{i+1}^s$ are glued along a pair $(\sigma_i^s,\tau_{i+1}^s)$ of edge-faces (here all indices are taken mod ${k_s}$). The edge-face $\tau_{i}^s$ (resp.\ $\sigma_i^s$) is called the \emph{incoming} (resp.\ \emph{outgoing}) edge-face of $\tet_i^s$ around $s$. We remark that the two faces of $\tet^s_i$ glued to $\tet_{i-1}^s$ and $\tet_{i+1}^s$ share the edge $s$, thus  $\overline{\sigma_{i}^s}=\tau_i^s$.

\begin{figure}[t]
    \centering
    \includegraphics[width=\textwidth]{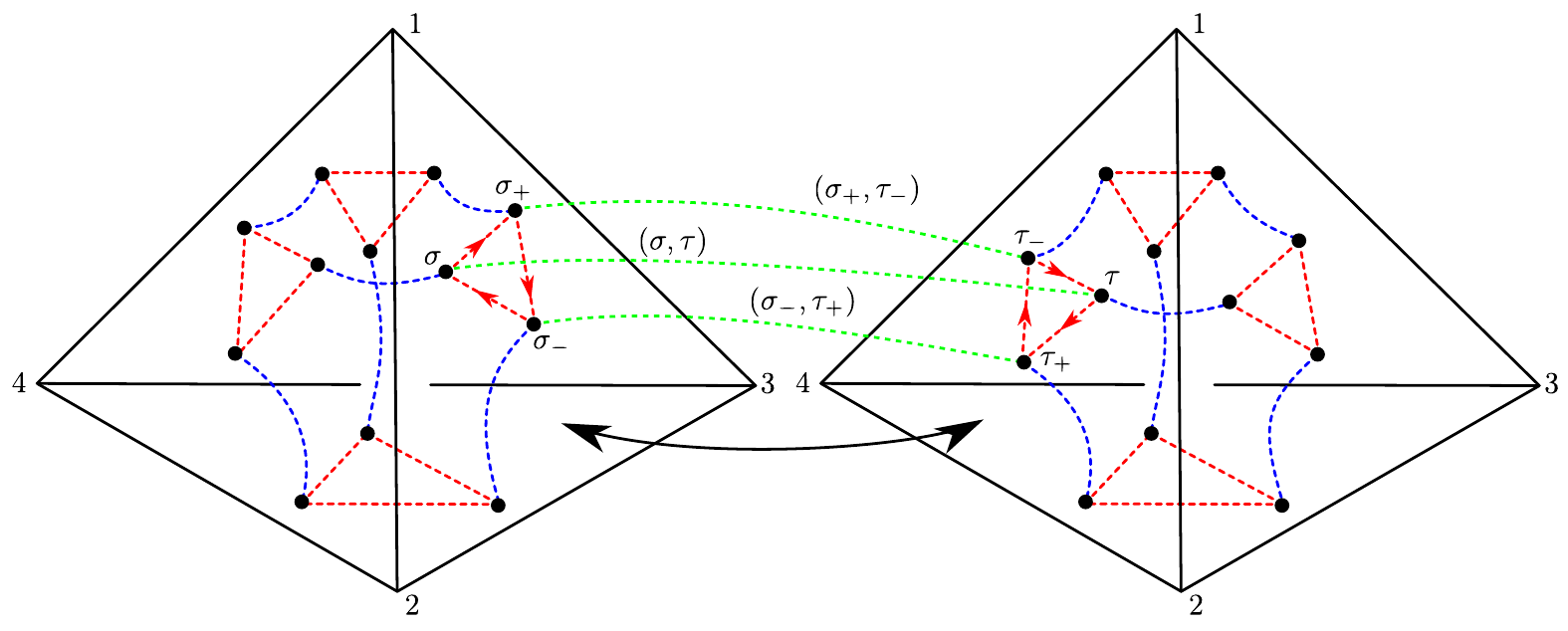}
    \caption{A gluing of two truncated tetrahedra in the monodromy graph.}    \label{fig:Monodromy_graph}
\end{figure}

The last type of $2$--cell in $\complex_\Delta$ is the $2{k_s}$--gon attached to the following loop around $s$, which alternates between green and blue edges (cf. Figure~\ref{fig:2kgon}).
\begin{alignat*}{3}
    (4) & (2{k_s}\emph{--gon}) && \quad \cyclic{(\tet_1^s,\sigma_1^s),(\tet_2^s,\tau_2^s),(\tet_2^s,\sigma_2^s),\dots,(\tet_{{k_s}}^s,\sigma_{{k_s}}^s),(\tet_1^s,\tau_1^s)} \text{ for all } s \in \Edge(\Delta).
\end{alignat*}

\begin{figure}[t]
    \centering
    \includegraphics[height=5cm]{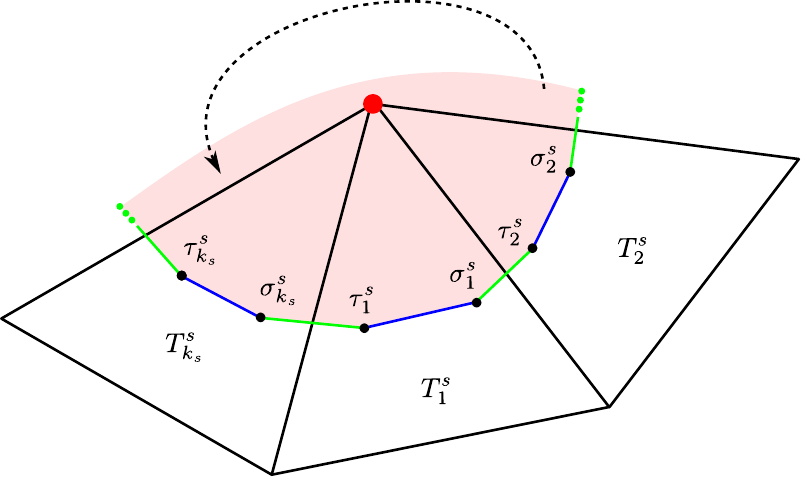}
    \caption{Every edge in $\Delta$ is dual to a $2k_s$--gon in the monodromy complex.}    \label{fig:2kgon}
\end{figure}

The embedding of $\graph_\Delta$ into $\Delta$ naturally extends to an embedding of $\complex_\Delta$ into $\Delta$, thus we can regard the monodromy complex as a subset of $M$ (via its ideal triangulation). Furthermore, if $\wt \Delta$ is the ideal triangulation of the universal cover of $M$ obtained by lifting the triangulation $\Delta$, then it is easy to show that $\wt \complex_\Delta=\complex_{\wt \Delta}$. In other words, the universal cover of the monodromy complex is the monodromy complex of the triangulation $\wt \Delta$. The next result shows that $\complex_\Delta$ carries the entire fundamental group of $M$ (and of $\Delta$).

\begin{lemma}\label{lem:complex_fund_group}
If $\Delta$ is an ideal triangulation of $M$ and $\iota:\complex_\Delta \hookrightarrow M$ is the inclusion of its monodromy complex, then $\iota_*:\pi_1(\complex_\Delta)\to \pi_1(M)$ %(\cong \pi_1(\Delta))$
is an isomorphism. 
\end{lemma}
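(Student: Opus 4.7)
The plan is to compare $\complex_\Delta$ with the dual $2$--skeleton $X$ of $\Delta$, which is a standard spine of $M$. I will show $\pi_1(\complex_\Delta) \cong \pi_1(X) \cong \pi_1(M)$ via a sequence of quotient maps, and then identify the composite isomorphism with $\iota_*$.

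The key combinatorial observation is that for each $\tet \in \Tet(\Delta)$, the subcomplex $\complex_\Delta^\tet \subset \complex_\Delta$ consisting of the $12$ vertices $(\tet,\sigma)$, the $18$ red and blue edges labelled by $\tet$, and the $4$ triangles and $4$ hexagons labelled by $\tet$ is a $2$--sphere. Indeed, it is combinatorially the boundary of a truncated tetrahedron dual to $\tet$, with the triangles near the four faces of $\tet$ and the hexagons near the four vertices; its Euler characteristic is $12 - 18 + 8 = 2$, and it is the Cayley $2$--complex of $\Alt(4) = \langle \alpha, \beta \mid \alpha^3, \beta^2, (\alpha\beta)^3 \rangle$ realized as a triangulated sphere. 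In particular, $\complex_\Delta^\tet$ is simply connected.

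Using this, I would form the quotient map $q : \complex_\Delta \to Y$ that collapses each subcomplex $\complex_\Delta^\tet$ to a single point $v_\tet$; since the $\complex_\Delta^\tet$ are pairwise disjoint, simply connected subcomplexes (hence each inclusion is a cofibration), iterated application of the principle that collapsing a simply connected subcomplex preserves $\pi_1$ gives $q_* : \pi_1(\complex_\Delta) \xrightarrow{\cong} \pi_1(Y)$. The quotient $Y$ has one vertex per tetrahedron, three $1$--cells per face of $\Delta$ (the surviving green edges), three bigon $2$--cells per face (from the quadrilaterals, whose two red edges collapse), and one $k_s$--gon per edge $s$ (from the $2k_s$--gon, whose $k_s$ blue edges collapse). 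For each face $f$, two of the three bigons identify the three parallel green edges to a single $1$--cell $e_f$, rendering the third bigon's boundary null-homotopic (contributing only a wedged $S^2$ that is irrelevant for $\pi_1$). After these identifications the $k_s$--gon around $s$ has attaching map $e_{f_1} e_{f_2} \cdots e_{f_{k_s}}$, matching exactly the $2$--cell of $X$ dual to $s$. Hence $\pi_1(Y) \cong \pi_1(X)$.

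Since $X$ is the dual $2$--skeleton of the $\CW$--structure on the end-compactification $\dot M$, and $M = \dot M \setminus \dot M^{(0)}$ deformation retracts onto $X$ (each dual $3$--cell being an open cone on an ideal-vertex link), we have $\pi_1(X) \cong \pi_1(M)$. To identify the composite isomorphism with $\iota_*$, I would realize each of the above collapses by an explicit homotopy inside $M$: the sphere $\complex_\Delta^\tet$ bounds a $3$--ball inside the associated tetrahedron, so its inclusion into $M$ is homotopic to the constant map to $v_\tet$; each quadrilateral and $2k_s$--gon can be pushed across an adjacent $3$--cell of $M$ onto the corresponding cell of $X$. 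Thus the composite $\complex_\Delta \to Y \to X \hookrightarrow M$ is homotopic to $\iota$ as a map into $M$, and the induced $\pi_1$--isomorphism is $\iota_*$. The main obstacle will be this final identification, which requires the collapses to be realized as genuine homotopies inside $M$; the earlier combinatorial steps reduce to bookkeeping once the $S^2$ structure of $\complex_\Delta^\tet$ is established.
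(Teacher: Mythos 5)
Your proposal is correct and reaches the same target, the dual spine of $\Delta$, but by a different route than the paper. The paper \emph{adds} $3$--cells: it observes that each truncated tetrahedron $\complex_\Delta^\tet$ bounds a $3$--ball inside $\tet$, and that for each glued pair $(\tet,\tet')$ the six vertices $\{(\tet,\sigma),(\tet,\sigma_\pm),(\tet',\tau),(\tet',\tau_\pm)\}$ span a triangular prism bounding a $3$--ball in $\tet\cup\tet'$. Filling in both families of $3$--cells gives a complex $\complex'_\Delta\subset M$ with $\pi_1(\complex'_\Delta)=\pi_1(\complex_\Delta)$ (attaching cells of dimension $\geq 3$ does not change $\pi_1$), and $\complex'_\Delta$ visibly deformation retracts onto the dual spine by collapsing each filled truncated tetrahedron to a point and each filled prism to an arc; combined with the standard fact that $M$ deformation retracts onto the dual spine, this finishes the proof. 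You instead \emph{quotient}: you collapse only the spheres $\complex_\Delta^\tet$ and then handle the prisms' quadrilaterals and the $2k_s$--gons by hand in the quotient $Y$. Both are valid. The paper's route is more economical and, crucially, every step is a deformation retraction inside $M$, so the identification of the resulting isomorphism with $\iota_*$ is automatic; your route is more explicit and self-contained in its combinatorics, but the quotient map $\complex_\Delta\to Y$ is not an inclusion, so you must separately argue (as you note in your final paragraph) that the collapses can be realized by homotopies in $M$ in order to recover $\iota_*$ — this is the fussiest part of your argument and is handled with no extra effort in the paper's version. A small side remark: the parenthetical identification of $\complex_\Delta^\tet$ with ``the Cayley $2$--complex of $\Alt(4)=\langle\alpha,\beta\mid\alpha^3,\beta^2,(\alpha\beta)^3\rangle$'' is not quite right as stated (that presentation complex has $\beta$--bigons, not $6$ blue edges, and a different face count); but you don't actually use it, since the geometric identification with the boundary of a truncated tetrahedron and the Euler characteristic count $12-18+8=2$ already give what you need.
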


\begin{proof}
We recall that, for each tetrahedron $\tet \in \Tet(\Delta)$, the subcomplex of $\complex_\Delta$ with vertices labeled with $\tet$ is a truncated tetrahedron contained in $\tet$, thus it bounds a topological $3$--ball. Its boundary is made out of four triangular $2$--cells and four hexagonal $2$--cells. Similarly, for each pair of tetrahedra $\tet,\tet' \in \Tet(\Delta)$ that are glued along a pair of edge-faces $(\sigma,\tau)$, the subcomplex of $\complex_\Delta$ with vertices
$$
\{(\tet,\sigma),(\tet,\sigma_+),(\tet,\sigma_-),(\tet',\tau),(\tet',\tau_+),(\tet',\tau_-)\}
$$
is a triangular prism that bounds a $3$--ball inside $\tet \cup \tet'$. Its boundary is made out of two triangular $2$--cells and three quadrilateral $2$--cells.

By adding all of these two types of $3$--cells, the monodromy complex $\complex_\Delta$ can be extended to a $3$--dimensional $\CW$--complex $\complex'_\Delta$ with the same fundamental group. The complex $\complex'_\Delta$ deformation retracts onto the \emph{dual spine} of $\Delta$, by collapsing every truncated tetrahedron into a single point and every prism to a line segment between two such points. We refer to~\cite[\S $1.1.2$]{MAT} for the definition of a dual spine of a triangulation, and to~\cite[\S $2.7$]{HRST15} for the more specific setting of an ideal triangulation. Since $M$ deformation retracts onto its dual spine (cf.~\cite[Thm $1.1.7$]{MAT}), the result follows.
\end{proof}

%%%%%%%%%%%%%%%%%%%%%%%%%%%%%%%%%%%%%%%%%%
%%%%%%%%%%%%%%%%%%%%%%%%%%%%%%%%%%%%%%%%%%
%%%%%%%%%%%%%%%%%%%%%%%%%%%%%%%%%%%%%%%%%%

\subsection{Cochains and cocycles}\label{subsec:cocycles}
	 
Let $H$ be a group and $\complex$ be a finite dimensional $\CW$--complex.  A \emph{$H$--cochain} on $\complex$ is a function from the set of oriented $1$--cells of $\complex$ to $H$.  A \emph{$H$--cocycle} on $\complex$ is a $H$--cochain with the following properties:
\begin{itemize}
    \item oppositely oriented edges of $\complex$ with the same vertices are mapped to inverse elements of $H$;
	\item for each oriented $2$--cell $D$, in $\complex$, the product of the elements along the boundary of $D$ is the identity in $H$. 
\end{itemize}
We denote by $\coch(\complex,H)$ and $\cocy(\complex,H)$ the \emph{sets of all $H$--cochains and $H$--cocycles on $\complex$}, respectively. 

An oriented path $\alpha$ in $\complex$ is \emph{simplicial} if it is contained in the $1$--skeleton of $\complex$. For each simplicial path $\alpha$ and each  $H$--cochain $f$ on $\complex$, we define $f(\alpha)$ as the product of the elements of $H$ along $\alpha$ determined by $f$. If $f$ is a $H$--cocycle on $\complex$ and $\alpha$ and $\alpha'$ are homotopic (rel.\ endpoints) simplicial paths, then it is easy to check that $f(\alpha)=f(\alpha')$. Furthermore, every path in $\complex$ with endpoints in the $1$--skeleton is homotopic (rel.\ endpoints) to the $1$--skeleton of $\complex$, which allows us to extend the previous definition to any path in $\complex$ based at a point in the $1$--skeleton.

\begin{lemma}\label{lem:cocyle_gives_rep}
	 Let $u \in \complex$ be a vertex, then each cocycle $c\in \cocy(\complex,H)$ determines a representation $\rho_c:\pi_1(\complex,u) \to H$.
\end{lemma}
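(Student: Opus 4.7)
The plan is to define $\rho_c$ by evaluating $c$ along loops representing elements of $\pi_1(\complex,u)$, and then verify well-definedness and the homomorphism property by invoking the homotopy-invariance of cocycle evaluation that was noted in the paragraph preceding the lemma.

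First I would address the definition. Given $[\gamma]\in \pi_1(\complex,u)$, I would choose a representative loop $\gamma$ based at $u$. By the remark preceding the statement, any path in $\complex$ with endpoints in the $1$--skeleton is homotopic rel endpoints to a simplicial path, so I may assume $\gamma$ is simplicial. I then set
$$
\rho_c([\gamma]) := c(\gamma) \in H.
$$
Well-definedness is precisely the content of the homotopy-invariance property stated just above the lemma: if $\gamma$ and $\gamma'$ are two simplicial loops based at $u$ that are homotopic rel endpoints, then $c(\gamma)=c(\gamma')$. Combined with the fact that any two simplicial representatives of the same class $[\gamma]$ can be connected by such a homotopy (by first homotoping any free homotopy between them into the $1$--skeleton), this shows that $\rho_c$ depends only on the homotopy class.

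Next I would check the homomorphism property. For simplicial loops $\gamma_1,\gamma_2$ based at $u$, the concatenation $\gamma_1\cdot\gamma_2$ is again simplicial, and by the very definition of $c(\alpha)$ as the product of edge labels along $\alpha$ we have $c(\gamma_1\cdot\gamma_2)=c(\gamma_1)c(\gamma_2)$, so
$$
\rho_c([\gamma_1][\gamma_2]) = c(\gamma_1\cdot\gamma_2) = c(\gamma_1)c(\gamma_2) = \rho_c([\gamma_1])\rho_c([\gamma_2]).
$$
Finally, the fact that $\rho_c([\gamma]^{-1})=\rho_c([\gamma])^{-1}$ follows from the first defining property of a cocycle: oppositely oriented edges are assigned inverse elements of $H$, so reversing the orientation of a simplicial loop inverts its $c$--value.

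The only step requiring any care is the homotopy-invariance statement underlying well-definedness, but the excerpt already asserts this for simplicial paths that are homotopic rel endpoints. I would therefore only need to argue that two simplicial representatives of the same class in $\pi_1(\complex,u)$ can be linked by such a simplicial homotopy; this is standard and follows from cellular approximation applied to the homotopy between them, using the second cocycle axiom to see that pushing across a $2$--cell does not change the product. With that in hand the lemma follows with no further work.
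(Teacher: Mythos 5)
Your proposal is correct and follows essentially the same route as the paper's proof: define $\rho_c$ by evaluating $c$ along a simplicial representative, use the triviality of $c$ across $2$--cells to establish well-definedness under homotopy, and observe that concatenation of paths gives the homomorphism property. The paper's version is more terse (it leaves the cellular approximation step implicit, as you do in the end), but the substance is identical.
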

\begin{proof}
	 Let $[\gamma]\in \pi_1(\complex,u)$ and let $\gamma$ be a homotopic representative lying in the $1$--skeleton of $\complex$. Every time $\gamma$ traverses an oriented edge $e$ of $\complex$, the cocycle $c$ determines an element $c(e)\in H$. Multiplying the resulting group elements determines the value $\rho_c([\gamma])$. Since products along the boundaries of $2$--cells are trivial, the map $\rho_c$ does not depend on the representative $\gamma$ and is well defined. Moreover, since multiplication in $\pi_1(\complex,u)$ is given by concatenating paths, the map $\rho_c$ is a homomorphism. 
\end{proof}

%%%%%%%%%%%%%%%%%%%%%%%%%%%%%%%%%%%%%%%%%%
%%%%%%%%%%%%%%%%%%%%%%%%%%%%%%%%%%%%%%%%%%
%%%%%%%%%%%%%%%%%%%%%%%%%%%%%%%%%%%%%%%%%%

\subsection{Edge gluing equations}\label{subsec:gluing_eq}

We recall that $\mathcal{D}'_\Delta$ is the codomain of $\Psi_\Delta$, defined at the beginning of \S\ref{sec:coords_triangulation}. Here we are going to determine equations that cut out a semi-algebraic subset $\mathcal{D}_\Delta = \mathcal{D}_\rp(M;\Delta)$ of $\mathcal{D}'_\Delta$, which will be shown to be homeomorphic to the space of triangulations of flags $\cTriFL$ via $\Psi_\Delta$ in \S\ref{subsec:parametrization_tri_of_flags}.

We begin by defining a map
$$
\Co : \mathcal{D}'_\Delta \rightarrow \coch(\complex_\Delta, \PGL(4)).
$$

Let $x \in \mathcal{D}'_\Delta$. We fix the following notation to describe the components of $x$. The space $\mathcal{D}'_\Delta$ is a subset of $(\tetsubvariety \times \RR_{>0}^{\edgeface})^{\Tet(\Delta)}$, thus for every tetrahedron $\tet \in \Tet(\Delta)$, we can write $x(\tet) = \left(x_1(\tet),x_2(\tet)\right)$, where $x_1(\tet) \in \tetsubvariety$ and $x_2(\tet) \in \RR_{>0}^{\edgeface}$. Since $\tetsubvariety \subset \RR_{>0}^{\edgeface} \times \RR_{>0}^{\edgeface}$, we will adopt the notation 
$$
x(\tet) = \left(x_1(\tet),x_2(\tet)\right) = \left( (\tau_*^\tet,\epsilon_*^\tet),\kappa_*^\tet \right) \in \left( \RR_{>0}^{\edgeface} \times\RR_{>0}^{\edgeface} \right) \times\RR_{>0}^{\edgeface}.
$$
Now we can describe the cochain $\Co(x)$. We recall that the $1$--cells of $\complex_\Delta$ are of three types: red, blue and green (cf. \S\ref{subsec:monodromy_complex}). The vertices of $\complex_\Delta$ are pairs $(\tet,\sigma)$ of a tetrahedron $\tet \in \Tet(\Delta)$ and an edge-face $\sigma \in \edgeface$. We are going to encode an oriented $1$--cell with an ordered pair $(u_1,u_2)$ of its starting vertex $u_1$ and final vertex $u_2$.

\begin{enumerate}
    \item (\emph{Red edges}) For every $\tet \in \Tet(\Delta)$ and $\sigma \in \edgeface$, we define the cochain of the oriented red edge $\left((\tet,\sigma_+),(\tet,\sigma)\right)$ as
    \begin{equation} \label{eq:rot_mat}
    \Co(x)\left((\tet,\sigma_+),(\tet,\sigma)\right) :=\Rot_\sigma(x(\tet))=
    \begin{bmatrix}
    0 & 1 & 0 & 0\\
	\tau_{\sigma}^\tet & 1 & -$1$--\tau_{\sigma}^\tet & 0\\
	0 & 1 & -1 & 0 \\
	0 & 0 & 0 & \frac{1}{\epsilon_{\sigma_+}^\tet}
	\end{bmatrix}.
    \end{equation}
    Then we set $\Co(x)\left((\tet,\sigma),(\tet,\sigma_+)\right) := \left(\Co(x)\left((\tet,\sigma_+),(\tet,\sigma)\right) \right)^{-1}$.
    
    \item (\emph{Blue edges}) For every $\tet \in \Tet(\Delta)$ and $\sigma \in \edgeface$, we define the cochain of the oriented blue edge $\left((\tet,\overline{\sigma}),(\tet,\sigma)\right)$ as
    \begin{equation} \label{eq:flip_mat}
    \Co(x)\left((\tet,\overline{\sigma}),(\tet,\sigma)\right) :=\Flip_\sigma(x(\tet))=
    \begin{bmatrix}
    0 & \epsilon_\sigma^\tet & 0 & 0\\
    1 & 0 & 0 & 0\\
    0 & 0 & \X_{\overline{\sigma}}^\tet & \X_\sigma^\tet \X_{\overline{\sigma}}^\tet-\epsilon_\sigma^\tet\\
    0 & 0 & -1 & -\X_\sigma^\tet
	\end{bmatrix},
	\end{equation}
	where
	$$
    \X_{\sigma'}^\tet := \frac{\nu_{\sigma'}}{\epsilon_{\sigma'_-}^\tet \epsilon_{\sigma'_+}^\tet (\tau_{\sigma'}^\tet+1)},\quad\text{and} \quad \nu_{\sigma'}^\tet := \epsilon_{\sigma'_-}^\tet \epsilon_{\sigma'_+}^\tet - \epsilon_{\sigma'_-}^\tet + 1, \qquad \text{for } \ \sigma' \in \{\sigma,\overline{\sigma}\}.
    $$
   
    \item (\emph{Green edges}) 
    For every pair of tetrahedra $\tet,\tet' \in \Tet(\Delta)$ and edge-faces $\sigma,\tau \in \edgeface$ such that $\tet$ and $\tet'$ are glued along $(\sigma,\tau)$, we define the cochain of the oriented green edge $\left((\tet,\sigma),(\tet',\tau)\right)$ as
    \begin{equation} \label{eq:glue_mat}
    \Co(x)\left((\tet',\tau),(\tet,\sigma)\right) :=\Glue_\sigma(x(\tet))=
    \begin{bmatrix}
    0 & 1 & 0 & 0\\
    1 & 0 & 0 & 0\\
    0 & 0 & 1 & 0\\
    0 & 0 & 0 & -\kappa^\tet_\sigma
	\end{bmatrix}.
	\end{equation}
\end{enumerate}

All of the above transformations depend continuously on the parameters of $x$ and so it is easy to check that $\Co$ is continuous and injective, but not surjective. At first sight, the above transformations seem mysterious, but they can each be described geometrically. Loosely speaking, each of the above transformations maps the standard position of its terminal vertex to the standard position of its initial vertex. This is made precise in the following lemma.

\begin{lemma}\label{lem:std_pos_action}
Let $x\in \mathcal{D}'_\Delta$ and let $\tet$ and $\tet'$ be two tetrahedra in $\Delta$ that are glued via the edge-face pair $(\sigma,\tau)$. Next, let $\F$ be the $\sigma$--standard representative of $\Psi^{-1}(x(\tet)_1)$ and let $\E$ be the representative of $\Psi^{-1}(x(\tet')_1)$ so that $(\F,\E)$ is in $(\sigma,\tau)$--standard glued position. Then
\begin{enumerate}
\item $\Flip_\sigma(x(\tet))\cdot \F$ is the $\overline{\sigma}$--standard representative of $[\F]$,
    \item $(\Rot_\sigma(x(\tet))\cdot \F,\Rot_\sigma(x(\tet))\cdot \E)$ is in $(\sigma_+,\tau_-)$--standard glued position, and 
    \item $(\Glue_\sigma(x(\tet))\cdot \F,\Glue_\sigma(x(\tet))\cdot \E)$ is in $(\tau,\sigma)$--standard glued position. 
\end{enumerate}
\end{lemma}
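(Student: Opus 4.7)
My overall plan is to treat each of the three claims as a direct matrix-vector calculation using the explicit coordinates for the $\sigma$-standard representative given in Lemma~\ref{lem:std_pos_tetrahedron}, together with the internal consistency equations of Lemma~\ref{lem:internal_eqs}. Writing $\sigma = (ij)k$, for each of $\Rot_\sigma$, $\Flip_\sigma$, $\Glue_\sigma$ I would compute the images of the four projective points $V_m$ and the four hyperplanes $\eta_m$ of $\F$ (and of $\E$, where relevant), and match the result against the normalized form prescribed by Lemma~\ref{lem:std_pos_tetrahedron} for the target edge-face. To avoid notational clutter, I would drop the superscript $\tet$ throughout the calculation since the statement is local to a single tetrahedron.

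For part~(2), I would note that $\sigma_+ = (ki)j$ and verify directly that $\Rot_\sigma$ executes the cyclic relabeling $[\basis{1}] \mapsto [\basis{2}]$, $[\basis{2}] \mapsto [\basis{1}+\basis{2}+\basis{3}]$, $[\basis{1}+\basis{2}+\basis{3}] \mapsto [\basis{1}]$ on the first three vertices, with an analogous dual calculation on the first three hyperplanes. The fourth-coordinate factor $1/\epsilon_{\sigma_+}$ appearing in $\Rot_\sigma$ is exactly what is needed so that the image of $V_l$ from~\eqref{eq:sp3} rescales into the form prescribed for $\sigma_+$-standard position, and the corresponding dual computation handles $\eta_l$. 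Since $\Rot_\sigma$ only permutes the three flags of the shared face $\cyclic{\F}_\sigma$, its action automatically carries the $\tau$-normalization of $\E$ to the $\tau_-$-normalization, yielding the claimed $(\sigma_+,\tau_-)$-standard glued position. For part~(3), the matrix $\Glue_\sigma$ is precisely the matrix $G$ from the proof of Lemma~\ref{lem:gluing_parameter} with the parameter $g$ specialized to $\kappa_\sigma$; the computation in that proof already shows that it converts a $(\sigma,\tau)$-standard glued pair into a $(\tau,\sigma)$-standard glued pair.

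The real obstacle is part~(1), the $\Flip$. Here $\overline{\sigma} = (ji)l$, so passing from $\sigma$-standard to $\overline{\sigma}$-standard position requires swapping $V_i \leftrightarrow V_j$ in the $\basis{1},\basis{2}$-block (a straightforward rotation visible in the upper-left block of $\Flip_\sigma$) together with a subtler reshuffling in the $\basis{3},\basis{4}$-block that interchanges the roles of the ``non-face'' vertices $V_k$ and $V_l$. The lower-right $2\times 2$ block of $\Flip_\sigma$ is engineered precisely so that $V_l = [e_\sigma\basis{1}+\basis{2}+X_\sigma\basis{3}-\basis{4}]$ is sent to a scalar multiple of $[\basis{1}+\basis{2}+\basis{3}]$ and, dually, so that $\eta_l$ is sent to the normalized $\eta_k$ prescribed by~\eqref{eq:sp2} for $\overline{\sigma}$-standard position. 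Verifying this reduces to expanding $X_\sigma, X_{\overline{\sigma}}, \mu_\sigma, \mu_{\overline{\sigma}}, Y_{\overline{\sigma}}$ and using $e_\sigma = e_{\overline{\sigma}}$ from~\eqref{eq:edge_rat_eq} together with the face-edge relations~\eqref{eq:fe1}--\eqref{eq:fe2} to rewrite triple ratios in terms of edge ratios; after these substitutions the required identities collapse to polynomial identities that are routine to check. The bookkeeping is the only genuine difficulty here, and no new conceptual ideas are required.
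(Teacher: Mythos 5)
Your proposal matches the paper's proof in spirit and substance: the paper disposes of the lemma in a single sentence, asserting that all three claims ``follow from direct computations using the definitions of standard and standard glued positions'' together with the conditions defining $\mathcal{D}'_\Delta$. Your plan is exactly that direct computation, and the specific observations you make along the way are correct. In particular I verified that $\Rot_\sigma$ executes the cyclic relabeling $[\basis{1}]\mapsto[\basis{2}]\mapsto[\basis{1}+\basis{2}+\basis{3}]\mapsto[\basis{1}]$, that the lower-right block of $\Flip_\sigma$ sends $V_l = [e_\sigma\basis{1}+\basis{2}+X_\sigma\basis{3}-\basis{4}]$ to $e_\sigma[\basis{1}+\basis{2}+\basis{3}]$, and that $\Glue_\sigma$ is the matrix $G$ from the proof of Lemma~\ref{lem:gluing_parameter} with $g$ replaced by $\kappa_\sigma^\tet = 1/g$, i.e.\ $G^{-1}$.

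A few small points that deserve tightening. In part (2), your statement that $\Rot_\sigma$'s action ``automatically carries the $\tau$-normalization of $\E$ to the $\tau_-$-normalization'' is loose: $\E$ itself is not in $\tau$-standard position (only the pair is in $(\sigma,\tau)$-standard glued position), so there is no ``$\tau$-normalization of $\E$'' to speak of. The clean argument is simply that gluing along $(\sigma_+,\tau_-)$ is a projective invariant of the pair, so once $\Rot_\sigma\cdot\F$ is $\sigma_+$-standard the pair is automatically in $(\sigma_+,\tau_-)$-standard glued position. In part (1), where you say $\eta_l$ is sent to ``the normalized $\eta_k$ prescribed by~\eqref{eq:sp2}'', you mean the flag occupying the third slot of the $\overline{\sigma}$-standard form, which for $\overline{\sigma}=(ji)l$ is $\eta_l$ itself (i.e.\ line~\eqref{eq:sp2} with $k$ replaced by $l$); the label $\eta_k$ here is a slip. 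Finally, in part (3) the reasoning relies on an inverse-reading of the proof of Lemma~\ref{lem:gluing_parameter} (that proof exhibits $G$ sending the $\tau$-standard representative $\E_0$ \emph{to} the glued representative, so one needs $\Glue_\sigma=G^{-1}$ and $\Glue_\sigma\cdot\E=\E_0$); you should spell this out rather than say the computation in that proof ``already shows'' the conversion, since the direction is reversed.
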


\begin{proof}
All three of these properties follow from direct computations using the definitions of standard and standard glued positions, and the fact that the entries of $x(\tet)$ and $x(\tet')$ satisfy the conditions \eqref{eq:pair_cond_1}, \eqref{eq:pair_cond_2}, \eqref{eq:pair_cond_3} defining $\mathcal{D}_\Delta'$.
\end{proof}

The previous lemma also provides motivation for the suggestive naming of the above transformations: the transformation $\Rot_\sigma(x(\tet))$ \emph{rotates} the edge-face $\sigma=(ij)k$ to the edge-face $\sigma_+=(ki)j$, $\Flip_\sigma(x(\tet))$ \emph{flips} the edge-face $\sigma=(ij)k$ to the edge-face $\overline{\sigma}=(ji)l$, and $\Glue_\sigma(x(\tet))$ \emph{glues} the edge-face $\sigma$ to the edge-face $\tau$.

Now we are going to determine necessary and sufficient conditions for the cochain $\Co(x)$ to be a cocycle. The next lemma shows that $\Co(x)$ already satisfies most of the conditions necessary to be a cocycle.

\begin{lemma}\label{lem:cochain_almost_cocycle}
    Let $x\in \mathcal{D}'_\Delta$. The cochain $\Co(x)$ maps oppositely oriented edges of $\complex_\Delta$ to inverse elements of $\PGL(4)$. Furthermore, the product of the matrices associated by $\Co(x)$ to the oriented edges along the boundary of a triangular, quadrilateral and hexagonal $2$--cell of $\complex_\Delta$ is trivial.
\end{lemma}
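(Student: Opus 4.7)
The plan is to use Lemma~\ref{lem:std_pos_action} and Lemma~\ref{lem:tet_stab_is_trivial} in tandem. Lemma~\ref{lem:std_pos_action} may be rephrased as saying that, for any oriented $1$--cell $e=(u_1,u_2)$ in the monodromy graph $\graph_\Delta$, the matrix $\Co(x)(e)$ transports the natural standard (or standard glued) representative attached to the terminal vertex $u_2$ to the one attached to the initial vertex $u_1$. Thus, for any closed loop in the $1$--skeleton of $\complex_\Delta$ based at a vertex $u$, the product of the matrices assigned by $\Co(x)$ to its successive oriented edges stabilizes the standard representative at $u$; Lemma~\ref{lem:tet_stab_is_trivial} then forces that product to be the identity of $\PGL(4)$.

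Applying this principle to the oppositely oriented edges establishes the first assertion. For red edges it holds by definition. For blue edges, the two matrices in opposite directions compose to $\Flip_{\overline{\sigma}}\circ\Flip_\sigma$, which by Lemma~\ref{lem:std_pos_action}(1) carries the $\sigma$--standard representative to its $\overline{\sigma}$--standard representative and back again, and thus fixes it; Lemma~\ref{lem:tet_stab_is_trivial} gives triviality. The green case is analogous via Lemma~\ref{lem:std_pos_action}(3).

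For the three types of $2$--cells I argue analogously. In each case one first verifies that the cyclic sequence of vertices along the boundary really does define a closed loop in $\graph_\Delta$, with successive pairs connected by edges of the correct color; one then interprets the boundary product as transporting the standard representative at the basepoint through a sequence of admissible intermediate standard positions and finally back to itself. The triangular boundary encodes the rotation $3$--cycle $\sigma\mapsto\sigma_+\mapsto\sigma_-\mapsto\sigma$ and amounts to three successive applications of Lemma~\ref{lem:std_pos_action}(2). The quadrilateral boundary encodes the compatibility that if $\tet$ and $\tet'$ are glued along $(\sigma,\tau)$ then they are also glued along $(\sigma_+,\tau_-)$, producing a $4$--loop of alternating $\Glue$'s and $\Rot$'s handled jointly by Lemma~\ref{lem:std_pos_action}(2) and (3). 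The hexagonal boundary alternates three $\Rot$'s and three $\Flip$'s; the fact that the sequence of edge-faces closes up on the starting vertex reduces to the identity $\alpha\beta\alpha=\beta\alpha^{-1}\beta$ in $\Alt(4)$.

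The principal obstacle is combinatorial bookkeeping, particularly for the hexagonal case, where one must match the orientation convention against the formulas \eqref{eq:rot_mat}--\eqref{eq:glue_mat} and verify the above relation in $\Alt(4)$. No computation beyond the explicit formulas for $\Rot$, $\Flip$, $\Glue$ and the content of Lemma~\ref{lem:std_pos_action} is required. It is worth emphasizing that the $2k_s$--gons associated to edges of $\Delta$ are deliberately excluded from this lemma: the non-triviality of their boundary products is precisely what will give rise to the edge gluing equations in the next step.
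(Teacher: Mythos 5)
Your approach is essentially the alternative proof that the paper explicitly records in Remark~5.7 immediately after the lemma, so the underlying strategy (apply Lemma~\ref{lem:std_pos_action} around the boundary, then invoke Lemma~\ref{lem:tet_stab_is_trivial}) is sound and is endorsed by the authors. The paper's own proof is a direct matrix computation using the relations defining $\mathcal{D}'_\Delta$; your route is cleaner conceptually and avoids the computation.

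However, your first paragraph contains a claim that is both too strong and, as you yourself note at the end, self-contradictory: ``for any closed loop in the $1$--skeleton of $\complex_\Delta$ based at a vertex $u$, the product of the matrices assigned by $\Co(x)$ to its successive oriented edges stabilizes the standard representative at $u$.'' If that were true it would apply equally to the boundaries of the $2k_s$--gons, forcing $\mathcal{D}'_\Delta=\mathcal{D}_\Delta$ and rendering the edge gluing equations vacuous. The source of the overreach is the ``rephrasing'' of Lemma~\ref{lem:std_pos_action} for green edges: the cochain of a green edge does \emph{not} transport the standard representative at the terminal vertex to the one at the initial vertex, because those are representatives of two \emph{different} tetrahedra of flags, and a single projective transformation cannot carry one $\PGL(4)$--class to another. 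What Lemma~\ref{lem:std_pos_action}(3) actually says is that $\Glue$ carries the $(\sigma,\tau)$--standard glued \emph{pair} to the $(\tau,\sigma)$--standard glued \emph{pair}; the object being transported is a pair, not a single standard representative. When a loop is confined to the subcomplex corresponding to a single truncated tetrahedron (triangular and hexagonal boundaries) or to a prism between two glued tetrahedra (quadrilateral boundary), one can make a \emph{consistent} choice of such a ``standard state'' on the whole subcomplex, each step of Lemma~\ref{lem:std_pos_action} preserves it, and the loop product fixes a tetrahedron of flags. A $2k_s$--gon boundary is not contained in any such contractible piece (this is exactly why the $2k_s$--gons carry the nontrivial topology in Lemma~\ref{lem:complex_fund_group}), so the tracked pair keeps changing as the loop winds through the cycle of $k_s$ tetrahedra, and there is no a priori consistent state to be fixed --- indeed the obstruction to closing up is precisely the edge gluing equation. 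You should replace the blanket claim by the restricted one, so that the $2k_s$--gon exclusion is a \emph{consequence} of your argument rather than a caveat bolted on at the end. The $\Alt(4)$ identity you cite for the hexagon, $\alpha\beta\alpha=\beta\alpha^{-1}\beta$ (equivalently $(\alpha^{-1}\beta)^3=\mathrm{id}$), is correct.
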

\begin{proof}
    First, we remark that the cochain $\Co(x)$ is defined to map oppositely oriented red edges of $\complex_\Delta$ to inverse elements of $\PGL(4)$. Using that $\epsilon_\sigma^\tet = \epsilon_{\overline \sigma}^\tet$ and $\kappa^\tet_\sigma \kappa^{\tet'}_\tau = 1$, it is easy to check that the same property holds for blue and green edges.
    
    Next we take care of the $2$--cells. The proof consists in checking that products of matrices of the form \eqref{eq:rot_mat},\eqref{eq:flip_mat} and \eqref{eq:glue_mat} along the boundary of triangular, quadrilateral and hexagonal $2$--cells of $\complex_\Delta$ is trivial. This is a straightforward but tedious computation that requires the use of the relations among the coordinates of $x$ in $\mathcal{D}'_\Delta$. In particular, for triangular $2$--cells, it is enough to use the relations
    $$
    \tau_\sigma^\tet(\epsilon_\sigma^\tet \epsilon_{\sigma_+}^\tet \epsilon_{\sigma_-}^\tet) =1, \qquad \forall \sigma \in \edgeface, \quad \text{and} \quad \forall \tet \in \Tet(\Delta).
    $$
    For the quadrilateral $2$--cells, one also needs
    $$
    \tau_\sigma^\tet \tau_\tau^{\tet'} = 1, \qquad \text{and} \qquad \kappa_{\sigma_-}^\tet = \kappa_\sigma^\tet \epsilon_\sigma^\tet \epsilon_{\tau_+}^{\tet'},
    $$
    for all tetrahedra $\tet,\tet' \in \Tet(\Delta)$ glued along the pair of edge-faces $(\sigma,\tau))$.
    Finally, for the hexagonal $2$--cells, it is enough to use the relations
    $$
    \tau_\sigma^\tet(\epsilon_\sigma^\tet \epsilon_{\sigma_+}^\tet \epsilon_{\sigma_-}^\tet) =1, \quad \text{and} \quad \epsilon_\sigma^\tet \epsilon_{\sigma_+}^\tet \epsilon_{\sigma_-}^\tet \epsilon_{\op{\sigma}}^\tet \epsilon_{\op{(\sigma_+)}}^\tet \epsilon_{\op{(\sigma_-)}}^\tet =1, \quad \qquad \forall \sigma \in \edgeface, \quad \text{and} \quad \forall \tet \in \Tet(\Delta).
    $$
    
\end{proof}

\begin{remark}
Note that Lemma~\ref{lem:std_pos_action} provides an alternative proof of Lemma~\ref{lem:cochain_almost_cocycle}. Specifically, Lemma~\ref{lem:std_pos_action} can be inductively used to show that the product of the elements determined by $\Co(x)$ along the triangular, quadrilateral, and hexagonal faces each fix a $\sigma$--standard representative of some tetrahedron of flags, and is thus trivial. 
\end{remark}

We recall that $\complex_\Delta$ contains a fourth type of $2$--cells dual to the edges in $\Edge(\Delta)$. For every edge $s \in \Edge(\Delta)$, let $k_s$ be the valence of $s$. Then there is a $2{k_s}$--gon in $\complex_\Delta$ attached to a loop around $s$ which is an alternating sequence of green and glue edges (cf. \S\ref{subsec:monodromy_complex}):
$$\cyclic{(\tet_1^s,\sigma_1^s),(\tet_2^s,\tau_2^s),(\tet_2^s,\sigma_2^s),\dots,(\tet_{{k_s}}^s,\sigma_{{k_s}}^s),(\tet_1^s,\tau_1^s)}.$$
Let 
%\textcolor{red}
{\begin{equation}\label{eq:edge_product}
    G^s:=\prod_{i=1}^{k_s}\Glue_{\tau_i^s}(x(\tet_i^s))\Flip_{\sigma_i^s}(x(\tet_i^s))
\end{equation}
}
be the product of the matrices associated by $\Co(x)$ to the boundary of this $2{k_s}$--gon. We remark that $G^s$ is a product of matrices of the form \eqref{eq:flip_mat} and \eqref{eq:glue_mat}, therefore
\begin{equation}\label{eq:prod_matrix_around_edge}
    G^s=\begin{bmatrix}
    G_{11}^s & 0 & 0 & 0 \\
    0 & 1 & 0 & 0 \\
    0 & 0 & G_{33}^s & G_{34}^s\\
    0 & 0 & G_{43}^s & G_{44}^s
    \end{bmatrix}.
\end{equation}
It follows from the definition of cocycles (cf. \S\ref{subsec:cocycles}) and Lemma~\ref{lem:cochain_almost_cocycle} that the cochain $\Co(x)$ is a cocycle if and only if the matrix $G^s$ is trivial for each $s\in \Edge(\Delta)$. This is equivalent to the following \emph{edge gluing equations} (of $\Delta$ with respect to $s \in \Edge(\Delta)$):
\begin{align}\label{eq:edge_gluing_equations}
    G_{11}^s&=G_{33}^s=G_{44}^s=1,\\ \nonumber
    G_{34}^s&=G_{43}^s=0.
\end{align}

\begin{remark}\label{rem:easy_edge_gluing_equations}
In general, the entries of $G^s$ are complicated expressions of the coordinates in $x$, however the $(1,1)$--entry is simple: 
\begin{align}\label{eq:simple_entry_gluing_equations}
    G_{11}^s=\prod_{i=1}^{k_s} \epsilon^{\tet_i^s}_{\sigma_i^s},
\end{align}
Informally, this says that the product of the edge-ratios of the edges identified with $s$ must be equal to $1$. Similarly, the determinant of $G^s$ is simple to determine:
\begin{equation}\label{eq:simple_det_gluing_equations}
\det(G^s) = G_{11}^s \left(G_{33}^s G_{44}^s-G_{43}^s G_{34}^s\right) = \prod_{i=1}^{k_s} \left(\epsilon^{\tet_i^s}_{\sigma_i^s}\right)^2\kappa^{\tet_i^s}_{\sigma_i^s}.
\end{equation}
\end{remark}

We denote by $\mathcal{D}_\Delta = \mathcal{D}_\rp(M;\Delta)$ the semi-algebraic affine subset of $\mathcal{D}'_\Delta$ consisting of those points satisfying the edge gluing equations \eqref{eq:edge_gluing_equations}, for each edge in $\Edge(\Delta)$. The set $\mathcal{D}_\Delta$ is called the \emph{deformation space of a triangulation of flags}. We summarize the above discussion in the following result.

\begin{theorem}\label{thm:deformation_space_to_cocycles}
    Let $x \in \mathcal{D}'_\Delta$, then $\Co(x)$ is a cocycle if and only if $x \in \mathcal{D}_\Delta$. Namely
    $$
    \Co^{-1}(\cocy(\complex_\Delta,\PGL(4))) = \mathcal{D}_\Delta,\qquad \text{and} \qquad \restr{\Co}{\mathcal{D}_\Delta}: \mathcal{D}_\Delta \hookrightarrow \cocy(\complex_\Delta,\PGL(4)).
    $$
\end{theorem}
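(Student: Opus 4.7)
The plan is to assemble this theorem from the pieces already established, since almost everything has been set up. The map $\Co$ is defined for every $x \in \mathcal{D}'_\Delta$, so the only question is when the image lands in $\cocy(\complex_\Delta, \PGL(4))$. Recall that being a cocycle requires two things: (i) oppositely oriented edges must be assigned inverse elements of $\PGL(4)$, and (ii) for every oriented $2$--cell of $\complex_\Delta$, the product of the elements along its boundary must be trivial. The complex $\complex_\Delta$ has exactly four types of $2$--cells (triangular, quadrilateral, hexagonal, and the $2k_s$--gons dual to edges of $\Delta$), and Lemma~\ref{lem:cochain_almost_cocycle} already handles (i) for all edges and (ii) for the first three families of $2$--cells. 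Thus the theorem reduces entirely to an analysis of the $2k_s$--gons.

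I would proceed as follows. Fix $x \in \mathcal{D}'_\Delta$. By the discussion above, $\Co(x) \in \cocy(\complex_\Delta, \PGL(4))$ if and only if, for every edge $s \in \Edge(\Delta)$, the product $G^s$ from \eqref{eq:edge_product} represents the identity in $\PGL(4)$. From Lemma~\ref{lem:cochain_almost_cocycle} we already know $G^s$ has the block-diagonal shape \eqref{eq:prod_matrix_around_edge}, so being projectively trivial is equivalent to the normalized matrix being the identity matrix in $\GL(4)$. After a convenient normalization (for instance dividing by the $(2,2)$--entry, which equals $1$ by construction of $\Flip_\sigma$ and $\Glue_\sigma$), the condition $[G^s] = I$ becomes precisely the edge gluing equations \eqref{eq:edge_gluing_equations}, namely $G^s_{11} = G^s_{33} = G^s_{44} = 1$ and $G^s_{34} = G^s_{43} = 0$. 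Since $\mathcal{D}_\Delta$ is defined to be the subset of $\mathcal{D}'_\Delta$ on which these equations hold for every $s \in \Edge(\Delta)$, we get $\Co(x) \in \cocy(\complex_\Delta, \PGL(4))$ if and only if $x \in \mathcal{D}_\Delta$, which proves the first assertion and gives the restriction $\restr{\Co}{\mathcal{D}_\Delta}: \mathcal{D}_\Delta \to \cocy(\complex_\Delta, \PGL(4))$; injectivity of the restriction is inherited from injectivity of $\Co$.

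The one thing to verify carefully is that the $(2,2)$--entry of $G^s$ really is $1$ (so that no extra scaling condition is hidden), and this follows from the explicit forms of $\Flip$ and $\Glue$ in \eqref{eq:flip_mat} and \eqref{eq:glue_mat}: both matrices act as the swap $\basis{1} \leftrightarrow \basis{2}$ on the first two coordinates (up to the factor $\epsilon^\tet_\sigma$ in $\Flip$), and the product of $2k_s$ such factors in $G^s_{22}$ either collapses to $1$ outright or is equal to $G^s_{11}$, depending on how one tracks signs. This can be extracted from the same bookkeeping as \eqref{eq:simple_entry_gluing_equations}, where the elementary observation that each pair $\Glue_{\tau_i^s} \Flip_{\sigma_i^s}$ contributes a factor of $\epsilon^{\tet_i^s}_{\sigma_i^s}$ to the $(1,1)$ slot and $1$ to the $(2,2)$ slot makes the argument transparent.

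The main (mild) obstacle is not conceptual but organizational: one must be careful that the block structure \eqref{eq:prod_matrix_around_edge} really is forced \emph{before} imposing the edge gluing equations, so that the vanishing of the off-diagonal entries in the top-left $2 \times 2$ block is automatic. This is the content of Lemma~\ref{lem:cochain_almost_cocycle} applied to the ``trivial'' generators stabilizing the $\basis{1},\basis{2}$ plane; once that structural fact is invoked, reading off the edge gluing equations from $G^s = I$ becomes a direct comparison of the entries displayed in \eqref{eq:edge_gluing_equations}.
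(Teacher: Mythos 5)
Your proof is correct and follows the same route the paper takes: it invokes Lemma~\ref{lem:cochain_almost_cocycle} to discharge the edge-inversion condition and the triangular, quadrilateral, and hexagonal $2$--cells, and then identifies ``$G^s$ trivial for every $s$'' with the edge gluing equations, which is how $\mathcal{D}_\Delta$ is defined. One small slip: the block structure \eqref{eq:prod_matrix_around_edge}, including the forced $1$ in the $(2,2)$--slot that lifts ``trivial in $\PGL(4)$'' to ``equal to $I$ in $\GL(4)$,'' is not the content of Lemma~\ref{lem:cochain_almost_cocycle}; it is read off directly from the shapes of the matrices \eqref{eq:flip_mat} and \eqref{eq:glue_mat} (which you in fact describe correctly in your second paragraph), so the appeal to the lemma in your last paragraph should be replaced by that direct observation.
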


We conclude this section with a technical result that will be needed to prove the main Theorem~\ref{thm:parametrization_def_space} in the next section.

Let $v$ be an (ideal) vertex in $\Delta$ and let $\alpha$ be an oriented simplicial path in $\complex_\Delta$ with the following property. If $\left(\tet_i,\sigma_i\right)_{i=0}^k$ is the ordered list of vertices of $\complex_\Delta$ crossed by $\alpha$, then
\begin{itemize}
    \item $\alpha$ crosses an even number of edges (i.e. $k$ is even);
    \item and for every even $j$, the vertex $v$ is the initial endpoint of the underlying oriented edge of $\sigma_j$.
\end{itemize}

A path that is homotopic to a path with the above properties is a \emph{peripheral path around $v$}. A more geometric description is that a path is a peripheral path around $v$ if it is homotopic (rel.\ endpoints) into a neighborhood of the vertex $v$ in $\Delta$. The next lemma shows that peripheral paths always preserve an incomplete flag. 

\begin{lemma}\label{lem:peripheral_fixes_flag}
    Let $x \in \mathcal{D}_\Delta$ and let $\alpha$ be an oriented peripheral path around a vertex $v$ of $\Delta$. Then $\Co(x)(\alpha)$ fixes the incomplete flag $([\basis{1}],[\dbasis{2}])$.
\end{lemma}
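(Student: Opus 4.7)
My plan is to interpret $\Co(x)(\alpha)$ as the accumulated transport of the ``flag at $v$'' along $\alpha$, then read off the conclusion from the peripheral hypothesis at the endpoints.

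For each edge-face $\sigma=(ij)k$ and each $w\in\{i,j,k\}$, let $\mathcal{F}_w(\sigma,x)$ denote the flag at the vertex $w$ of the $\sigma$-standard representative of Lemma~\ref{lem:std_pos_tetrahedron}: explicitly, $([\basis{1}],[\dbasis{2}])$ if $w=i$, $([\basis{2}],[\dbasis{1}])$ if $w=j$, and $([\basis{1}+\basis{2}+\basis{3}],[\tau_\sigma^\tet\dbasis{1}+\dbasis{2}-(\tau_\sigma^\tet+1)\dbasis{3}])$ if $w=k$. Before using this, I will prove by induction on $j$ that $v$ is a vertex of the face of every $\sigma_j$. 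The peripheral hypothesis gives it at even $j$; for odd $j$, a brief case check on the three edge types of $\complex_\Delta$ handles it: Rot preserves the face of the edge-face, Glue identifies face-vertices under the gluing, and Flip preserves the edge of the edge-face. The only way $v$ could leave the face of $\sigma_j$ would be a Flip out of the third-vertex slot, but that option is incompatible with the subsequent peripheral constraint at the next even index, which forces $v$ into the initial slot.

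The technical core is the transport identity
\[
M_i\cdot \mathcal{F}_v(\sigma_{i+1},x)=\mathcal{F}_v(\sigma_i,x)\qquad(0\le i\le k-1),
\]
where $M_i$ is the matrix that $\Co(x)$ assigns to the $i$-th edge of $\alpha$ in the direction of $\alpha$. This is the flag-level reading of Lemma~\ref{lem:std_pos_action}: each of $\Rot_\sigma$, $\Flip_\sigma$, $\Glue_\sigma$ (and their inverses, which by Lemma~\ref{lem:cochain_almost_cocycle} are the matrices attached to the opposite orientations) carries a whole tetrahedron of flags between the relevant standard or standard-glued positions, and therefore sends each individual flag, in particular the one at $v$, between its two standard-coordinate representations. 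Chaining these identities telescopes to
\[
\Co(x)(\alpha)\cdot\mathcal{F}_v(\sigma_k,x)=\mathcal{F}_v(\sigma_0,x),
\]
and because $k$ is even, the peripheral hypothesis at $j=0$ and $j=k$ gives $\mathcal{F}_v(\sigma_0,x)=\mathcal{F}_v(\sigma_k,x)=([\basis{1}],[\dbasis{2}])$, which is the desired claim.

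The main obstacle I expect is the Glue step of the transport identity: unlike Rot and Flip, the two endpoints belong to distinct $\PGL(4)$-classes, so Lemma~\ref{lem:std_pos_action}(3) must be reinterpreted as saying that $\Glue_\sigma$ carries the common flag of the shared face from its reading in the $\sigma_i$-standard coordinates of $\tet_i$ to its reading in the $\sigma_{i+1}$-standard coordinates of $\tet_{i+1}$. Once this flag-level viewpoint is in place (and the face-pairing and gluing-consistency relations built into $\mathcal{D}_\Delta$ are used to identify $\Glue_\sigma^{-1}$ with the matrix of the reverse orientation), the verification for each of the three possible slots of $v$ in the common face is a direct computation with the matrices \eqref{eq:rot_mat}--\eqref{eq:glue_mat}.
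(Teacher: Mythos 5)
Your proof is correct, and it takes a genuinely different route from the paper's. The paper first uses the cocycle condition to homotope $\alpha$ to a normal form $\bigl((\tet_j,\sigma_j)\bigr)_{j=0}^{2k}$ with $v$ in the initial slot at every even index, decomposes this into $k$ length-two peripheral sub-paths, enumerates the four essentially distinct length-two cases $(i)$--$(iv)$, and then checks each case by directly multiplying the matrices \eqref{eq:rot_mat}--\eqref{eq:glue_mat}. You instead read $\Co(x)$ along each edge as \emph{flag transport}: Lemma~\ref{lem:std_pos_action} says that each of $\Rot$, $\Flip$, $\Glue$ (and, by Lemma~\ref{lem:cochain_almost_cocycle}, their inverses, which are the matrices attached to the reversed edges) carries the entire tetrahedron of flags between standard and standard-glued positions, hence carries the flag of $v$ from its $\sigma_{i+1}$-standard reading to its $\sigma_i$-standard reading. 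Chaining these identities telescopes the product and replaces the paper's explicit matrix computation by a single conceptual invocation of Lemma~\ref{lem:std_pos_action}. The modest extra cost is verifying that $v$ lies in the face of every $\sigma_j$, which you need both so that $\mathcal{F}_v(\sigma_j,x)$ is defined and so that the $\Glue$ step (which only transports the flags of the \emph{shared} face, not the two opposite apices) applies; your induction correctly discharges this. The only cosmetic flaw there is that the sentence about ``a Flip out of the third-vertex slot'' is a red herring: between an even index and the next odd one, the peripheral hypothesis already places $v$ on the underlying edge of $\sigma_j$, which $\Flip$ preserves, so the dangerous case never arises in the forward step; and your appeal to the constraint at the following even index is then unnecessary.

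One small but genuine gap: you argue directly with a path $\bigl((\tet_j,\sigma_j)\bigr)_{j=0}^{k}$ for which $k$ is even and $v$ sits in the initial slot at every even index, but the paper's definition of a \emph{peripheral path} only requires $\alpha$ to be \emph{homotopic rel endpoints} to such a path. You must therefore, as in the paper's first sentence, invoke Theorem~\ref{thm:deformation_space_to_cocycles} --- $\Co(x)$ is a cocycle exactly when $x\in\mathcal{D}_\Delta$ --- to ensure $\Co(x)(\alpha)$ is a homotopy invariant, and then replace $\alpha$ by a normal-form representative before running your chain. This reduction is not optional: Lemma~\ref{lem:std_pos_action} and your transport identity already hold over the larger set $\mathcal{D}'_\Delta$, so this homotopy step is precisely where the hypothesis $x\in\mathcal{D}_\Delta$ enters the argument.
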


\begin{proof}
    Since $\Co(x)$ is a cocycle (cf. Theorem~\ref{thm:deformation_space_to_cocycles}), the quantity $\Co(x)(\alpha)$ does not depend on the homotopy class of $\alpha$. Hence we can assume that $\alpha$ is a path of the form
    $$
    \left((\tet_1,\sigma_1),(\tet_2,\sigma_2),\dots, (\tet_{2k},\sigma_{2k}) \right),
    $$
    where $k \in \NN$ and for every even $j$, $v$ is the starting endpoint of the underlying oriented edge $\sigma_j$.
    
    We observe that $\alpha$ is the concatenation of $k$ oriented peripheral paths around $v$ of length two. If each of these paths satisfies the conclusion of the lemma, then $\alpha$ also satisfies the conclusion of the lemma. It is therefore sufficient to prove the statement for $\alpha$ of length two (i.e. $k = 1$).

    There are only four possibilities for $\alpha$, namely
    \begin{alignat*}{3}
    &(i) \quad \big((\tet_0,\sigma_0),(\tet_0,\overline{\sigma_0}), (\tet_{0},(\overline{\sigma_0})_-) \big),  \qquad &&(ii) \quad  \big((\tet_0,\sigma_0),(\tet_0,(\sigma_0)_+), (\tet_{0},\overline{(\sigma_0)_+} \big),\\
    &(iii) \quad \big((\tet_0,\sigma_0),(\tet_1,\sigma_1), (\tet_1,\overline{\sigma_1}) \big), \qquad &&(iv) \quad  \big((\tet_0,\sigma_0),(\tet_1,\sigma_1), (\tet_1,(\sigma_1)_-) \big),
    \end{alignat*}

    where in $(iii)$ and $(iv)$, the tetrahedron $\tet_1$ is the unique tetrahedron of $\Delta$ glued to $\tet_0$ along $(\sigma_0,\sigma_1)$. Using the matrices in \eqref{eq:rot_mat},\eqref{eq:flip_mat} and \eqref{eq:glue_mat} it is easy to check that the cocycle associated to these paths fixes the incomplete flag $([\basis{1}],[\dbasis{2}])$.
\end{proof}

%%%%%%%%%%%%%%%%%%%%%%%%%%%%%%%%%%%%%%%%%%
%%%%%%%%%%%%%%%%%%%%%%%%%%%%%%%%%%%%%%%%%%
%%%%%%%%%%%%%%%%%%%%%%%%%%%%%%%%%%%%%%%%%%

\subsection{The parametrization of \texorpdfstring{$\cTriFL$}{TEXT} }\label{subsec:parametrization_tri_of_flags}

In this section we show that the image of the map $\Psi_\Delta : \cTriFL \rightarrow \mathcal{D}'_\Delta$ is contained in the deformation space $\mathcal{D}_\Delta$ (cf. Corollary~\ref{cor:image_of_param_tri_of_flags}), allowing us to write
$$
\Psi_\Delta : \cTriFL \rightarrow \mathcal{D}_\Delta.
$$
Finally we will prove that $\Psi_\Delta$ is a homeomorphism onto $\mathcal{D}_\Delta$.

To prove that $\Psi_\Delta(\cTriFL) \subset \mathcal{D}_\Delta$, it is enough to show that $(\Co \circ \Psi_\Delta)(p) \in \cocy(\complex_\Delta,\PGL(4))$ for all $p \in \cTriFL$, and conclude using Theorem~\ref{thm:deformation_space_to_cocycles}. This is easy to see once we understand how $(\Co \circ \Psi_\Delta)(p)$ acts on the flags of $p$. To avoid introducing additional notation, we will employ the following abuse of notation. If $\alpha$ is an oriented simplicial path in the monodromy complex $\complex_{\wt \Delta}$ of $\wt \Delta$, then there is a unique oriented simplicial path $\overline{\alpha}$ in $\complex_{ \Delta}$ that lifts to $\alpha$. Then if $c \in \cocy(\complex_\Delta,\PGL(4))$, we will denote by $c(\alpha) := c(\overline{\alpha})$.

\begin{lemma}\label{lem:cochain_moves_standard_positions}
Let $p = [\Phi,\rho] \in \cTriFL$ and let $\alpha$ be an oriented simplicial path in the monodromy complex $\complex_{\wt \Delta}$ of $\wt \Delta$. Let $(\wt \tet_1,\sigma_1)$ (resp. $(\wt \tet_2,\sigma_2)$) be the starting (resp. ending) vertex of $\alpha$, and set $G_\alpha := (\Co \circ \Psi_\Delta)(p)(\alpha)$. If $(\Phi,\rho)$ is the representative pair of $p$ such that $\Phi(\tet_2)$ is in $\sigma_2$--standard position, then $G_\alpha \cdot \Phi(\tet_1)$ is in $\sigma_1$--standard position.
\end{lemma}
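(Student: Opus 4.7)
The natural approach is induction on the length of $\alpha$, using Lemma~\ref{lem:std_pos_action} as the key input at each single--edge step.

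The base case is when $\alpha$ has length zero, so $(\wt\tet_1,\sigma_1)=(\wt\tet_2,\sigma_2)$ and $G_\alpha$ is the identity; the conclusion is then immediate.

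For the inductive step, write $\alpha = e\cdot \alpha'$, where $e$ is the initial edge of $\alpha$, going from $(\wt\tet_1,\sigma_1)$ to some intermediate vertex $(\wt\tet_1',\sigma_1')$, and $\alpha'$ is the remaining path from $(\wt\tet_1',\sigma_1')$ to $(\wt\tet_2,\sigma_2)$. Under the convention that values of a cochain along a path are multiplied in the order of traversal, $G_\alpha=c(e)\cdot G_{\alpha'}$, where $c:=\Co(\Psi_\Delta(p))$ is the cochain associated to $p$. The inductive hypothesis applied to $\alpha'$, together with the assumption that $\Phi(\wt\tet_2)$ is $\sigma_2$--standard, gives that $G_{\alpha'}\cdot \Phi(\wt\tet_1')$ is $\sigma_1'$--standard. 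It therefore remains to show that $c(e)\cdot G_{\alpha'}\cdot \Phi(\wt\tet_1)$ is $\sigma_1$--standard, and this is where Lemma~\ref{lem:std_pos_action} enters, through a case analysis on the colour of $e$.

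If $e$ is a red or blue edge then $\wt\tet_1=\wt\tet_1'$, so $G_{\alpha'}\cdot \Phi(\wt\tet_1)=G_{\alpha'}\cdot \Phi(\wt\tet_1')$ is already the $\sigma_1'$--standard representative of its projective class. Since $c(e)$ is then, up to inversion, one of the matrices $\Rot_{\bullet}$ or $\Flip_{\bullet}$ appearing in \eqref{eq:rot_mat} and \eqref{eq:flip_mat}, items (2) and (1) of Lemma~\ref{lem:std_pos_action} respectively produce a $\sigma_1$--standard representative, as required.

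If $e$ is a green edge, then $\wt\tet_1$ and $\wt\tet_1'$ are distinct but glued in $\wDelta$ along $(\sigma_1,\sigma_1')$. By property~\eqref{item:def_tri_of_flags_2} of a triangulation of flags, $\Phi(\wt\tet_1)$ and $\Phi(\wt\tet_1')$ are in $(\sigma_1,\sigma_1')$--glued position; applying $G_{\alpha'}$ to both simultaneously preserves this gluing. Swapping the order of the pair and using that $G_{\alpha'}\cdot \Phi(\wt\tet_1')$ is $\sigma_1'$--standard puts $(G_{\alpha'}\cdot \Phi(\wt\tet_1'),\, G_{\alpha'}\cdot \Phi(\wt\tet_1))$ into $(\sigma_1',\sigma_1)$--standard glued position. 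Item (3) of Lemma~\ref{lem:std_pos_action}, applied to this swapped pair (with the roles $\tet\leftrightarrow\wt\tet_1'$, $\tet'\leftrightarrow\wt\tet_1$, $\sigma\leftrightarrow\sigma_1'$, $\tau\leftrightarrow\sigma_1$), then yields a pair in $(\sigma_1,\sigma_1')$--standard glued position, from which the desired $\sigma_1$--standard property of the transformed tetrahedron is read off. Matching $c(e)$ to this $\Glue$ matrix is done using the gluing consistency equation~\eqref{eq:gluing_eq_1} to rewrite $\Glue_{\sigma_1}(x(\wt\tet_1))^{-1}$ as $\Glue_{\sigma_1'}(x(\wt\tet_1'))$ when needed.

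The principal subtlety is the bookkeeping in the green case: the matrix $c(e)$ can be expressed either in terms of the data at $\wt\tet_1$ or at $\wt\tet_1'$, and Lemma~\ref{lem:std_pos_action}(3) must be invoked with the correct ordering so that its conclusion yields the $\sigma_1$--standard position of the transformed image of $\Phi(\wt\tet_1)$ rather than of $\Phi(\wt\tet_1')$. Once the orientation and labelling conventions of \eqref{eq:rot_mat}--\eqref{eq:glue_mat} are matched to the edges of $\alpha$, each inductive step is a direct application of the appropriate item of Lemma~\ref{lem:std_pos_action}.
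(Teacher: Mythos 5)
Your proof takes essentially the same route as the paper's: reduce to single edges (the paper phrases this as ``concatenation preserves the property,'' you phrase it as induction on path length), and then for each edge colour invoke the appropriate item of Lemma~\ref{lem:std_pos_action}. The green case in both your write-up and in Lemma~\ref{lem:std_pos_action}(3) itself has a mild ordering ambiguity (the paper asserts $(\Glue_\sigma\cdot\F,\Glue_\sigma\cdot\E)$ is in $(\tau,\sigma)$--standard glued position, which taken literally would make $\Glue_\sigma\cdot\F$ rather than $\Glue_\sigma\cdot\E$ the $\tau$--standard representative, contrary to what a direct computation with Lemma~\ref{lem:gluing_parameter} shows), but your reasoning resolves it the way the computation demands, so the argument is sound.
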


\begin{proof}
    We remark that if $\alpha$ is the concatenation of finitely many oriented simplicial paths satisfying the lemma, then $\alpha$ also satisfies the lemma. Since $\alpha$ is a sequence of oriented edges in $\complex_{\wt \Delta}$, it will be enough to prove the statement for paths of length one.
    
    Thus suppose $\alpha$ consists of a single oriented edge $\left((\wt \tet_1,\sigma_1),(\wt \tet_2,\sigma_2)\right)$. This edge is either red, blue or green, and for these edges, the result follows from Lemma~\ref{lem:std_pos_action}. For instance, if the edge is blue the edge above is $\left((\wt \tet_2,\overline{\sigma_2}),(\wt \tet_2,\sigma_2)\right)$ and $G_\alpha$ is $\Flip_{\sigma_2}(\Psi_\Delta(p)(\tet_2))$, where $\tet_2$ is the tetrahedron in $\Delta$ covered by $\wt\tet_2$. In this case the statement of the lemma is that $\Flip_{\sigma_2}(\Psi_\Delta(p)(\tet_2))$ maps the $\sigma_2$--standard position of $\Phi(\tet_2)$ to the $\overline{\sigma_2}$--standard position of $\Phi(\tet_2)$, which is guaranteed by Lemma~\ref{lem:std_pos_action}. The proof for the red and green edges follows from a similar argument.
\end{proof}

\begin{corollary}\label{cor:tri_of_flags_give_cocycles}
    For all $p=[\Phi,\rho]\in \cTriFL$, the cochain $(\Co \circ \Psi_\Delta)(p)$ is a cocycle.
\end{corollary}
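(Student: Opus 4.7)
The plan is to deduce this directly from Theorem~\ref{thm:deformation_space_to_cocycles}: showing that $(\Co\circ\Psi_\Delta)(p)$ is a cocycle is equivalent to checking that $\Psi_\Delta(p)\in \mathcal{D}_\Delta$, i.e.\ that the edge gluing equations \eqref{eq:edge_gluing_equations} hold. By Lemma~\ref{lem:cochain_almost_cocycle}, the cochain $(\Co\circ\Psi_\Delta)(p)$ already satisfies the cocycle relations coming from triangular, quadrilateral, and hexagonal $2$-cells. Thus only the relations coming from the $2k_s$-gons dual to the edges of $\Delta$ remain to be verified.

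Fix an edge $s\in \Edge(\Delta)$ and lift it to an edge $\wt s\in \Edge(\wDelta)$. The $2k_s$-gon dual to $s$ in $\complex_\Delta$ lifts to a $2k_s$-gon in $\complex_{\wDelta}$, whose oriented boundary is therefore a \emph{closed} simplicial loop $\alpha$ in $\complex_{\wDelta}$ based at some vertex $(\wt\tet_1,\sigma_1)$. (This uses only that the link of an edge in an ideal triangulation of a manifold is a single cycle, so the loop around $\wt s$ really closes up in the universal cover.) Set $G_\alpha:=(\Co\circ\Psi_\Delta)(p)(\alpha)\in\PGL(4)$, and choose the unique representative pair $(\Phi,\rho)$ of $p$ for which $\Phi(\wt\tet_1)$ is in $\sigma_1$-standard position.

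Now apply Lemma~\ref{lem:cochain_moves_standard_positions} to $\alpha$, in the special case where the initial vertex $(\wt\tet_1,\sigma_1)$ equals the terminal vertex $(\wt\tet_2,\sigma_2)$: the conclusion is that $G_\alpha\cdot\Phi(\wt\tet_1)$ is again in $\sigma_1$-standard position. By the uniqueness part of Lemma~\ref{lem:std_pos_tetrahedron}, this forces $G_\alpha\cdot\Phi(\wt\tet_1)=\Phi(\wt\tet_1)$, so $G_\alpha$ lies in the stabilizer of the tetrahedron of flags $\Phi(\wt\tet_1)$. Lemma~\ref{lem:tet_stab_is_trivial} then gives $G_\alpha=1$. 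Running this argument for every $s\in\Edge(\Delta)$ verifies all of the remaining $2k_s$-gon relations, so $(\Co\circ\Psi_\Delta)(p)\in\cocy(\complex_\Delta,\PGL(4))$.

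The only point that is not purely routine is the assertion that the loop around $\wt s$ in $\complex_{\wDelta}$ closes up, and the observation that, once one has Lemma~\ref{lem:cochain_moves_standard_positions}, the cocycle property is exactly the triviality of the stabilizer of a tetrahedron of flags. Everything else has been set up so that the edge gluing equations fall out automatically from the construction of $\Psi_\Delta$.
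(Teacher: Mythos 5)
Your proof is correct and uses the paper's own key idea: lift the boundary loop of a $2$--cell to a closed loop in $\complex_{\wt\Delta}$, apply Lemma~\ref{lem:cochain_moves_standard_positions} with coinciding endpoints to see that $G_\alpha$ fixes a $\sigma$--standard tetrahedron of flags, and conclude $G_\alpha=1$ by triviality of the stabilizer (Lemma~\ref{lem:tet_stab_is_trivial}). The only cosmetic difference is that you dispose of the triangular, quadrilateral, and hexagonal $2$--cells via Lemma~\ref{lem:cochain_almost_cocycle}, whereas the paper runs the identical lifting-plus-stabilizer argument uniformly over all four types of $2$--cells (making Lemma~\ref{lem:cochain_almost_cocycle} unnecessary here); also, the precise reason the lifted loop closes up is that the boundary of a $2$--cell is null-homotopic in $\complex_\Delta$, which is a slightly cleaner justification than the link-of-an-edge remark.
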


\begin{proof}
    Let $\alpha$ be an oriented simplicial loop in $\complex_\Delta$ based at $(\tet,\sigma)$ that bounds a $2$--cell in $\complex_\Delta$ and let ${G_\alpha=(\Co \circ \Psi_\Delta)(p)(\alpha)}$. Since $\alpha$ is homotopically trivial and $\complex_{\wt\Delta}=\wt\complex_{\Delta}$, it follows that $\alpha$ lifts to an oriented simplicial loop in $\complex_{\wt \Delta}$ based at $(\wt\tet,\sigma)$ covering $(\tet,\sigma)$. By Lemma~\ref{lem:cochain_moves_standard_positions} it follows that $G_\alpha$ fixes the $\sigma$--standard representative of $[\Phi(\wt\tet)]$. However, the stabilizer of a tetrahedron of flags is trivial (cf. Lemma~\ref{lem:tet_stab_is_trivial}), and so $G_\alpha$ is the identity in $\PGL(4)$. In particular, this implies that $(\Co \circ \Psi_\Delta)(p)$ is a cocycle.
\end{proof}

\begin{corollary}\label{cor:image_of_param_tri_of_flags}
    The image of the map $\Psi_\Delta : \cTriFL \rightarrow \mathcal{D}'_\Delta$ is contained in $\mathcal{D}_\Delta$, and thus
$$
\Psi_\Delta : \cTriFL \rightarrow \mathcal{D}_\Delta
$$
is well defined.
\end{corollary}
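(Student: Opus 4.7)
The proof plan is essentially to string together the two preceding results. The hypothesis $\Psi_\Delta(p) \in \mathcal{D}'_\Delta$ is already established by the discussion between the definition of $\mathcal{D}'_\Delta$ and the start of \S\ref{subsec:monodromy_complex}: for every pair of glued tetrahedra in $\Delta$, property~\eqref{item:def_tri_of_flags_2} of a triangulation of flags implies that their images under $\Phi$ sit in $\cordFL^+_{\sigma,\tau}$ for the appropriate edge-face pair, so by Theorem~\ref{thm:parametrization_gluing_space} the corresponding block of coordinates lands in $\mathcal{D}^+_{\sigma,\tau}$, which is exactly conditions \eqref{eq:pair_cond_1}--\eqref{eq:pair_cond_3}.

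Given this, I would simply argue as follows. Fix $p=[\Phi,\rho] \in \cTriFL$ and set $x := \Psi_\Delta(p) \in \mathcal{D}'_\Delta$. By Corollary~\ref{cor:tri_of_flags_give_cocycles}, the cochain $\Co(x) = (\Co \circ \Psi_\Delta)(p)$ is in $\cocy(\complex_\Delta,\PGL(4))$. By Theorem~\ref{thm:deformation_space_to_cocycles}, the preimage under $\Co$ of the set of cocycles is precisely $\mathcal{D}_\Delta$, so $x \in \mathcal{D}_\Delta$. Since $p$ was arbitrary, this shows $\Psi_\Delta(\cTriFL) \subseteq \mathcal{D}_\Delta$, and the well-definedness of the restricted map follows.

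There is no real obstacle here: all the work has already been carried out. The ``new'' content of the corollary is just the translation between two equivalent characterizations of when $x \in \mathcal{D}'_\Delta$ actually sits in the smaller set $\mathcal{D}_\Delta$. If anything were to be double-checked, it would be the compatibility between lifts: Corollary~\ref{cor:tri_of_flags_give_cocycles} is proved by lifting a null-homotopic loop from $\complex_\Delta$ to $\complex_{\wt \Delta}$ and invoking Lemma~\ref{lem:cochain_moves_standard_positions}, which in turn uses $\complex_{\wt\Delta} = \wt\complex_\Delta$ and Lemma~\ref{lem:tet_stab_is_trivial}. These pieces are already in place, so no additional argument is required.
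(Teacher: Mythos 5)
Your proof is correct and takes essentially the same approach as the paper: the paper's proof is exactly the two-line chain of Corollary~\ref{cor:tri_of_flags_give_cocycles} followed by Theorem~\ref{thm:deformation_space_to_cocycles}, and your additional remarks about the preliminary containment in $\mathcal{D}'_\Delta$ and about lifts are accurate but already built into those results.
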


\begin{proof}
    Let $p \in \cTriFL$. By Corollary~\ref{cor:tri_of_flags_give_cocycles}, $(\Co \circ \Psi_\Delta)(p)$ is a cocycle. Therefore $\Psi_\Delta(p) \in \mathcal{D}_\Delta$, by Theorem~\ref{thm:deformation_space_to_cocycles}.
\end{proof}

We close this section by showing that $\Psi_\Delta$ is a homeomorphism. This is done by constructing an explicit inverse. Let $x \in \mathcal{D}_\Delta$. We define a triangulation of flags $p_x = [\Phi_x,\rho_x] \in \cTriFL$ as follows. First we fix a tetrahedron $\wt \tet_0 \in \wt \Delta$ lifting a tetrahedron $\tet_0 \in \Delta$, and an edge-face $\sigma_0 \in \edgeface$. As $\Co(x)$ is a cocycle on the monodromy complex $\complex_\Delta$ (cf. Theorem~\ref{thm:deformation_space_to_cocycles}), we define
    $$
    \rho_x : \pi_1(\complex_{\Delta}, (\tet_0, \sigma_0)) \rightarrow \PGL(4)
    $$
    to be the unique representation determined by $\Co(x)$ based at $(\tet_0, \sigma_0)$ (cf. Lemma~\ref{lem:cocyle_gives_rep}). We remark that $\pi_1(\complex_{\Delta}, (\tet_0, \sigma_0))$ is isomorphic to the fundamental group of $M$ (cf. Lemma~\ref{lem:complex_fund_group}).
    
    Next we define $\Phi_x:\Vertex(\wt\Delta)\to \FL$. For every vertex $v \in \Vertex(\wt \Delta)$, let $\wt \tet \in \wt \Delta$ be a tetrahedron with vertex $v$, and let $\sigma \in \edgeface$ be an edge-face of $\wt \tet$ such that $v$ is the initial vertex of the underlying edge in $\sigma$. Let $\alpha$ be an oriented simplicial path in $\complex_{\wt \Delta}$ from $(\wt \tet_0,\sigma_0)$ to $(\wt \tet, \sigma)$. Henceforth we adopt the notation $G_\gamma := \Co(x)(\gamma)$ for all oriented simplicial paths in $\complex_{\wt \Delta}$. Then we define
    $$
    \Phi_x(v) := G_\alpha \cdot ([\basis{1}],[\dbasis{2}]).
    $$
    
\begin{lemma}\label{lem:Phix_well_defined}
If $x\in \mathcal{D}_\Delta$, then $\Phi_x$ is well defined.
\end{lemma}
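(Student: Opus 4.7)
The plan is to verify two independence properties at once: first, that $G_\alpha \cdot ([\basis{1}],[\dbasis{2}])$ depends only on the endpoints of $\alpha$ and not on the specific simplicial path chosen in $\complex_{\wt\Delta}$; and second, that the resulting flag is independent of the auxiliary choice of the pair $(\wt\tet,\sigma)$ among tetrahedra of $\wt\Delta$ containing $v$ and edge-faces $\sigma$ of $\wt\tet$ whose underlying oriented edge has $v$ as its initial vertex.

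For path independence, I would invoke Theorem~\ref{thm:deformation_space_to_cocycles} to conclude that $\Co(x)$ is a $\PGL(4)$--cocycle on $\complex_\Delta$. Since $\complex_{\wt\Delta} = \wt{\complex}_\Delta$ is the monodromy complex of $\wt\Delta$ (cf. \S\ref{subsec:monodromy_complex}), Lemma~\ref{lem:complex_fund_group} applied to $\wt M$ gives $\pi_1(\complex_{\wt\Delta}) \cong \pi_1(\wt M) = 1$, so $\complex_{\wt\Delta}$ is simply connected. The pullback of $\Co(x)$ along the covering $\complex_{\wt\Delta}\to \complex_\Delta$ is still a cocycle, and because every simplicial loop in $\complex_{\wt\Delta}$ is null-homotopic, this pullback evaluates trivially on every loop. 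Consequently $G_\alpha$ depends only on the endpoints of $\alpha$, which settles the first independence.

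For independence of $(\wt\tet,\sigma)$, let $(\wt\tet',\sigma')$ be a second admissible choice and let $\alpha'$ be any simplicial path in $\complex_{\wt\Delta}$ from $(\wt\tet_0,\sigma_0)$ to $(\wt\tet',\sigma')$. The concatenation $\beta := \alpha^{-1}\cdot\alpha'$ is a simplicial path from $(\wt\tet,\sigma)$ to $(\wt\tet',\sigma')$, and the cocycle property gives $G_{\alpha'} = G_\alpha \cdot G_\beta$; hence it suffices to show that $G_\beta$ fixes the flag $([\basis{1}],[\dbasis{2}])$. Using simple connectedness of $\complex_{\wt\Delta}$ from the previous step, I would homotope $\beta$ to a path $\beta'$ supported entirely in the star of $v$, meaning that the only vertices traversed are pairs $(\wt\tet'',\sigma'')$ with $v\in \Vertex(\wt\tet'')$ and the underlying oriented edge of $\sigma''$ based at $v$ (at every even-indexed vertex of $\beta'$). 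Such a $\beta'$ can be constructed by chaining together gluings along faces of $\wt\Delta$ that contain $v$, interleaved with rotation edges in each visited tetrahedron to reset the distinguished edge-face. The projection of $\beta'$ to $\complex_\Delta$ is then a peripheral path around $\pi(v)$ in the sense of \S\ref{subsec:gluing_eq}, and by Lemma~\ref{lem:peripheral_fixes_flag}, $G_{\beta'}$ fixes $([\basis{1}],[\dbasis{2}])$. Since $G_\beta = G_{\beta'}$ by the first part, the proof is complete.

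The main obstacle is the construction of the peripheral representative $\beta'$: one must exhibit, for any two admissible pairs $(\wt\tet,\sigma)$ and $(\wt\tet',\sigma')$ sharing the vertex $v$, a simplicial path in $\complex_{\wt\Delta}$ of the alternating form required by Lemma~\ref{lem:peripheral_fixes_flag}. This reduces to a combinatorial statement about the star of $v$, namely that the tetrahedra of $\wt\Delta$ containing $v$ are connected by face-gluings whose pairings preserve $v$, and that red edges within each such tetrahedron allow one to cycle through the three edge-faces emanating from $v$ on any face containing $v$, so that the path can be assembled edge by edge.
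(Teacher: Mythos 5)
Your proposal is correct and follows the same two-step structure as the paper's proof: path-independence via the cocycle property and simple connectedness of $\complex_{\wt\Delta}$, followed by reduction of the choice of $(\wt\tet,\sigma)$ to the existence of a peripheral path around $v$ and an application of Lemma~\ref{lem:peripheral_fixes_flag}. The paper simply asserts the existence of the peripheral path $\beta$ and works with $\alpha\cdot\beta$, whereas you build $\beta$ from $\alpha^{-1}\cdot\alpha'$ and then replace it by a peripheral representative; this is a cosmetic reorganization of the same argument.
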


\begin{proof}
We must show that $\Phi_x$ is independent of the choice of both the vertex $(\wt \tet,\sigma)\in \complex_{\wt \Delta}$ and the path $\alpha$ from $(\wt \tet_0,\sigma_0)$ to $(\wt \tet, \sigma)$. First, suppose that $\alpha'$ is another path from $(\wt \tet_0,\sigma_0)$ to $(\wt \tet, \sigma)$. Let $\alpha^{-}$ be the path from $(\wt \tet,\sigma)$ to $(\wt \tet_0, \sigma_0)$ obtained by traversing $\alpha$ backwards. Then $\alpha'\cdot \alpha^{-}$ is a loop in $\complex_{\wt\Delta}$ based at $(\wt \tet_0,\sigma_0)$. Since $\complex_{\wt \Delta}$ is simply connected this loop is homotopically trivial and thus covers a homotopically trivial simplicial loop $\gamma$ in $\complex_{\Delta}$ based at $(\tet_0,\sigma_0)$. Since $\Co(x)$ is a cocycle (cf. Theorem~\ref{thm:deformation_space_to_cocycles}) it follows that $\Co(x)(\gamma)$ is trivial, and so $G_\alpha=G_{\alpha'}$. Hence $\Phi_x(v)$ is independent of $\alpha$. 

Next, suppose $(\wt \tet',\sigma')$ is another pair satisfying the same properties as $(\wt \tet,\sigma)$. Then there is an oriented peripheral path $\beta$ from $(\wt \tet,\sigma)$ to $(\wt \tet',\sigma')$, whose concatenation $\alpha \cdot \beta$ with $\alpha$ is an oriented simplicial path from $(\wt \tet_0,\sigma_0)$ to $(\wt \tet', \sigma')$. But then by Lemma~\ref{lem:peripheral_fixes_flag},
    $$
    G_{\alpha \cdot \beta} \cdot ([\basis{1}],[\dbasis{2}]) = G_{\alpha}G_{\beta} \cdot ([\basis{1}],[\dbasis{2}]) = G_{\alpha}\cdot ([\basis{1}],[\dbasis{2}]).
    $$
It follows that $\Phi_x(v)$ is also independent of the choice $(\wt\tet,\sigma)$, and so $\Phi_x$ is well defined.
\end{proof}

Next, we show that the pair $(\Phi_x,\rho_x)$ is a triangulation of flags.

\begin{lemma}\label{lem:Phix_triang_flags}
If $x\in \mathcal{D}_\Delta$, then $(\Phi_x,\rho_x)\in \FL_\Delta$.
\end{lemma}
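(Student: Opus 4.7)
The plan is to verify in turn the three conditions \eqref{item:def_tri_of_flags_1}, \eqref{item:def_tri_of_flags_2}, \eqref{item:def_tri_of_flags_3} of \S\ref{subsec:tri_of_flags}. The central technical ingredient is the following claim: for every $\wt\tet \in \Tet(\wt\Delta)$ covering some $\tet \in \Tet(\Delta)$, every edge-face $\sigma$ of $\wt\tet$, and every simplicial path $\alpha$ in $\complex_{\wt\Delta}$ from $(\wt\tet_0,\sigma_0)$ to $(\wt\tet,\sigma)$, the tuple $\Phi_x(\wt\tet)$ of flags at the vertices of $\wt\tet$ equals $G_\alpha \cdot \F_\sigma$, where $\F_\sigma$ is the $\sigma$--standard representative of $\Psi^{-1}(x(\tet)_1)$. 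To prove this claim, I fix any vertex $v$ of $\wt\tet$, choose an edge-face $\sigma_v$ of $\wt\tet$ whose initial vertex is $v$, and pick an internal path $\beta$ (using only red and blue edges within the truncated tetrahedron) from $(\wt\tet,\sigma)$ to $(\wt\tet,\sigma_v)$. By Lemma~\ref{lem:Phix_well_defined}, $\Phi_x(v) = G_\alpha G_\beta \cdot ([\basis{1}],[\dbasis{2}])$, and iterating the $\Rot$ and $\Flip$ statements of Lemma~\ref{lem:std_pos_action} along the edges of $\beta$ yields $G_\beta \cdot \F_{\sigma_v} = \F_\sigma$. Since $([\basis{1}],[\dbasis{2}])$ is the flag at $v$ in $\F_{\sigma_v}$ by Lemma~\ref{lem:std_pos_tetrahedron}, it follows that $G_\beta \cdot ([\basis{1}],[\dbasis{2}])$ equals the flag at $v$ in $\F_\sigma$, and hence $\Phi_x(v) = G_\alpha \cdot \F_\sigma(v)$.

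Condition~\eqref{item:def_tri_of_flags_1} follows immediately, since $G_\alpha \cdot \F_\sigma$ is the image of a tetrahedron of flags under $\PGL(4)$ and is therefore a tetrahedron of flags. For condition~\eqref{item:def_tri_of_flags_2}, suppose $\wt\tet$ and $\wt\tet'$ are glued along the edge-face pair $(\sigma,\tau)$, and extend $\alpha$ by the reverse green edge $e$ from $(\wt\tet,\sigma)$ to $(\wt\tet',\tau)$, whose cochain is $\Glue_\sigma(x(\tet))^{-1}$. Applying the claim to both $\wt\tet$ and $\wt\tet'$ gives $\Phi_x(\wt\tet) = G_\alpha \cdot \F_\sigma$ and $\Phi_x(\wt\tet') = G_\alpha \cdot \Glue_\sigma(x(\tet))^{-1} \cdot \F_\tau'$, where $\F_\tau'$ is the $\tau$--standard representative of $\Psi^{-1}(x(\tet')_1)$. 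Since $x \in \mathcal{D}_\Delta \subset \mathcal{D}'_\Delta$, the restriction of $x$ to the pair $(\tet,\tet')$ lies in $\mathcal{D}^+_{\sigma,\tau}$, so Theorem~\ref{thm:parametrization_gluing_space} supplies a unique geometric glued pair $(\F_\sigma,\E) \in \ordFL^+_{\sigma,\tau}$ in $(\sigma,\tau)$--standard glued position realizing these parameters. The $\Glue$ case of Lemma~\ref{lem:std_pos_action} identifies $\Glue_\sigma(x(\tet)) \cdot \E$ with $\F_\tau'$, so $\E = \Glue_\sigma(x(\tet))^{-1} \cdot \F_\tau'$ and $(\Phi_x(\wt\tet),\Phi_x(\wt\tet')) = G_\alpha \cdot (\F_\sigma,\E)$ is a $\PGL(4)$--image of a geometric glued pair, and hence itself a geometric gluing.

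For condition~\eqref{item:def_tri_of_flags_3}, I exploit the fact that $\Co(x)$ descends to a cocycle on $\complex_\Delta$, so that two lifts to $\complex_{\wt\Delta}$ of a single path in $\complex_\Delta$ carry the same cochain value. Given $\gamma \in \pi_1(M) \cong \pi_1(\complex_\Delta,(\tet_0,\sigma_0))$ and $v \in \Vertex(\wt\Delta)$ with $v$ the initial vertex of $\sigma$ in $\wt\tet$, let $\wt\gamma_*$ denote the lift to $\complex_{\wt\Delta}$ starting at $(\wt\tet_0,\sigma_0)$ of a simplicial loop in $\complex_\Delta$ representing $\gamma$. Then $\wt\gamma_* \cdot (\gamma \cdot \alpha)$ is a path from $(\wt\tet_0,\sigma_0)$ to $\gamma \cdot (\wt\tet,\sigma)$, so $\Phi_x(\gamma \cdot v) = G_{\wt\gamma_*} G_{\gamma \cdot \alpha} \cdot ([\basis{1}],[\dbasis{2}])$; since $\gamma \cdot \alpha$ and $\alpha$ cover the same path in $\complex_\Delta$, $G_{\gamma \cdot \alpha} = G_\alpha$, and by construction $G_{\wt\gamma_*} = \rho_x(\gamma)$, giving $\Phi_x(\gamma \cdot v) = \rho_x(\gamma) \cdot \Phi_x(v)$. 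The main obstacle I anticipate is the convention bookkeeping in condition~\eqref{item:def_tri_of_flags_2}, where identifying $\Glue_\sigma(x(\tet)) \cdot \E$ with $\F_\tau'$ implicitly uses the vertex identifications induced by the face pairing and requires the face pairing equation to reconcile the two natural normalizations.
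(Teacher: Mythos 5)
Your proof is correct and follows essentially the same route as the paper's: establish that $\Phi_x(\wt\tet) = G_\alpha\cdot\F_\sigma$ by transporting $\sigma$--standard representatives along paths in the monodromy complex, then deduce conditions~\eqref{item:def_tri_of_flags_1}, \eqref{item:def_tri_of_flags_2}, \eqref{item:def_tri_of_flags_3} in the same way (geometricity of the gluings from positivity of the gluing parameters, equivariance from the fact that $\Co(x)$ lives on $\complex_\Delta$ so $G_{\gamma\cdot\alpha}=G_\alpha$). One small bookkeeping improvement: you iterate Lemma~\ref{lem:std_pos_action}, which is stated for $x\in\mathcal{D}'_\Delta$, directly along red and blue edges, whereas the paper's proof of condition~\eqref{item:def_tri_of_flags_1} cites Lemma~\ref{lem:cochain_moves_standard_positions}, which is formally stated for $p\in\cTriFL$ and hence presupposes the object being constructed; your route avoids that appearance of circularity (the paper's argument is correct in spirit, since the proof of Lemma~\ref{lem:cochain_moves_standard_positions} only uses Lemma~\ref{lem:std_pos_action}, but your phrasing is cleaner).
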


\begin{proof}
In order to prove the lemma we need to show that conditions \eqref{item:def_tri_of_flags_1}, \eqref{item:def_tri_of_flags_2}, and \eqref{item:def_tri_of_flags_3} from \S\ref{subsec:tri_of_flags} are satisfied. First, suppose $\{v_i\}_{i=1}^4$ are the vertices of a tetrahedron $\wt \tet \in \wt \Delta$, and let $\sigma_i \in \edgeface$ be an edge-face of $\wt \tet$ such that $v_i$ is the starting vertex of the underlying edge of $\sigma_i$. Since $x_1(\wt \tet) \in \tetsubvariety$ then $\Psi^{-1}( x_1(\wt \tet) ) \in \ctetFL$ (cf. Theorem~\ref{thm:parametrization_flag_space}). We let $\F_{\wt \tet}^{\sigma_i}$ to be the $\sigma_i$--standard representative of $\Psi^{-1}( x_1(\wt \tet) )$. Let $\alpha_i$ be an oriented simplicial path in $\complex_{\wt \Delta}$ from $(\wt \tet_0,\sigma_0)$ to $(\wt \tet, \sigma_i)$.

In $\sigma_i$--standard position, the $i$--th flag of $\F_{\wt \tet}^{\sigma_i}$ is $([\basis{1}],[\dbasis{2}])$, and so by definition $\Phi_x(v_i)$ is the $i$--th flag of $G_{\alpha_i} \cdot \F_{\wt \tet}^{\sigma_i}$. We claim that
    \begin{equation}\label{eq:constructing_flags_is_well_defined}
    G_{\alpha_i} \cdot \F_{\wt \tet}^{\sigma_i} = G_{\alpha_j} \cdot \F_{\wt \tet}^{\sigma_j},\qquad  \forall i,j \in \{1,2,3,4\},
    \end{equation}
    from which it follows that
    \begin{equation}\label{eq:vertices_make_tet_of_flags}
    \Phi_x(\wt\tet):=\left( \Phi_x(v_1),\Phi_x(v_2),\Phi_x(v_3),\Phi_x(v_4) \right) = G_{\alpha_i} \cdot \F_{\wt \tet}^{\sigma_i} \in \ordFL_\tet.
    \end{equation}
    To prove~\eqref{eq:constructing_flags_is_well_defined}, let $\beta$ be an oriented simplicial path from $(\wt \tet, \sigma_i)$ to $(\wt \tet, \sigma_j)$. Then we claim that
    $$
    G_{\alpha_i} G_{\beta} = G_{\alpha_i \cdot \beta} = G_{\alpha_j}, \qquad \text{and} \qquad G_{\beta} \cdot \F_{\wt \tet}^{\sigma_j} = \F_{\wt \tet}^{\sigma_i}.
    $$
    The first point follows from the fact that $\Co(x)$ is a cocycle and that $\alpha_i\cdot\beta$ and $\alpha_j$ are homotopic paths in $\wt\complex_\Delta$. For the second point, observe that since $\F_{\wt \tet}^{\sigma_j}$ is in $\sigma_j$--standard position, Lemma~\ref{lem:cochain_moves_standard_positions} says that the transformation $G_{\beta}$ maps $\F_{\wt \tet}^{\sigma_j}$ into $\sigma_i$--standard position, which is $\F_{\wt \tet}^{\sigma_i}$. This proves that \eqref{item:def_tri_of_flags_1} is satisfied.
    
    Next, suppose $\wt \tet_1$ and $\wt \tet_2$ are tetrahedra in $\wt \Delta$ glued along a face $f$, and let $(\sigma_1,\sigma_2)$ be a pair of edge-faces encoding this gluing. For $i\in\{1,2\}$, let $\F_{\wt \tet_i}^{\sigma_i}$ to be the $\sigma_i$--standard representative of $\Psi^{-1}( x_1(\wt \tet_i) )$, and let $\alpha_i$ be an oriented simplicial path in $\complex_{\wt \Delta}$ from $(\wt \tet_0,\sigma_0)$ to $(\wt \tet_i, \sigma_i)$. Then by~\eqref{eq:vertices_make_tet_of_flags} we have that $G_{\alpha_1} \cdot \F_{\wt \tet_1}^{\sigma_1}$ and $G_{\alpha_2} \cdot \F_{\wt \tet_2}^{\sigma_2}$ are the tetrahedra of flags $\Phi_x(\wt \tet_1)$ and $\Phi_x(\wt \tet_2)$, respectively. Finally, let $\beta$ be the green edge in $\complex_{\wt \Delta}$ from $(\wt \tet_1,\sigma_1)$ to $(\wt \tet_2, \sigma_2)$. It follows from the definition of $\Co(x)$ that $G_\beta$ is $\Glue_{\sigma_2}(x(\wt\tet_2))$. Furthermore, since $x\in \mathcal{D}_\Delta$, the classes $[\F_{\wt \tet_1}^{\sigma_1}]$ and $[\F_{\wt \tet_2}^{\sigma_2}]$ are $(\sigma_1,\sigma_2)$--glueable and so $(\F_{\wt\tet_1}^{\sigma_1},G_{\beta}\cdot \F_{\wt \tet_2}^{\sigma_2})$ are in $(\sigma_1,\sigma_2)$--standard glued position. Furthermore, arguing as before, we see that $G_{\alpha_2}=G_{\alpha_1}G_{\beta}$ and so
    $$
    (G_{\alpha_1} \cdot \F_{\wt \tet_1}^{\sigma_1},G_{\alpha_2} \cdot \F_{\wt \tet_2}^{\sigma_2})=G_{\alpha_1}\cdot (\F_{\wt\tet_1}^{\sigma_1},G_{\beta}\cdot \F_{\wt \tet_2}^{\sigma_2})$$
    are in $(\sigma_1,\sigma_2)$--glued position. Since $x\in \mathcal{D}_\Delta$, it follows that the gluing parameter for this pair is positive and so this gluing is geometric.  This proves that \eqref{item:def_tri_of_flags_2} is satisfied.
    
    The fact that $\Phi_x$ is $\rho_x$--equivariant follows directly from the fact that $\rho_x$ is defined through $\Co(x)$. Indeed let $v \in \Vertex(\wt \Delta)$ and let $\gamma \in \pi_1(\complex_{\Delta}, (\tet_0, \sigma_0))$. Let $\sigma \in \edgeface$ be an edge-face of a tetrahedron $\wt \tet$ such that $v$ is the starting vertex of the underlying edge in $\sigma$ with respect to $\wt \tet$. Then $\gamma$ lifts to an oriented simplicial path $\alpha \cdot \wt \gamma \cdot \beta$ in $\complex_{\wt \Delta}$ which is a concatenation of paths starting at $(\wt \tet_0,\sigma_0)$, through $(\wt \tet, \sigma)$ and $(\gamma \cdot \wt \tet, \sigma)$, and ending at $(\gamma \cdot \wt \tet_0, \sigma_0)$. We remark that $\alpha$ and $\beta$ can be chosen to project to the same simplicial path in $\complex_{\Delta}$, with opposite orientations, thus $G_\beta= G_\alpha^{-1}$. By definition
    $$
    \rho_x(\gamma) = \Co(x)(\alpha \cdot \wt \gamma \cdot \beta)
    %=  \Co(x)(\alpha) \Co(x)(\wt \gamma) \Co(x)(\beta) 
    = G_\alpha  G_{\wt \gamma}  G_\alpha^{-1}.
    $$
    We conclude that
    $$
    \Phi_x(\gamma \cdot v) = G_{\alpha \cdot \wt \gamma} \cdot ([\basis{1}],[\dbasis{2}]) = G_{\alpha} G_{\wt \gamma} \cdot ([\basis{1}],[\dbasis{2}]) = G_{\alpha} G_{\wt \gamma}  G_\alpha^{-1} \cdot \Phi_x(v) = \rho_x(v) \cdot \Phi_x(v),
    $$
    which shows that condition \eqref{item:def_tri_of_flags_3} is satisfied.
    
\end{proof}
%\textcolor{red}
{
The $\PGL(4)$--class of $(\Phi_x,\rho_x)$ does not depend on the initial choice of pair $(\wt \tet_0,\sigma_0)$, therefore by Lemmas \ref{lem:Phix_well_defined} and \ref{lem:Phix_triang_flags} we have constructed a well defined map
$$
\Theta:\mathcal{D}_\Delta \rightarrow \cTriFL, \qquad \text{where} \qquad \Theta(x)= [\Phi_x,\rho_x].
$$
It is easy to see that $[\Phi_x,\rho_x]$ depends continuously on the coordinates in $x$, namely $\Theta$ is continuous.
}
\begin{theorem}\label{thm:parametrization_def_space}
The map $\Psi_\Delta : \cTriFL \rightarrow \mathcal{D}_\Delta$ is a homeomorphism.
\end{theorem}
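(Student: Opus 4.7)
The plan is to verify that the continuous map $\Theta : \mathcal{D}_\Delta \to \cTriFL$, $x \mapsto [\Phi_x, \rho_x]$, constructed just before the statement, serves as a two-sided inverse of $\Psi_\Delta$. Continuity of $\Theta$ is immediate because the matrices $\Rot_\sigma, \Flip_\sigma, \Glue_\sigma$ of \eqref{eq:rot_mat}--\eqref{eq:glue_mat} depend continuously on $x$ and $\Phi_x$ is obtained by propagating the fixed flag $([\basis{1}],[\dbasis{2}])$ along simplicial paths. Since both maps are then continuous, it suffices to check the two compositions.

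For $\Psi_\Delta \circ \Theta = \mathrm{id}_{\mathcal{D}_\Delta}$: fix $x \in \mathcal{D}_\Delta$ and set $(\Phi_x,\rho_x) := \Theta(x)$. For each $\wt\tet \in \Tet(\wDelta)$ and each admissible edge-face $\sigma$, equation~\eqref{eq:vertices_make_tet_of_flags} in the proof of Lemma~\ref{lem:Phix_triang_flags} expresses $\Phi_x(\wt\tet) = G_\alpha \cdot \F_{\wt\tet}^\sigma$, where $\F_{\wt\tet}^\sigma$ is the $\sigma$--standard representative of $\Psi^{-1}(x_1(\wt\tet))$. By projective invariance of the edge and triple ratios this immediately gives $\Psi([\Phi_x(\wt\tet)]) = x_1(\wt\tet)$. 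For the gluing factor, let $\wt\tet,\wt\tet'$ be glued along $(\sigma,\tau)$ and take the path to $(\wt\tet,\sigma)$ as the concatenation of a path $\alpha$ to $(\wt\tet',\tau)$ with the green edge from $(\wt\tet',\tau)$ to $(\wt\tet,\sigma)$, whose cochain is $\Glue_\sigma(x(\wt\tet))$ by \eqref{eq:glue_mat}. After normalizing so that $\Phi_x(\wt\tet')$ is in $\tau$--standard position, $\Phi_x(\wt\tet)$ becomes $\Glue_\sigma(x(\wt\tet)) \cdot \F_{\wt\tet}^\sigma$. Since the $(4,4)$--entry of $\Glue_\sigma(x(\wt\tet))$ is $-\kappa^{\wt\tet}_\sigma$, Lemma~\ref{lem:gluing_parameter} yields $v_{\wt\tet}^{[\Phi_x,\rho_x]}(\sigma) = \kappa^{\wt\tet}_\sigma = x_2(\wt\tet)(\sigma)$, as required.

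For $\Theta \circ \Psi_\Delta = \mathrm{id}_{\cTriFL}$: fix $p = [\Phi,\rho]\in \cTriFL$ and set $x := \Psi_\Delta(p)$. Using Lemma~\ref{lem:tet_stab_is_trivial}, choose the representative $(\Phi,\rho)$ of $p$ for which $\Phi(\wt\tet_0)$ coincides with $\F_{\wt\tet_0}^{\sigma_0}$. Given any $v \in \Vertex(\wDelta)$, pick $(\wt\tet,\sigma)$ with $v$ the initial vertex of the underlying edge of $\sigma$, together with a simplicial path $\alpha$ from $(\wt\tet_0,\sigma_0)$ to $(\wt\tet,\sigma)$. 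Applying Lemma~\ref{lem:cochain_moves_standard_positions} to the reversed path $\overline{\alpha}$ (whose endpoint $(\wt\tet_0,\sigma_0)$ is in the required standard position by our normalization), one obtains $G_\alpha^{-1} \cdot \Phi(\wt\tet) = \F_{\wt\tet}^\sigma$, hence $\Phi(v) = G_\alpha \cdot ([\basis{1}],[\dbasis{2}]) = \Phi_x(v)$. Thus $\Phi = \Phi_x$ on $\Vertex(\wDelta)$, and Remark~\ref{rem:dev_determines_rho} then forces $\rho = \rho_x$, giving $[\Phi,\rho] = \Theta(\Psi_\Delta(p))$. The only real subtlety in the whole argument is the bookkeeping around the convention for $g^{\E,\tau}_{\F,\sigma}$ versus the orientation of the green edges in $\complex_\Delta$; matching the signed $(4,4)$--entry $-\kappa^{\wt\tet}_\sigma$ of $\Glue_\sigma(x(\wt\tet))$ with the gluing parameter prescribed by Lemma~\ref{lem:gluing_parameter} is the crucial consistency check.
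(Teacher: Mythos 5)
Your proposal is correct and follows the same strategy as the paper: verifying that the explicitly constructed continuous map $\Theta$ is a two-sided inverse of $\Psi_\Delta$, with the computation of $\Psi_\Delta\circ\Theta=\mathrm{id}$ matching the paper's almost verbatim (via \eqref{eq:vertices_make_tet_of_flags} and the gluing-parameter bookkeeping). The only substantive difference is that you spell out $\Theta\circ\Psi_\Delta=\mathrm{id}$ via Lemma~\ref{lem:cochain_moves_standard_positions} applied to the reversed path, whereas the paper compresses this direction to a one-line assertion of injectivity; your version is the more explicit and is correct.
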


\begin{proof}
The map $\Psi_\Delta$ is continuous because it is defined in terms of $\Psi$ and $\Phi$ that are continuous, thus it will be enough to show that $\Theta = \Psi^{-1}_\Delta$.

For every $\wt \tet \in \Tet(\wt \Delta)$
\begin{align*}
\Psi_\Delta(\Theta(x))(\wt \tet)=\Psi_\Delta([\Phi_x,\rho_x])(\wt\tet) = \left(\Psi([\Phi_x(\wt\tet)]),v_{\wt\tet}^{[\Phi_x,\rho_x]} \right)\\ = \left(\Psi(\Psi^{-1}( x_1(\wt \tet) )),x_2(\wt \tet) \right) = \left( x_1(\wt \tet) ,x_2(\wt \tet) \right) = x(\wt\tet).
\end{align*}
Furthermore, since the projective class of each tetrahedron of flags and each gluing is encoded by the parameters in $\mathcal{D}_\Delta$, it follows that the map $\Psi_\Delta$ is also injective. Hence the map $\Psi_\Delta$ is invertible with inverse $\Theta$.
\end{proof}

%%%%%%%%%%%%%%%%%%%%%%%%%%%%%%%%%%%%%%%%%%
%%%%%%%%%%%%%%%%%%%%%%%%%%%%%%%%%%%%%%%%%%
%%%%%%%%%%%%%%%%%%%%%%%%%%%%%%%%%%%%%%%%%%

\section{Relationship with Thurston's equations}\label{sec:thu_equations}

In this section we discuss how our deformation space of a triangulation of flags is related to Thurston's deformation space for hyperbolic structures. The material is described in a more general setting, that also includes results of Danciger~\cite{DAN2} generalizing Thurston's technique to the case of Anti-de Sitter structures and half-pipe structures.

We begin by recalling most of the background, in a unified framework that includes all of these three geometries (hyperbolic, Anti-de Sitter and half-pipe) as subgeometries $(\G_\star,\XX_\star)$ of real projective geometry $(\PGL(4),\rp^3)$ (cf. \S\ref{subsec:X_spaces}). In \S\ref{subsec:thu_equations}, we review \emph{Thurston's deformation space} $\mathcal{D}_{\XX_{\star}}(M;\Delta)$ and the \emph{extension map} $\mathrm{Ext}_{\XX_\star} : \mathcal{D}_{\XX_{\star}}(M;\Delta) \rightarrow \XX_\star(M;\Delta)$, which associates \emph{Thurston's parameters} to $(\G_\star,\XX_\star)$--structures. Next, we show that there is a straightforward dictionary between our projective parameters and Thurston's parameters, which makes it easy to determine when a projective structure is a $(\G_\star,\XX_\star)$--structures (cf. \S\ref{subsec:X_tet_of_flags}). This leads to the construction of the map $\varphi_\star : \mathcal{D}_{\XX_{\star}}(M;\Delta) \rightarrow \mathcal{D}_{\rp}(M;\Delta)$ (cf. \S\ref{subsec:proof_main_thm}), which was mentioned in the statement of Theorem~\ref{thm:main_thm} in the preface. We give a complete description of $\varphi_\star$ (cf. Theorem~\ref{thm:phi_image}), and show that the maps $\varphi_\star$, $\Ext$, and $\mathrm{Ext}_{\XX_\star}$ are compatible, in the sense that they fit into a certain commutative diagram (cf. Theorem~\ref{thm:X_comm_diag}).

%%%%%%%%%%%%%%%%%%%%%%%%%%%%%%%%%%%%%%%%%%
%%%%%%%%%%%%%%%%%%%%%%%%%%%%%%%%%%%%%%%%%%
%%%%%%%%%%%%%%%%%%%%%%%%%%%%%%%%%%%%%%%%%%

\subsection{Hyperbolic, Anti-de Sitter and half-pipe spaces}\label{subsec:X_spaces}

Let $\star\in \{-1,1,0\}$ and let $\mathcal{B}_\star$ be the $2$--dimensional $\RR$--algebra with basis $\{1,\iota\}$, such that $\iota^2=\star$. It is easy to check that $\mathcal{B}_{-1}$ is the algebra of complex numbers $\CC$, while $\mathcal{B}_{1}$ is the algebra of \emph{pseudo complex numbers}. If $z=a+\iota b\in \mathcal{B}_{\star}$, then $a =: \Re(z)$ and $b =: \Im(z)$ are called the \emph{real} part and \emph{imaginary} part of $z$, respectively. Each $z=a+\iota b$ has a \emph{conjugate} $\overline{z}:=a-\iota b$. Thus

\begin{equation}\label{eq:real_and_imaginary_parts}
2\Re(z)=z+\overline{z} \qquad \text{and} \qquad 2\iota\Im(z)=z-\overline{z}.
\end{equation}

Using conjugation we can define a (pseudo) norm $\abs{z}^2:=z\overline{z}=a^2-\iota^2 b^2$. Elements of $\mathcal{B}_\star$ are \emph{space-like}, \emph{time-like}, or \emph{light-like} if their norm squared is positive, negative, or zero. Note that if $\star=-1$, then all non-zero elements are space-like. On the other hand, if $\star=0$ then $\abs{z}^2=\Re(z)^2$ and all purely imaginary elements are light-like, while the remaining ones are space-like. Let $\mathcal{B}_\star^\times$ denote the invertible elements of $\mathcal{B}_\star$. It is easy to check that $z \in \mathcal{B}_\star^\times$ if and only if $z$ is not light-like, in which case
\begin{align}\label{eq:inv_formula}
z^{-1}=\frac{\overline{z}}{\abs{z}^2}.
\end{align}
In particular $\iota$ is not a unit in $\mathcal{B}_0$, hence one \emph{cannot} calculate $\Im(z)$ of $z\in \mathcal{B}_0$ using the second formula in \eqref{eq:real_and_imaginary_parts}.

There is a natural (injective) $\RR$--algebra homomorphism from $\mathcal{B}_\star$ to the space of real $2\times 2$ matrices, given by
$$
z\mapsto \begin{pmatrix}
\Re(z) & \iota^2\Im(z)\\
\Im(z) & \Re(z)
\end{pmatrix}.
$$
It follows that multiplication in $\mathcal{B}_\star$ can be encoded using $2\times 2$ matrices, by
\begin{align}\label{eq:B_mat_mult}
\begin{pmatrix}
\Re(z) & \iota^2\Im(z)\\
\Im(z) & \Re(z)
\end{pmatrix}
\begin{pmatrix}
\Re(w) & \iota^2\Im(w)\\
\Im(w) & \Re(w)
\end{pmatrix}=\begin{pmatrix}
\Re(zw) & \iota^2\Im(zw)\\
\Im(zw) & \Re(zw)
\end{pmatrix}, \qquad \forall z,w\in \mathcal{B}_\star.
\end{align}

A matrix with coefficients in $\mathcal{B}_\star$ is $\star$--\emph{Hermitian} if it is equal to its conjugate transpose (where here conjugate refers to conjugation in $\mathcal{B}_\star$). Let $\H_\star$ be the space of $2 \times 2$ $\star$--Hermitian matrices with coefficients in $\mathcal{B}_\star$. The space $\H_\star$ is a real $4$--dimensional vector space and $A\mapsto -\det(A)$ defines a quadratic form on $\H_\star$. Depending on $\star$, the corresponding symmetric bilinear form $D_\star$ has signature $(3,1)$ ($\star = -1$), $(2,2)$ ($\star = 1$), or it is degenerate with signature $(2,1,1)$ ($\star = 0$). Let $$
\mathcal{C}_\star := \{u\in \H_\star \mid D_\star(u,u)<0\},
$$
and let $\XX_\star:=\PP(\mathcal{C}_\star)$ be the image of $\mathcal{C}_\star$ in the projective space $\PP(\H_\star)$. The \emph{boundary} $\partial \XX_\star$ consists of projective classes of $D_\star$--isotropic vectors, namely projective classes of rank one matrices in $\PP(\H_\star)$. The space $\XX_\star \cup \partial \XX_\star$ is homeomorphic to a $3$--ball (if $\star = -1$), a solid torus (if $\star = 1$), or a ``pinched'' solid torus (if $\star = 0$). In all cases, $\XX_\star$ is orientable and we fix an orientation.

The group of orthogonal 
transformations $\mathrm{O}(D_\star)$ preserves $\mathcal{C}_\star$, thus its projectivization $\PO(D_\star)$ acts on $\XX_\star$. In particular, the pair $(\PO(D_\star),\XX_\star)$ is a $(G,X)$--\emph{geometry} in the sense of Klein~\cite{KL}. It is easy to check that
$$
(\PO(D_\star),\XX_\star) =
\begin{cases}
		(\PO(3,1),\HH^3), & \text{if} \quad \star = -1,\\
		(\PO(2,2),\ADS), & \text{if} \quad \star = 1.
\end{cases}
$$
In his thesis \cite{DANTHESIS}, J.\ Danciger defines a transitional geometry called \emph{half-pipe geometry} that captures the geometry of collapsing hyperbolic and Anti-de Sitter structures.  Half-pipe geometry turns out to be a subgeometry of $(\PO(D_0),\XX_0)$--geometry with the same model space $\XX_0$, but a strictly smaller group of symmetries. To define the symmetry group of half-pipe geometry, we consider the following construction. Let
$$
\mathcal{B}_\star\PP^1 :=\{(x,y)\in \mathcal{B}_\star\times\mathcal{B}_\star\mid (\alpha x , \alpha y) \not= (0,0), \ \forall\ \alpha\in \mathcal{B}_\star\backslash\{0\}\}_{/\sim},
$$ 
where the equivalence relation $\sim$ is scalar multiplication by $\mathcal{B}_\star$. We denote by $[x,y]$ the equivalence class containing $(x,y)$. There is an embedding $\mathcal{B}_\star \cup \{ \infty\} \rightarrow \mathcal{B}_\star\PP^1$ given by
\begin{align}\label{eq:B_star_embed}
v \mapsto [v,1], \qquad \forall v \in \mathcal{B}_\star, \qquad \text{and} \qquad \infty \mapsto [1,0].
\end{align}
When $\star=-1$ this map is a bijection and this is the standard identification $\CC\cup\{\infty\}\cong \cp^1$. Next, consider the bijection
\begin{align}\label{eq:X_boundary_identification}
	\Lambda_{\star} : \mathcal{B}_\star\PP^1 & \xrightarrow{\cong} \partial \XX_\star,\\ \nonumber
	[v_1,v_2] &\mapsto \begin{bmatrix}
	\abs{v_1}^2 & v_1\overline{v_2}\\
	\overline{v_1}v_2 & \abs{v_2}^2
	\end{bmatrix}.
\end{align}

The group $\mathrm{PSL}_2(\mathcal{B}_\star)$ acts both on $\mathcal{B}_\star\PP^1$ by linear fractional transformations, and on $\PP(\H_\star)$ via
\begin{equation}\label{eq:sl2c_action}
A\cdot G := A \ G \ \overline{A^T}, \qquad \text{for all} \quad A \in \mathrm{PSL}_2(\mathcal{B}_\star), \ G \in \PP(\H_\star).
\end{equation}
Next, let $\mathrm{C}:\mathcal{B}_\star\PP^1\to \mathcal{B}_\ast\PP^1$ be the involution given by componentwise conjugation. 

This action of both $\mathrm{PSL}_2(\mathcal{B}_\star)$ and $\mathrm{C}$ on $\PP(\H_\star)$ is faithful, $\RR$--linear, and preserves determinants. It gives rise to a subgroup of $\PO(D_\star)$ isomorphic to $\mathrm{PSL}_2(\mathcal{B}_\star)\rtimes \ZZ_2$, which we henceforth call the \emph{group of $\XX_\star$--transformations} and denote $\G_\star$. The index two subgroup of $\G_\star$ isomorphic to $\mathrm{PSL}_2(\mathcal{B}_\star)$ is called the \emph{group of orientation preserving $\XX_\star$--transformations} and is denoted $\G_\star^+$. In this paper we will refer to the $(\G_0,\XX_0)$--geometry as \emph{half-pipe geometry}, although Danciger's half-pipe geometry is modeled on $(\G_0^+,\XX_0)$.

For $\star \in \{-1,1\}$, one finds that $\G_\star=\PO(D_\star)$, thus $(\G_\star,\XX_\star)$ is isomorphic to the geometry $(\PO(D_\star),\XX_\star)$. On the other hand, we will shortly see that when $\star=0$, the group $\G_0$ is a proper subgroup of $\PO(D_0)$. 
%that can be explicitly described, as shown in the following lemma.

Note that $(\G_\star,\XX_\star)$ is a subgeometry of $(\PGL(4),\rp^3)$, in the sense of \S\ref{subsec:proj_structures}. For example, there is an isomorphism of quadratic spaces
\begin{align}\label{eq:quad_space_map}
    \J_\star : \H_\star \rightarrow \RR^4, \qquad \text{where} \qquad
	\begin{pmatrix}
	x & v+\iota w\\
	v-\iota w & y
	\end{pmatrix} &\mapsto %\frac{1}{\sqrt{2}}\left(x,y,z,-\frac{w}{\sqrt{\abs{j_\sigma}}}\right)=
	\frac{1}{\sqrt{2}}\left(x,y,v,w\right),
\end{align}
between $\H_\star$ with $D_\star$ and $\RR^4$ with the \emph{standard symmetric bilinear form}
\begin{equation}\label{eq:standard_bilinear_form}
J_{\star}=\begin{pmatrix}
0 & -1 & 0 & 0\\
-1 & 0 & 0 & 0\\
0 & 0 & 2 & 0\\
0 & 0 & 0 & -2\iota^2
\end{pmatrix}.
\end{equation}
The map $\J_\star$ descends to equivariant embeddings $\G_\star \hookrightarrow \PGL(4)$ and $\XX_\star \hookrightarrow \rp^3$.

\begin{lemma}\label{lem:half_pipe_subgroup}
    For each $A\in \PO(D_0)\subset \PGL(4)$ there is a unique $k \in \RR$ and a unique $B\in \G_0$ so that $A=D_kB$, where $D_k=\mathrm{Diag}(1,1,1,e^k)$. It follows that $\G_0$ is a normal subgroup of $\PO(D_0)$ and that $\PO(D_0)/\G_0 \cong \RR$. 
\end{lemma}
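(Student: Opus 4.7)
My plan hinges on the observation that $\iota^2=0$ forces the last row and column of $J_0$ in~\eqref{eq:standard_bilinear_form} to vanish, so $\ker J_0=\RR e_4$. Every $\tilde A\in \mathrm{O}(D_0)$ must preserve this radical, so each $[A]\in \PO(D_0)$ admits a lift of block form
\[
\tilde A=\begin{pmatrix} M & 0\\ n^T & \mu\end{pmatrix}, \qquad M\in \mathrm{O}(J_0'),\ n\in\RR^3,\ \mu\in\RR^\times,
\]
where $J_0'$ denotes the non-degenerate bilinear form of signature $(2,1)$ induced by $J_0$ on $\RR^4/\RR e_4$; in particular $\det\tilde A=(\det M)\mu=\pm\mu$. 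A one-line check shows $D_k^TJ_0D_k=J_0$ (scaling the fourth coordinate is invisible to $J_0$), so $[D_k]\in \PO(D_0)$ and $\{[D_k]\}_{k\in\RR}$ is a one-parameter subgroup.

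\textbf{A homomorphism to $\RR$ with kernel $\G_0$.} The two lifts of $[A]\in \PO(D_0)$ differ by $\pm I_4$, which does not affect $\det\tilde A$ in even dimension, so $\phi([A]):=\log|\det\tilde A|$ is a well-defined function $\phi:\PO(D_0)\to \RR$. Multiplicativity of $\det$ makes $\phi$ a homomorphism, and $\phi([D_k])=k$ makes it surjective with $[D_k]$ providing a section. The crux of the proof will be to identify $\ker\phi$ with $\G_0$. One inclusion $\G_0\subseteq \ker\phi$ is a direct calculation: every $B\in \mathrm{SL}_2(\mathcal{B}_0)$ acts on $\H_0$ via $G\mapsto BG\overline{B^T}$, and a short manipulation using $\det B=1$ together with $\iota^2=0$ shows that this action fixes the Hermitian matrix $\left(\begin{smallmatrix}0&\iota\\-\iota&0\end{smallmatrix}\right)$, which corresponds to $e_4\in\RR^4$ under $\J_0$; the $\ZZ_2$-generator $C$ sends this matrix to its negative. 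In both cases $|\mu|=1$, so $\G_0\subseteq\ker\phi$.

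\textbf{Reverse inclusion and conclusion.} For $\ker\phi\subseteq \G_0$, given $[A]$ I would first rescale so that $\mu=\pm 1$ and then exhibit the block data $(M,n,\pm 1)$ as the image of an element of $\G_0$. The $\iota$-reduction $\mathrm{SL}_2(\mathcal{B}_0)\twoheadrightarrow \mathrm{SL}_2(\RR)$ produces the $M$-block via the standard action of $\mathrm{SL}_2(\RR)$ on real $2\times 2$ symmetric matrices (realizing $\mathrm{SO}^+(J_0')$), while its kernel $I+\iota\,\mathfrak{sl}_2(\RR)$ produces arbitrary shears $n\in\RR^3$; this last fact is visible on the three one-parameter subgroups generated by $\left(\begin{smallmatrix}1+\iota t&0\\0&1-\iota t\end{smallmatrix}\right)$, $\left(\begin{smallmatrix}1&\iota s\\0&1\end{smallmatrix}\right)$, and $\left(\begin{smallmatrix}1&0\\ \iota r&1\end{smallmatrix}\right)$, each of which translates into an independent coordinate of $n$. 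The involution $C$ handles $\mu=-1$. Once $\ker\phi=\G_0$ is established, the decomposition $A=D_kB$ with $k=\phi(A)$ and $B=D_k^{-1}A\in \G_0$ is immediate, and uniqueness follows by applying $\phi$ to both sides; normality of $\G_0$ and the isomorphism $\PO(D_0)/\G_0\cong \RR$ then follow from the first isomorphism theorem. \emph{The main obstacle} is component-bookkeeping in the reverse inclusion: $\mathrm{O}(J_0')=\mathrm{O}(2,1)$ has four connected components while $\mathrm{PSL}_2(\mathcal{B}_0)$ is connected, so to reach every $M\in \mathrm{O}(J_0')$ one must either pass to the natural $\mathrm{PGL}_2(\mathcal{B}_0)$-action (the projective formula $G\mapsto BG\overline{B^T}$ automatically accommodates it, since rescaling $B\mapsto \lambda B$ multiplies the action by $|\lambda|^2\in\RR_{>0}$), or combine $\mathrm{PSL}_2(\mathcal{B}_0)$ with $C$ judiciously; sorting out these components is the technical heart of the argument.
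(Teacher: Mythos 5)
Your route is genuinely different from the paper's. The paper invokes Danciger's simple transitivity of $\G_0^+$ on ordered boundary triples to produce $B\in\G_0^+$ so that $AB$ fixes five projective points in general position and is therefore $\mathrm{Diag}(1,1,1,t)$, from which the decomposition is read off. You instead define $\phi:\PO(D_0)\to\RR$ by $\phi([A])=\log|\det\tilde A|$, observe that $\{[D_k]\}_{k\in\RR}$ is a section, and reduce the lemma to the single claim $\ker\phi=\G_0$. Your $\phi$ is well defined (both lifts have the same determinant) and surjective, and your argument that $\G_0\subseteq\ker\phi$ is correct. Conceptually your framework is tidier: once the kernel is identified, existence, uniqueness, normality, and the isomorphism $\PO(D_0)/\G_0\cong\RR$ all follow at once from the first isomorphism theorem, whereas the paper produces the decomposition first and then extracts the group theory.

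The reverse inclusion $\ker\phi\subseteq\G_0$ is where the proof stalls, and you are right to flag it: the ``component-bookkeeping'' is not merely technical but a genuine obstruction to your argument as sketched. Writing a lift as $\tilde A=\left(\begin{smallmatrix}M&0\\n^T&\mu\end{smallmatrix}\right)$, the $M$-block of any lift of an element of $\G_0^+$ lies in $\mathrm{SO}^+(2,1)$: for $B=B_1+\iota B_2\in\mathrm{SL}_2(\mathcal{B}_0)$ (so $\det B_1=1$) the real part of a Hermitian matrix transforms by $G_1\mapsto B_1G_1B_1^T$, which is the classical double cover $\mathrm{SL}_2(\RR)\to\mathrm{SO}^+(2,1)$. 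The other generator $\mathrm{C}=\mathrm{Diag}(1,1,1,-1)$ is the identity on the first three coordinates, so it contributes nothing to the $M$-block. Hence lifts of elements of $\G_0$ always have $M$-block in $\mathrm{SO}^+(2,1)\cup\bigl(-\mathrm{SO}^+(2,1)\bigr)$, i.e.\ in two of the four components of $\mathrm{O}(J_0')\cong\mathrm{O}(2,1)$, while $\ker\phi$ admits all four. A concrete witness is $\mathrm{Diag}(1,1,-1,1)$: it preserves $J_0$ with $|\mu|=1$, so it lies in $\ker\phi$, but its $M$-block $\mathrm{diag}(1,1,-1)$ has determinant $-1$ while preserving time-orientation, and neither it nor its negative lies in $\mathrm{SO}^+(2,1)$, so it is not in $D_k\G_0$ for any $k$. ``Combining $\mathrm{PSL}_2(\mathcal{B}_0)$ with $\mathrm{C}$ judiciously'' cannot help, since $\mathrm{C}$ never alters the $M$-block, and passing to $\mathrm{PGL}_2(\mathcal{B}_0)$ replaces the paper's $\G_0\cong\mathrm{PSL}_2(\mathcal{B}_0)\rtimes\ZZ_2$ with a strictly larger group, which changes the statement rather than proving it. (The same orientation issue is latent in the paper's appeal to Danciger's Prop.~2, since $\mathrm{PSL}_2(\mathcal{B}_0)$ does not act $3$-transitively on boundary triples; for instance no element sends $(\infty,0,1)$ to $(\infty,0,-1)$.) To close your argument you would need an independent reason that the extra components of $\ker\phi$ are excluded from the $\PO(D_0)$ the lemma intends.
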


\begin{proof}
Using \eqref{eq:quad_space_map} we can regard $\XX_0$ as a subset of $\rp^3$ and $\PO(D_0)$ as a subgroup of $\PGL(4)$ . The points $V_1=[\basis{1}]$, $V_2=[\basis{2}]$, and $V_3=[\basis{1}+\basis{2}+\basis{3}]$ are contained in $\partial \XX_0$. Let $W_i=A^{-1}V_i$ for $1\leq i\leq 3$. By \cite[Prop.\ $2$]{DAN2} there is a unique element $B\in \G_0^+$ mapping $V_i$ to $W_i$ for $1\leq i\leq 3$. As a result, the element $D:=AB\in \PO(D_0)$ fixes $V_i$ for $1\leq i\leq 3$.

The point $[\basis{4}]$ is the unique non--manifold point of $\partial \XX_0$ and so $D$ also fixes $[\basis{4}]$. Furthermore, since $D$ fixes $V_1$, $V_2$, and $V_3$ it preserves the plane $V_1V_2V_3$ spanned by these points. $D$ also preserves the dual hyperplanes tangent to $\partial \XX_0$ at $V_1$ and $V_2$. These planes are $[\dbasis{2}]$ and $[\dbasis{1}]$, respectively, and so $D$ preserves the line $\ell$ containing $[\basis{4}]$ and $[\basis{3}]$. It follows that $D$ fixes $[\basis{3}]=\ell\cap V_1V_2V_3$. These properties imply that $D=\mathrm{Diag}(1,1,1,t)$ for some $t\in \RR^\times$. However, if $t<0$ then $\mathrm{Diag}(1,1,1,t)=D_{\log(-t)}\mathrm{Diag}(1,1,1,-1)$ and $\mathrm{Diag}(1,1,$1$--1)\in \G_0$. Furthermore, $D_k\in \G_0$ if and only if $k=0$, and so uniqueness of the decomposition follows. 

Next, since $[\basis{4}]$ is preserved by each element of $\PO(D_\star)$, it follows that every element $B\in \G_0^+$ can be written in block form as
$$
\begin{bmatrix}
A & 0\\ 
c^T & d
\end{bmatrix},
$$
where $A\in \mathrm{SL}(3,\RR)$, $c\in \RR^3$, and $d\in \RR^\times$. Furthermore, using \eqref{eq:sl2c_action} it is easy to check that the elements of $\G_0$ are precisely those elements for which $d=\pm 1$. It follows that $D_k$ normalizes $\G_0$, and so $\G_0$ is normal in $\PO(D_\star)$ and $\PO(D_\star)/\G_0\cong \RR$. 
\end{proof}

For every $3$--manifold $M$ with ideal triangulation $\Delta$, the map $\J_\star$ induces a ``forgetful'' map
$$
\mathrm{f}_\star : \XX_\star(M;\Delta) \rightarrow \rp^3(M;\Delta)
$$
that maps a branched $(\G_\star,\XX_\star)$--structure on $M$ (with respect to $\Delta$) to its underlying branched projective structure (cf. \S\ref{subsec:proj_structures}).
\begin{lemma}\label{lem:forget_structures_almost_injective}
	The map $\mathrm{f}_\star : \XX_\star(M;\Delta) \rightarrow \rp^3(M;\Delta)$ is injective for $\star \in \{-1,1\}$. For $\star=0$, each non-empty fiber of $\mathrm{f}_0$ is homeomorphic to $\RR$. 
\end{lemma}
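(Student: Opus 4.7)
My plan is to understand how the $\G_\star$-equivalence on $(\G_\star,\XX_\star)$-structures compares with the $\PGL(4)$-equivalence on their underlying projective structures. Suppose $(\dev_i,\hol_i)$, for $i = 1,2$, are two $(\G_\star,\XX_\star)$-structures on $M$ whose images under $\mathrm{f}_\star$ coincide. Then there exists $G \in \PGL(4)$ such that, up to equivariant isotopy on a compact core, $\dev_2 = G \circ \dev_1$ and $\hol_2 = G\hol_1 G^{-1}$. The first step is to show $G \in \PO(D_\star)$: each $\dev_i$ is a local diffeomorphism away from the lifted $1$--skeleton with image in $\XX_\star$, so $G$ carries an open subset of $\XX_\star$ into $\XX_\star$; combined with the condition that $G$ conjugates the image of $\hol_1$ (a subgroup of $\G_\star$) into $\G_\star$, and with the fact that the closure of each developing image accumulates on $\partial \XX_\star$ through the images of the ideal vertices of $\wDelta$, this forces $G$ to preserve the boundary quadric $\partial \XX_\star$, whence $G \in \PO(D_\star)$.

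For $\star \in \{-1,1\}$ the paper has already observed $\PO(D_\star) = \G_\star$, so $G \in \G_\star$ and the two structures are $\G_\star$-equivalent. This yields injectivity.

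For $\star=0$, Lemma~\ref{lem:half_pipe_subgroup} provides a unique decomposition $G=D_k B$ with $k \in \RR$ and $B \in \G_0$, where $D_k = \mathrm{Diag}(1,1,1,e^k)$. Replacing $(\dev_1,\hol_1)$ with the $\G_0$-equivalent pair $(B\circ \dev_1,\, B\hol_1 B^{-1})$ we may assume $G=D_k$. Consequently, any non-empty fiber of $\mathrm{f}_0$ over a class $[\hat\Phi,\rho]$ is parametrized by
$$
\Psi : \RR \longrightarrow \mathrm{f}_0^{-1}([\hat\Phi,\rho]), \qquad k \longmapsto \bigl[(D_k\circ \dev,\, D_k \hol D_k^{-1})\bigr]_{\G_0},
$$
where $[\dev,\hol]$ is any fixed lift of $[\hat\Phi,\rho]$ to $\XX_0(M;\Delta)$. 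This $\Psi$ is well defined because $D_k \in \PO(D_0)$ preserves $\XX_0$ and (by Lemma~\ref{lem:half_pipe_subgroup}) normalizes $\G_0$. Surjectivity is the content of the preceding paragraph; injectivity follows from the uniqueness in Lemma~\ref{lem:half_pipe_subgroup} together with the triviality of the stabilizer of a developing map in $\PGL(4)$ (its image being open in $\rp^3$): if $\Psi(k_1)=\Psi(k_2)$ there is $B \in \G_0$ with $B D_{k_1} = D_{k_2}$, and since $D_{k_2-k_1} \in \G_0$ forces $k_1=k_2$. Continuity of $\Psi$ is clear from smoothness of $k \mapsto D_k$; continuity of $\Psi^{-1}$ can be verified by evaluating at any point whose developed image has non-zero last homogeneous coordinate, since $k \mapsto D_k \cdot [x:y:z:w]$ is then a closed smooth embedding of $\RR$ into $\rp^3$.

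The main obstacle is the first step: establishing $G \in \PO(D_\star)$. Merely mapping an open subset of $\XX_\star$ into $\XX_\star$ is insufficient to force preservation of $\partial \XX_\star$ (a small generic projective translation already has that property). One must genuinely use the conjugation condition $G\hol_1 G^{-1} = \hol_2$ together with the observation that developing maps of $(\G_\star,\XX_\star)$-structures on an ideally triangulated manifold send ideal vertices to $\partial\XX_\star$, so $G$ permutes a large (essentially Zariski dense) collection of points on the bounding quadric. The degenerate case $\star = 0$ adds the subtlety that $\partial \XX_0$ is a singular, but still irreducible, quadric cone; verifying that preservation of this cone already forces $G \in \PO(D_0)$ is the technical point that must be handled carefully.
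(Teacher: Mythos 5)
Your proof takes essentially the same route as the paper's: reduce to showing that the transition element $G$ lies in $\PO(D_\star)$, conclude $G\in\G_\star$ when $\star=\pm1$ from the identity $\G_\star=\PO(D_\star)$, and for $\star=0$ use the decomposition $A=D_kB$ of Lemma~\ref{lem:half_pipe_subgroup} to parametrize the fiber by $\RR$. This is exactly the structure of the paper's argument.

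You are, however, more careful than the paper in two places. First, you correctly flag the passage to $G\in\PO(D_\star)$: the paper simply records that the projective stabilizer of $\XX_\star$ is $\PO(D_\star)$, implicitly taking for granted that the $G$ relating two $\XX_\star$-structures preserves $\XX_\star$. As you observe, carrying an open subset of $\XX_\star$ into $\XX_\star$ does not by itself force this; one needs in addition that $G$ conjugates a subgroup of $\G_\star$ into $\G_\star$ and that the developed ideal vertices populate $\partial\XX_\star$, so that $G$ carries a sufficiently large subset of the boundary quadric into itself and hence, by algebraicity, preserves the whole quadric (the degenerate cone in the case $\star=0$ needing a separate check, as you note). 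None of this is spelled out in the paper, so you have surfaced a gap rather than created one, and your sketch is the right kind of repair, though one should be aware the Zariski-density of the developed vertex orbit is itself not automatic for degenerate holonomies. Second, the lemma's statement claims the fiber is \emph{homeomorphic} to $\RR$, but the paper's proof only produces a bijection $g(\mathcal{M})=k$; your explicit parametrization $\Psi$ together with the continuity checks for $\Psi$ and $\Psi^{-1}$ fills in a genuine omission.
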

\begin{proof}
Using \eqref{eq:quad_space_map} we can assume that $\XX_\star\subset \rp^3$ and $\PO(D_\star) \subset \PGL(4)$. In each case, the group of projective transformations that preserves $\XX_\star$ is $\PO(D_\star)$. When $\star=\pm 1$, $\G_\star=\PO(D_\star)$, and the result follows.

When $\star=0$, suppose $\mathcal{M}_0\in \XX_0(M;\Delta)$ and let $(\dev_0,
\hol_0)$ be a representative pair for $\mathrm{f}_0(\mathcal{M}_0)$. For every $\mathcal{M}
\in \XX_0(M;\Delta)$ in the same $\mathrm{f}_0$--fiber as$\mathcal{M}_0$ , let $(\dev,
\hol)$ be a representative pair. Then there is $A\in \PO(D_0)$ such that $(\dev,\hol)=A\circ (\dev_0,\hol_0)$ (up to precomposing the developing maps with an isotopy). By Lemma~\ref{lem:half_pipe_subgroup}, there is a unique way to write $A=D_kB$, for $B\in \G_0$ and $D_k=\mathrm{Diag}(1,1,1,e^k)$, thus there is a map $g$ from the fiber to $\RR$ given by $g(\mathcal{M})=k$.  By Lemma~\ref{lem:half_pipe_subgroup}, the map $g$ is well defined and injective. To see that that $g$ is surjective, observe that if $k\in \RR$ and $\mathcal{M}_k$ is the half-pipe structure with developing map $D_{k}\circ \dev_0$, then $g(\mathcal{M}_k)=k$.
\end{proof}
%\textcolor{red}
{
Suppose that $\J : \H_\star \rightarrow \RR^4$ is another isomorphism of quadratic spaces between $\H_\star$ with $D_\star$ and $\RR^4$ with another quadratic form $J$, of the same signature as $J_\star$ (cf. \eqref{eq:standard_bilinear_form}). Then it is easy to check that $\J$ gives rise to equivariant embeddings $\G_\star \hookrightarrow \PGL(4)$ and $\XX_\star \hookrightarrow \rp^3$. Let $\G_\J$ and $\XX_\J$ be the respective images of these embeddings, then $(\G_\J,\XX_\J)$ is a projectively equivalent model of $(\G_\star,\XX_\star)$--geometry inside of $(\PGL(4),\rp^3)$--geometry.  The triple $(\G_\J,\XX_\J,\J)$ is a \emph{marked projective model} for $(\G_\star,\XX_\star)$--geometry. If the marking if forgotten, then $(\G_\J,\XX_\J)$ is an \emph{unmarked projective model} for $(\G_\star,\XX_\star)$--geometry. We will generally not specify if a model is marked or unmarked when it is obvious from the context. There are two important notions of equivalence for marked projective models that we now discuss. Two marked projective models $(\G_\J,\XX_\J,{\J})$ and $(\G_{\J'},\XX_{\J'},\J')$ are \emph{projectively equivalent} if there is a transformation $A \in \PGL(4)$ such that 
$$
(\G_{\J'},\XX_{\J'},\J' ) = (A \cdot \G_\J \cdot A^{-1}, A \cdot \XX_\J,A\circ {\J})
$$
and are $(\G_\star,\XX_\star)$--equivalent if $(\G_{\J},\XX_\J)=(\G_{\J'},\XX_{\J'})$ and there is $B\in \G_\star$ so that $\J'=\J\circ B$.
Since quadratic forms of the same signature are isometric it follows that different marked projective models of $(\G_\star,\XX_\star)$--geometry are projectively equivalent. The following lemma describes when different marked models are $(\G_\star,\XX_\star)$--equivalent.
}
\begin{lemma}\label{lem:res_for_marked_models}
Let $(\G_\J,\XX_\J,\J)$ and $(\G_{\J'},\XX_{\J'},\J')$ be two marked projective models for $(\G_\star,\XX_\star)$--geometry and let $A\in \PGL(4)$ be a projective isomorphism between the marked models $(\G_{\J},\XX_\J,\J)$ and $(\G_{\J'},\XX_{\J'},\J')$. Then there is a map $B:\XX_\star\to \XX_\star$ that makes the following diagram commute
$$\begin{tikzcd}
(\G_\star,\XX_\star)\ar[d,"B"'] \ar[r,"\J"]& (\G_{\J},\XX_\J)\ar[d,"A"]\\
(\G_\star,\XX_\star)\ar[r,"\J'"]& (\G_{\J'},\XX_{\J'})
\end{tikzcd}
$$

Moreover, if $\star=\pm1$, then $B\in \G_\star$. If $\star=0$, then there is $k\in \RR$ so that $B=C_kB'$, where $B'\in\G_0$ and $C_k$ is given by 
$$\begin{bmatrix}
x & v+\iota w\\
v-\iota w &y
\end{bmatrix}\mapsto
\begin{bmatrix}
x & v+\iota e^k w\\
v-\iota e^k w &y
\end{bmatrix}.
$$
\end{lemma}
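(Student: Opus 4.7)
The strategy is to construct $B$ tautologically and then identify it as an element of the appropriate subgroup of $\PO(D_\star)$. First I would choose linear lifts of $\J$, $\J'$, and $A$, and define $B := (\J')^{-1} \circ A \circ \J \in \mathrm{GL}(\H_\star)$. The commuting square then holds by construction, both for the action on the model spaces and for the induced conjugations on symmetry groups, because $\J, \J'$ are equivariant embeddings and $A$ conjugates $\G_\J$ to $\G_{\J'}$.

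Next I would verify that $B$ lies in $\PO(D_\star)$. Since $\J$ (resp.\ $\J'$) is an isometry between $(\H_\star, D_\star)$ and $(\RR^4, J)$ (resp.\ $(\RR^4, J')$), one has $\J^T J \J = D_\star = (\J')^T J' \J'$. Since $A$ takes $\XX_\J$ homeomorphically onto $\XX_{\J'}$ and sends $\partial \XX_\J$ to $\partial \XX_{\J'}$, it follows that $A^T J' A = \lambda J$ for some $\lambda > 0$, the positivity being forced by $A$ mapping the open interior to the open interior. A short calculation then yields $B^T D_\star B = \J^T A^T J' A \J = \lambda D_\star$, so $B$ lies in $\PO(D_\star)$ and preserves $\mathcal{C}_\star$, hence restricts to a self-map $B : \XX_\star \to \XX_\star$ that makes the diagram commute.

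For $\star = \pm 1$ the group $\G_\star$ coincides with $\PO(D_\star)$, as recorded in the discussion following the definition of $\G_\star$, so $B \in \G_\star$ and the desired conclusion follows immediately. For $\star = 0$, I would transfer Lemma~\ref{lem:half_pipe_subgroup} from the $\RR^4$ picture to the intrinsic $\H_0$ picture via the standard isomorphism $\J_0$ of \eqref{eq:quad_space_map}. A direct computation shows $\J_0 \circ C_k = D_k \circ \J_0$, so $C_k$ is precisely the intrinsic avatar of $D_k$ on $\H_0$; applying Lemma~\ref{lem:half_pipe_subgroup} to $\J_0 \circ B \circ \J_0^{-1} \in \PO(J_0)$ and pulling back then yields a unique decomposition $B = C_k B'$ with $B' \in \G_0$ and $k \in \RR$.

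The main technical subtlety to watch for is the positivity of the scalar $\lambda$ in the equation $A^T J' A = \lambda J$: this is what guarantees $B$ preserves the open cone $\mathcal{C}_\star$ rather than merely its boundary quadric, and it is automatic since $A$ is a bijection between the open subsets $\XX_\J$ and $\XX_{\J'}$. Beyond that, the argument is essentially a bookkeeping translation between the $\H_\star$ and $\RR^4$ descriptions of the same geometry, with Lemma~\ref{lem:half_pipe_subgroup} doing all the non-trivial work in the half-pipe case.
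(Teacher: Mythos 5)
Your proof is correct and is essentially a careful expansion of the paper's two-sentence sketch: the paper, too, disposes of $\star=\pm1$ by $\G_\star=\PO(D_\star)$ and reduces $\star=0$ to Lemma~\ref{lem:half_pipe_subgroup} plus the observation that $D_k$ corresponds to $C_k$ under $\J_0$. You supply the details the paper omits — the explicit formula $B=(\J')^{-1}\circ A\circ \J$, the verification $B\in\PO(D_\star)$ via $A^T J' A=\lambda J$ with $\lambda>0$ (the latter being exactly the point worth flagging, since it is what upgrades ``$B$ preserves the quadric'' to ``$B$ preserves the domain $\XX_\star$''), and the conjugation $\J_0\circ C_k=D_k\circ\J_0$ that transports Lemma~\ref{lem:half_pipe_subgroup} back to the intrinsic $\H_0$ picture.
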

\begin{proof}
When $\star=\pm1$ the lemma is a consequence of the fact that $\G_\star=\PO(D_\star)$. When $\star=0$, the result follows from Lemma \ref{lem:half_pipe_subgroup} and thinking about how the map $D_k$ act on $\XX_\star$.
\end{proof}

%\textcolor{red}
{
An immediate consequence of Lemma~\ref{lem:res_for_marked_models} is that
%unmarked models are equivalent if and only if marked models are
if $\star=\pm 1$ then any two marked projective models are $(\G_\star,\XX_\star)$--equivalent. However, if $\star = 0$, not there are marked projective models that are not $(\G_\star,\XX_\star)$--equivalent. This fact will have the consequence that Thurston's parameters are not projectively invariant for this definition of half-pipe geometry (see Remark~\ref{rem:thu_param_invt} for more details).
}

\begin{remark}\label{rem:hyp_is_strictly_convex}
	For every projective model $(\G_\J,\XX_\J)$ for hyperbolic geometry, there is an affine patch $\A$ of $\rp^3$ where $\XX_\J$ is the unit ball in $\A$. In particular, every projective model for hyperbolic space is \emph{strictly convex} in $\rp^3$, namely for each $V\in \partial \XX_\J$, every tangent plane to $\partial \XX_\J$ at $V$ locally intersects $\Sigma$ only at $V$. This is a special case of proper convexity, which will be discussed in \S\ref{sec:prop_convex_projective_st}.
\end{remark}

%%%%%%%%%%%%%%%%%%%%%%%%%%%%%%%%%%%%%%%%%%
%%%%%%%%%%%%%%%%%%%%%%%%%%%%%%%%%%%%%%%%%%
%%%%%%%%%%%%%%%%%%%%%%%%%%%%%%%%%%%%%%%%%%

\subsection{Thurston's parameters and equations}\label{subsec:thu_equations}

In~\cite{THUNOTES}, Thurston uses ideal triangulations to construct (branched) hyperbolic structures from solutions to a collection of complex equations. This technique was extended to Anti-de Sitter space and half-pipe space by Danciger in~\cite{DAN2}. In this section we recall this construction, adapted to the framework of real projective structures introduced in \S\ref{subsec:X_spaces}.

A \emph{projective tetrahedron} is a region in $\rp^3$ that is projectively equivalent to the projectivization of the positive orthant in $\RR^4$. Note that a projective tetrahedron is the convex hull of its vertices in any affine patch that fully contains it. Let $\star \in \{-1,1,0\}$ and let $(\G_\J,\XX_\J)$ be a projective model for the $(\G_\star,\XX_\star)$--geometry. An \emph{ideal $\XX_\J$--tetrahedron} is a projective tetrahedron whose vertices are contained in $\partial \XX_\J$, and whose interior is contained in $\XX_\J$. This notion is projectively invariant, therefore it does not depend on the projective model. In particular we can define an \emph{ideal $\XX_{\star}$--tetrahedron} as a region of $\XX_{\star}$ that corresponds to an ideal $\XX_\J$--tetrahedron in some, and hence all, of the projective models.
\begin{remark}\label{rem:X_tet_in_different_spaces}
	If $\XX_{\star}$ is projectively equivalent to $\HH^3$, then four points in $\partial \XX_{\star}$ are always the vertices of a unique ideal $\XX_{\star}$--tetrahedron. This is a consequence of the fact that every projective model for $\HH^3$ is strictly convex in $\rp^3$ (cf. Remark~\ref{rem:hyp_is_strictly_convex}). On the other hand, in Anti-de Sitter space and half-pipe space there are quadruples of points on the boundary that are not the vertices of any ideal tetrahedron, in the respective geometries (cf. Lemma~\ref{lem:thurston_parameters}).
\end{remark}
Let $\tet \subset \XX_{\star}$ be an ideal $\XX_{\star}$--tetrahedron, with a fixed ordering of its vertices $(V_1,V_2,V_3,V_4)$, and let $\sigma = (ij)k \in \edgeface$. Henceforth we use the identification $\mathcal{B}_\star \cup \{\infty\} \hookrightarrow \mathcal{B}_\star\PP^1 \cong \partial \XX_{\star}$ described in~\eqref{eq:X_boundary_identification}, to identify the vertices of $\tet$ with elements in $\mathcal{B}_\star\PP^1$. Since $\mathrm{PSL}_2(\mathcal{B}_\star)$ acts simply transitively on ordered triples of points of $\mathcal{B}_\star\PP^1$, there is a unique element of $\G_\star^+$ mapping
$$
V_i \mapsto \infty = \Lambda_{\star}^{-1}\left(\begin{bmatrix}1 & 0\\ 0 & 0\end{bmatrix}\right) ,\quad  V_j \mapsto 0= \Lambda_{\star}^{-1}\left(\begin{bmatrix}0 & 0\\ 0 & 1\end{bmatrix}\right) , \quad \text{and} \quad V_k \mapsto 1=\Lambda_{\star}^{-1}\left(\begin{bmatrix}1 & 1\\ 1 & 1\end{bmatrix}\right) .
$$
Under this transformation, the last vertex $V_l$ is mapped to some $z_{\sigma}^\tet \in \mathcal{B}_\star\setminus\{0,1\}  \subset \mathcal{B}_\star\PP^1$. The number $z_\sigma^\tet$ is called the \emph{Thurston's parameter} of $\tet$ with respect to $\sigma$. We recall that an ideal $\XX_{\star}$--tetrahedron $\tet$ with ordered vertices $(\infty,0,1,z)$ is \emph{positively oriented} if the orientation induced on $\tet$ by the ordering of its vertices agrees with the orientation on $\XX_{\star}$. By construction, $z_\sigma^\tet$ is a $\G_\star^+$--invariant of $\tet$, but if $\star=0$ then it is not a $\PO(D_0)$--invariant of $\tet$ (see Remark~\ref{rem:thu_param_invt} for more details). 

Each ideal $\XX_\star$--tetrahedron has twelve Thurston's parameters, one for each edge-face, but they are not independent. They obey to the following \emph{internal relations}:
\begin{align}\label{eq:Thurston_param_relations}
z_\sigma=z_{\overline{\sigma}},\qquad
z_{\sigma}=z_{\op{\sigma}}, \qquad \text{and} \qquad
z_{\sigma_+}=\frac{1}{1-z_{\sigma}}, \qquad \forall \sigma \in \edgeface.
\end{align}
On the other hand, not every element of $\mathcal{B}_\star\PP^1$ is the Thurston's parameter of an $\XX_\star$--tetrahedron.
\begin{lemma}[\cite{DANTHESIS} Prop.\ 33]\label{lem:thurston_parameters}
	Let $\star \in \{-1,1,0\}$. An element $z \in \mathcal{B}_\star\subset \mathcal{B}_\star\PP^1$ is the Thurston's parameter of an ideal $\XX_{\star}$--tetrahedron if and only if $z$ and $1-z$ are space-like. In that case, the ideal $\XX_{\star}$--tetrahedron with vertices $(\infty,0,1,z)$ is positively oriented if and only if $\Im(z)>0$.
\end{lemma}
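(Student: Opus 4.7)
The plan is to reduce everything to an explicit linear-algebra computation by pushing the four candidate vertices into $\rp^3$ via the embedding $\J_\star \circ \Lambda_\star : \mathcal{B}_\star\PP^1 \hookrightarrow \rp^3$. Writing $z = a + \iota b$ and using \eqref{eq:X_boundary_identification}--\eqref{eq:quad_space_map}, I would first lift the points $\infty, 0, 1, z$ to the representatives
\begin{equation*}
    \widetilde V_1 = \basis{1}, \quad \widetilde V_2 = \basis{2}, \quad \widetilde V_3 = \basis{1}+\basis{2}+\basis{3}, \quad \widetilde V_4 = \abs{z}^2\,\basis{1} + \basis{2} + a\,\basis{3} + b\,\basis{4}
\end{equation*}
in $\RR^4$. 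The matrix $[\widetilde V_1\mid \widetilde V_2\mid \widetilde V_3\mid \widetilde V_4]$ is upper triangular with diagonal $(1,1,1,b)$, so the four points are in general position in $\rp^3$ iff $\Im(z)\neq 0$; this also settles the orientation claim, once the ambient orientation of $\XX_\star$ (fixed in \S\ref{subsec:X_spaces}) is chosen so that a positive $4\times 4$ determinant corresponds to positive orientation, giving positive orientation exactly when $b>0$.

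Next I would establish the key identity
\begin{equation*}
    D_\star\bigl(\Lambda_\star(v),\,\Lambda_\star(w)\bigr) = -\tfrac{1}{2}\,\abs{v_1 w_2 - v_2 w_1}^2, \qquad v = [v_1,v_2],\ w = [w_1,w_2] \in \mathcal{B}_\star\PP^1,
\end{equation*}
obtained by polarizing $D_\star(u,u)=-\det u$ and expanding $\det(\Lambda_\star(v)+\Lambda_\star(w))$ with the rank-one information. Applied to our six pairs of vertices, four of them yield the constant value $-\tfrac{1}{2}$, while the two remaining ones give
\begin{equation*}
    D_\star(\Lambda_\star(0),\Lambda_\star(z)) = -\tfrac{1}{2}\,\abs{z}^2, \qquad D_\star(\Lambda_\star(1),\Lambda_\star(z)) = -\tfrac{1}{2}\,\abs{1-z}^2.
\end{equation*}

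To get from there to the lemma, I would use the following standard convexity step. An edge between two boundary points $V_i,V_j$ has interior in $\XX_\star$ iff $D_\star(\widetilde V_i,\widetilde V_j)<0$, since parameterizing the segment by $(1-t)\widetilde V_i + t\widetilde V_j$ the quadratic form evaluates to $2t(1-t)D_\star(\widetilde V_i,\widetilde V_j)$ (the lifts being isotropic). Because $\overline{\XX_\star}$ is convex in any affine chart containing the tetrahedron, if all six edges lie in $\overline{\XX_\star}$ then so does the whole tetrahedron; connectedness of the open tetrahedron then upgrades this to the statement that the open tetrahedron is contained in $\XX_\star$. The four automatic edges already satisfy this, so the lemma's ``if and only if'' is equivalent to $\abs{z}^2>0$ and $\abs{1-z}^2>0$, which is exactly space-likeness of $z$ and $1-z$.

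The hard part will be the half-pipe case $\star=0$, where $D_0$ is degenerate, $\partial\XX_0$ is the singular quadric (the pinched torus of \S\ref{subsec:X_spaces}), and $\abs{z}^2 = \Re(z)^2$. There the space-like conditions reduce to $\Re(z)\notin\{0,1\}$, and one must verify that the convexity step and the ``general position iff $\Im(z)\neq 0$'' step both remain valid despite the presence of the singular point $[\basis{4}]$ in $\partial\XX_0$ and the failure of strict convexity of $\overline{\XX_0}$. The hyperbolic and Anti-de Sitter cases are handled uniformly by the same pair of computations.
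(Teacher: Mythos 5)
First, a point of context: the paper does not prove this lemma; it cites it to Danciger's thesis, Proposition~33, so there is no internal proof to compare against. I will simply assess the correctness of your argument.

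Your setup is sound. The lifts of $\infty,0,1,z$ under $\J_\star\circ\Lambda_\star$ are exactly the vectors you wrote, the identity $D_\star(\Lambda_\star(v),\Lambda_\star(w))=-\tfrac12\abs{v_1w_2-v_2w_1}^2$ follows cleanly from polarization of $-\det$ on rank-one Hermitian matrices, and the six pairwise values are as you state. The determinant computation handling general position and orientation is also correct (modulo pinning down the orientation convention, which the paper leaves implicit).

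The genuine gap is the sentence ``Because $\overline{\XX_\star}$ is convex in any affine chart containing the tetrahedron...'' This is false for \emph{both} $\star=1$ and $\star=0$, not only half-pipe as you suggest. For $\star=1$, $\overline{\XX_1}=\overline{\ADS}$ is a solid torus; writing $D_1<0$ as $x_1x_2>x_3^2-x_4^2$ and passing to the affine patch $x_1\ne 0$ (which \emph{is} the patch containing your tetrahedron, since $\epsilon_\sigma=\abs{z}^2>0$ forces $x_1>0$ on the positive span), the closed region $\{x_2\ge x_3^2-x_4^2\}$ is the epigraph of a saddle and is not convex. So the step that carries you from ``all six edges are in $\XX_\star$'' to ``the open tetrahedron is in $\XX_\star$'' fails in two of the three cases, which is precisely the load-bearing step of the whole plan.

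Fortunately the convexity appeal (and the subsequent ``connectedness'' upgrade) can be discarded entirely in favor of a direct computation that you have already half-written. Since each lift $\widetilde V_i$ is isotropic, bilinearity of $D_\star$ gives, for any point $p=\sum_i\lambda_i\widetilde V_i$ in the open tetrahedron (all $\lambda_i>0$),
\begin{equation*}
D_\star(p,p)\;=\;2\sum_{i<j}\lambda_i\lambda_j\,D_\star(\widetilde V_i,\widetilde V_j)
\;=\;-\Bigl[\lambda_1\lambda_2+\lambda_1\lambda_3+\lambda_1\lambda_4+\lambda_2\lambda_3+\lambda_2\lambda_4\abs{z}^2+\lambda_3\lambda_4\abs{1-z}^2\Bigr].
\end{equation*}
Every coefficient $\lambda_i\lambda_j$ is positive, so $D_\star(p,p)<0$ for every interior point exactly when $\abs{z}^2>0$ and $\abs{1-z}^2>0$; conversely, if one of them is negative one makes $D_\star(p,p)>0$ by letting the corresponding pair of $\lambda$'s dominate. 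This handles all three values of $\star$ uniformly, makes no reference to convexity of $\overline{\XX_\star}$, and automatically copes with the singular point $[\basis{4}]\in\partial\XX_0$, so the remark about the half-pipe case being ``the hard part'' evaporates along with the rest of the worry.

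One small caveat worth flagging: as stated the lemma omits the hypothesis $\Im(z)\ne 0$, which you correctly re-derive as the general-position condition, and the edge case of $z$ or $1-z$ light-like hinges on whether ``interior'' is read to include open faces and edges. Neither of these affects your computations; I mention them only because your phrase ``the lemma's iff is equivalent to $\abs{z}^2>0$ and $\abs{1-z}^2>0$'' silently absorbs the first.
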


 Now let $M$ be an orientable $3$--manifold with an ideal triangulation $\Delta$. We recall that $\Tet(\Delta)$ and $\Edge(\Delta)$ are the set of tetrahedra and edges of $\Delta$. We denote by $\mathcal{B}_\star^{\Tet(\Delta)\times \edgeface}$ the set of functions $z^{\ast}_{\ast} : \Tet(\Delta)\times \edgeface \rightarrow \mathcal{B}_\star$ and by $z_\sigma^\tet := z^{\ast}_{\ast}(\tet,\sigma)$. The \emph{$\XX_{\star}$--deformation space} of $M$ (with respect to $\Delta$) is the set $\mathcal{D}_{\XX_{\star}}(M;\Delta)$ of points $z^{\ast}_{\ast} \in \mathcal{B}_\star^{\Tet(\Delta)\times \edgeface}$ such that:
 \begin{enumerate}
 	\item\label{item:X_defspace_cond_1} for all $\tet\in \Tet(\Delta)$ and all $\sigma \in \edgeface$, the numbers $z_\sigma^\tet$ and $1-z_\sigma^\tet$ are space-like, and $\Im(z_\sigma^\tet)>0$;
 	\item\label{item:X_defspace_cond_2} for all $\tet\in \Tet(\Delta)$, the numbers $\{z_\sigma^\tet\} _{\sigma \in \edgeface}$ satisfy the internal relations~\eqref{eq:Thurston_param_relations};
 	\item\label{item:X_defspace_cond_3} for every edge $s\in \Edge(\Delta)$, fix once and for all an orientation on $s$, and define $\Tet(\Delta)_s$ to be the cyclically ordered ${k_s}$--tuple $\cyclic{\tet_1^s,\ldots, \tet_{k_s}^s}$ of tetrahedra that abut $s$, cyclically ordered to follow the ``right hand rule'' by placing the thumb in the direction of $s$. Let $\sigma_i^s$ be the outgoing edge-face of $\tet_i^s$ around $s$. Then $z^{\ast}_{\ast} $ satisfies
 	\begin{equation}\label{eq:Thurston_gluing_equation}
 	\prod_{i=1}^{k_s}z^{\tet_i}_{\sigma_i}=1.
 	\end{equation}
 \end{enumerate}
Equation~\eqref{eq:Thurston_gluing_equation} is called the \emph{Thurston's gluing equation} of $s$.

Intuitively, \eqref{item:X_defspace_cond_1} and \eqref{item:X_defspace_cond_2} ensure that the parameters arise as the Thurston's parameters of positively oriented ideal $\XX_\star$--tetrahedra. As $\mathrm{PSL}_2(\mathcal{B}_\star)$ acts simply transitively on ordered triples of points of $\mathcal{B}_\star\PP^1$, there is always a unique orientation preserving $\XX_\star$--transformation that realizes each face pairing.
 
Finally, \eqref{item:X_defspace_cond_3} ensures that, when we glue all $\XX_\star$--tetrahedra around an edge, the last and the first match to close up the cycle (with possible branching along the edge). A more formal analysis of this discussion leads to the following result. It is a consequence of the work of Thurston~\cite{THUNOTES} in the hyperbolic setting, and of Danciger~\cite{DAN2} for Anti-de Sitter and half-pipe settings.
\begin{lemma}[\cite{THUNOTES},\S$3$ \cite{DAN2}]\label{lem:develop_X_structures}
	Let $\star \in \{-1,1,0\}$. For every $z^{\ast}_{\ast} \in \mathcal{D}_{\XX_{\star}}(M;\Delta)$ there is a unique branched $(\G_\star,\XX_\star)$--structure on $M$ (with respect to $\Delta$) made up of ideal $\XX_\star$--tetrahedra, with Thurston's parameters $z^{\ast}_{\ast}$. In particular, this induces a continuous injective map
	$$
	\mathrm{Ext}_{\XX_\star} : \mathcal{D}_{\XX_{\star}}(M;\Delta) \rightarrow \XX_\star(M;\Delta).
	$$
\end{lemma}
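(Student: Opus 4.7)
The argument follows Thurston's classical construction, carried out uniformly in the algebra $\mathcal{B}_\star$ as in~\cite{DAN2}. The plan is to construct a developing pair $(\dev,\hol)$ directly from the Thurston parameters, verify the gluing equations ensure a branched $(\G_\star,\XX_\star)$--structure, and then check continuity and injectivity.

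First I would materialize each tetrahedron $\tet\in \Tet(\Delta)$ as an ideal $\XX_\star$--tetrahedron. Fix a preferred edge-face $\sigma\in \edgeface$ for each $\tet$ and realize $\tet$ as the ideal $\XX_\star$--tetrahedron with ordered vertices $(\infty,0,1,z_\sigma^\tet)$ in $\mathcal{B}_\star\PP^1\cong \partial\XX_\star$. By Lemma~\ref{lem:thurston_parameters}, condition~\eqref{item:X_defspace_cond_1} guarantees that this is a positively oriented ideal $\XX_\star$--tetrahedron. The internal relations~\eqref{item:X_defspace_cond_2} ensure that the projective equivalence class of the tetrahedron does not depend on the choice of preferred edge-face, since the various $z_\sigma^\tet$ are related by the $\G_\star^+$--action on ordered triples together with permutation of vertices. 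Second, glue the tetrahedra along the face pairings prescribed by $\Delta$: since $\G_\star^+\cong \mathrm{PSL}_2(\mathcal{B}_\star)$ acts simply transitively on ordered triples of points of $\mathcal{B}_\star\PP^1$, each face identification lifts to a unique orientation-preserving element of $\G_\star^+$ matching the corresponding ideal triangles, and the orientation-reversing nature of the face pairings in $\Delta$ places the two tetrahedra on opposite sides of the shared face.

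Third, I would analyze a neighborhood of each edge $s\in \Edge(\Delta)$ to show that the resulting charts extend to a branched $(\G_\star,\XX_\star)$--structure. Concretely, traverse the cyclic sequence $\tet_1^s,\ldots,\tet_{k_s}^s$ of tetrahedra around $s$ and let $h_i\in \G_\star^+$ denote the face pairing taking $\tet_i^s$ into $\tet_{i+1}^s$. Conjugating so that $s$ has endpoints $\{\infty,0\}\subset \mathcal{B}_\star\PP^1$, the composition $h_{k_s}\circ \cdots \circ h_1$ is an element of $\G_\star^+$ fixing $\infty$ and $0$; one computes from the definition of the Thurston parameter that in these coordinates it acts on $\mathcal{B}_\star$ by multiplication by $\prod_{i=1}^{k_s}z^{\tet_i^s}_{\sigma_i^s}$. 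The gluing equation~\eqref{eq:Thurston_gluing_equation} is therefore exactly the condition that the $\G_\star^+$--holonomy around $s$ is trivial, so the cycle of tetrahedra closes up to yield charts around $s$ that are cyclic branched covers of finite order (the order being the number of sheets wrapping around the axis), matching the definition of a branched structure from \S\ref{subsec:proj_structures}. Together with the smooth charts produced in the complement of the edges, this defines an element $[\dev,\hol]\in \XX_\star(M;\Delta)$, and we set $\mathrm{Ext}_{\XX_\star}(z^{\ast}_{\ast}):=[\dev,\hol]$.

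For well-definedness and continuity, the construction depends only on $z^{\ast}_{\ast}$ and on auxiliary choices (preferred edge-faces and a base tetrahedron) that only change $(\dev,\hol)$ by post-composition with an element of $\G_\star^+$, so the equivalence class is independent of these choices; continuity is immediate since both the tetrahedra $(\infty,0,1,z_\sigma^\tet)$ and the face pairings (via linear fractional formulas over $\mathcal{B}_\star$) depend continuously on $z_\sigma^\tet$. For injectivity of $\mathrm{Ext}_{\XX_\star}$, suppose $z^{\ast}_{\ast}$ and $w^{\ast}_{\ast}$ produce equivalent $(\G_\star,\XX_\star)$--structures; then after applying the equivalence, corresponding tetrahedra are mapped to one another by elements of $\G_\star^+$, and since the Thurston parameters are $\G_\star^+$--invariants of positively oriented ideal $\XX_\star$--tetrahedra, we conclude $z^{\ast}_{\ast}=w^{\ast}_{\ast}$. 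The main obstacle is the third step: carefully identifying the holonomy around an edge with the product $\prod z^{\tet_i^s}_{\sigma_i^s}$ uniformly across the three algebras $\mathcal{B}_\star$ and checking that a trivial holonomy genuinely yields a branched (rather than only a multi-sheeted) chart; this is where the simultaneous treatment of hyperbolic, Anti-de Sitter and half-pipe geometries becomes most delicate and relies on~\cite{DAN2}.
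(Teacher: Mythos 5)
Your argument is correct and follows the same route that the paper sketches (without a detailed proof) in the paragraph immediately preceding the lemma: realize each tetrahedron as an ideal $\XX_\star$--tetrahedron from its Thurston parameter via $\mathcal{B}_\star\PP^1\cong\partial\XX_\star$, glue along faces using the simple transitivity of $\G_\star^+$ on ordered triples, and observe that Thurston's gluing equations~\eqref{eq:Thurston_gluing_equation} are exactly the condition that the developing images around each edge close up, with possible branching governed by the total angle. The paper presents this statement as a citation to Thurston and Danciger rather than with its own proof, and your write-up is a reasonable expansion of that informal discussion along the same lines.
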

 
 %%%%%%%%%%%%%%%%%%%%%%%%%%%%%%%%%%%%%%%%%%
 %%%%%%%%%%%%%%%%%%%%%%%%%%%%%%%%%%%%%%%%%%
 %%%%%%%%%%%%%%%%%%%%%%%%%%%%%%%%%%%%%%%%%%
 
\subsection{Hyperbolic, Anti-de Sitter and half-pipe tetrahedra of flags}\label{subsec:X_tet_of_flags}

Let $\star \in \{-1,1,0\}$ and let $(\G_\J,\XX_\J, \J)$ be a marked projective model for the $(\G_\star,\XX_\star)$--geometry. For each ideal $\XX_\J$--tetrahedron $\tet$, there is a preferred tetrahedron of flags $\F_\tet =(V_m,\eta_m)_{m=1}^4$ associated to it, where
\begin{enumerate}
    \item $\{V_m\}_{m=1}^4$ is the set of vertices of $\tet$;
    \item each $\eta_m$ is the unique plane tangent to $\XX_\J$ at $V_m$.
\end{enumerate}
%\textcolor{red}
{
Conversely, we say that a tetrahedron of flags $\F=(V_m,\eta_m)_{m=1}^4$ is an \emph{$\XX_\star$--tetrahedron of flags} if there is a marked projective model $(\G_\J,\XX_\J, \J)$ and an ideal $\XX_\J$--tetrahedron whose associated tetrahedron of flags is $\F$. In that case we say that $(\G_\J,\XX_\J, \J)$ is a \emph{marked osculating model} for $\F$. If $\F$ is an $\XX_\star$--tetrahedron of flags with osculating model $(\G_\J,\XX_\J, \J)$, then we can use $\J$ to pull the vertices of $\F$ (i.e.\ the points of each of the flags) back to $\partial \XX_\star$. Thus given an edge-face $\sigma$ we can compute the \emph{Thurston's parameter (with respect to $\J$)} of this tetrahedron of flags, which we denote $z_{\sigma}^{\F,\J}$. 
}
\begin{remark}\label{rem:thu_param_invt}
By Lemma \ref{lem:res_for_marked_models}, when $\star=\pm 1$, the Thurston's parameters are independent of the marking and are hence projective invariants. When $\star=0$ different markings give rise to Thurston's parameters with the same real part, but different imaginary parts, and so the real part is a projective invariant, but the imaginary part is not. 

Furthermore, Lemma \ref{lem:res_for_marked_models} also shows that for half-pipe tetrahedra the imaginary parts of Thurston's parameters are uniformly distorted by different markings. Specifically, if $\E$ and $\F$ are $\XX_0$--tetrahedra with osculating models $(\G,\X, \J)$ and $(\G,\X, \J')$, then
$$
\frac{\Im(z_\sigma^{\F,\J})}{\Im(z_{\tau}^{\E,\J})}=\frac{\Im(z_\sigma^{\F,\J'})}{\Im(z_{\tau}^{\E,\J'})}, \qquad \forall \sigma,\tau\in \edgeface.
$$
\end{remark}

The next result shows that determining when a tetrahedron of flags is an $\XX_\star$--tetrahedron of flags is simple, using edge ratios and triple ratios. We recall that given a tetrahedron of flags $\F=(V_m,\eta_m)_{m=1}^4$ and an edge-face $\sigma$, we denote by $e_\sigma^\F$ (resp. $t_\sigma^\F$) the edge ratio (resp. triple ratio) of $\F$ with respect to $\sigma$ (cf.~\S\ref{subsec:triple_edge_ratios}). We also recall that
$$
    X_\sigma^\F := \frac{\mu_{\sigma}^\F t_\sigma^\F e_\sigma^\F}{t_{\sigma}^\F+1} \quad \text{ and } \quad  Y_{\sigma}=\frac{t_{\sigma}^\F+1}{t_{\sigma}^\F \mu_{\sigma}^\F}, \qquad \text{ where } \qquad \mu_\sigma^\F := e^\F_{\sigma_-}e^\F_{\sigma_+} -e_{\sigma_-}^\F + 1,
$$
and further define
\begin{equation}\label{eq:j_definition}
    j_\sigma^\F:= e_\sigma^\F-(X_\sigma^\F)^2.
\end{equation}

\begin{theorem}\label{thm:X_tets_of_flags}
Let $\star \in \{-1,1,0\}$ and let $\F=(V_m,\eta_m)_{m=1}^4$ be a tetrahedron of flags. Then the following are equivalent:
\begin{enumerate}
    \item \label{item:tet_of_flag_is_X_1} $\F$ is an $\XX_\star$--tetrahedron of flags; 
    \item \label{item:tet_of_flag_is_X_2} $t_\sigma^\F=1$ for all $\sigma\in \edgeface$; %and $j_\sigma>0$ (resp.\ $j_\sigma<0$) .
    \item \label{item:tet_of_flag_is_X_3} $e_{\sigma}^\F=e_{\op{\sigma}}^\F$ for all $\sigma\in \edgeface$.% and $j_\sigma>0$ (resp.\ $j_\sigma<0$) 
\end{enumerate}
Furthermore, the osculating model of an $\XX_\star$--tetrahedron of flags is hyperbolic, Anti-de Sitter, or half pipe if $j_\sigma^\F>0$, $j_\sigma^\F<0$, or $j_\sigma^\F=0$, respectively. In each case, if $z_\sigma^\F$ is a Thurston's parameter (with respect to some marking) then we have the following relations:
\begin{equation}\label{eq:edge_ratios_thurston_param}
    e_\sigma^\F=\abs{z_\sigma^\F}^2, \qquad X_\sigma^\F=\Re(z_\sigma^\F), \qquad \text{and} \qquad j_\sigma = -\iota^2 \Im(z_\sigma^\F)^2, \qquad \forall \sigma\in \edgeface.
\end{equation}
\end{theorem}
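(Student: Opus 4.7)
The plan is to reduce everything to explicit computation in $\sigma$--standard position (Lemma~\ref{lem:std_pos_tetrahedron}), and to cleanly separate the combinatorial equivalence $(2)\Leftrightarrow(3)$ from the geometric content of $(1)\Leftrightarrow(2)$.

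First I would prove $(2)\Leftrightarrow(3)$ purely using Lemma~\ref{lem:internal_eqs}. Assuming (3), label the six underlying edge-ratios by $a,b,c,d,e,f$ with opposite pairs $(a,f),(b,e),(c,d)$; condition (3) asserts $a=f$, $b=e$, $c=d$. Substituting into equation~\eqref{eq:fe3} yields $(abc)^2=1$, and since all ratios are positive (Lemma~\ref{lem:positive_coords}) we get $abc=1$, whence $t_\sigma^\F=1$ by~\eqref{eq:fe1}. Conversely, if $t_\sigma^\F=1$ for every $\sigma$, then \eqref{eq:fe1} gives the product of edge-ratios around each face is $1$, and \eqref{eq:fe2} gives the product around each vertex is $1$; taking ratios of these eight equations in pairs (face $ijk$ versus vertex $l$ for $l \ne i,j,k$) forces $e_\sigma^\F=e_{\op{\sigma}}^\F$ for each pair of opposite edges.

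Next I would prove $(1)\Rightarrow(2)$ via the following classical fact: if $C\subset\rp^2$ is a smooth conic and $(V_i,\ell_i)_{i=1}^3$ are three points on $C$ with their tangent lines, then the Fock--Goncharov triple ratio of the ordered triple of flags equals $1$ (a two--line calculation in coordinates where $V_i=[\basis{i}]$ and $C:ax_2x_3+bx_1x_3+cx_1x_2=0$, giving tangent lines and triple ratio $abc/bca=1$). Given an $\XX_\star$--tetrahedron of flags with osculating model $(\G_\J,\XX_\J,\J)$, the intersection of $\partial\XX_\J$ with any plane $V_iV_jV_k$ is a conic (possibly degenerate in the half-pipe case), each tangent plane $\eta_m$ restricts to the tangent line of this conic at $V_m$, and by Lemma~\ref{lem:triple_ratio} the triple ratio of $\cyclic{\F}_\sigma$ agrees with the planar Fock--Goncharov triple ratio. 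In the half-pipe case one must verify that the slice is still smooth enough that the tangency picture makes sense, but this holds because the relevant three points are in general position on the cone $xy=v^2$.

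For $(2)\Rightarrow(1)$, and simultaneously for the last statement of the theorem, I would place $\F$ in $\sigma$--standard position and directly solve for the osculating quadric. With $t_\sigma^\F=1$, conditions $QV_i\propto\eta_i$ for $i=1,2,3$ together with symmetry of $Q$ force (after normalization)
\[
Q=\begin{pmatrix}0&-1&0&0\\-1&0&0&0\\0&0&2&0\\0&0&0&q_{44}\end{pmatrix};
\]
imposing $V_4\in\{Q=0\}$ and $QV_4\propto\eta_4$ uses the identity $X_\sigma^\F=\tfrac{1}{2}e_\sigma^\F\mu_\sigma^\F$ together with $\mu_{\overline{\sigma}}=\mu_\sigma$ (a consequence of (3) and how opposite edges pair), and yields $q_{44}=2j_\sigma^\F$. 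The signature of $Q$ is $(3,1)$, $(2,2)$, or degenerate of type $(2,1,1)$ exactly according to $j_\sigma^\F>0$, $<0$, or $=0$; after rescaling $\basis{4}$ one obtains the form $J_\star$ of \eqref{eq:standard_bilinear_form}, producing a marked osculating model. Finally the Thurston parameter identities follow by comparing $V_l=[e_\sigma^\F,1,X_\sigma^\F,-1]$ with the image under $\J\circ\Lambda_\star$ of $[z,1]\in\mathcal{B}_\star\PP^1$, which is $[\,|z|^2,1,\Re(z),\Im(z)\,]$; the first three coordinates give $e_\sigma^\F=|z_\sigma^\F|^2$ and $X_\sigma^\F=\Re(z_\sigma^\F)$, hence $j_\sigma^\F=e_\sigma^\F-(X_\sigma^\F)^2=-\iota^2\Im(z_\sigma^\F)^2$.

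The main obstacle will be the half-pipe case, where $\partial\XX_0$ is singular and the osculating model is only determined up to the one-parameter family of Lemma~\ref{lem:half_pipe_subgroup}; care is required both in asserting the tangent-line conic argument in $(1)\Rightarrow(2)$ and in verifying that the explicit quadric produced in $(2)\Rightarrow(1)$ really is projectively equivalent to the half-pipe model (which needs the degenerate rescaling of $\basis{4}$ and the fact that $\Im(z_\sigma^\F)$ is then marking-dependent, as noted in Remark~\ref{rem:thu_param_invt}).
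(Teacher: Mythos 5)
Your proof is correct in substance and takes a genuinely different route from the paper in two places worth noting. For $(2)\Leftrightarrow(3)$, the paper derives $e_\sigma/e_{\op{\sigma}}=1/(t_\sigma t_{\overline{\sigma}})$ and $t_\sigma^2 = (e_{\op{\sigma}}e_{\op{(\sigma_+)}}e_{\op{(\sigma_-)}})/(e_\sigma e_{\sigma_+}e_{\sigma_-})$ directly from the internal consistency equations; your product argument is an equally valid alternative, except for a small indexing slip: you should divide the face equation $e_{ij}e_{jk}e_{ki}=1$ by the vertex equation at a vertex $l\in\{i,j,k\}$, not $l\notin\{i,j,k\}$. (Face $123$ divided by vertex $1$ yields $e_{23}/e_{14}=1$; pairing face $123$ with vertex $4$ just reproduces \eqref{eq:fe3} and gives nothing new.) The more substantive departure is your $(1)\Rightarrow(2)$ via the classical fact that the Fock--Goncharov triple ratio of three tangent flags on a non-degenerate conic is $1$; the paper instead proves $(1)\Leftrightarrow(2)+(3)$ in one shot by solving for the osculating quadric in $\sigma$--standard position and reading off the three conditions $t_\sigma=1$, $t_{\overline{\sigma}}=1$, $\mu_\sigma=\mu_{\overline{\sigma}}$. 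Your conic argument is shorter and more conceptual, at the cost of needing the remark (which you flag) that the slice $\partial\XX_\J\cap V_iV_jV_k$ is a smooth conic; this is automatic here since a degenerate slice would force two of $\eta_i,\eta_j,\eta_k$ to restrict to the same line, violating non-degeneracy of the flags. Your $(2)\Rightarrow(1)$ together with the signature classification and the Thurston-parameter identities reproduces the paper's explicit computation with $\J_\delta$ essentially verbatim (up to a harmless sign/scaling in the last coordinate of $\J\circ\Lambda_\star([z,1])$, which should be $-\Im(z)/\delta$ rather than $\Im(z)$, but the projective comparison still yields $\Im(z_\sigma^\F)=\delta$ and hence $j_\sigma=-\iota^2\Im(z_\sigma^\F)^2$). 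You also correctly emphasize, as the paper does, that the relations \eqref{eq:edge_ratios_thurston_param} are independent of the half-pipe marking by Remark~\ref{rem:thu_param_invt}.
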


\begin{proof}
Before embarking on the proof we observe that if the relations \eqref{eq:edge_ratios_thurston_param} are satisfied for the Thurston's parameters with respect to some marking then they are satisfied for the Thurston's parameters with respect to any other marking. This is a consequence of Remark \ref{rem:thu_param_invt} and the fact that the last relation is trivially satisfied when $\star=0$. As a result we are free to compute Thurston's parameters with respect to any marking we wish when proving the theorem.

Henceforth we are going to drop the superscript in the parameters of $\F$.

(\eqref{item:tet_of_flag_is_X_2} $\Leftrightarrow$ \eqref{item:tet_of_flag_is_X_3}): This is a consequence of the internal consistency equations (cf. Lemma~\ref{lem:internal_eqs}) and the relations
$$
\frac{e_{\sigma}}{e_{\op{\sigma}}}=
\frac{e_{\sigma}e_{\sigma_-}e_{\sigma_+}}{e_{\op{\sigma}}e_{\sigma_-}e_{\sigma_+}}=
\frac{e_{\sigma}e_{\sigma_-}e_{\sigma_+}}{e_{\overline{\op{\sigma}}}e_{\overline{\sigma_-}}e_{\sigma_+}}=
\frac{1}{t_{\sigma}t_{\overline{\sigma}}}, \qquad \text{and} \qquad t_\sigma^2=\frac{e_{\op{\sigma}}e_{\op{(\sigma_+)}}e_{\op{(\sigma_-)}}}{e_{\sigma}e_{\sigma_+}e_{\sigma_-}}.
$$

(\eqref{item:tet_of_flag_is_X_1} $\Leftrightarrow$ \eqref{item:tet_of_flag_is_X_2} $+$  \eqref{item:tet_of_flag_is_X_3}): We begin by noticing that the property of being an $\XX_\star$--tetrahedron of flags is $\PGL(4)$--invariant, thus we can assume without loss of generality that $\F$ is in $\sigma$--standard position for any $\sigma = (ij)k \in \edgeface$. Henceforth suppose $\F$ is in $\sigma$--standard position (cf. Lemma~\ref{lem:std_pos_tetrahedron}) and fix $\delta \in \RR_{>0}$. We consider the isomorphism of quadratic spaces
\begin{align}\label{eq:osculating_model}
    \J_{\delta} : \H_\star \rightarrow \RR^4, \qquad \text{where} \qquad
	\begin{pmatrix}
	x & v+\iota w\\
	v-\iota w & y
	\end{pmatrix} &\mapsto 
	\frac{1}{\sqrt{2}}\left(x,y,v,-\frac{w}{\delta}\right),
\end{align}
between $\H_\star$ with $D_\star$ and $\RR^4$ with the symmetric bilinear form
\begin{align}\label{eq:osc_form}
J_{\delta}=\begin{pmatrix}
0 & -1 & 0 & 0\\
-1 & 0 & 0 & 0\\
0 & 0 & 2 & 0\\
0 & 0 & 0 & -2 \iota^2 \delta^2
\end{pmatrix}.
\end{align}
It is easy to check that the underlying projective quadric $\partial \XX_{\J_{\delta}}$ of $\rp^3$ corresponding to $J_\sigma$ contains the points $V_i, V_j,$ and $V_k$ and is tangent to the planes $\eta_i$ and $\eta_j$ at $V_i$ and $V_j$. The point $V_l \in \partial \XX_{\J_\delta}$ if and only if
\begin{equation}\label{eq:delta_sigma_j_sigma}
-2e_\sigma + 2 X_\sigma^2 - 2 \iota^2 \delta^2 = 0 \qquad \Longleftrightarrow \qquad -\iota^2 \delta^2 = j_\sigma. 
\end{equation}
We remark that this condition is satisfied in the half-pipe setting if and only if $j_\sigma=0$, while in the other cases we get the condition that $\delta = \sqrt{\abs{j_\sigma}}$ 

Furthermore, the planes tangent to $\partial \XX_{\J_\delta}$ at $V_3$ and $V_4$ are
$$
[\dbasis{1} + \dbasis{2} - 2 \cdot \dbasis{3}], \qquad \text{and} \qquad \left[\frac{1}{e_\sigma} \cdot \dbasis{1} + \dbasis{2} - \frac{2X_\sigma}{e_\sigma} \cdot \dbasis{3} -  \frac{2j_\sigma}{e_\sigma} \cdot \basis{4}^*\right].
$$
From Lemma \ref{lem:std_pos_tetrahedron} it follows that $\F$ is an $\XX_\star$--tetrahedron of flags, with osculating model $(\G_{\J_\delta},\XX_{\J_\delta},\J_\delta)$, if and only if
$$
t_\sigma = 1, \qquad e_{\overline{\sigma}_-}e_{\overline{\sigma}_+} = \frac{1}{e_\sigma}, \qquad  \mu_{\overline{\sigma}} = \frac{2X_\sigma}{e_\sigma}, \qquad \text{and} \qquad  \mu_{\overline{\sigma}}\left( Y_{\overline{\sigma}} - X_\sigma \right) = \frac{2j_\sigma}{e_\sigma}.
$$
Using the internal consistency equations (cf. Lemma~\ref{lem:internal_eqs}), one can check that these equations are equivalent to
$$
t_\sigma = 1, \qquad t_{\overline{\sigma}} = 1, \qquad \text{and} \qquad  \mu_{\sigma}=\mu_{\overline{\sigma}},
$$
which, in turn, are equivalent to statements \eqref{item:tet_of_flag_is_X_2} $+$  \eqref{item:tet_of_flag_is_X_3}, once one observes that \eqref{item:tet_of_flag_is_X_2} implies that $e_{\overline{\sigma}_-}=e_{\sigma_-}$. 

For the final part of the proof, suppose that $\F$ is an $\XX_\star$--tetrahedron of flags, with osculating model $(\G_{\J},\XX_{\J},\J)$. Once again we can assume that $\F$ is in $\sigma$--standard position, and by the above discussion, we may take $\J = \J_\delta$. Then the fact that $\XX_{\J_\delta}$ is hyperbolic, Anti-de Sitter, or half pipe if $j_\sigma>0$, $j_\sigma<0$, or $j_\sigma=0$, follows directly from the form of $J_\delta$ in \eqref{eq:osc_form}.

Furthermore, one can check that
$$
V_i = \left[\J_\delta ( \Lambda_\star(\infty) )
\right], \qquad V_j = \left[\J_\delta ( \Lambda_\star(0) ) 
\right], \qquad \text{and} \qquad V_k = \left[\J_\delta ( \Lambda_\star(1) ) 
\right].
$$
Hence, by definition of Thurston's parameter,
$$
[e_{\sigma} \cdot \basis{1} + \basis{2} + X_\sigma \cdot \basis{3} - \basis{4}] = V_l = \left[\J_\sigma ( \Lambda_\star(z_\sigma) ) \right] = \left[|z_\sigma|^2 \cdot \basis{1} + \basis{2} + \Re(z_\sigma) \cdot \basis{3} - \frac{\Im(z_\sigma)}{\delta} \cdot \basis{4} \right],
$$
and the last part of the lemma follows.
\end{proof}

\begin{remark}\label{rem:delta_sigma_j_sigma}
%\textcolor{red}
{
In the above proof we remarked that equation~\eqref{eq:delta_sigma_j_sigma} is always satisfied in the half-pipe setting, while in the other cases we get the condition that $\delta_\sigma = \sqrt{\abs{j_\sigma}}$. This has the following implications. For $\star \in \{-1,1\}$, if $\F$ is an $\XX_\star$--tetrahedron of flags in $\sigma$--standard position, then there is a unique (marked) osculating model $(\G_{\J_\sigma}, \XX_{\J_\sigma},\J_\sigma)$ for $\F$, obtained by replacing $\delta_\sigma = \sqrt{\abs{j_\sigma}}$ in \eqref{eq:osculating_model}. In particular, there is a unique Thurston's parameter $z_\sigma^\F$ associated to $\F$ which is the Thurston's parameter of the underlying ideal $\XX_{\J_\sigma}$--tetrahedron, with respect to $\sigma$. This can be explicitly computed from the edge ratios using equations~\eqref{eq:edge_ratios_thurston_param}.
}

%\textcolor{red}
{
On the other hand, for $\star = 0$, if $\F$ is an $\XX_\star$--tetrahedron of flags in $\sigma$--standard position, then there is a unique unmarked osculating model but a $1$--real parameter family of marked osculating models $(\G_{\J_\sigma}, \XX_{\J_\sigma},\J_\sigma)$, for all $\delta_\sigma \in \RR_{>0}$. Consequentially, for each of these models there is a Thurston's parameter $z_\sigma^\F$ that can be associated to $\F$. The real part of $z_\sigma^\F$ (which is equal to the norm) is invariant and can be determined by equations~\eqref{eq:edge_ratios_thurston_param}, while the imaginary part depends on the model, according to the relation $\Im(z_\sigma^\F) = \delta_\sigma$.
}
\end{remark}

\begin{remark}\label{rem:j_sigma_geom}
If $\sigma=(ij)k$, then the parameter $j_\sigma$ has a geometric interpretation. Let $P^*$ be the pencil of lines of $\rp^3$ through the point of intersection $P := \eta_i\eta_j\eta_k$. Then $P^*$ can be identified with the projective plane $\eta := V_i V_j V_k$ by sending each line to its intersection with $\eta$. The restriction of the form $J_\sigma$ with this plane (for any possible value of $\delta_\sigma$) has signature $(2,1)$. The fourth point $V_l$ determines a line in the pencil $P^*$ and $j_\sigma$ is positive, negative, or zero, depending on whether the corresponding point in $\eta$ is time-like, space-like, or light like (see Figure~\ref{fig:inscribed_quadric}). 
\end{remark}

\begin{figure}[t]
  \centering
  \def\svgscale{.4}
  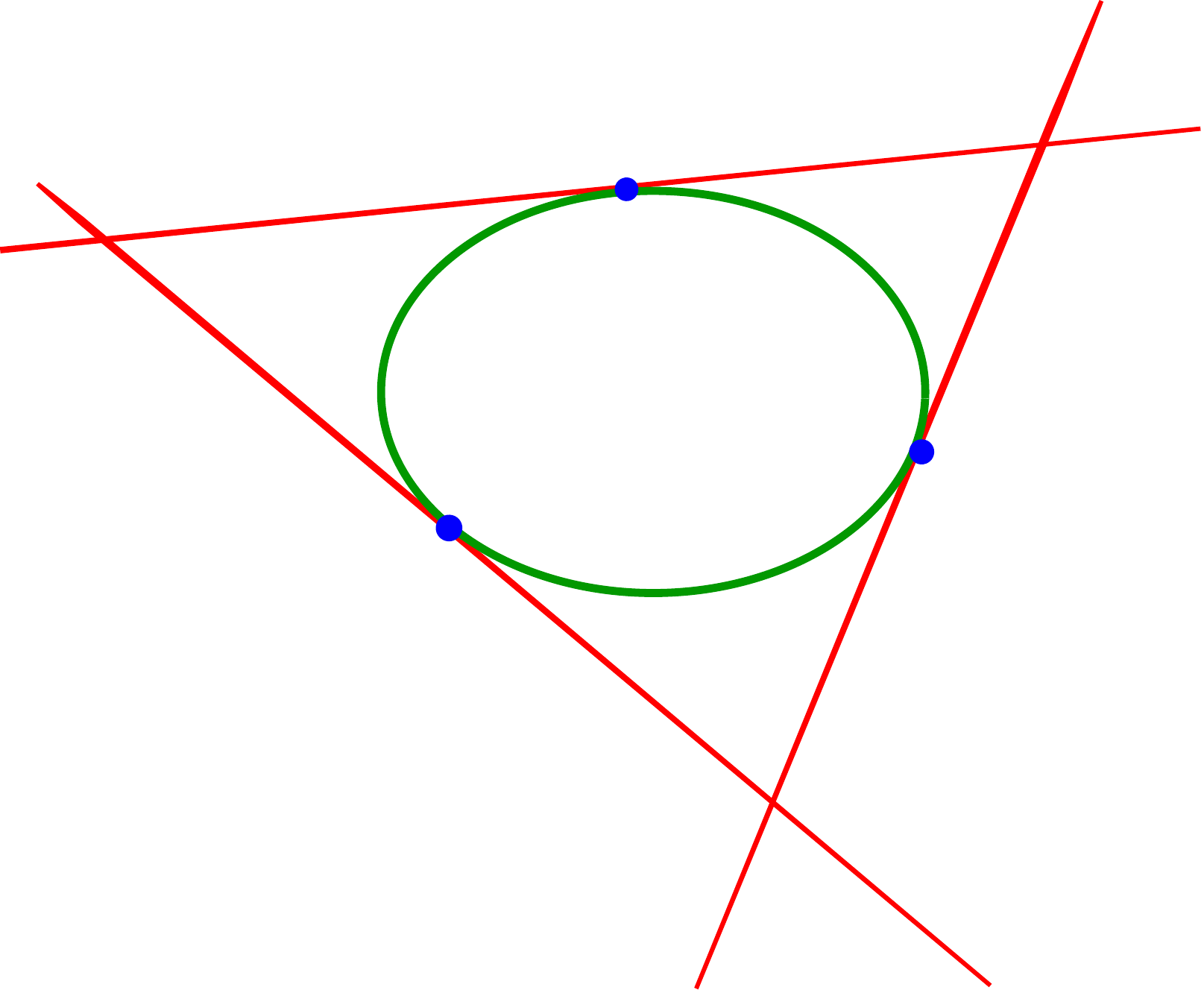
  \caption{A geometric interpretation of the parameter $j_\sigma$.}\label{fig:inscribed_quadric}
\end{figure}

Next, we turn our attention to gluings of $\XX_\star$--tetrahedra of flags. Given any two $\XX_\star$--tetrahedra of flags $\F$ and $\E$, and edge-faces $\sigma,\tau\in \edgeface$, it follows from Theorem~\ref{thm:X_tets_of_flags} that $[\F]$ and $[\E]$ are $(\sigma,\tau)$--glueable since all triple ratios are equal to $1$. On the other hand, even when $\E$ and $\F$ are in $(\sigma,\tau)$--standard glued position, the osculating model for $\F$ is typically different from the osculating model for $\E$. In other words, while the individual tetrahedra of flags can be inscribed in a projective model of $(\G_\star,\XX_\star)$, the glued tetrahedra might not be inscribed in a \emph{common} model of $(\G_\star,\XX_\star)$. The following theorem shows that there is a pair of values for the gluing parameter of $(\F,\E)$ for which the two osculating models coincide, one of which corresponds to a geometric gluing.

%\begin{theorem}\label{thm:X_gluing_param}
%Let $\star \in\{-1,1,0\}$. Let $\sigma,\tau\in \edgeface$, and let $\F$ and $\E$ be two $\XX_\star$--tetrahedra of flags such that $(\F,\E)$ is in $(\sigma,\tau)$--glued position and geometric. Consider marked osculating models $(\G_\F,\XX_\F,\J_\F)$ and $(\G_\E,\XX_\E,\J_\E)$ for these tetrahedra of flags and let $z_\sigma^\F$ (resp. $z_\tau^\E$) be the corresponding Thurston's parameter of $\F$ with respect to $\J_\F$ (resp. of $\E$ with respect to $\J_\E$). Then 
%\begin{align}\label{eq:X_gluing_param}
%g_{\F,\sigma}^{\E,\tau}=  \frac{\Im(z_{\tau}^\E)}{\Im(z_{\sigma}^\F)}
%\end{align}
%if and only if $(\G_\F,\XX_\F)=(\G_\E,\XX_\E)$ and there is $B\in \G_\star^+$ so that $\J_\E=\J_\F\circ B$. 
%%cFurthermore, the gluing is geometric if and only if $g_{\F,\sigma}^{\E,\tau} =  \frac{\Im(z_{\tau}^\E)}{\Im(z_{\sigma}^\F)}$.
%
%
%
%
%%then $(\G_\F,\XX_\F)=(\G_\E,\XX_\E)$. 
%
%%Conversely,  if $\F$ and $\E$ have the same osculating model, $(\G,\XX)$, and $\J$ is a marking for this model then 
%%$$ \pm \frac{\Im(z_{\tau}^{\E,\J})}{\Im(z_{\sigma}^{\F,\J})}=g_{\F,\sigma}^{\E,\tau}$$
%
%\end{theorem}
%\textcolor{red}
{
\begin{theorem}\label{thm:X_gluing_param}
Let $\star \in\{-1,1,0\}$. Let $\sigma,\tau\in \edgeface$, and let $\F$ and $\E$ be two $\XX_\star$--tetrahedra of flags in $(\sigma,\tau)$--glued position. Consider a marked osculating model for each of them, and let $z_\sigma^\F$ (resp. $z_\tau^\E$) be the corresponding Thurston's parameter of $\F$ with respect to $\sigma$ (resp. of $\E$ with respect to $\tau$). Then the marked osculating model for $\F$ is equal to the marked osculating model for $\E$ if and only if
\begin{align}\label{eq:X_gluing_param}
g_{\F,\sigma}^{\E,\tau}= \pm \frac{\Im(z_{\tau}^\E)}{\Im(z_{\sigma}^\F)}.
\end{align}
Furthermore, the gluing is geometric if and only if $g_{\F,\sigma}^{\E,\tau} =  \frac{\Im(z_{\tau}^\E)}{\Im(z_{\sigma}^\F)}$.
\end{theorem}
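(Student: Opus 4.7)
The idea is to reduce to convenient coordinates, identify the marked osculating model for $\F$ explicitly, and then check when $\E$ admits the same marked model (in the sense of $(\G_\star,\XX_\star)$--equivalence). By projective invariance of the gluing parameter (cf.~Remark~\ref{rem:gluing_param_geometric_description}), I may assume $(\F,\E)$ is in $(\sigma,\tau)$--standard glued position with $\F$ in $\sigma$--standard position. Then $\F$ is as in Lemma~\ref{lem:std_pos_tetrahedron}, and the remaining vertex of $\E$ is $W_{l'}=[\basis{1}+e_\tau^\E\basis{2}+X_\tau^\E\basis{3}+g\basis{4}]$, where $g:=g_{\F,\sigma}^{\E,\tau}$ (Remark~\ref{rem:gluing_param_geometric_description}).

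By the proof of Theorem~\ref{thm:X_tets_of_flags} together with Remark~\ref{rem:delta_sigma_j_sigma}, the marked osculating model of $\F$ in this position is $(\G_{\J_{\delta_\F}},\XX_{\J_{\delta_\F}},\J_{\delta_\F})$ with $\J_{\delta_\F}$ as in~\eqref{eq:osculating_model} and $\delta_\F=\Im(z_\sigma^\F)$; the underlying quadric is cut out by $J_{\delta_\F}$ from~\eqref{eq:osc_form}. The first condition for this triple to also serve as a marked osculating model for $\E$ is that $W_{l'}$ lie on $\partial\XX_{\J_{\delta_\F}}$, i.e.\ $W_{l'}^T J_{\delta_\F} W_{l'}=0$, which a short computation reduces to $j_\tau^\E=-\iota^2\delta_\F^2 g^2$. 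For $\star=\pm 1$, combining this with~\eqref{eq:edge_ratios_thurston_param} gives $\Im(z_\tau^\E)^2=\delta_\F^2 g^2$, equivalent to $g=\pm\Im(z_\tau^\E)/\Im(z_\sigma^\F)$. For $\star=0$ this identity is automatic.

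To pin down the sign and to handle the $\star=0$ case, I would then compute the Thurston's parameter of $\E$ via the marking $\J_{\delta_\F}$ directly: pulling the vertices back to $\mathcal{B}_\star\PP^1$ via $\Lambda_\star^{-1}\circ\J_{\delta_\F}^{-1}$ and applying the Möbius map $z\mapsto 1/z$ (to send $(W_{i'},W_{j'},W_{k'})$ to $(\infty,0,1)$) yields $e_\tau^\E/(X_\tau^\E-\iota\delta_\F g)$. Using $e_\tau^\E=(X_\tau^\E)^2-\iota^2\Im(z_\tau^\E)^2$ together with the step above, this simplifies to $X_\tau^\E+\iota\delta_\F g$; equating to $z_\tau^\E$ forces $g=+\Im(z_\tau^\E)/\Im(z_\sigma^\F)$, while the negative sign is accounted for by postcomposing the marking with the conjugation $\mathrm{C}\in\G_\star$, yielding a $(\G_\star,\XX_\star)$--equivalent marking whose computed Thurston's parameter is $\overline{z_\tau^\E}$. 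The geometric half of the statement then follows from Lemma~\ref{lem:geometric_pairs}, since $(\F,\E)$ is geometric iff $g>0$ and $\Im(z_\sigma^\F),\Im(z_\tau^\E)>0$, selecting exactly the positive sign. The main difficulty is the $\star=0$ case, where the underlying unmarked quadric is canonical (both $j$'s vanish), so the $\pm$ in the formula must be identified with the two $(\G_\star,\XX_\star)$--equivalent markings $\J_{\delta_\F}$ and $\J_{\delta_\F}\circ\mathrm{C}$ rather than with any unmarked distinction.
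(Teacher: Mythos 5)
Your proof is correct and follows essentially the same line of reasoning as the paper's. You both reduce to $(\sigma,\tau)$--standard glued position, identify $\F$'s osculating model as $\J_{\delta_\F}$ with $\delta_\F=\Im(z_\sigma^\F)$ via Remark~\ref{rem:delta_sigma_j_sigma}, and express the last vertex as $W_{l'}=[\basis{1}+e_\tau^\E\basis{2}+X_\tau^\E\basis{3}+g\basis{4}]$; the remaining difference is purely organizational. The paper introduces the reference $\XX_{\J_{\delta_\F}}$--tetrahedron $\E^0$ in $\sigma$--standard position with parameter $z_\tau^\E$ and finds the two elements $G^\pm\in \G_{\J_{\delta_\F}}$ carrying $\E^0$'s first three flags to $\E$'s, pinning down $g$ by matching fourth vertices; you instead impose the point condition $W_{l'}\in\partial\XX_{\J_{\delta_\F}}$ and then directly compute $\E$'s Thurston parameter via $\J_{\delta_\F}$, obtaining $X_\tau^\E+\iota\delta_\F g$. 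These amount to the same computation: the M\"obius normalization $z\mapsto 1/z$ (composed or not with conjugation) you use is exactly the boundary action of $G^+$ or $G^-$. Your handling of the $\star=0$ case, where the unmarked quadric is canonical and the $\pm$ must be attributed to the two $(\G_\star,\XX_\star)$--equivalent markings $\J_{\delta_\F}$ and $\J_{\delta_\F}\circ\mathrm{C}$, is a reasonable and arguably cleaner way to explain the sign, and Lemma~\ref{lem:geometric_pairs} correctly closes the geometricity claim in both treatments.

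One small caveat worth noting: the tangent-plane part of the fourth flag never appears in your argument. This is actually fine because $\E$ is assumed to be an $\XX_\star$--tetrahedron of flags, so some osculating quadric exists, and (by the computation inside the proof of Theorem~\ref{thm:X_tets_of_flags}) any osculating quadric through the three shared flags must be $\partial\XX_{\J_\delta}$ for some $\delta$; the point condition then forces $\delta=\delta_\F$ when $\star\neq 0$, and tangency of $\zeta_{l'}$ follows automatically. It would strengthen the write-up to say this explicitly, since as written the point condition looks like it might only be necessary.
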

}

\begin{proof}
Once again, without loss of generality we can assume that $\F$ and $\E$ are in $(\sigma,\tau)$--standard glued position. Let $\sigma = (ij)k$ and $\tau = (i'j')k'$. For each $\XX_\star$--tetrahedron of flags we fix a marked osculating model. Let $z_\sigma^\F,z_\tau^\E$ be the Thurston's parameters of $\F$ and $\E$, with respect to $\sigma$ and $\tau$ in these models. We recall that each model is unique in the hyperbolic and Anti-de Sitter setting, while there is a $1$--parameter family of choices for half-pipe (cf. Remark~\ref{rem:delta_sigma_j_sigma}). Since $\F$ is in $\sigma$--standard position, the marked osculating model is $(\G_{\J_\sigma^\F}, \XX_{\J_\sigma^\F},\J_\sigma^\F)$, as defined in~\eqref{eq:osculating_model}, for some non-zero $\delta_\sigma^\F = \Im(z_\sigma^\F)$. If $\E = (W_m,\zeta_m)_{m=1}^4$, then
\begin{align}\label{eq:glue_pos_pt}
    W_{l'}=\left[\basis{1}+\abs{z_\tau^\E}^2 \cdot \basis{2}+\Re(z_\tau^\E)\cdot \basis{3}+g_{\F,\sigma}^{\E,\tau} \cdot \basis{4}\right].
\end{align}
To compare the osculating models, we consider the unique $\XX_{\J_\sigma^\F}$--tetrahedron of flags $\E^0 := (W^0_m,\zeta^0_m)_{m=1}^4$, in $\sigma$--standard position, with Thurston's parameter $z_\tau^\E$. Then
$$
W^0_l = \left[|z_\tau^\E|^2 \cdot \basis{1} + \basis{2} + \Re(z_\tau^\E) \cdot \basis{3} - \frac{\Im(z_\tau^\E)}{\Im(z_\sigma^\F)} \cdot \basis{4} \right].
$$
 of both $(\G_{\J_\sigma^\F}, \XX_{\J_\sigma^\F})$ and $(\G_{\J_\tau^\G}, \XX_{\J_\tau^\G})$
There are exactly two transformations in $\G_{\J_\sigma^\F} \cap \G_{\J_\tau^\G}$ that map
$$
(W^0_i,\zeta^0_i) \mapsto (W_{j'},\zeta_{j'}), \qquad (W^0_j,\zeta^0_j) \mapsto (W_{i'},\zeta_{i'}), \qquad \text{and} \qquad (W^0_k,\zeta^0_k)\mapsto (W_{k'},\zeta_{k'}).
$$
They are
$$
G^-:=\begin{bmatrix}
0 & 1 & 0 & 0\\
1 & 0 & 0 & 0\\
0 & 0 & 1 & 0\\
0 & 0 & 0 & 1
\end{bmatrix}, \qquad \text{and} \qquad G^+:=\begin{bmatrix}
0 & 1 & 0 & 0\\
1 & 0 & 0 & 0\\
0 & 0 & 1 & 0\\
0 & 0 & 0 & -1
\end{bmatrix}.
$$
It follows that $(\G_{\J_\sigma^\F}, \XX_{\J_\sigma^\F},\J_\sigma^\F) = (\G_{\J_\tau^\E}, \XX_{\J_\tau^\E},\J_\tau^\E)$ if and only if either
$$
G^+ \cdot (W'_4,\zeta'_4) = (W_{l'},\zeta'_{l'}), \qquad \text{or} \qquad G^- \cdot (W'_4,\zeta'_4) = (W_{l'},\zeta'_{l'}).
$$
This is equivalent to~\eqref{eq:X_gluing_param}. The final statement follows from the fact that $G^+$ is orientation preserving, while $G^-$ is orientation reversing.
\end{proof}

%%%%%%%%%%%%%%%%%%%%%%%%%%%%%%%%%%%%%%%%%%
%%%%%%%%%%%%%%%%%%%%%%%%%%%%%%%%%%%%%%%%%%
%%%%%%%%%%%%%%%%%%%%%%%%%%%%%%%%%%%%%%%%%%

\subsection{Proof of Theorem~\ref{thm:main_thm}}\label{subsec:proof_main_thm}

We are now ready to construct the map $\varphi_\star : \mathcal{D}_{\XX_{\star}}(M;\Delta) \rightarrow \mathcal{D}_\rp(M;\Delta)$ mentioned in the statement of Theorem~\ref{thm:main_thm}. First, we recall that $\mathcal{D}_{\Delta} = \mathcal{D}_\rp(M;\Delta)$ is a subset of
$$
Y := \left(\left( \RR_{>0}^{\edgeface} \times\RR_{>0}^{\edgeface} \right) \times\RR_{>0}^{\edgeface} \right)^{\Tet(\Delta)},
$$
and that we can describe every $x \in Y$ as
$$
x(\tet) = \left(x_1(\tet),x_2(\tet)\right) = \left( (\tau_*^\tet,\epsilon_*^\tet),\kappa_*^\tet \right) \in \left( \RR_{>0}^{\edgeface} \times\RR_{>0}^{\edgeface} \right) \times\RR_{>0}^{\edgeface}, \qquad \forall \tet \in \Tet(\Delta).
$$
For $\star \in \{-1,1,0\}$, we define a map $\varphi_\star : \mathcal{D}_{\XX_{\star}}(M;\Delta) \rightarrow Y$. For every $z_\ast^\ast \in \mathcal{D}_{\XX_{\star}}(M;\Delta)$ and every $\tet \in \Tet(\Delta)$, let
$$
\varphi_\star(z_\ast^\ast)(\tet) := \left( (\tau_*^\tet,\epsilon_*^\tet),\kappa_*^\tet \right), \qquad \text{such that} \qquad
\tau_\sigma^\tet =1, \quad \epsilon_\sigma^\tet =\abs{z^\tet_\sigma}^2, \quad \text{and} \quad \kappa_\sigma^\tet =  \frac{\Im(z^\tet_\sigma)}{\Im(z^{\tet'}_{\tau})},
$$
where $\tet'$ is the unique tetrahedron glued to $\tet$ along the edge-face pair $(\sigma,\tau)$. We claim that the image of $\varphi_\star$ is contained in $\mathcal{D}_{\Delta}$, and that it can be explicitly described. We recall that the coordinates of points in $\mathcal{D}_{\Delta}$ are edge ratios, triple ratios and gluing parameters of some triangulation of flags, thus we henceforth refer to them with their usual symbols $e_\sigma^\tet,t_\sigma^\tet$ and $g_\sigma^\tet$. Let $\mathcal{S}_\star(M;\Delta) \subset \mathcal{D}_{\Delta}$ be the subset of points satisfying the following additional equations
\begin{enumerate}
    \item for each $\tet\in \Tet(\Delta)$ and each $\sigma \in \edgeface$,
    $$
    e_\sigma^\tet=e_{\op{\sigma}}^\tet;
    $$
    \item \label{item:j_sigma_and_gluing_equation} for each $\tet,\tet'\in \Tet(\Delta)$ glued along the edge-face pair $(\sigma,\tau)$ (see \eqref{eq:j_definition} for the definition),
    $$
    j_\sigma^\tet=(g_\sigma^\tet)^2 j_\tau^{\tet'};
    $$ 
    \item for each $\tet\in \Tet(\Delta)$ and each $\sigma \in \edgeface$,
    $$
    j_\sigma^\tet
    \begin{cases}
        >0, & \quad \text{if} \quad \star = -1,\\
        <0, & \quad \text{if} \quad \star = 1,\\
        =0, & \quad \text{if} \quad \star = 0.
        \end{cases}
    $$
\end{enumerate}
\begin{remark}\label{rem:sigma_j_relations}
    For every $x \in \mathcal{S}_\star(M;\Delta)$, the coordinates of $x$ satisfy the additional relations
    \begin{equation}\label{eq:thurston_additional_relations}
        j_{\sigma_+}^\tet = e_{\sigma_+}^2 j_{\sigma}^\tet, \qquad \forall \tet \in \Tet(\Delta), \quad \sigma \in \edgeface.
    \end{equation}
    This is a consequence of the fact that all triple ratios in $x$ are $1$ (cf. Theorem~\ref{thm:X_tets_of_flags} part \eqref{item:tet_of_flag_is_X_3} $\Rightarrow$ \eqref{item:tet_of_flag_is_X_2}). It follows that, if $\star \not= 0$, all of the $j_{\sigma_+}^\tet$'s are determined by any one of them, via equations~\eqref{item:j_sigma_and_gluing_equation} and \eqref{eq:thurston_additional_relations}.
\end{remark}

\begin{theorem}\label{thm:phi_image} Let $M$ be an orientable $3$--manifold with an ideal triangulation $\Delta$, then
$$
\varphi_\star(\mathcal{D}_{\XX_{\star}}(M;\Delta)) = \mathcal{S}_\star(M;\Delta), \qquad \forall \star \in \{-1,1,0\}.
$$
Furthermore, if $\star \in \{-1,1\}$, then $\varphi_\star$ is injective. Otherwise, $\mathcal{D}_{\XX_{0}}(M;\Delta)$ is a trivial line bundle over $\mathcal{S}_0(M;\Delta)$, where two points $z_\ast^\ast,w_\ast^\ast \in \mathcal{D}_{\XX_{0}}(M;\Delta)$ belong to the same fiber if and only if there is a constant $k \in \RR$ such that
$$
\Re(z_\sigma^\tet) = \Re(w_\sigma^\tet), \quad \text{and} \quad \Im(z_\sigma^\tet) = e^k \cdot \Im(w_\sigma^\tet), \qquad \forall \tet \in \Tet(\Delta), \ \sigma \in \edgeface.
$$
\end{theorem}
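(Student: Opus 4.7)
The plan is to prove the theorem by exploiting the geometric characterizations of $\XX_\star$--tetrahedra of flags (Theorem~\ref{thm:X_tets_of_flags}) and of compatible gluings admitting a common osculating model (Theorem~\ref{thm:X_gluing_param}). First I would verify the inclusion $\varphi_\star(\mathcal{D}_{\XX_\star}(M;\Delta)) \subset \mathcal{S}_\star(M;\Delta)$. Given $z^\ast_\ast \in \mathcal{D}_{\XX_\star}(M;\Delta)$, Lemma~\ref{lem:develop_X_structures} produces a branched $(\G_\star,\XX_\star)$--structure whose ideal tetrahedra have vertices on $\partial \XX_\star$; decorating each vertex by the tangent plane to $\partial \XX_\star$ there yields a $\rho$--equivariant flag decoration, hence a triangulation of flags whose $\Psi_\Delta$--image is exactly $\varphi_\star(z^\ast_\ast)$. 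By Theorem~\ref{thm:X_tets_of_flags} this image satisfies $t_\sigma^\tet = 1$, $e_\sigma^\tet = e_{\op{\sigma}}^\tet$, and $j_\sigma^\tet = -\iota^2 \Im(z_\sigma^\tet)^2$, so both the edge-ratio condition and the sign condition of $\mathcal{S}_\star$ hold. Theorem~\ref{thm:X_gluing_param}, combined with the geometricity of the gluings (i.e.\ $g_\sigma^\tet > 0$), yields $g_\sigma^\tet = \Im(z_\sigma^\tet)/\Im(z_\tau^{\tet'})$, and squaring gives $j_\sigma^\tet = (g_\sigma^\tet)^2 j_\tau^{\tet'}$, completing verification that $\varphi_\star(z^\ast_\ast) \in \mathcal{S}_\star(M;\Delta)$.

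For the reverse inclusion, given $x \in \mathcal{S}_\star(M;\Delta)$, Theorem~\ref{thm:parametrization_def_space} produces a triangulation of flags $[\Phi,\rho]$; each $\Phi(\wt\tet)$ is an $\XX_\star$--tetrahedron of flags of the prescribed subgeometry by Theorem~\ref{thm:X_tets_of_flags} together with the sign condition on $j_\sigma^\tet$. I would next define candidate Thurston parameters by $\Re(z_\sigma^\tet) := X_\sigma^\tet$ and, for $\star = \pm 1$, $\Im(z_\sigma^\tet) := \sqrt{-j_\sigma^\tet/\iota^2}$; for $\star = 0$ I would fix $\Im(z_{\sigma_0}^{\tet_0}) > 0$ arbitrarily in a chosen base tetrahedron and propagate using the internal Thurston relations within each tetrahedron and the identity $\kappa_\sigma^\tet = \Im(z_\sigma^\tet)/\Im(z_\tau^{\tet'})$ across glued faces. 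The condition $j_\sigma^\tet = (g_\sigma^\tet)^2 j_\tau^{\tet'}$ with $g_\sigma^\tet > 0$ forces $g_\sigma^\tet = \Im(z_\sigma^\tet)/\Im(z_\tau^{\tet'})$, so by Theorem~\ref{thm:X_gluing_param} adjacent tetrahedra share a common marked osculating model and are geometrically glued within it. Consequently, the branched projective structure produced by $\Ext$ has holonomy factoring through $\G_\star$ and is realized by ideal $\XX_\star$--tetrahedra, hence is a branched $(\G_\star,\XX_\star)$--structure. The triviality of the edge-loop holonomy, which is enforced by the edge gluing equations of \S\ref{subsec:gluing_eq}, then translates to Thurston's gluing equations $\prod z_\sigma = 1$, placing $z^\ast_\ast$ in $\mathcal{D}_{\XX_\star}(M;\Delta)$ with $\varphi_\star(z^\ast_\ast) = x$.

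Injectivity for $\star = \pm 1$ is immediate from the formulas $\Re(z_\sigma^\tet) = X_\sigma^\tet$ and $\Im(z_\sigma^\tet) = \sqrt{-j_\sigma^\tet/\iota^2} > 0$, which uniquely recover $z^\ast_\ast$ from $\varphi_\star(z^\ast_\ast)$. The half-pipe case $\star = 0$ is subtler because $\abs{z}^2 = \Re(z)^2$ in $\mathcal{B}_0$, so $\epsilon_\sigma^\tet$ only determines $\Re(z_\sigma^\tet)$ and $\kappa_\sigma^\tet$ only the ratios of imaginary parts between glued faces. Writing the Thurston internal relation $z_{\sigma_+} = 1/(1-z_\sigma)$ in $\mathcal{B}_0$ yields $\Im(z_{\sigma_+}) = \Im(z_\sigma)/(1-\Re(z_\sigma))^2$, showing that within any single tetrahedron all imaginary parts scale by a common factor; propagating via the fixed $\kappa$--ratios across the connected complex $\complex_\Delta$ forces this factor to be constant on $\Delta$, so the fiber is parametrized by a single $k \in \RR$ via $\Im(z_\sigma^\tet) \mapsto e^k \Im(z_\sigma^\tet)$. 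I expect the main obstacle to be verifying that every such scaling actually remains in $\mathcal{D}_{\XX_0}(M;\Delta)$, i.e.\ preserves Thurston's gluing equations. This should follow from the identity $\prod_i (a_i + \iota b_i) = (\prod_i a_i)\bigl(1 + \iota \sum_j b_j/a_j\bigr)$ in $\mathcal{B}_0$, which is homogeneous of degree one in the $b_j$'s, so the pair of equations $\prod_i a_i = 1$ and $\sum_j b_j/a_j = 0$ is simultaneously invariant under $b_j \mapsto e^k b_j$. This establishes both the trivial $\RR$--bundle structure and the surjectivity of $\varphi_0$ onto $\mathcal{S}_0(M;\Delta)$, completing the proof.
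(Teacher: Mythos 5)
Your proof is essentially correct and arrives at the same conclusions, but it is organized along a noticeably more geometric route than the paper's. For the inclusion $\varphi_\star(\mathcal{D}_{\XX_\star}) \subset \mathcal{S}_\star$ you pass through Lemma~\ref{lem:develop_X_structures} to build a branched $(\G_\star,\XX_\star)$--structure, decorate the ideal vertices by tangent planes, and then read off the $\Psi_\Delta$--coordinates via Theorems~\ref{thm:X_tets_of_flags} and~\ref{thm:X_gluing_param}; membership in $\mathcal{D}_\Delta$ is automatic because a genuine triangulation of flags has been exhibited. The paper instead verifies $\varphi_\star(z_\ast^\ast) \in \mathcal{D}_\Delta$ by a direct matrix computation: the key step is factoring each $\Glue$ transformation as $G_{i-1}'G_i$, observing that $G_i\Flip_{\sigma_i}G_i'$ is the $4\times 4$ block encoding of multiplication by $z^{\tet_i}_{\sigma_i}\in\mathcal{B}_\star$ (Eq.~\eqref{eq:B_mat_mult}), and concluding that the edge product $G^s$ is conjugate to the matrix~\eqref{eq:thurston_gluing_matrix} built from $\prod_i z^{\tet_i}_{\sigma_i}$, so that $G^s = I$ is \emph{literally} Thurston's gluing equation. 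Your approach avoids this computation but in doing so uses, at the crucial point, the converse of Lemma~\ref{lem:develop_X_structures} — that an $\XX_\star$--structure built from ideal tetrahedra has Thurston's parameters satisfying $\prod z = 1$. That converse is standard Thurston–Danciger folklore, but it is not what Lemma~\ref{lem:develop_X_structures} states, and the paper never relies on it; the explicit $G^s$--to--$\prod z_\sigma$ translation is exactly what makes both directions watertight and is reused in the surjectivity argument as well. So there is a small logical debt in your write-up. If you keep the geometric framing, you should either cite the converse statement or, better, note that the identity $G^s = I \Leftrightarrow \prod z_\sigma = 1$ follows from the block-diagonal form~\eqref{eq:prod_matrix_around_edge} once one checks (via the factorization of $\Glue\cdot\Flip$ described above) that the lower $2\times 2$ block of $G^s$ is the $\mathcal{B}_\star$--matrix of $\prod z_\sigma$ and the $(2,2)$--entry is $\prod|z_\sigma|^2$. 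Your treatment of injectivity for $\star=\pm 1$, and of the $\RR$--bundle structure for $\star=0$ — including the observation that $z_{\sigma_+}=1/(1-z_\sigma)$ gives $\Im(z_{\sigma_+})=\Im(z_\sigma)/(1-\Re z_\sigma)^2$ in $\mathcal{B}_0$, so the imaginary parts scale uniformly, and that the degree-one homogeneity of $\sum_j b_j/a_j$ makes the gluing equations scale-invariant — matches the paper's $\delta_\ast^\ast$-section argument in content and is sound.
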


\begin{proof}
    We begin by showing that $\varphi_\star(\mathcal{D}_{\XX_{\star}}(M;\Delta)) \subset \mathcal{S}_\star(M;\Delta)$. Suppose $z_\ast^\ast \in \mathcal{D}_{\XX_{\star}}(M;\Delta)$. It follows from the definition of $\varphi_\star$ and the internal relations~\eqref{eq:Thurston_param_relations} that $\varphi_\star(z_\ast^\ast)$ satisfies the internal consistency equations~\eqref{eq:triple_rat_eq}--\eqref{eq:fe2} (cf. Lemma~\eqref{lem:internal_eqs}), and the face pairing equations~\eqref{eq:face_equation} (cf. Lemma~\ref{lem:face_pair_eq}). To show that the gluing consistency equations \eqref{eq:gluing_eq_1} and \eqref{eq:gluing_eq_2} (cf. Lemma~\ref{lem:gluing_eqs}) are also satisfied, it is enough to notice that every Thurston's parameter $z_\sigma$ satisfies the additional relation
    $$
    \Im(z_{\sigma_+}) = \abs{z_{\sigma_+}}^2 \Im(z_{\sigma}).
    $$
    Next we show that $\varphi_\star(z_\ast^\ast)$ satisfies the edge gluing equations \eqref{eq:edge_gluing_equations}. For every edge $s\in \Edge(\Delta)$, fix an orientation on $s$, and let $\cyclic{\tet_1^s,\ldots, \tet_{k_s}^s}$ be the cyclically ordered ${k_s}$--tuple  of tetrahedra that abut $s$, cyclically ordered to follow the ``right hand rule'' by placing the thumb in the direction of $s$. Let $\tau_i^s$ and $\sigma_i^s$ be the respective incoming and outgoing edge-faces of $\tet_i^s$, around $s$. Henceforth we are going to drop the superscript $s$. Using that the coordinates of $\varphi_\star(z_\ast^\ast)(\tet)$ satisfy
    $$
    z^{\tet_{i}}_{\sigma_{i}} = z^{\tet_{i}}_{\tau_{i}}, \qquad \text{and} \qquad
    g_{\tau_i}^{\tet_i} =  \frac{\Im\left(z^{\tet_{i}}_{\sigma_{i}}\right)}{\Im(z^{\tet_{i-1}}_{\tau_{i-1}})} =  \frac{\Im\left(z^{\tet_{i}}_{\sigma_{i}}\right)}{\Im(z^{\tet_{i-1}}_{\sigma_{i-1}})},
    $$
    we deduce that each gluing map \eqref{eq:glue_mat} can be factored as $\Glue_{\tau_i}(\varphi_\star(z_\ast^\ast)(\tet))=G_{i-1}'G_{i}$, where 
    $$
    G_{i}'=\begin{bmatrix}
    1 & 0 & 0 & 0\\
    0 & 1 & 0 & 0\\
    0 & 0 & 1 & 0\\
    0 & 0 & 0 & 1/\Im(z^{\tet_{i}}_{\sigma_{i}})
    \end{bmatrix},
    \qquad \text{and} \qquad
    G_{i}=\begin{bmatrix}
    0 & 1 & 0 & 0\\
    1 & 0 & 0 & 0 \\
    0 & 0 & 1 & 0\\
    0 & 0 & 0 & -\Im(z^{\tet_i}_{\sigma_i})
    \end{bmatrix}.
    $$
    Thus we can rewrite the product in~\eqref{eq:edge_gluing_equations} as
    \begin{align}\label{eq:X_edge_cycle}
    G^s = G'_{k_s}G_1\Flip_{\sigma_1}(\varphi_\star(z_\ast^\ast)(\tet))G_1'G_2\ldots G'_{k_s-1}G_{k_s}\Flip_{\sigma_{k_s}}(\varphi_\star(z_\ast^\ast)(\tet)).
    \end{align}
    On the other hand, from the definition~\eqref{eq:flip_mat},
    $$
    \Flip_{\sigma_i}(\varphi_\star(z_\ast^\ast)(\tet))=\begin{bmatrix}
    0 & \abs{z^{\tet_i}_{\sigma_i}}^2 & 0 & 0\\[3pt]
    1 & 0 & 0 & 0\\[3pt]
    0 & 0 & \Re(z^{\tet_i}_{\sigma_i}) & \iota^2\Im(z^{\tet_i}_{\sigma_i})^2\\[3pt]
    0 & 0 & -1 & -\Re(z^{\tet_i}_{\sigma_i})
    \end{bmatrix},
    $$
    and so 
    $$
    G_i\Flip_{\sigma_i}(\varphi_\star(z_\ast^\ast)(\tet))G_i'=
    \begin{bmatrix}
    1 & 0 & 0 & 0\\[3pt]
    0 & \abs{z^{\tet_i}_{\sigma_i}}^2 & 0 & 0\\[3pt]
    0 & 0 & \Re(z^{\tet_i}_{\sigma_i})  & \iota^2\Im(z^{\tet_i}_{\sigma_i})\\[3pt]
    0 & 0 & \Im(z^{\tet_i}_{\sigma_i}) & \Re(z^{\tet_i}_{\sigma_i})
    \end{bmatrix}.
    $$
    It follows from~\eqref{eq:B_mat_mult} that $G^s$ is conjugate to 
    \begin{equation}\label{eq:thurston_gluing_matrix}
    \begin{bmatrix}
    1 & 0 & 0 & 0 \\[3pt]
    0 & \prod_{i=1}^{k_s}\abs{z^{\tet_i}_{\sigma_i}}^2 & 0 & 0\\[3pt]
    0 & 0 & \Re\left(\prod_{i=1}^{k_s} z^{\tet_i}_{\sigma_i}\right) & \iota^2\Im \left(\prod_{i=1}^{k_s} z^{\tet_i}_{\sigma_i}\right)\\[3pt]
    0 & 0 & \Im \left(\prod_{i=1}^{k_s} z^{\tet_i}_{\sigma_i}\right) & \Re \left(\prod_{i=1}^{k_s} z^{\tet_i}_{\sigma_i}\right)
    \end{bmatrix}.
    \end{equation}
    However, $z_\ast^\ast \in \mathcal{D}_{\XX_{\star}}(M;\Delta)$  satisfies Thurston's gluing equations~\eqref{eq:Thurston_gluing_equation}, thus $G^s$ is the identity matrix. This concludes the proof that $\varphi_\star(z_\ast^\ast) \in \mathcal{D}_{\Delta}$. The fact that $\varphi_\star(z_\ast^\ast) \in \mathcal{S}_\star(M;\Delta)$ is a direct consequence of Theorem~\ref{thm:X_tets_of_flags} and Theorem~\ref{thm:X_gluing_param}.\\
    
    To prove that $\varphi_\star$ is surjective, let $x \in \mathcal{S}_\star(M;\Delta)$. Recall that we refer to the coordinates of $x$ with the usual symbols $e_\sigma^\tet,t_\sigma^\tet$ and $g_\sigma^\tet$. Let $\delta_\ast^\ast \in \RR_{>0}^{\Tet(\Delta) \times \edgeface}$ be a function satisfying
    \begin{equation}\label{eq:delta_sigma_equations}
        \delta_{\sigma_+}^\tet = e_{\sigma_+} \delta_{\sigma}^\tet, \qquad  \delta_{\sigma}^\tet = g_\sigma^\tet \delta_{\tau}^{\tet'}, \quad \text{and} \quad -\iota^2 (\delta_{\sigma}^\tet)^2 = j_{\sigma}^\tet, \qquad \tet \in \Tet(\Delta), \ \sigma \in \edgeface,
    \end{equation}
    where $\tet'$ is the unique tetrahedron glued to $\tet$ along $(\sigma,\tau)$. If $\iota^2 \in \{-1,1\}$, then $\delta_\ast^\ast$ is uniquely determined by $\delta_\sigma^\tet = \sqrt{\abs{j_\sigma^\tet}}$. In this case, the first two equations of~\eqref{eq:delta_sigma_equations} are consequences of the assumption that $x \in \mathcal{S}_\star(M;\Delta)$ (cf. Remark~\ref{rem:sigma_j_relations}). On the other hand, if $\iota^2 =0$, then $j_{\sigma}^\tet = 0$ by definition of $\mathcal{S}_0(M;\Delta)$, and the last equation of~\eqref{eq:delta_sigma_equations} is trivial. In that case $\delta_\ast^\ast$ is only determined up to positive rescaling.
    
    Next we consider the function $z_\ast^\ast \in \mathcal{B}_\star^{\Tet(\Delta)\times \edgeface}$ defined by
    \begin{equation}\label{eq:inverse_map_of_varphi_star}
        z_\sigma^\tet := X_\sigma^\tet + \iota \delta_\sigma^\tet \in \mathcal{B}_\star ,\qquad  \forall \tet \in \Tet(\Delta), \quad \sigma \in \edgeface.
    \end{equation}
    
    It is easy to check that if $z_\ast^\ast \in \mathcal{D}_{\XX_{\star}}(M;\Delta)$, then $\varphi_\star(z_\ast^\ast) = x$. Indeed it is enough to notice that
    \begin{equation}\label{eq:norm_and_imaginary_parts_are_coorect}
    \abs{z_\sigma^\tet}^2 = (X_\sigma^\tet)^2 - \iota^2 (\delta_\sigma^\tet)^2 = (X_\sigma^\tet)^2 + j_\sigma^\tet = e_\sigma^\tet, \qquad \text{and} \qquad \frac{\Im(z_\sigma^\tet)}{\Im(z_\tau^{\tet'})} = \frac{\delta_\sigma^\tet}{\delta_\tau^{\tet'}} = g_\sigma^\tet.
    \end{equation}
    
    Therefore we are left to show that $z_\ast^\ast \in \mathcal{D}_{\XX_{\star}}(M;\Delta)$. It follows from the definitions that each number $z_\sigma^\tet$ is the Thurston's parameter of some positively oriented ideal $\XX_\star$--tetrahedron (cf. Lemma~\ref{lem:thurston_parameters}), and that they satisfy the internal relations~\eqref{eq:Thurston_param_relations}. Now let $s \in \Edge(\Delta)$ be an oriented edge. Using~\eqref{eq:norm_and_imaginary_parts_are_coorect} and the definition of $\mathcal{S}_\star(M;\Delta)$, one can rewrite every edge ratio, triple ratios and gluing parameter in terms of $z_\ast^\ast$ in the edge gluing equations, to show that $G^s$ is conjugate to the matrix in~\eqref{eq:thurston_gluing_matrix}. But since $x \in \mathcal{S}_\star(M;\Delta)$, $G^s$ is the identity matrix, and therefore $z_\ast^\ast$ satisfies the Thurston's gluing equation~\eqref{eq:Thurston_gluing_equation} of $s$. This concludes the proof of the first part of the lemma.\\
    
    For the second part, it is enough to notice that every $\delta_\ast^\ast \in \RR_{>0}^{\Tet(\Delta) \times \edgeface}$ defines a map
    $$
    \psi_\star : \mathcal{S}_0(M;\Delta) \rightarrow \mathcal{D}_{\XX_{\star}}(M;\Delta),
    $$
    via~\eqref{eq:inverse_map_of_varphi_star}, such that $\varphi_\star \circ \psi_\star = 1$. If $\star \in \{-1,1\}$, then $\delta_\ast^\ast$ is uniquely determined and $\psi_\star \circ \varphi_\star = 1$, implying that $\varphi_\star$ is injective. On the other hand, if $\star = 0$ then $\delta_\ast^\ast$ is only determined up to positive rescaling, and each $\psi_0$ is a section of $\varphi_0$. 
\end{proof}

We conclude this section by showing that the maps $\varphi_\star,\mathrm{f}_\star, \Ext$ and $\mathrm{Ext}_{\XX_\star}$ fit nicely in a commutative diagram. We recall that $\Ext$ is the extension map from Theorem~\ref{thm:geom_flag_struc_are_developable}, which associates each triangulation of flags to a branched real projective structure. Similarly, the map $\mathrm{Ext}_{\XX_\star}$ associates points in the $\XX_\star$--deformation space to branched $(\G_\star,\XX_\star)$--structures (cf. Lemma~\ref{lem:develop_X_structures}). Finally, $\mathrm{f}_\star$ is the ``forgetful'' map that associates a $(\G_\star,\XX_\star)$--structure to the underlying projective structure (cf. \S\ref{subsec:X_spaces}).

\begin{theorem}\label{thm:X_comm_diag}
    Let $\star \in \{-1,1,0\}$, and let $M$ be an orientable $3$--manifold with an ideal triangulation $\Delta$. Then the following diagram commutes
    \begin{equation}\label{cd:X_cd}
    \begin{tikzcd}
    \mathcal{D}_{\XX_\star}(M;\Delta)\ar[d,rightarrow,"\varphi_\star"'] \ar[r,"\mathrm{Ext}_{\XX_\star}"] & \XX_\star(M;\Delta)\ar[d,rightarrow,"\mathrm{f}_\star"]\\
    \hspace{2cm} \mathcal{S}_\star(M;\Delta) \subset \mathcal{D}_{\rp}(M;\Delta)\ar[r,"\Ext"] & \rp(M;\Delta)
    \end{tikzcd}
    \end{equation}
    
\end{theorem}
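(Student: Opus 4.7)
The plan is as follows. Fix $z^\ast_\ast \in \mathcal{D}_{\XX_\star}(M;\Delta)$, a base lift $\wt\tet_0 \in \Tet(\wDelta)$ and an edge-face $\sigma_0 \in \edgeface$. Set $\J := \J_{\delta}$ with $\delta = \Im(z^{\tet_0}_{\sigma_0})$, the marked projective model from \eqref{eq:osculating_model}. I would exhibit explicit representatives of both compositions as branched projective structures on $M$ with respect to $\Delta$, and then show they agree by induction along the dual tree of $\wDelta$.

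For the upper composition, Lemma~\ref{lem:develop_X_structures} produces a branched $\XX_\star$--structure whose developing map $\dev$ sends $\wt\tet_0$ to the ideal $\XX_\star$--tetrahedron with vertices $\Lambda_\star(\infty), \Lambda_\star(0), \Lambda_\star(1), \Lambda_\star(z^{\tet_0}_{\sigma_0})$ (assigned to the vertices of $\wt\tet_0$ in the order prescribed by $\sigma_0$), and glues adjacent tetrahedra by the unique orientation-preserving $\XX_\star$--transformation realizing the face pairing. Composing with $\J$ gives an explicit representative of $\mathrm{f}_\star \circ \mathrm{Ext}_{\XX_\star}(z^\ast_\ast) \in \rp(M;\Delta)$. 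For the lower composition, Theorem~\ref{thm:phi_image} places $\varphi_\star(z^\ast_\ast)$ in $\mathcal{S}_\star(M;\Delta)$, which by Theorem~\ref{thm:parametrization_def_space} corresponds to a unique $\PGL(4)$--class of triangulations of flags $[\Phi,\rho]$. Choose the representative $(\Phi,\rho)$ so that $\Phi(\wt\tet_0)$ is the $\sigma_0$--standard tetrahedron of flags from Lemma~\ref{lem:std_pos_tetrahedron}. Theorem~\ref{thm:geom_flag_struc_are_developable} then represents $\Ext(\varphi_\star(z^\ast_\ast))$ by $(\hat\Phi,\rho)$, where $\hat\Phi$ is the simplicial extension of $\Phi$.

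The base case of the induction is the identity $\J \circ \dev|_{\wt\tet_0} = \hat\Phi|_{\wt\tet_0}$, which I would verify by direct computation: the $\J$--image of $\Lambda_\star(\infty), \Lambda_\star(0), \Lambda_\star(1), \Lambda_\star(z^{\tet_0}_{\sigma_0})$, together with the tangent planes to $\partial\XX_\star$, matches the formulas \eqref{eq:sp1}--\eqref{eq:sp4} after using the identities $e^{\tet_0}_{\sigma_0} = |z^{\tet_0}_{\sigma_0}|^2$ and $X^{\tet_0}_{\sigma_0} = \Re(z^{\tet_0}_{\sigma_0})$ from Theorem~\ref{thm:X_tets_of_flags}. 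For the inductive step, suppose $\wt\tet$ and $\wt\tet'$ are adjacent tetrahedra glued along $(\sigma,\tau)$ and that agreement holds at $\wt\tet$. On the upper route, the transition from $\dev(\wt\tet)$ to $\dev(\wt\tet')$ is realized by the unique orientation-preserving $\XX_\star$--transformation identifying the shared face; on the lower route, it is a product of the matrices $\Rot$, $\Flip$, $\Glue$ supplied by the cochain $\Co(\varphi_\star(z^\ast_\ast))$. That these two elements of $\PGL(4)$ coincide is essentially the content underlying \eqref{eq:thurston_gluing_matrix}: the edge ratios, triple ratios, and gluing parameters produced by $\varphi_\star$ are engineered so that the matrices \eqref{eq:flip_mat} and \eqref{eq:glue_mat} reassemble into the $\J$--image of the relevant $\mathrm{PSL}_2(\mathcal{B}_\star)$--element via the block correspondence \eqref{eq:B_mat_mult}. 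Theorem~\ref{thm:X_gluing_param}, together with the formula $\kappa^\tet_\sigma = \Im(z^\tet_\sigma)/\Im(z^{\tet'}_\tau)$ in the definition of $\varphi_\star$, guarantees that all adjacent $\XX_\star$--tetrahedra of flags share the common marked osculating model $\J$, so the induction proceeds.

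The main obstacle is the half-pipe case $\star = 0$, where marked osculating models carry an extra $\RR$--parameter (the scale $\delta$) that does not affect the underlying projective structure. A globally consistent choice of $\delta$ across $\Tet(\Delta)$ compatible with $\J$ is guaranteed by condition~\eqref{item:j_sigma_and_gluing_equation} defining $\mathcal{S}_0(M;\Delta)$, which is precisely the cocycle condition for the system \eqref{eq:delta_sigma_equations}. The matching ambiguity on the upper route is the $\RR$--fiber identified in Lemma~\ref{lem:forget_structures_almost_injective}. Once a consistent choice is fixed on both sides, the inductive comparison closes and the diagram commutes.
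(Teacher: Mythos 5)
Your proof is correct in substance and reaches the same conclusion as the paper, but it is considerably more explicit. The paper's proof is very short: it observes that both $p_1:=\mathrm{f}_\star(\mathrm{Ext}_{\XX_\star}(z_\ast^\ast))$ and $p_2:=\Ext(\varphi_\star(z_\ast^\ast))$ have developing maps sending each lifted tetrahedron to an ideal $\XX_\J$--tetrahedron in some osculating model (the latter by Theorems~\ref{thm:X_tets_of_flags} and~\ref{thm:X_gluing_param}), notes that in both cases the Thurston's parameters of those tetrahedra are $z_\ast^\ast$, and then concludes projective equivalence from the uniqueness implicit in Lemma~\ref{lem:develop_X_structures} and Lemma~\ref{lem:res_for_marked_models}. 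Your tetrahedron-by-tetrahedron induction along the dual tree is precisely the detailed verification of the paper's ``it is easy to check'' sentence, and you correctly identify the base case (the $\sigma_0$--standard position formulas of Lemma~\ref{lem:std_pos_tetrahedron} matching the $\J_\delta$--image of $(\infty,0,1,z)$ via $e=|z|^2$, $X=\Re(z)$) and the inductive engine (Theorem~\ref{thm:X_gluing_param} forces a common osculating model across a face when $\kappa_\sigma=\Im(z_\sigma)/\Im(z_\tau)$). What your version buys is an explicit, verifiable chain of equalities of developing maps rather than an appeal to uniqueness; what the paper's version buys is brevity, at the cost of delegating the half-pipe subtlety to the reader.

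Two small corrections. First, in the inductive step the citation of~\eqref{eq:thurston_gluing_matrix} is off-target: that computation concerns the product around an entire edge cycle, not the single face transition you need. The correct reference is Lemma~\ref{lem:std_pos_action} together with Theorem~\ref{thm:X_gluing_param}; \eqref{eq:thurston_gluing_matrix} is a global consequence, not the local step. Second, in the half-pipe discussion you attribute the global consistency of the scale $\delta$ across $\Tet(\Delta)$ to condition~\eqref{item:j_sigma_and_gluing_equation} defining $\mathcal{S}_0(M;\Delta)$, but when $\star=0$ one has $j_\sigma\equiv 0$ and that condition is vacuous. The consistency of the $\delta$--system actually comes from the first two equations of~\eqref{eq:delta_sigma_equations} together with the fact that $\delta^\tet_\sigma:=\Im(z^\tet_\sigma)$ is a solution (since $z_\ast^\ast$ satisfies the Thurston relations and gluing equations); the residual freedom is exactly the global rescaling identified as the fiber of $\mathrm{f}_0$ in Lemma~\ref{lem:forget_structures_almost_injective}, as you say. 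Neither of these affects the validity of the argument.
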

\begin{proof}
    Let $z_\ast^\ast \in \mathcal{D}_{\XX_\star}(M;\Delta)$, and consider the branched projective structures
    $$
    p_1 :=  \mathrm{f}_\star\left( \mathrm{Ext}_{\XX_\star}(z_\ast^\ast) \right), \qquad \text{and} \qquad p_2 := \Ext\left( \varphi_\star(z_\ast^\ast) \right).
    $$
    Let $\dev_1,\dev_2$ be developing maps for $p_1$ and $p_2$, respectively. It is clear from the definitions that $\dev_1$ maps all tetrahedra of $\wDelta$ to ideal $\XX_\J$--tetrahedra, for some osculating model $(\G_\J,\XX_\J,\J)$. The same is true for $\dev_2$, for some other model $(\G_{\J'},\XX_{\J'},\J')$, by Theorem~\ref{thm:X_tets_of_flags} and Theorem~\ref{thm:X_gluing_param}. Thus $p_1$ is a $(\G_\J,\XX_\J)$--structure, while $p_2$ is a $(\G_\J',\XX_\J')$--structure. It is easy to check that, in both cases, the Thurston's parameters of all these tetrahedra are exactly $z_\ast^\ast$. It follows that these structures are projectively equivalent, hence $p_1 = p_2$.
\end{proof}

\begin{proof}[Proof of Theorem \ref{thm:main_thm}]
Theorem \ref{thm:X_comm_diag} establishes that the maps in Theorem \ref{thm:main_thm} fit into the appropriate commutative diagram. The map $\mathrm{f} = \mathrm{f} _{-1}$ is injective by Lemma \ref{lem:forget_structures_almost_injective} and $\mathrm{Ext}_\HH = \mathrm{Ext}_{\XX_{-1}}$ is injective by Lemma \ref{lem:develop_X_structures}. However, commutativity of the diagram then implies that the map $\varphi = \varphi_{-1}$ must also be injective.  

\end{proof}
	 
% %%%%%%%%%%%%%%%%%%%%%%%%%%%%%%%%%%%%%%%%%%
% %%%%%%%%%%%%%%%%%%%%%%%%%%%%%%%%%%%%%%%%%%
% %%%%%%%%%%%%%%%%%%%%%%%%%%%%%%%%%%%%%%%%%%

\section{Properly convex projective structures}\label{sec:prop_convex_projective_st}

A specific type of projective structures of interest in this work are \emph{properly convex projective structures}. A subset $\Omega \subset \rp^n$ is \emph{properly convex} if it has non-empty interior, its closure is disjoint from some hyperplane $\eta \subset \rp^n$, and it is convex in the \emph{affine patch} determined by $\rp^n \setminus \eta$. In this context, a projective structure on an $n$--manifold $M$ is \emph{properly convex} if its developing map is a diffeomorphism onto some properly convex set $\Omega$. In that case the holonomy representation is an isomorphism onto a discrete group $\Gamma\subset \PGL(n+1)$. The quotient manifold $\Omega/\Gamma$ is a \emph{properly convex manifold} and the developing map descends to a diffeomorphism $M\to \Omega/\Gamma$, called \emph{marking}. We let $\Tt(M)$  denote the subset of $\rp(M)$ consisting of (marked) properly convex projective structures.\\

This section is concerned with those properly convex structures whose geometry near the non-compact part is particularly well behaved. This is the concept of a \emph{generalized cusp}, which will be introduced in \S\ref{subsec:gen_cusp}. It is conjectured that, under some mild condition on the fundamental group, every cusp of a properly convex manifold is a generalized cusp (cf. Conjecture~\ref{conj:gen_cusps}). The main goal of this section will be to understand how to decorate generalized cusps with flags (cf. Lemma~\ref{lem:framing_end}), and to prove Theorem~\ref{thm:main_thm2} (cf. \S\ref{subsec:proo_main_thm_2}).  

%%%%%%%%%%%%%%%%%%%%%%%%%%%%%%%%%%%%%%%%%%
%%%%%%%%%%%%%%%%%%%%%%%%%%%%%%%%%%%%%%%%%%
%%%%%%%%%%%%%%%%%%%%%%%%%%%%%%%%%%%%%%%%%%

\subsection{Generalized cusps}\label{subsec:gen_cusp}

Generalized cusps are properly convex generalizations of cusps of finite volume hyperbolic manifolds. A \emph{generalized cusp} is a properly convex manifold with virtually abelian fundamental group that admits a codimension $1$ foliation by compact, strictly convex hypersurfaces. In this context, an embedded codimension $1$ submanifold $\Sigma\subset M$ is \emph{strictly convex} if for each $V\in \Sigma$, every tangent plane to $\Sigma$ at $V$ locally intersects $\Sigma$ in a single point (namely $V$). In other words, $\Sigma$ is ``locally on one side'' of each tangent plane. A good example to keep in mind is a cusp of a finite volume hyperbolic $3$--manifold. Such a cusp is foliated by tori, each of which is covered by a (strictly convex) horosphere. 

Generalized cusps are classified in \cite{BCL}, and we now describe the classification in the setting of $3$--manifolds. In dimension three, every generalized cusp is of the form $C=\Omega/\Gamma$, where $C\cong T^2\times (0,\infty)$ and $T^2$ is a $2$--torus. For each generalized cusp we can define a projective invariant called the \emph{type}, which is an integer $0\leq t\leq 3$ (see \cite[\S 1]{BCL}. Loosely speaking, the type gives the dimension of the largest subgroup of $\Gamma$ that is diagonalizable over $\RR$ (a slightly different definition applies when $t=3)$. Furthermore, for each $C$ there is a non-negative vector $u=(\lambda_1,\lambda_2,\lambda_3)\in \RR_{\geq0}^ 3$, where $\lambda_i=0$ if and only if $i> t$. When $t=3$, define $\phi_u:\RR_{\geq0}^3\to \RR$ by $\phi(x_1,x_2,x_3)=x^{\lambda_1}_1x_2^{\lambda_2}x_3^{\lambda_3}$ and $\Omega_u=\phi_u^{-1}((0,\infty))$. Next let $H_u$ be the $2$--dimensional abelian Lie subgroup of $\PGL(4)$ consisting of elements of the form
\begin{align}\label{eq:type3}
\begin{bmatrix}
e^{y_1} & 0 & 0 & 0\\
0 & e^{y_2} & 0 & 0\\
0 & 0 & e^{y_3} & 0\\
0 & 0 & 0 & 1
\end{bmatrix},
\end{align}
where $\sum_{i=1}^3\lambda_i y_i=0$. The set $\Omega_u$ is properly convex, and is foliated by the (strictly convex) level sets of $\phi_u$, called \emph{horospheres}. Both $\Omega_u$ and each leaf of this foliation is $H_u$--invariant.  %\textcolor{red}
{Furthermore, every generalized cusp of type $3$ is projectively equivalent to $\Omega_u/\Gamma_u$, for some $u$ and some lattice $\Gamma_u$ in $H_u$} (see \cite[Thm 0.1]{BCL}).

When $t<3$, define $\phi_u: \RR_{\geq 0}^t\times \RR^{3-t}\to \RR$ by $\phi_u(x_1,x_2,x_3)=x_{t+1}+\sum_{i=1}^t\lambda_i x_i-\frac{1}{2}\sum_{t+2}^3x_i^2$. Note that with this definition, it is possible for some of the sums in the definition of $\phi_u$ to be empty depending on the value of $t$. For each $t$, let $\Omega_u=\phi_u^{-1}((0,\infty))$ and define $H_u$ to be the $2$--dimensional abelian Lie subgroup of $\PGL(4)$ consisting of elements of the form
\begin{align}\label{eq:typeleq2}
&\begin{bmatrix}
1 & x_1 & x_2 & \frac{x_1^2 + x_2^2}{2} \\
0 & 1 & 0 & x_1\\
0 & 0 & 1 & x_2\\
0 & 0 & 0 & 1
\end{bmatrix}, \quad
&&\begin{bmatrix}
e^{y_1} & 0 & 0 & 0 \\
0 & 1 & x_1 & \frac{x_1^2}{2} + \lambda_1 y_1\\
0 & 0 & 1 & x_1\\
0 & 0 & 0 & 1
\end{bmatrix}, \quad
&&\begin{bmatrix}
e^{y_1} & 0 & 0 & 0 \\
0 & e^{y_2} & 0 & 0\\
0 & 0 & 1 & -\lambda_1y_1-\lambda_2y_2\\
0 & 0 & 0 & 1
\end{bmatrix},\\ \nonumber
& \qquad \qquad t = 0, && \qquad \qquad t = 1, && \qquad \qquad t = 2.
\end{align}
Again, the set $\Omega_u$ is properly convex and foliated by the strictly convex level sets of $\phi_u$. Both the domain and the foliation are $H_u$--invariant and each type $t<3$ generalized cusp is projectively equivalent to $\Omega_u/\Gamma_u$ for some $u$ and some lattice $\Gamma_u$ in $H_u$. We depicted one example of $\Omega_u$ for each type $0\leq t \leq 3$ in Figure~\ref{fig:cusp_domains}.

\begin{figure}
    \centering
    \includegraphics[scale=.5]{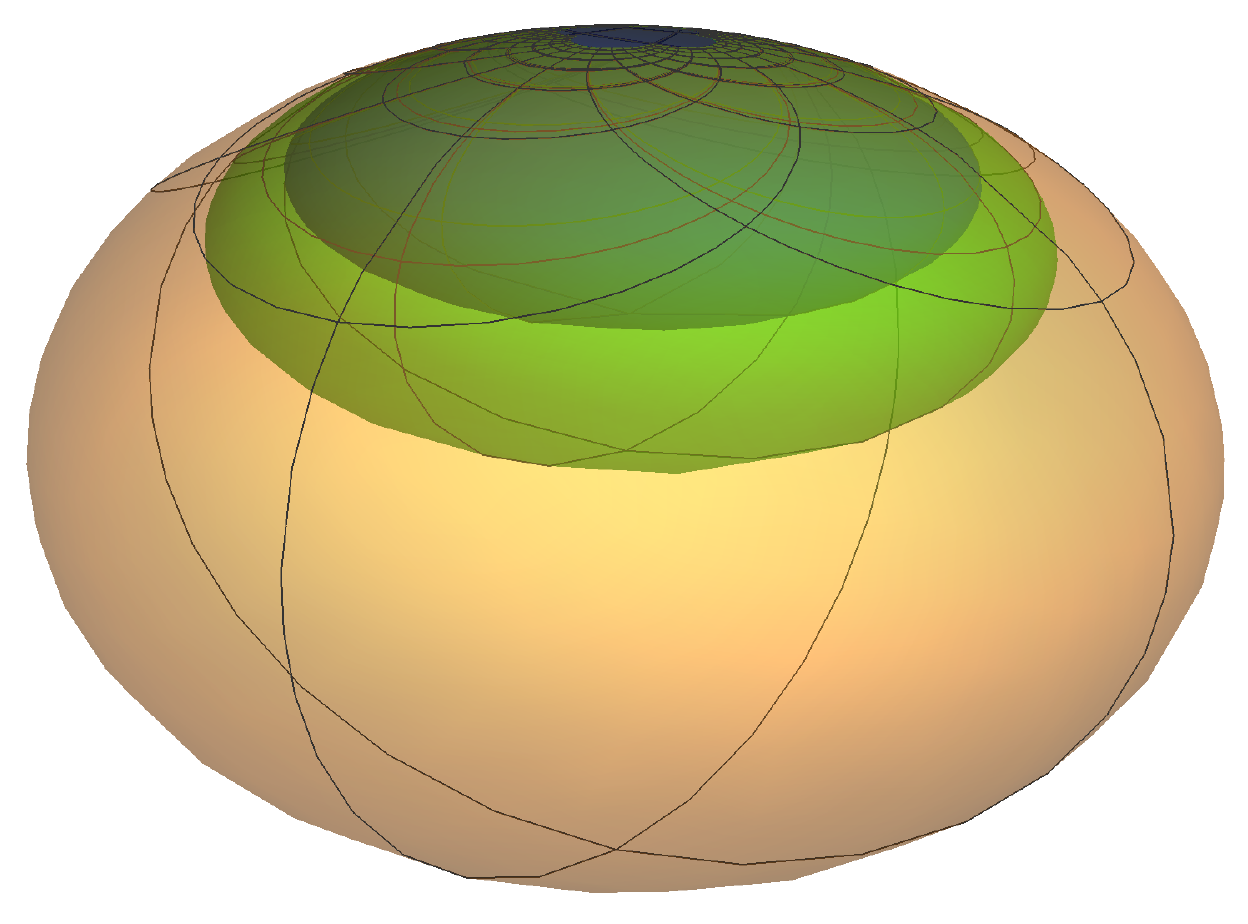}
    \includegraphics[scale=.5]{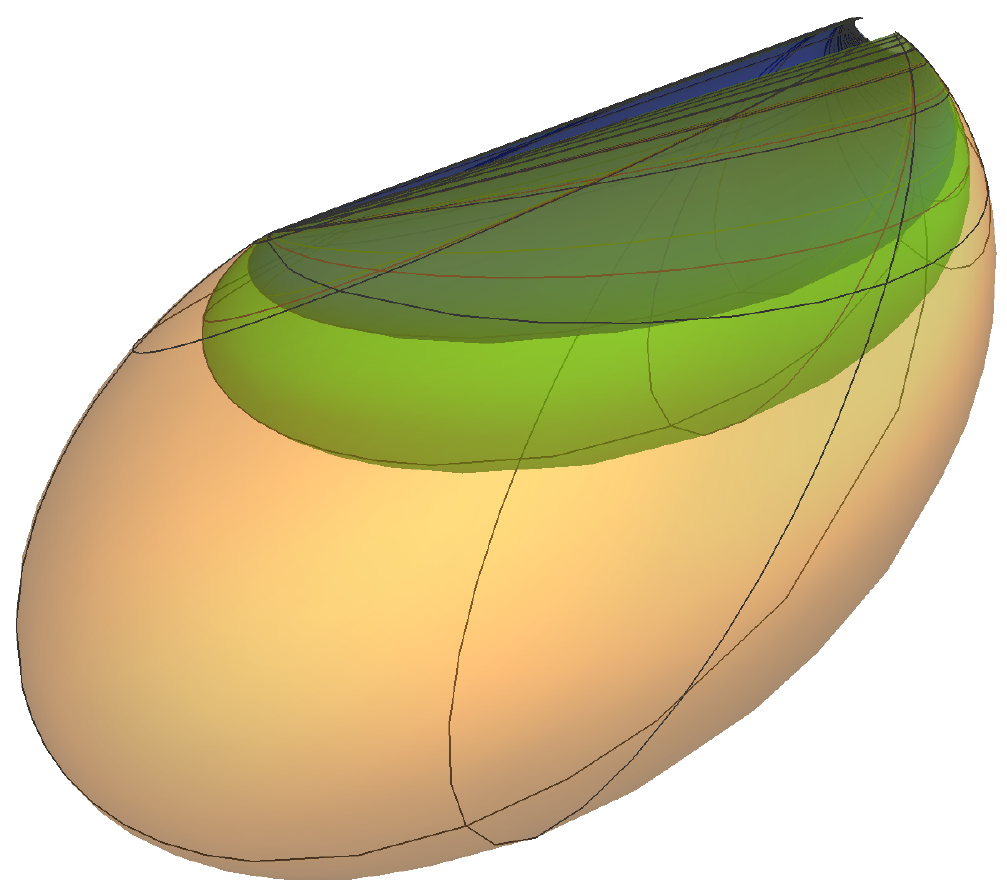}
     \includegraphics[scale=.5]{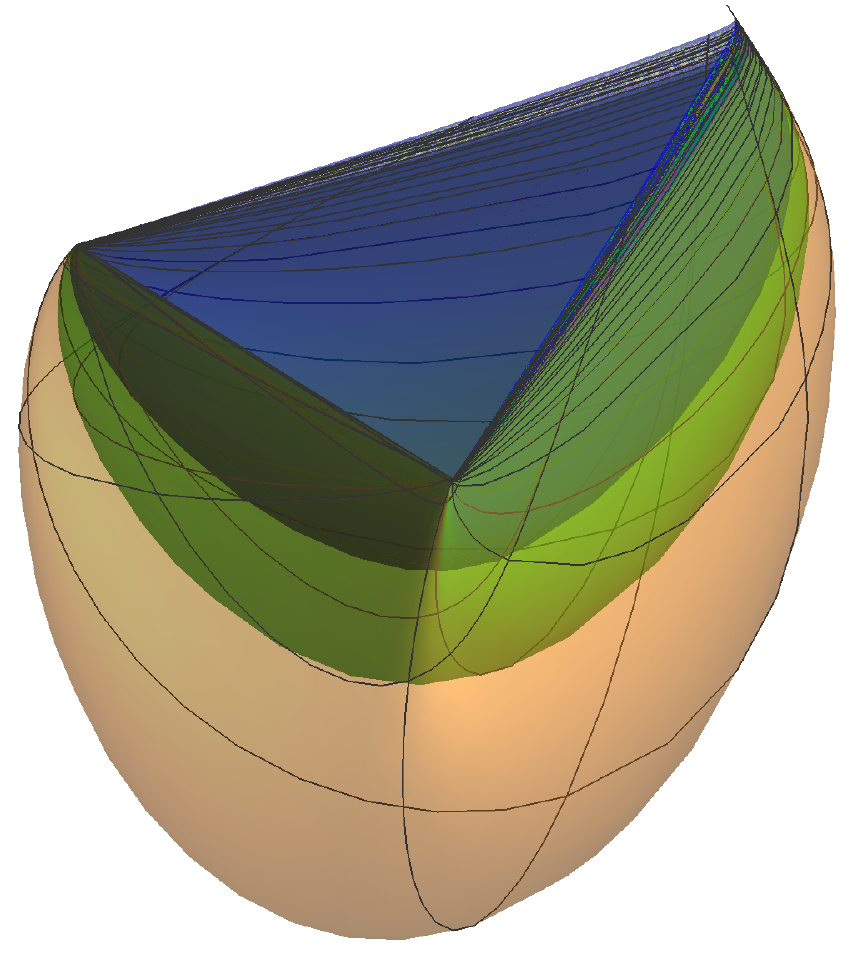}
      \includegraphics[scale=.45]{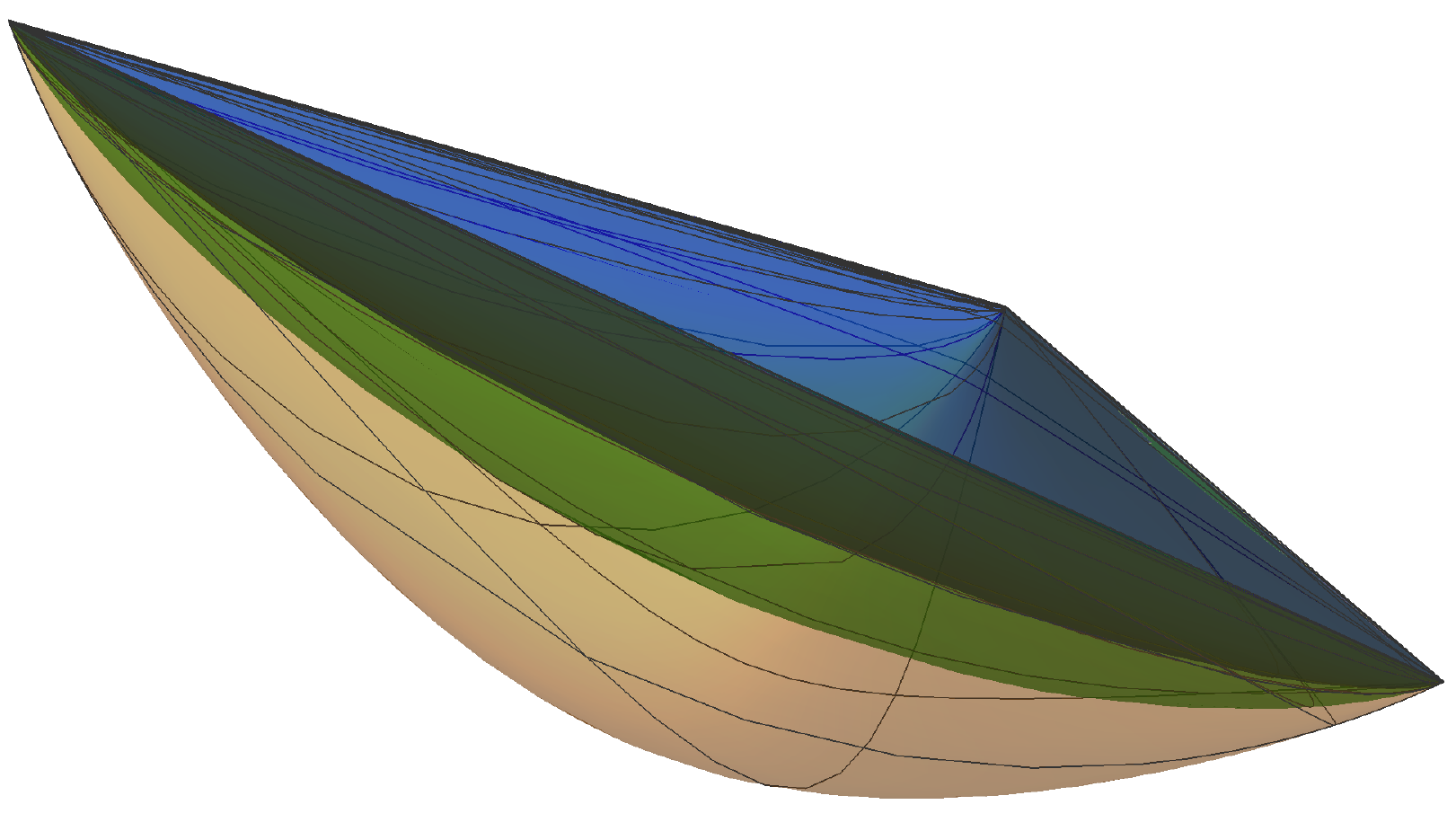}
    \caption{The domains $\Omega_u$. The type increases as one goes left to right and top to bottom.}
    \label{fig:cusp_domains}
\end{figure}

Given a properly convex domain $\Omega$, a flag $(V,\eta)$ is a \emph{supporting flag} if $V\in \partial \overline{\Omega}$, and $\eta$ is a supporting plane for $\Omega$. If $C=\Omega/\Gamma$ is a generalized cusp then $(V,\eta)$ is a \emph{supporting flag for $C$} if $(V,\eta)$ is a supporting flag for $\Omega$ and it is $\Gamma$--invariant. Generalized cusps always admit supporting flags, and most of the time (when $ t \neq 3$) there is a canonical one. They can be described using the concept of radial flow which we briefly recall here (for more details refer to \cite[\S 1.1]{BCL}.

A \emph{radial flow} $\varphi_s$ on $\rp^3$ is a $1$--parameter flow by projective transformations with the property that there is a unique plane $\eta \subset \rp^3$ and a unique point $V\in \rp^3$ so that $\varphi_s$ acts trivially on $\eta$ for each $s$, and $\displaystyle \lim_{s\to -\infty}\varphi_s(x)=V$ for any $x\notin \eta$. Up to reparametrization and applying projective transformations there are exactly two radial flows distinguished by whether or not $V\in \eta$. In the former case we call the radial flow \emph{parabolic}, and in the latter case it is \emph{hyperbolic}. The point $V$ (resp. \ plane $\eta$) is called the \emph{center} (resp.\ \emph{dual center}) of the radial flow.

It turns out that for each generalized cusp $C = \Omega_u/\Gamma_u$ there is a distinguished radial flow that centralizes the group $\Gamma_u$ and for which $\varphi_s(\Omega_u)\subset \Omega_u$ for $s\leq 0$. These radial flows can be described in a ``coordinate free'' way and hence they are (up to reparametrization) a projective invariant of the generalized cusp (see \cite[Prop.\ 1.29]{BCL}. If $0\leq t<3$, then the radial flow is parabolic and has center $V=[\basis{t+1}]$ and dual center $\eta=[\dbasis{4}]$. In this case we see that $(V,\eta)$ is an incomplete flag that is $\Gamma_u$ invariant. Furthermore, the point $V\in \partial \overline{\Omega}_u$ and the plane $\eta$ is a supporting plane for $\Omega_u$. If $t=3$, then the radial flow is hyperbolic and has center $V=[\basis{4}]$ and dual center $\eta=[\dbasis{4}]$. In this case the center and dual center do not form a flag, however $\eta$ is a supporting plane for $\Omega_u$. The intersection of $\eta$ with $\partial \overline{\Omega}_u$ is a triangle and if we let $V'$ be one of the vertices of this triangle, then $(V',\eta)$ is a supporting flag. 

For future reference we record the above discussion in the following lemma. In addition, we show that these flags are some of only finitely many possible supporting flags of $C$.

\begin{lemma}\label{lem:framing_end}
If $C=\Omega/\Gamma$ is a generalized cusp, then $\Gamma$ preserves at least one and at most finitely many (incomplete) flags in $\rp^3$. Furthermore, at least one of these flags can be chosen to be a supporting flag for $C$.
\end{lemma}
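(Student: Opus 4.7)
The plan is to invoke the classification of generalized cusps to reduce to the model $C = \Omega_u/\Gamma_u$ with $\Gamma_u \subset H_u \subset \PGL(4)$ a rank-two lattice in one of the explicit two-dimensional abelian Lie groups of \eqref{eq:typeleq2}--\eqref{eq:type3}. In this setting, the existence of an invariant supporting flag follows essentially from the radial-flow discussion immediately preceding the lemma: for $t \in \{0,1,2\}$ the center $V = [\basis{t+1}]$ and dual center $\eta = [\dbasis{4}]$ of the parabolic radial flow form a supporting flag invariant under $H_u$, with the incidence $V \in \eta$ holding because $\dbasis{4}(\basis{t+1}) = 0$; for $t=3$ the plane $\eta = [\dbasis{4}]$ is an $H_u$-invariant supporting plane whose intersection with $\partial \overline{\Omega}_u$ is the triangle with vertices $[\basis{1}], [\basis{2}], [\basis{3}]$, each fixed by the diagonal group $H_u$, any one of which supplies a supporting flag.

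To prove finiteness, I will show that every $\Gamma_u$-invariant point in $\rp^3$ is a coordinate point $[\basis{i}]$ and, dually, that every $\Gamma_u$-invariant plane is a coordinate hyperplane $[\dbasis{j}]$; this bounds the number of invariant flags by the twelve compatible incidences $i \neq j$. My method is to pick two generators $\gamma_1, \gamma_2$ of $\Gamma_u$ and impose the eigenvector conditions $(\gamma_i - \lambda_i I)V = 0$ for some scalar $\lambda_i$. Reading the resulting linear system directly from the block structure of the matrices in \eqref{eq:typeleq2}--\eqref{eq:type3} and eliminating variables shows that, provided the scalar parameters of $\gamma_1, \gamma_2$ satisfy a finite list of non-degeneracy conditions, the common eigenvectors in $\RR^4$ are exactly the coordinate vectors. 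A dual computation on $\gamma_i^T$ handles the invariant planes.

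The main obstacle is verifying that generators satisfying the required non-degeneracy conditions actually exist in $\Gamma_u$. For $t \in \{0,1,2\}$ the group $H_u$ is unipotent, so the exponential is a polynomial diffeomorphism and any rank-two lattice in $H_u$ is automatically Zariski dense in it by Malcev's theorem; in particular the generic conditions on the parameters are met, and the $\Gamma_u$-invariant flags coincide with the $H_u$-invariant flags, which are finite by direct inspection of the block forms. For $t = 3$, $H_u$ is a two-dimensional algebraic torus in which a lattice need not be Zariski dense, but the non-degeneracy conditions reduce to avoiding a finite union of rational hyperplanes in the parameter space $\{(y_1,y_2,y_3) : \sum \lambda_i y_i = 0\} \cong \RR^2$, and no such finite union can contain a rank-two lattice, so suitable generators still exist. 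The computation in this last case is the most delicate: one must separately handle the possible Jordan-block structure of $\gamma_i$ (absent only in type 3) that causes individual elements of $H_u$ to have lines of fixed points, which is why two generators, rather than just one, are needed to pin down the coordinate vectors.
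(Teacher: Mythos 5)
Your approach is essentially the paper's: reduce to the model groups $H_u$ via the classification of generalized cusps, get existence from the radial flow, and get finiteness by showing that the common fixed points and invariant planes of the lattice are coordinate points and hyperplanes. The paper's proof is terser (it says a generic element of $\Gamma$ already fixes finitely many flags when $t>0$, and handles $t=0$ by hand with two generators), but you are filling in the same structure.

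However, there is an error in the finiteness justification for $t\in\{1,2\}$: the groups $H_u$ in \eqref{eq:typeleq2} are \emph{not} unipotent in these cases. For $t=1$ the diagonal contains $e^{y_1}$, and for $t=2$ it contains $e^{y_1}$ and $e^{y_2}$; only the $t=0$ group has all eigenvalues equal to $1$. Consequently Malcev's theorem on Zariski density of lattices in unipotent groups does not apply, and the appeal to it is misplaced. The fix is to observe that the same argument you give for $t=3$ works uniformly for $t\in\{1,2,3\}$: the non-degeneracy conditions you need (e.g.\ $y_1\neq 0$, $x_1\neq 0$, distinctness of diagonal entries) cut out a proper Zariski-closed subset of the two-dimensional parameter space $H_u\cong\RR^2$, and no rank-two lattice in $\RR^2$ can be contained in such a set, since $\ZZ^2$ is already Zariski dense in $\RR^2$. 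With that substitution, the argument is correct. It is also worth noting that for $t=0$, what you need is not merely a generic element (a single unipotent generator always fixes a projective line of points, as the paper's proof observes) but two generators whose parameter vectors $(x_1,x_2)$ are linearly independent, which is automatic for a rank-two lattice; you do flag this, but the unipotence/Malcev framing obscures that this is the only case where a single good element does not suffice.
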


\begin{proof}
    As mentioned above, existence follows from the above discussion concerning radial flows.

    Recall that, depending on the type $0\leq t\leq 3$ of $C$, the group $\Gamma$ is a lattice in one of the Lie groups $H_u$ defined in \eqref{eq:type3} and \eqref{eq:typeleq2}. When $t>0$, it is easy to check using \eqref{eq:type3} and \eqref{eq:typeleq2} that a generic element $G\in \Gamma$ will preserve only finitely many incomplete flags.
    
    On the other hand, if $t=0$, then each $G$ preserves infinitely many incomplete flags. Specifically, we have that
    $$
    G=\begin{bmatrix}
    1 & y_1 & y_2 & \frac{1}{2}(y_1^2+y_2^2)\\
    0 & 1 & 0 & y_1\\
    0 & 0 & 1 & y_2\\
    0 & 0 & 0 & 1
    \end{bmatrix}, \qquad \text{for some} \quad y_1,y_2 \in \RR,
    $$
    thus $G$ pointwise fixes the line $\ell=[\basis{1}][-y_2\basis{2}+y_1\basis{3}]$, and preserves each plane in the pencil of planes through $\ell$. It follows that $G$ preserves any incomplete flag where the point lies on $\ell$ and the plane comes from the pencil through $\ell$. Furthermore, every $G$--invariant incomplete flag arises in this way.  However, if $G'\in \Gamma$ is such that $\langle G,G'\rangle\cong \Gamma$, then the fixed line $\ell'$ for $G'$ is such that $\ell\cap \ell'=[\basis{1}]$ and $\ell' \subset [\dbasis{4}]$. It follows that $([\basis{1}],[\dbasis{4}])$ is the unique incomplete flag preserved by $\Gamma$.
\end{proof}

\begin{remark}\label{rem:cplt_flags_bad}
Notice that if one instead uses complete flags, then Lemma~\ref{lem:framing_end} is false. For example, a type $t=0$ generalized cusp preserves the complete flags $([\basis{1}],[\basis{1}][v],[\basis{1}][\basis{2}][\basis{3}])$, for any point $[v]$ contained in the line $[\basis{2}][\basis{3}]$.
\end{remark}

%%%%%%%%%%%%%%%%%%%%%%%%%%%%%%%%%%%%%%%%%%
%%%%%%%%%%%%%%%%%%%%%%%%%%%%%%%%%%%%%%%%%%
%%%%%%%%%%%%%%%%%%%%%%%%%%%%%%%%%%%%%%%%%%

\subsection{Proof of Theorem~\ref{thm:main_thm2}}\label{subsec:proo_main_thm_2}

Let $M$ be an orientable non-compact $3$--manifold, which is the interior of a compact manifold $\hat{M}$ whose boundary is a finite union of disjoint $2$--tori $T_1, \ldots, T_n$. Let $\mathcal{N}_1, \ldots, \mathcal{N}_n$ be open neighbourhoods of $T_1, \ldots, T_n$ in $\hat{M}$, with pairwise disjoint closures. We call $\End_k = \overline{\mathcal{N}_k} \setminus T_k$ an \emph{end} of $M$.

For each end $\End_k$ of $M$, there is a conjugacy class of \emph{peripheral subgroups} corresponding to $\im (\pi_1(\End_k) \to \pi_1(M))$, where different choices of basepoint give different conjugacy classes.  Let $\Gamma_k \le \pi_1(M)$ be a representative for this class, and let $(\dev,\hol)$ be a developing pair of a properly convex projective structure on $M$. Let $\wt \End_k$ be the lift of $\End_k$ to the universal cover of $M$ corresponding to $\Gamma_k$. In this setting, the end $\End_k$ is a \emph{generalized cusp end} if (up to isotopy) $\dev(\wt \End_k)/\hol(\Gamma_k)$ is projectively equivalent to a generalized cusp (cf. \S\ref{subsec:gen_cusp}). By $\hol$--equivariance of $\dev$, this notion does not depend on the choice of representative $\Gamma_k$ and therefore it is well defined. Next, we define $\Tt^c(M)\subset \Tt(M)$ to be the set of properly convex projective structures on $M$ such that each end is a generalized cusp end. In  this context, there is a recent conjecture of Cooper and Tillmann:

\begin{conj}[Cooper-Tillmann]\label{conj:gen_cusps}
Suppose $\pi_1(M)$ is hyperbolic relative peripheral subgroups, then every end of a properly convex structure on $M$ is a generalized cusp. In other words $\Tt^c(M)=\Tt(M)$.
\end{conj}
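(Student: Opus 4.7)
The plan is to reduce the conjecture to the Ballas--Cooper--Leitner (BCL) classification of generalized cusps by showing that, under relative hyperbolicity, the holonomy of each peripheral subgroup has exactly the algebraic and dynamical form that BCL requires. Let $[\dev,\hol]\in \Tt(M)$ be a properly convex structure, with $\Omega:=\dev(\wt M)$ and $\Gamma:=\hol(\pi_1(M))\subset \PGL(4,\RR)$. Since the ends of $M$ are tori, each peripheral subgroup satisfies $\Gamma_k\cong \ZZ^2$, and it suffices to analyze each $\Gamma_k$ (equivalently each $\hol(\Gamma_k)$) separately. The goal is to produce, for each $k$, a properly convex $\hol(\Gamma_k)$--invariant subdomain $\Omega_k\subset \Omega$ that is foliated by strictly convex horospheres, after which BCL~\cite{BCL} will identify $\Omega_k/\hol(\Gamma_k)$ with one of the four model cusps recalled in \S\ref{subsec:gen_cusp}.

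First I would extract structure from relative hyperbolicity. Using the Hilbert metric on $\Omega$, the action of $\Gamma$ on $\Omega$ is by isometries of a Finsler geodesic metric space, and it is expected (following Crampon--Marquis and Cooper--Long--Tillmann) that relative hyperbolicity of $(\pi_1(M),\{\Gamma_k\})$ forces the Hilbert geometry to be \emph{geometrically finite} in an appropriate sense: each peripheral $\hol(\Gamma_k)$ has a well-defined limit set in $\partial\Omega$ consisting of a single point if the cusp is of type $0$, and of a $(t-1)$--simplex's worth of boundary points in the higher types. Concretely, I would show that each $\hol(\Gamma_k)$ fixes a supporting flag $(V_k,\eta_k)$ of $\Omega$, with $V_k$ the unique attracting/neutral fixed point coming from relative hyperbolicity and $\eta_k$ a supporting plane at $V_k$. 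Lemma~\ref{lem:framing_end} already shows that any potential generalized cusp has such a supporting flag, so establishing its existence here is the bridge between the relatively hyperbolic world and the BCL classification.

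Next, given $(V_k,\eta_k)$, I would analyze the algebraic structure of $\hol(\Gamma_k)$ inside the stabilizer $\mathrm{Stab}_{\PGL(4)}(V_k,\eta_k)$. Since $\Gamma_k$ is abelian of rank $2$, its image lies in a $2$--dimensional abelian subgroup of $\PGL(4,\RR)$ fixing the flag, and simultaneous Jordan decomposition (combined with the fact that $\hol(\Gamma_k)$ preserves the properly convex $\Omega$, so no element can have a complex eigenvalue of modulus $\ne 1$ acting on a line meeting $\Omega$) restricts the possibilities to exactly the four conjugacy classes of abelian subgroups appearing in \eqref{eq:type3}--\eqref{eq:typeleq2}, parametrized by the type $0\le t\le 3$. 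This is the step that makes essential use of properly convexity rather than just relative hyperbolicity: one must rule out, for instance, elements of $\hol(\Gamma_k)$ with nontrivial rotation part in the tangent plane to $\partial\Omega$ at $V_k$, which follows from the fact that such an element would force non-properly-convex behavior in a neighborhood of $V_k$. Once $\hol(\Gamma_k)$ is identified with a lattice $\Gamma_u$ in one of the groups $H_u$, the domain $\Omega_u$ of \S\ref{subsec:gen_cusp} is the maximal properly convex set stabilized by $H_u$, and by maximality $\Omega_k:=\Omega_u\cap\Omega$ (suitably chosen) inherits the desired foliation by horospheres. BCL then applies to identify $\End_k$ with a neighborhood of the small end of $\Omega_k/\hol(\Gamma_k)$.

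The hard part will be the second step, namely extracting the peripheral fixed flag and establishing geometric finiteness of the Hilbert action purely from the combination of proper convexity and relative hyperbolicity of $\pi_1(M)$. The available tools from Crampon--Marquis, Cooper--Long--Tillmann, and Zhu's recent work on relative Anosov representations suggest the correct framework, but the subtle point is that relative hyperbolicity a priori only controls the coarse geometry of $\pi_1(M)$, not the fine dynamics of $\Gamma$ on $\partial\Omega$; one must rule out pathological limit sets (e.g., a peripheral $\ZZ^2$ whose limit set is a nontrivial arc in $\partial\Omega$ not arising from any generalized cusp). A promising route is to show that the convex core of $\Omega/\Gamma$ has finite volume for the Busemann measure, and then to appeal to a geometrically finite decomposition: the non-cuspidal part is compact, and each cuspidal region is stabilized by a peripheral subgroup whose limit set is, by compactness plus the ambient Hilbert-geometry horoball estimates, exactly of the type demanded by BCL. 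Carrying out this horoball analysis rigorously across all four possible types is, I expect, the main technical obstacle, and probably requires combining the convex-projective Margulis lemma of Cooper--Long--Tillmann with a careful case-by-case study of the possible Jordan forms of commuting pairs in $\PGL(4,\RR)$ that preserve a properly convex set.
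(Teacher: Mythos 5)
The statement you are trying to prove is not proved in the paper at all: it is stated as a conjecture of Cooper--Tillmann, and the authors only record (Remark~\ref{rem:conjecture_is_proven}) that they learned via personal communication that Cooper and Tillmann have a proof; no argument appears in the paper. So there is no ``paper proof'' to compare against, and the relevant question is whether your proposal actually closes the conjecture. It does not. Your own final paragraph concedes that the decisive steps are left open, and those steps are not technical clean-up --- they \emph{are} the conjecture. Concretely: (i) the existence, for each peripheral $\ZZ^2$, of an invariant supporting flag and of a peripheral-invariant properly convex neighborhood foliated by compact strictly convex hypersurfaces is exactly the defining property of a generalized cusp; the BCL classification you invoke takes that foliation as a hypothesis and classifies the resulting cusps, so it cannot be used to produce the foliation. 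Appealing to it at that point is circular. (ii) The reduction of $\hol(\Gamma_k)$ to a lattice in one of the four groups $H_u$ by ``simultaneous Jordan decomposition'' is where all the difficulty lives: a priori a commuting pair preserving a properly convex domain could have complex eigenvalues (screw-motion behavior), mixed Jordan blocks, or a limit set in $\partial\Omega$ that is an arc or a segment not arising from any model cusp, and your parenthetical claim that proper convexity rules these out is asserted, not argued. (iii) The bridge from relative hyperbolicity of $\pi_1(M)$ to geometric finiteness of the Hilbert-geometric action (finite Busemann volume of the convex core, thick-thin decomposition, horoball estimates) is stated as ``expected'' and deferred to a combination of Crampon--Marquis, Cooper--Long--Tillmann and a Margulis lemma; without carrying this out there is no control on the ends whatsoever.

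In short, what you have written is a plausible research outline that correctly identifies the known ingredients and the main obstructions, but every load-bearing step is either deferred or borrowed in a way that presupposes the conclusion. Since the paper itself treats the statement as open, a submission claiming it would need complete arguments for (i)--(iii) above; as it stands the proposal has genuine gaps at each of them.
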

\begin{remark}\label{rem:conjecture_is_proven}
    We were recently made aware via personal communication that Cooper and Tillmann have a proof of this conjecture. 
\end{remark}
\begin{remark}\label{rem:hyp_rel_subgroup}
    To avoid introducing more unnecessary details, we will not define what it means for $\pi_1(M)$ to be hyperbolic relative peripheral subgroups, but refer the reader to \cite{FARB}. We simply remark that all finite volume hyperbolic manifolds have this property (see \cite[Thm. 5.1]{FARB}).
\end{remark}

We are now ready to prove Theorem~\ref{thm:main_thm2}. In the proof, we are implicitly going to make use of the identification $\mathcal{D}_\Delta \cong \cTriFL$ (cf. Theorem~\ref{thm:parametrization_def_space}).

\begin{proof}[Proof of Theorem~\ref{thm:main_thm2}]
Let $p_0 = [\Phi_0,\rho_0] \in \cTriFL$ such that $\sigma_0 = [\hat \Phi_0,\rho_0] := \Ext(p_0) \in \Tt^c$, and let $\sigma_1 = [\dev_1,\rho_1] \in \Tt^c$ be a properly convex structure near $\sigma_0$. We want to show that, if $\sigma_1$ is close enough to $\sigma_0$, then there is $p_1 \in \cTriFL$ close to $p_0$ such that $\sigma_1 =\Ext(p_1)$.

Let $v \in \Vertex(\Delta)$ and let $\End_v$ be the corresponding end in $M$. For every lift $\wt v \in \Vertex(\wt \Delta)$ of $v$ there is a corresponding lift $\wt \End_v$ of $\End_v$ and a peripheral subgroup $\Gamma_v \le \pi_1(M)$. Since $\sigma_0$ is a properly convex structure coming from a triangulation of flags, the flag $ \Phi_0(\wt v) = (V_0,\eta_0)$ is a supporting flag for $\hat \Phi_0(\wt \End_v)/\rho_0(\Gamma_v)$. If $\sigma_1$ is close enough to $\sigma_0$, then there is a representative pair $(\dev_1,\rho_1)$ of $\sigma_1$ whose holonomy $\rho_1$ is close to $\rho_0$. In particular there is a $\rho_1(\Gamma_v)$--invariant flag $(V_1,\eta_1)$ that is close to $(V_0,\eta_0)$. It is possible that $(V_1,\eta_1)$ is not unique if the type of the generalized cusp at $v$ corresponding to $\sigma_1$ is larger than the generalized cusp corresponding to $\sigma_0$, %\textcolor{red}
{however in this case one can be chosen arbitrarily as long the choice is made equivariantly for all lifts of $v$.}

Thus we have just defined a $\rho_1$--equivariant map $\Phi_1 : \Vertex(\wt \Delta) \to \FL$, such that $\Phi_1(\wt v) = (V_1,\eta_1)$. By construction, the map $\Phi_1$ maps each $\wt v\in \Vertex(\wt \Delta)$ close to its $\Phi_0$--image. Since the property of being a tetrahedron of flags is an open condition in the space of quadruples of flags and there are only finitely many tetrahedra in $\Delta$, it follows that $\Phi_1$ maps the vertices of each $\wt \tet\in \Tet(\wt\Delta)$ to a tetrahedron of flags. The property of a pair of tetrahedra of flags being geometric is also an open condition and  so it follows that $p_1 := [\Phi_1,\rho_1] \in \cTriFL$.

Finally, we claim that $\Ext(p_1) = \sigma_1$. Clearly there are representative developing/holonomy pairs for $\Ext(p_1)$ and $\sigma_1$ where the holonomy is the same. Furthermore, since $\Phi_1$ and $\Phi_0$ are close, it follows from Theorem \ref{thm:geom_flag_struc_are_developable} that $\hat \Phi_1$ and $\hat \Phi_0$ are also close. Since $\sigma_0$ and $\sigma_1$ are close it follows that $\hat \Phi_0$ and $\dev_1$ are close. These two facts imply that $\hat\Phi_1$ and $\dev_1$ are nearby developing maps with the same holonomy. Therefore $\Ext(p_1) = \sigma_1$ by the Ehresmann-Thurston principle (cf. Theorem~\ref{thm:ersh_thu_principle}). Finally, the finiteness--to--one property of $\Ext$ is a consequence of the fact that each vertex $\wt v\in \Vertex(\wt\Delta)$ corresponds to a generalized cusp end, and by Lemma \ref{lem:framing_end}, such ends can be decorated with incomplete flags in only finitely many ways. 
\end{proof}

%%%%%%%%%%%%%%%%%%%%%%%%%%%%%%%%%%%%%%%%%%
%%%%%%%%%%%%%%%%%%%%%%%%%%%%%%%%%%%%%%%%%%
%%%%%%%%%%%%%%%%%%%%%%%%%%%%%%%%%%%%%%%%%%

\section{Examples}\label{sec:examples}

In this section we describe some instances where we are able to (partially) solve our gluing equations, and describe the geometry of the corresponding projective structures. 

%%%%%%%%%%%%%%%%%%%%%%%%%%%%%%%%%%%%%%%%%%
%%%%%%%%%%%%%%%%%%%%%%%%%%%%%%%%%%%%%%%%%%
%%%%%%%%%%%%%%%%%%%%%%%%%%%%%%%%%%%%%%%%%%

\subsection{Figure-eight knot complement}\label{sec:fig8}

Let $M_8$ be the figure-eight knot complement also known as the manifold {\tt m004} in the SnapPy cusped census \cite{SnapPy}. $M_8$ has an ideal triangulation $\Delta$ consisting of two tetrahedra with the gluing pattern shown in Figure \ref{fig8_gluing}. Here we describe a curve in the deformation space $\mathcal{D}_8 := \mathcal{D}_\rp(M_8;\Delta)$. We recall that $\mathcal{D}_8$ can be regarded as semi-algebraic subset of $\RR_{>0}^{48}$. Specifically we have an edge and a gluing parameter for each pair $(\tet,\sigma)\in \Tet(\Delta)\times \edgeface$ (we do not need the triple ratios since they can be recovered from the edge ratios using \eqref{eq:fe1}).  For each of the two tetrahedra $\tet_1$ and $\tet_2$ there is an edge ratio and gluing parameter for each edge-face. Let $e_\sigma$ (resp.\ $f_\sigma$) denote the edge ratio of the edge-face $\sigma$ for the tetrahedron $\tet_1$ (resp.\ $\tet_2$) and let $g_\sigma$ (resp.\ $h_\sigma$) denote the gluing parameter of the edge-face $\sigma$ for the tetrahedron $\tet_1$ (resp.\ $\tet_2$).
We now define a $1$--parameter family of points in $\mathcal{D}_8$.  First, let each $e_\sigma$ and $f_\sigma$ be equal to either $t^{\pm 1}$ according to Table \ref{tab:fig8}. Next, if $\sigma=(12)0$ or $(30)2$ then let $h_\sigma=1/g_\sigma=t^2$. For all other $\sigma$, let $h_\sigma=g_\sigma=1$. 

Letting $t$ range over $(0,\infty)$ gives a path in $\RR_{>0}^{48}$. We now show that this path lies in $\mathcal{D}_8$. In both rows of Table $\ref{tab:fig8}$, conjugate edge-faces have the same sign and there are the same number of $+$'s and $-$'s and so the internal consistency relations \eqref{eq:fe3} are satisfied for both tetrahedra. It is straightforward to check that the gluing consistency equations \eqref{eq:gluing_eq_1} and \eqref{eq:gluing_eq_2} are satisfied for each relevant pair of edge-faces, for each point on this curve. There are two sets of edge equations for this ideal triangulation (one for each edge class). More specifically, let $\sigma_i$ be the edge-face determined by the values in Table \ref{tab:eftab1}. These equations are equivalent to the following matrices being equal to the identity:
\begin{align}\label{eq:fig8_edge_mats}
M_1=\prod_{i=1}^6F^{i+1}_{\sigma_i}G^{i+1}_{\sigma_i}, \qquad \text{and} \qquad
M_2=\prod_{i=7}^{12}F^{i+1}_{\sigma_i}G^{i+1}_{\sigma_i}.
\end{align}
Here $G^1_{\sigma_i}:=\Glue(h_{\sigma_i})$, $G^2_{\sigma_i}:=\Glue(g_{\sigma_i})$, $F^1_{\sigma_i}:=\Flip_{\sigma_i}([\F_1])$, and $F^2_{\sigma_i}:=\Flip_{\sigma_i}([\F_2])$ where $[\F_i]$ is the tetrahedron of flags corresponding to the parameters for $\tet_i$, and superscripts in \eqref{eq:fig8_edge_mats} are taken $\mod{2}$. Using one's favorite computer algebra system it is easily verified that both of the elements in \eqref{eq:fig8_edge_mats} are trivial, and so this curve lies in $\mathcal{D}_8$.

\begin{figure}[t]
	\centering
	\def\svgscale{.5}
	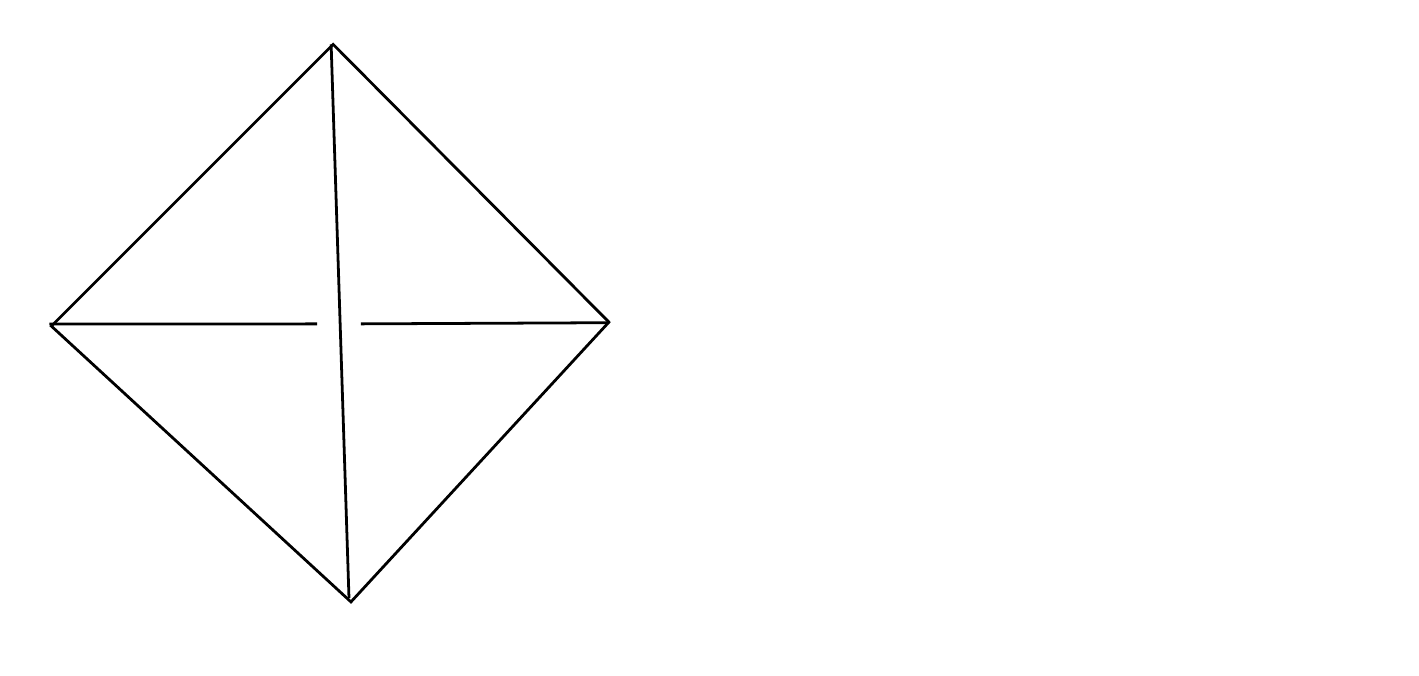
	\caption{An ideal triangulation for the figure-eight knot complement.}
	\label{fig8_gluing}
\end{figure}

\begin{table}[t]
	\begin{center}
		\begin{tabular}{|c|c|c|c|c|c|c|c|c|c|c|c|c|}
			
			\hline
			$\sigma$ & (12)3 &(21)4 &(34)1 & (43)2 & (13)4 & (31)2 & (24)3 & (42)1 & (14)2 & (41)3 & (23)1 & (32)4\\
			\hline
			$e_\sigma$ & + & + & + & + & - & - & + & + & - & - & - & - \\
			\hline
			$f_\sigma$ & - & - & - & - & + & + & - & - & + & + & + & +\\
			\hline
		\end{tabular}
	\end{center}
	\caption{Table of edge ratios for the figure-eight knot complement.}
	\label{tab:fig8}
\end{table}

\begin{table}[t]
	\centering
	\begin{tabular}{|c|c|c|c|c|c|c|c|c|c|c|c|c|}
		\hline
		i & 1 & 2 & 3 & 4 & 5 & 6 & 7 & 8 & 9 & 10 & 11 & 12 \\
		\hline
		$\sigma_i$ & (31)2 & (43)2 & (34)1 & (13)4 & (21)4 & (12)3 & (24)3 & (23)1 & (14)2 & (24)3 & (23)1 & (14)2\\
		\hline
	\end{tabular}
	\caption{Edge-face table for \eqref{eq:fig8_edge_mats}.}
	\label{tab:eftab1}
\end{table}

This curve of solutions turns out to contain many points corresponding to properly convex projective structures on the figure-eight knot complement. To see this we observe that every parameter is equal to $1$ for the solution with $t=1$. It is easy to check using Theorem \ref{thm:phi_image} that this solution comes from a hyperbolic structure on $M_8$. In fact, it can be checked that both tetrahedra are regular and ideal, and so this is the complete hyperbolic structure on $M_8$.

For values of $t$ near $1$, the peripheral holonomy preserves a complete real flag, and so by \cite{CLTII} it follows that these other solutions also correspond to properly convex structures on $M_8$. Furthermore, a more detailed analysis of the peripheral subgroup shows that for every $t\neq 1$ the holonomy of the peripheral subgroup is that of a type 1 generalized cusp (see \cite{BCL} for details). Furthermore, the holonomy of the meridian and longitude for these structures may be calculated by multiplying the transformations on the appropriate edges in the monodromy complex (see Figure \ref{fig8_cusp}). In this way, the holonomy of the meridian is always unipotent, and so it follows that this is the same path of deformations found in \cite{BAL14}. In particular, for values $t\neq 1$, the holonomy is that of a type 1 generalized cusp (see \cite{BCL}).

\begin{figure}
  \centering
  \def\svgscale{.4}
  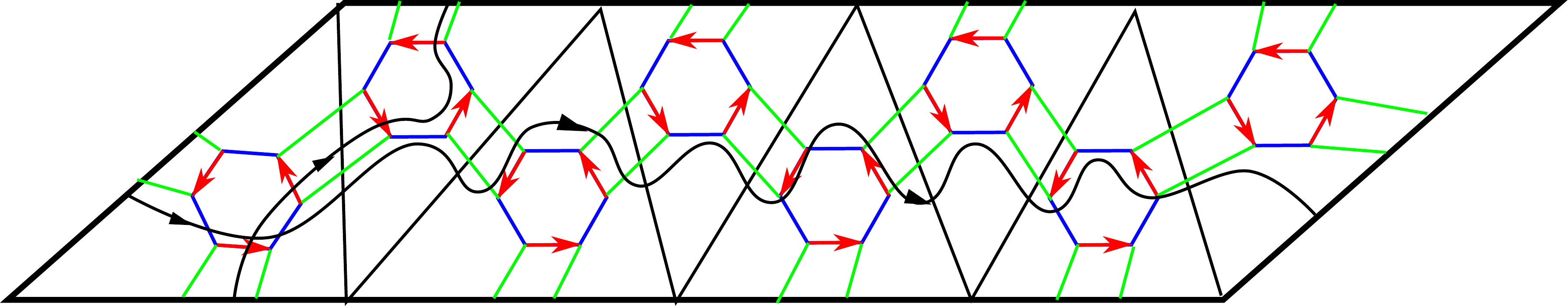
  \caption{The cusp cross section for the figure-eight, its intersection with the monodromy graph, and a meridian/longitude pair.}
  \label{fig8_cusp}
\end{figure}

%%%%%%%%%%%%%%%%%%%%%%%%%%%%%%%%%%%%%%%%%%
%%%%%%%%%%%%%%%%%%%%%%%%%%%%%%%%%%%%%%%%%%
%%%%%%%%%%%%%%%%%%%%%%%%%%%%%%%%%%%%%%%%%%

\subsection{Figure-eight sister manifold}\label{sec:sister}

\begin{figure}
  \centering
  \def\svgscale{.5}
  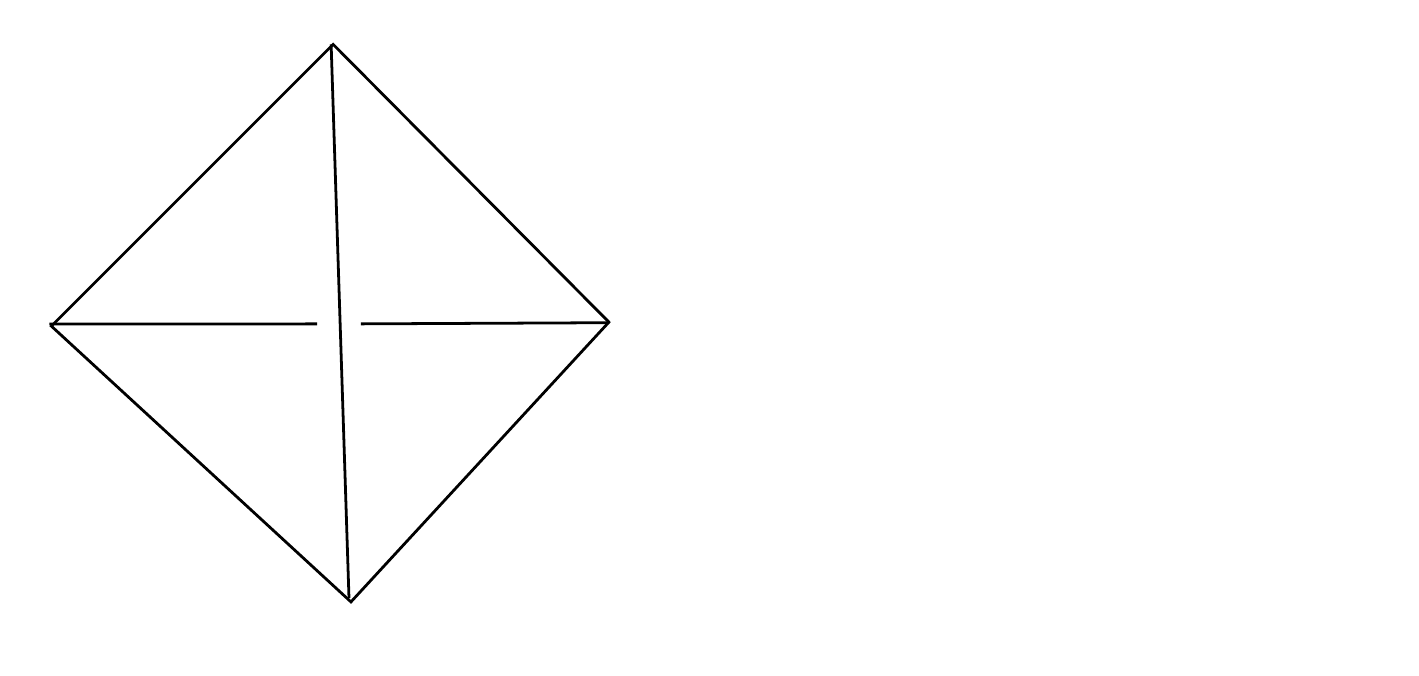
  \caption{A gluing for the figure-eight sister}\label{sister_gluing}
\end{figure}

Let $N_8$ be the manifold $\tt{m003}$ in the SnapPy \cite{SnapPy} cusped census (this manifold is also known as the figure-eight sister). $N_8$ also admits an ideal triangulation $\Delta$ with two tetrahedra, displayed in Figure~\ref{sister_gluing}. We will again adopt the convention that $e_\sigma$ and $g_\sigma$ are the edge and gluing parameters for the first tetrahedron, while $f_\sigma$ and $h_\sigma$ are the parameters for the second tetrahedron. We again define a curve by setting $t^2=h_{(14)2}=h_{(32)4}=1/g_{(24)3}=1/g_{(13)4}$, and setting each of the remaining gluing parameters equal to 1. Next, as before, we let $e_\sigma$ and $f_\sigma$ take values $t^{\pm 1}$ according to Table \ref{tab:fig8_sister}.

\begin{table}[h]
\begin{center}
\begin{tabular}{|c|c|c|c|c|c|c|c|c|c|c|c|c|}
\hline
$\sigma$ & (12)3 &(21)4 &(34)1 & (43)2 & (13)4 & (31)2 & (24)3 & (42)1 & (14)2 & (41)3 & (23)1 & (32)4\\
\hline
$e_\sigma$ & + & + & + & + & - & - & - & - & + & + & - & - \\
\hline
$f_\sigma$ & - & + & - & + & - & - & - & - & + & + & + & +\\
\hline
\end{tabular}
\end{center}
\caption{Table of edge ratios for figure-eight sister.}\label{tab:fig8_sister}
\end{table}

As with the previous example it is a simple, but tedious matter to check that the internal consistency and gluing consistency equations are satisfied for the points on this curve. Once again, there are two sets of edge equations coming from the following two matrices being trivial:
\begin{align}\label{eq:sis_edge_mats}
N_1=\prod_{i=1}^6F^{i+1}_{\sigma_i}G^{i+1}_{\sigma_i}, \qquad \text{and} \qquad
N_2=\prod_{i=7}^{12}F^{i+1}_{\sigma_i}G^{i+1}_{\sigma_i},
\end{align}
where again $G^1_{\sigma_i}=\Glue(h_{\sigma_i})$, $G^2_{\sigma_i}=\Glue(g_{\sigma_i})$, $F^1_{\sigma_i}=\Flip_{\sigma_i}([\F_1])$, and $F^2_{\sigma_i}=\Flip_{\sigma_i}([\F_2])$ where $[\F_i]$ is the tetrahedron of flags corresponding to the parameters for $\tet_i$, superscripts in \eqref{eq:sis_edge_mats} are taken mod $2$ and $\sigma_i$ is determined by Table~\ref{tab:eftab2}. Again, these matrices can be computed and checked to be the identity, so the edge equations are also satisfied. Thus the above curve is contained in $\mathcal{D}_\rp(N_8,\Delta)$.

\begin{table}[t]
    \centering
    \begin{tabular}{|c|c|c|c|c|c|c|c|c|c|c|c|c|}
    \hline
         i & 1 & 2 & 3 & 4 & 5 & 6 & 7 & 8 & 9 & 10 & 11 & 12 \\
         \hline
         $\sigma_i$ & (41)3 & (24)3 & (23)1 & (43)2 & (34)1 & (12)3 & (31)2 & (31)2 & (24)3 & (23)1 & (12)3 & (14)2\\
         \hline
    \end{tabular}
    \caption{Edge-face table for \eqref{eq:sis_edge_mats}.}
    \label{tab:eftab2}
\end{table}

As with the case for $M_8$, this curve consists of many properly convex structures on $N_8$, allowing us to prove Theorem~\ref{thm:sis_thm}.
\begin{figure}
  \centering
  \def\svgscale{.3}
  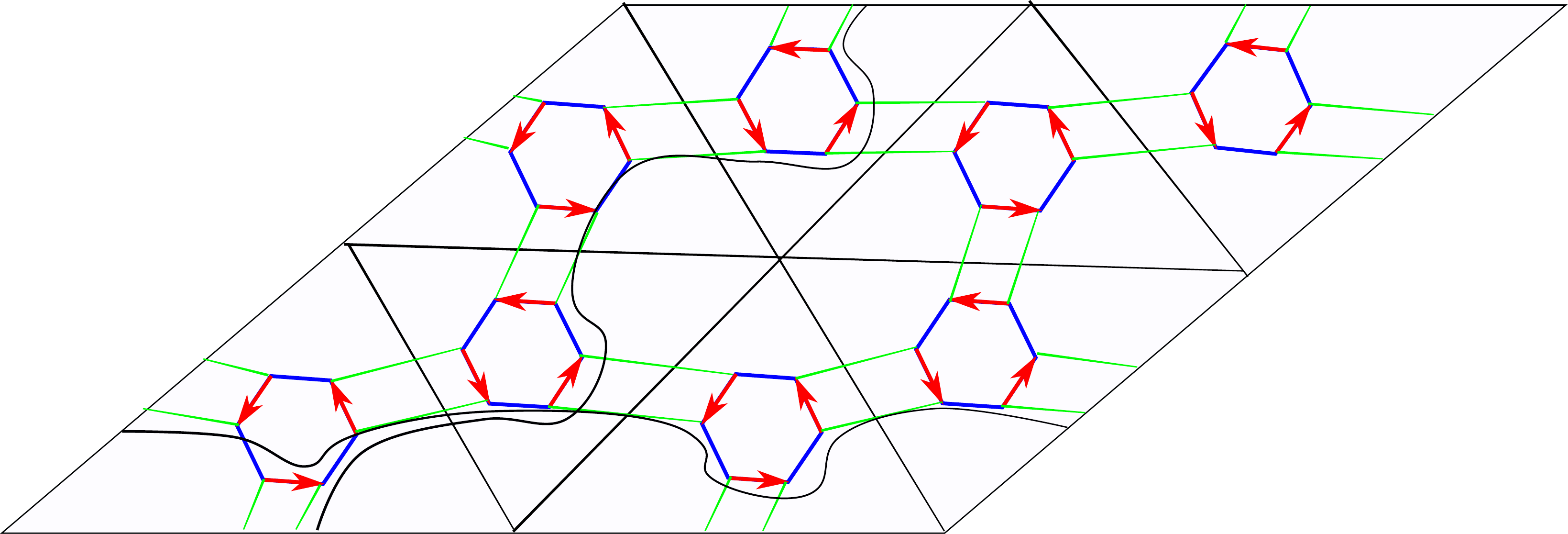
  \caption{The cusp cross section for the figure-eight sister, its intersection with the monodromy graph, and a meridian/longitude pair.}
  \label{fig8_sister_cusp}
\end{figure}

\begin{proof}[Proof of Theorem~\ref{thm:sis_thm}]
The point $t=1$ corresponds to the complete hyperbolic structure on $N_8$, which is well known to be built out of two regular hyperbolic ideal tetrahedra. By analyzing the peripheral subgroup one can check that for each $t$ that this group preserves a complete real flag, and thus for $t$ near 1 these solutions correspond to properly convex projective structures on $N_8$ by \cite[Thm.\ 0.2]{CLTII}. Using Figure \ref{fig8_sister_cusp} the holonomy of the peripheral subgroup can be computed and one finds that for $t\neq 1$ this give the holonomy of a type 1 cusp on the torus. Finally, using Theorem 0.6 of \cite{BCL}, it follows that these properly convex structures have finite volume.
\end{proof}

This case is of additional interest since such structures were not previously known to exist. 

%%%%%%%%%%%%%%%%%%%%%%%%%%%%%%%%%%%%%%%%%%
%%%%%%%%%%%%%%%%%%%%%%%%%%%%%%%%%%%%%%%%%%
%%%%%%%%%%%%%%%%%%%%%%%%%%%%%%%%%%%%%%%%%%

\subsection{Hopf link orbifold}\label{sec:Hopf_link}

We close this section by describing solutions to the gluing equations for an orbifold $O$ whose underlying space is $S^3$ and whose singular locus is the union of the Hopf link and an unknotted arc connecting the two boundary components, where all singular components have order $3$ (cf. Figure \ref{hopf_gluing}). The orbifold $O$ admits an ideal triangulation with a single tetrahedron (cf. Figure \ref{hopf_gluing}). This orbifold has two cusps, each of which has a cross section that is an Euclidean $2$--orbifold with underlying space $S^2$, and singular locus three points each of order $3$. 

\begin{figure}
  \centering
  \def\svgscale{.5}
  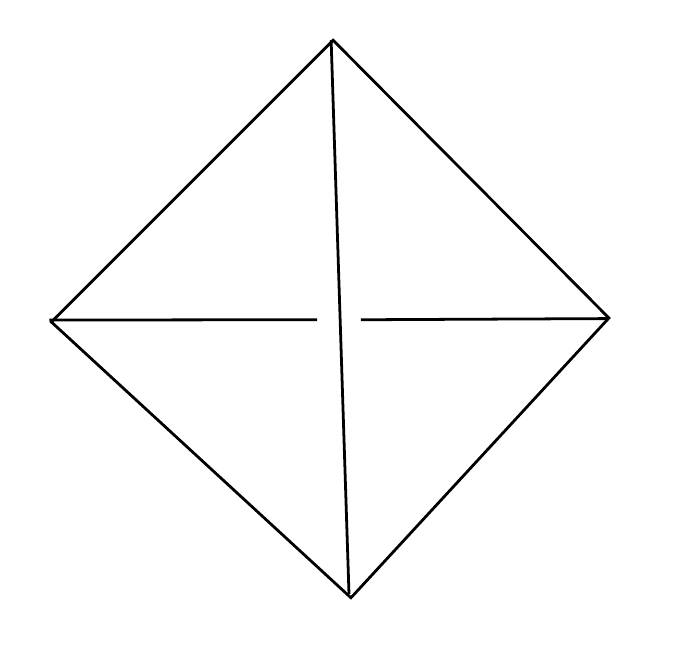
  \def\svgscale{.25}
  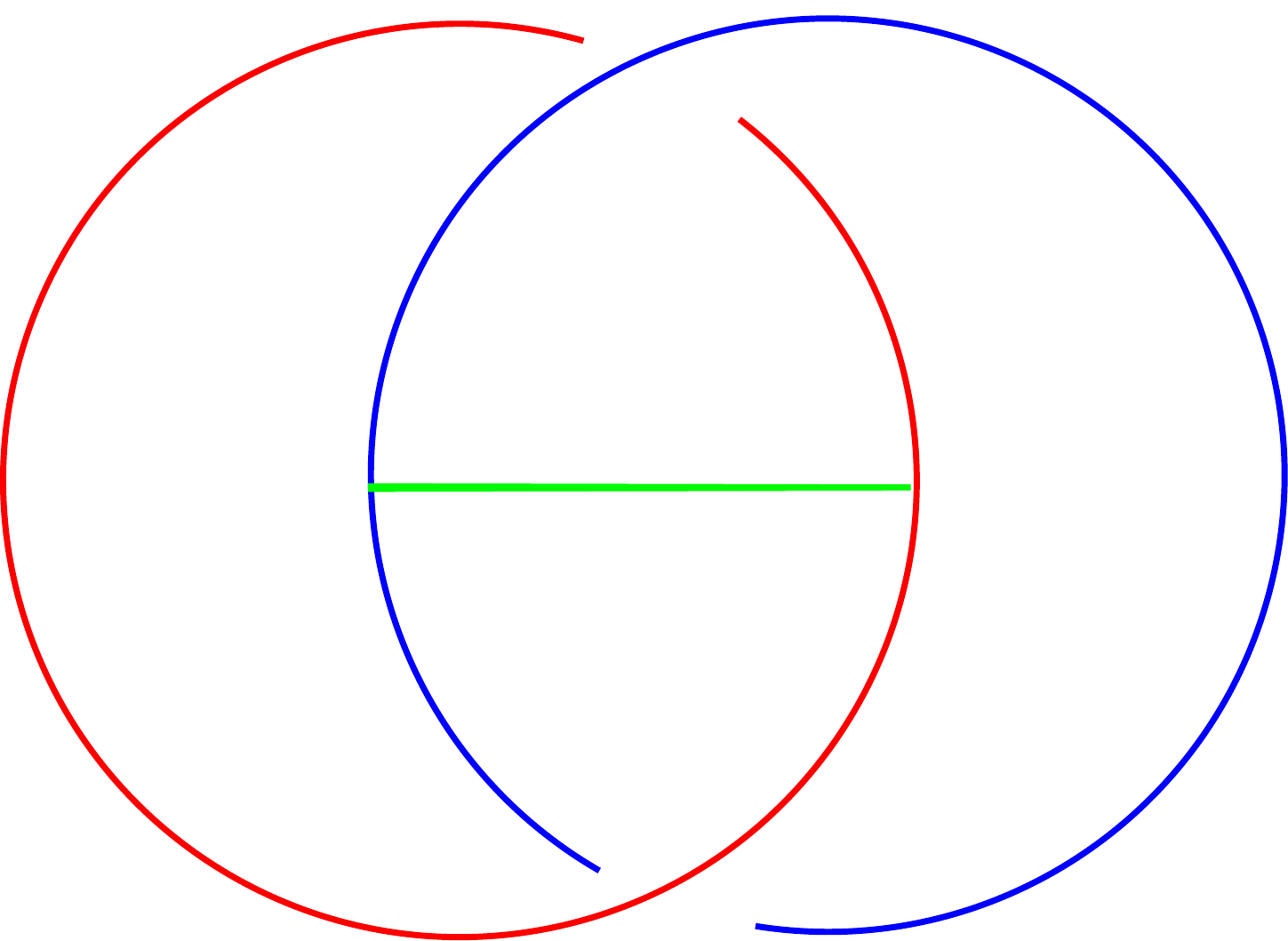
  \caption{(Left) An ideal triangulation for the Hopf link orbifold. (Right) The singular locus of the Hopf orbifold.}\label{hopf_gluing}
\end{figure}

We now describe how to solve (a modified version of) the gluing equations for this orbifold. The three equivalence classes of edges in the triangulation in Figure \ref{hopf_gluing} correspond to the three components of the singular locus. Each of these components has degree 3 and so we must modify the edge equations so that instead of the corresponding product of transformations being equal to the identity, they are instead equal to an elliptic element of order 3. In other words, the holonomy around each edge must preserve a pair of projective lines, one of which is fixed pointwise and one where the action is elliptic of order 3. 

Let $\sigma = (ij)k$. For this example we adopt the notation $e_{ij}:=e_{\sigma}$ for the edge parameters and $g_{(ij)k} = g_{\sigma}$ for the gluing parameters, of the tetrahedra. We begin by noting some simple equations. Since the edges $(12)$ and $(34)$ are only glued to themselves, it follows from \eqref{eq:simple_entry_gluing_equations} that $e_{12}^3=e_{34}^3=1$. Since we are only interested in real solutions, they imply that
\begin{align}\label{eq:hopf_e_parm_1}
    e_{12}=e_{34}=1.
\end{align} 
Furthermore, the previous equation, together that the internal consistency equations, imply that
\begin{align}\label{eq:hopf_e_param_2}
    e_{13}e_{14}e_{23}e_{24}=1.
\end{align}

Furthermore, the edge transformations can be calculated as 
\begin{align}\label{eq:hopf_edge_mats}
    &\Glue(g_{(21)4})\Flip_{(12)3}, \qquad \text{and} \qquad  \Glue(g_{(34)1})\Flip_{(43)2},\\ \nonumber
    \Glue(g_{(42)1})&\Flip_{(24)3}\Glue(g_{(32)4})\Flip_{(23)1}\Glue(g_{(31)2})\Flip_{(13)4}\Glue(g_{(41)3})\Flip_{(14)2},
\end{align}
where $\Flip_{(ij)k}:=\Flip_{(ij)k}([\F])$ with $[\F]$ equal to the tetrahedron of flags determined by the $e_{ij}$.

Since equations \eqref{eq:hopf_e_parm_1} and \eqref{eq:hopf_e_param_2} are satisfied, it follows that each of the transformations in \eqref{eq:hopf_edge_mats} is of the form 
$$\begin{bmatrix}
1 & 0 & 0 & 0\\
0 & 1 & 0 & 0\\
0 & 0 & m_{33} & m_{34}\\
0 & 0 & m_{43} & m_{44}
\end{bmatrix}.
$$
Such a matrix has order 3 if and only if both its trace and determinant are $1$. The determinants of the first two matrices in \eqref{eq:hopf_edge_mats} are easily calculated to be $g_{(21)4}$ and $g_{(34)1}$ (cf. \eqref{eq:simple_det_gluing_equations}), and so both of these gluing parameters must be equal to $1$. Assuming that, then the first two matrices in \eqref{eq:hopf_edge_mats} have trace $1$ if and only if $e_{14}=e_{23}=3$.

Setting the trace and determinant of the last matrix in \eqref{eq:hopf_edge_mats} to 1 and solving the face equations gives a unique solution 
\begin{align}\label{eq:hopf_soln}
   e_{12}=e_{34}=1,\qquad
   e_{13}=e_{24}=1/3,\qquad
   e_{14}=e_{23}=3.
 \end{align}
 with the values of $g_\sigma$ being determined by Table \ref{tab:hopf_gluing_params}. It is easy to check that this solution corresponds to the complete hyperbolic structure on $O$, which is constructed from an ideal tetrahedron whose Thurston's parameter is the third root of unity $z=-\frac{1}{2}+\frac{\sqrt{3}}{2}i$. The cusps of the orbifold $O$ are rigid cusps (i.e. their Teichm\"uller space is trivial). It follows that there are no incomplete hyperbolic structures on $O$, and so the gluing equations detect all hyperbolic structures on $O$. 

\begin{table}[h]
    \centering
    \begin{tabular}{|c|c|c|c|c|c|c|c|c|c|c|c|c|}
\hline
$\sigma$ & (12)3 &(21)4 &(34)1 & (43)2 & (13)4 & (31)2 & (24)3 & (42)1 & (14)2 & (41)3 & (23)1 & (32)4\\
\hline
$g_\sigma$ & 1 & 1 & 1 & 1 & 1/3 & 1/3 &1/3 &1/3 &3 &3 & 3 &3 \\ 
\hline
\end{tabular}
    \caption{Gluing parameters for Hopf link orbifold.}
    \label{tab:hopf_gluing_params}
\end{table}

In light of Theorem~\ref{thm:main_thm2}, it is tempting to conclude that $O$ does not admit convex projective structures other than the complete hyperbolic structure, however, we were recently made aware (personal communication) that J.\ Porti and S.\ Tillmann have computed the moduli space of convex projective structures on $O$ and found that it has positive dimension (notice that this does not contradict Theorem~\ref{thm:main_thm2} since $O$ is an orbifold and not a manifold). The reason that our equations do not detect these projective structures is that the peripheral subgroups do not preserve an incomplete real flag (cf. Lemma~\ref{lem:peripheral_fixes_flag}). More precisely, if $H$ is a peripheral subgroup of $O$ then $H$ is virtually abelian with finite index abelian normal subgroup $H'$. For the non-hyperbolic convex projective structures on $O$, the subgroup $H'$ preserves finitely many incomplete flags (the cusps turn out to be type 3 generalized cusps). The group $H$ acts on the set of incomplete flags preserved by $H'$, however this action does not have a global fixed point. 

%\printbibliography
\bibliographystyle{plain}
\bibliography{biblio}

\end{document}